\documentclass[11pt]{amsart}
\pdfoutput=1
\usepackage{amsmath, amssymb, amsthm, amsbsy, mathtools, bbm, amsfonts, color, verbatim, array, floatflt, yfonts}

\usepackage[margin=1.2in]{geometry}
\usepackage[all]{xy}
\usepackage[english]{babel}
\usepackage[shortlabels]{enumitem}
\usepackage{ stmaryrd } % for double brackets

\usepackage{graphicx, color, overpic}
\usepackage[usenames,dvipsnames,svgnames,table]{xcolor}
\usepackage[pdfpagelabels]{hyperref}
\usepackage{cleveref}
\usepackage{multirow}
\usepackage{easybmat}
\usepackage{adjustbox}

\usepackage{tikz}
\newcommand*\circled[1]{\tikz[baseline=(char.base)]{
    \node[shape=circle,draw,inner sep=2pt] (char) {#1};}}

\setlength{\headheight}{15pt}
\setcounter{tocdepth}{2} % set depth of table of contents; 1=section, 2=subsection

\hyphenation{nonemptiness}
 
\theoremstyle{plain}
\newtheorem{thm}{Theorem}[section]
\newtheorem*{thm*}{Theorem}
 %  theorem environment for Introduction
 % "letter-numbered" theorems
\newtheorem{prop}[thm]{Proposition}
\newtheorem*{prop*}{Proposition}
\newtheorem{lemma}[thm]{Lemma}
\newtheorem*{lemma*}{Lemma}
\newtheorem{corollary}[thm]{Corollary}

\theoremstyle{definition}
\newtheorem{definition}[thm]{Definition}
\newtheorem{example}[thm]{Example}

\newtheorem{rmk}[thm]{Remark}

\newcommand{\cC}{\mathcal{C}}

\newcommand{\cR}{\mathcal{R}}

\newcommand{\R}{\mathbb{R}}
\newcommand{\Z}{\mathbb{Z}}

\newcommand{\Q}{\mathbb{Q}}

\newcommand{\cH}{\mathcal H}

%{\mathrm{R}}%Root system
 % identity element in a group
\newcommand{\Id}{\operatorname{I}} % identity linear operator
 % projection operator
 % projection onto V_J
 % projection onto V_J
 % apartment system
 % set of chambers/alcoves 
 %building at infninity notation
 %parallel class of Weyl chambers or simplices
\newcommand{\sW}{W} %spherical Weyl group (consistent w/ GHKR, less subscripts)
 % subgroup generated by the reflections in a reflection presentation of w
\newcommand{\aW}{{\overline{W}}} %affine Weyl group
\newcommand{\TW}{T} % translation subgroup of affine Weyl group
 % translation lattice = coroot lattice of affine Weyl group
 %full group of isometries of Euclidean space
 %linear isometries of Euclidean space
\newcommand{\aH}{H} %subgroup of isometries of Euclidean space
\newcommand{\sH}{H_0} %subgroup of linear isometries of Euclidean space
\newcommand{\On}{\operatorname{O}(n)} % n by n real orthogonal group
 % 2 by 2 real orthogonal group
 % 2 by 2 real orthogonal group
 %extended affine Weyl group
\newcommand{\aS}{\overline{S}} %generating set for affine Weyl group
\newcommand{\sS}{S} %generating set for spherical Weyl group
 %set of T-reduced presentations of an element 
 %Hecke algebra for extended affine Weyl group
 %fundamentaler Weyl chamber
 % fundamental alcove
 %opp of fundamentaler Weyl chamber
 %shrunken fundamental Weyl chamber
 % fundamental alcove
 % any Weyl chamber
 % any Weyl chamber
 % any shrunken Weyl chamber
 % any shrunken Weyl chamber
 %positive Cone
 % opposite of the positive Cone (negative cone)
%\newcommand{\aH}{\mathcal{H}} %set of affine hyperplanes

 % affine parabolic generated by all affine simples except s_i

 %induced orientation at infinity

 %min. galleries in $\Delta$
 %Cayley graph
 %weight/endpoint of a gallery
 %arrow used for galleries
 % LS Gallery w.r.t. the negative standard orientation / negative root operators 
 % LS Gallery w.r.t. the positive standard orientation / positive root operators
 % \defect_\phi(a) dimension of a min gallery from \fa to a wrt \phi

%number of load-bearing walls through p_0

 % fundamental co-weight
 % type-function

%vertex of an alcove of the same type as the index. To be used as: \vert_0(c)
 % root coordinates of an element of the root lattice
 %partition of root coordinates
 %set of partitions of root coordinates
 %inversion set

 % combinatorial convex hull of alcoves in an apartment
 % metric convex hull of vertices in an apartment
 % closure of the union of the set of alcoves defined by \conv

\newcommand{\Mov}{\textsc{Mov}} % move-space
\newcommand{\Mod}{\textsc{Mod}} % sub-module of translation part inside move-space, "discrete move set" 
 % s.o. inside G
\newcommand{\ModH}{\textsc{Mod}_\aH} % s.o. inside H < G
\newcommand{\ModW}{\textsc{Mod}_\aW} % s.o. inside affine Coxeter group
\newcommand{\Fix}{\textsc{Fix}} % pointwise stabilizer 
 % pointwise stabilizer in $\sW$ 
 % pointwise stabilizer in $\sH$ 
 % setwise stabilizer
 
\newcommand{\Cmin}{\cC_{\min}} 
\newcommand{\Pc}{\operatorname{Pc}} % parabolic closure
 % normalizer
\newcommand{\Cent}{\operatorname{C}} % centralizer
 % centralizer and coconjugation sets

\newcommand{\Span}{\textsc{Span}}
\newcommand{\Range}{\operatorname{Im}}
\newcommand{\Ker}{\operatorname{Ker}}
\newcommand{\rank}{\operatorname{rk}}

\newcommand{\diag}{\operatorname{diag}}

\newcommand{\bfc}{\mathbf{c}}

\newcommand{\x}{\mathbf{x}}

\newcommand{\wbg}{w_{\beta,\gamma}}

\newcommand{\Bbg}{B_{\beta,\gamma}}

\newcommand{\Mbg}{M_{\beta,\gamma}}
\newcommand{\Sbg}{S_{\beta,\gamma}}
\newcommand{\Tbg}{T_{\beta,\gamma}}
\newcommand{\wbd}{w_{\beta,\delta}}

\newcommand{\Sbd}{S_{\beta,\delta}}

\newcommand{\Pd}{P_\delta}
\newcommand{\Qd}{Q_\delta}
\newcommand{\Pb}{P_\beta}

\newcommand{\Pbd}{P_{\beta,\delta}}
\newcommand{\Qbd}{Q_{\beta,\delta}}

\newcommand{\supp}{\operatorname{Supp}}

 %Aut

 % averaging projection

\newcommand{\xconj}{[x]}

\newcommand{\wconj}{[w]_\sW} %conjugacy class in the spherical Coxeter group
\newcommand{\conj}[1]{[#1]} %conjugacy class of #1

\newcommand{\coconj}[2]{\operatorname{C}(#1,#2)} % co-conjugacy class of #1 and #2
 % co-conjugacy class of #1 and #2 in \aG
 % co-conjugacy class of #1 and #2 in \aH
\newcommand{\coconjW}[2]{\operatorname{C}(#1,#2)} % co-conjugacy class of #1 and #2 in \aW
 % co-conjugacy class of #1 and #2
\newcommand{\scoconjW}[2]{\operatorname{C}_\sW(#1,#2)} % co-conjugacy class of #1 and #2
 % co-conjugacy class of #1 and #2
 % number of components of CompH
 % number of components of CompH
 % number of components of CompW
 % number of components of Cxconj
 % component
\newcommand{\CompW}{\operatorname{Comp}} % components of the conjugacy class 
 % components of the conjugacy class 
 % components of the conjugacy class 
 % base component of the conjugacy class 
 % base component of the conjugacy class 
 % base component of the conjugacy class 
 % components of the conjugacy class of w 
\newcommand{\tcCoconj}{\operatorname{C}}

\numberwithin{equation}{subsection}

\definecolor{amethyst}{rgb}{0.6, 0.4, 0.8}
\definecolor{kellygreen}{rgb}{0.3, 0.73, 0.09}
\definecolor{americanrose}{rgb}{1.0, 0.01, 0.24}

\newcommand{\liz}[1]{\textcolor{kellygreen}{#1}}

\setcounter{MaxMatrixCols}{20}
\setcounter{tocdepth}{1}

%\usepackage{showlabels} % show labels in compiled pdf

%%%%%%%%%%%%%%%%%%%%%%%%%%%%%%%%%%%%%%%%%%%%%%%%%%%%
%%%%%%%%%%%%%%%%%%%%%%%%%%%%%%%%%%%%%%%%%%%%%%%%%%%%

\begin{document}

\hypersetup{pdfauthor={Milicevic, Schwer, Thomas},pdftitle={The geometry of conjugation in affine Coxeter groups}}

\title[Conjugation in Affine Coxeter groups]{The Geometry of Conjugation in \\ affine Coxeter Groups}

\author{Elizabeth Mili\'{c}evi\'{c}}
\address{Elizabeth Mili\'{c}evi\'{c}, Department of Mathematics \& Statistics, Haverford College, 370 Lancaster Avenue, Haverford, PA, USA}
\email{emilicevic@haverford.edu}

\author{Petra Schwer}
\address{Petra Schwer, Institute of Mathematics, Heidelberg University, Im Neuenheimer Feld 255, 69120 Heidelberg, Germany}
\email{schwer@uni-heidelberg.de}

\author{Anne Thomas}
\address{Anne Thomas, School of Mathematics \& Statistics, Carslaw Building F07,  University of Sydney NSW 2006, Australia}
\email{anne.thomas@sydney.edu.au}

\thanks{EM was supported by NSF Grant DMS 2202017. PS was supported by the DFG Project SCHW 1550/4-1. This research was also supported in part by ARC Grant DP180102437.}

%%%%%%%%%%%%%%%%%%%%%%%%%%%%%%%%%%%%%%%%%%%%%%%%%%%%
%%%%%%%%%%%%%%%%%%%%%%%%%%%%%%%%%%%%%%%%%%%%%%%%%%%%

\begin{abstract} 
We develop new and precise geometric descriptions of the conjugacy class $\conj{x}$ and coconjugation set $\coconj{x}{x'} = \{ y \in \aW \mid yxy^{-1} = x' \}$ for all elements $x,x'$ of any affine Coxeter group $\aW$. The centralizer of $x$ in $\aW$ is the special case $\coconj{x}{x}$. The key structure in our description of the conjugacy class $\conj{x}$ is the mod-set $\ModW(w) = (w-\Id)R^\vee$, where~$w$ is the finite part of $x$ and $R^\vee$ is the coroot lattice. The coconjugation set $\coconj{x}{x'}$ is then described by $\ModW(w')$ together with the fix-set of $w'$, where $w'$ is the finite part of $x'$. For any element $w$ of the associated finite Weyl group $\sW$, the mod-set of $w$ is contained in the classical move-set $\Mov(w) = \Range(w - \Id)$. We prove that the rank of $\ModW(w)$ equals the dimension of $\Mov(w)$, and then further investigate type-by-type the surprisingly subtle structure of the $\Z$-module $\ModW(w)$. As corollaries, we determine exactly when $\ModW(w) = \Mov(w) \cap R^\vee$, in which case our closed-form descriptions of conjugacy classes and coconjugation sets are as simple as possible.
\end{abstract}

\maketitle

%\tableofcontents

%%%%%%%%%%%%%%%%%%%%%%%%%%%%%%%%%%%%%%%%%%%%%%%%%%%%
%%%%%%%%%%%%%%%%%%%%%%%%%%%%%%%%%%%%%%%%%%%%%%%%%%%%

%%%%%%%%%%%%%%%%%%%%%%%%%%%%%%%%%%%%%%%%%%%%%%%%%
%%%%%%%%%%%%%%%%%%%%%%%%%%%%%%%%%%%%%%%%%%%%%%%%%

\section{Introduction}\label{sec:Intro}

The study of conjugacy classes and centralizers in Coxeter groups has a long history, going back to classical work of Frobenius, Schur, Specht~\cite{Specht37}, and Young~\cite{Young30}. Carter~\cite{Carter72} gave the first systematic study of conjugacy classes in all finite Weyl groups, and Carter~\cite{Carter72} and Springer~\cite{Springer} studied centralizers of Coxeter elements in these groups. In his thesis from 1994, Krammer showed that there exists a polynomial-time algorithm for the conjugacy problem in all finite rank Coxeter groups (see~\cite{Krammer}). Geck and Pfeiffer gave precise descriptions of conjugacy classes in all finite Coxeter groups and characterized their minimal length elements (see~\cite{GeckPfeiffer} and~\cite[Chapter 3]{GeckPfeifferBook}, as well as an independent proof by He and Nie~\cite{HeNie12}). This characterization of minimal length elements was then established for all twisted Coxeter groups by Geck, Kim, and Pfeiffer~\cite{GKP}, all affine Coxeter groups by He and Nie~\cite{HeNie14}, and all infinite Coxeter groups by Marquis~\cite{Marquis21}. Motivated by the representation theory of Hecke algebras, much of this work dating back to \cite{GeckPfeiffer} has focused on obtaining minimal length representatives through the operation of cyclic shifts; that is, conjugation by one simple reflection at a time.

In this work, we take a distinctive new approach and develop precise geometric descriptions of the entire \emph{conjugacy class}
\[
\conj{x} = \{ yxy^{-1} \mid y \in \aW \} 
\]
and \emph{coconjugation set}
\[
\coconjW{x}{x'} = \{ y \in \aW \mid yxy^{-1} = x' \}
\]	
of arbitrary elements $x,x'$ in any affine Coxeter group~$\aW$. 
Note that $\Cent(x,x)$ is the centralizer of $x$ in $\aW$.

In our companion paper \cite{MST4}, we provide closed geometric descriptions of conjugacy classes and coconjugation sets for all split subgroups of the full isometry group of Euclidean space. Since every affine Coxeter group $\aW$ splits as a semidirect product $\aW = T \rtimes \sW$, where $T = \{ t^\lambda \mid \lambda \in R^\vee \}$ is the group of translations by elements of the associated coroot lattice~$R^\vee$, and $\sW$ is the associated finite Weyl group, all the statements in \cite{MST4} hold true for $\aW$.

As we explain further below, a key player in the results of \cite{MST4} is the \emph{mod-set} of an element $x \in \aW$, which we introduce as:
\[
\ModW(x)= (\Id - x)R^\vee=(x - \Id)R^\vee.
\]
The mod-set is a $\aW$-adapted version of the classical \emph{move-set} $
\Mov(x) = \Range(x - \Id) = (x - \Id)\R^n$.
For $x = t^\lambda w \in \aW$, with $\lambda \in R^\vee$ and $w \in \sW$, we have $\ModW(x) = \lambda + \ModW(w)$ (see~\cite[Lemma 2.2]{MST4}), and hence it is enough to study mod-sets for elements of the associated finite Weyl group $\sW$.
We view $\ModW(w)$ as a (free) $\Z$-submodule of $R^\vee$, and note that $\ModW(w)$ is thus a submodule of $\Mov(w) \cap R^\vee$ (see~\cite[Lemma 2.3]{MST4}).

In this paper, we give explicit algebraic descriptions of $\ModW(w)$ for all finite Weyl groups~$\sW$ and all $w \in \sW$. In particular, we  determine exactly when $\ModW(w)$ equals $\Mov(w) \cap R^\vee$, in which case we say that $w$ \emph{fills its move-set}. Combined with the results of~\cite{MST4}, we thus obtain the precise geometry of all conjugacy classes and coconjugation sets in all affine Coxeter groups.

%%%%%%%%%%%%%%%%%%%%%%%%%%%%%%%%%%%%%%%%%%%%%%%%%
\subsection{Geometric description of (co)conjugation}

The work in this paper refines and builds upon the results of \cite{MST4}. We now briefly review the main contributions of~\cite{MST4} in the setting of an affine Coxeter group $\aW = T \rtimes \sW$. Throughout, we assume that $\aW$ is irreducible. Recall that any $x \in \aW$ can be expressed uniquely as the product of a \emph{translation part} $t^\lambda $, where  $\lambda\in R^\vee$,  and a \emph{spherical part} $w \in \sW$. For $w \in \sW$, the \emph{fix-set} $\Fix(w) = \Ker (w - \Id)$ is the orthogonal complement of the move-set $\Mov(w) = \Range(w - \Id)$.

\begin{figure}[htb]
	\begin{minipage}{0.4\textwidth}
		\centering
		{\begin{overpic}[width=0.95\textwidth]{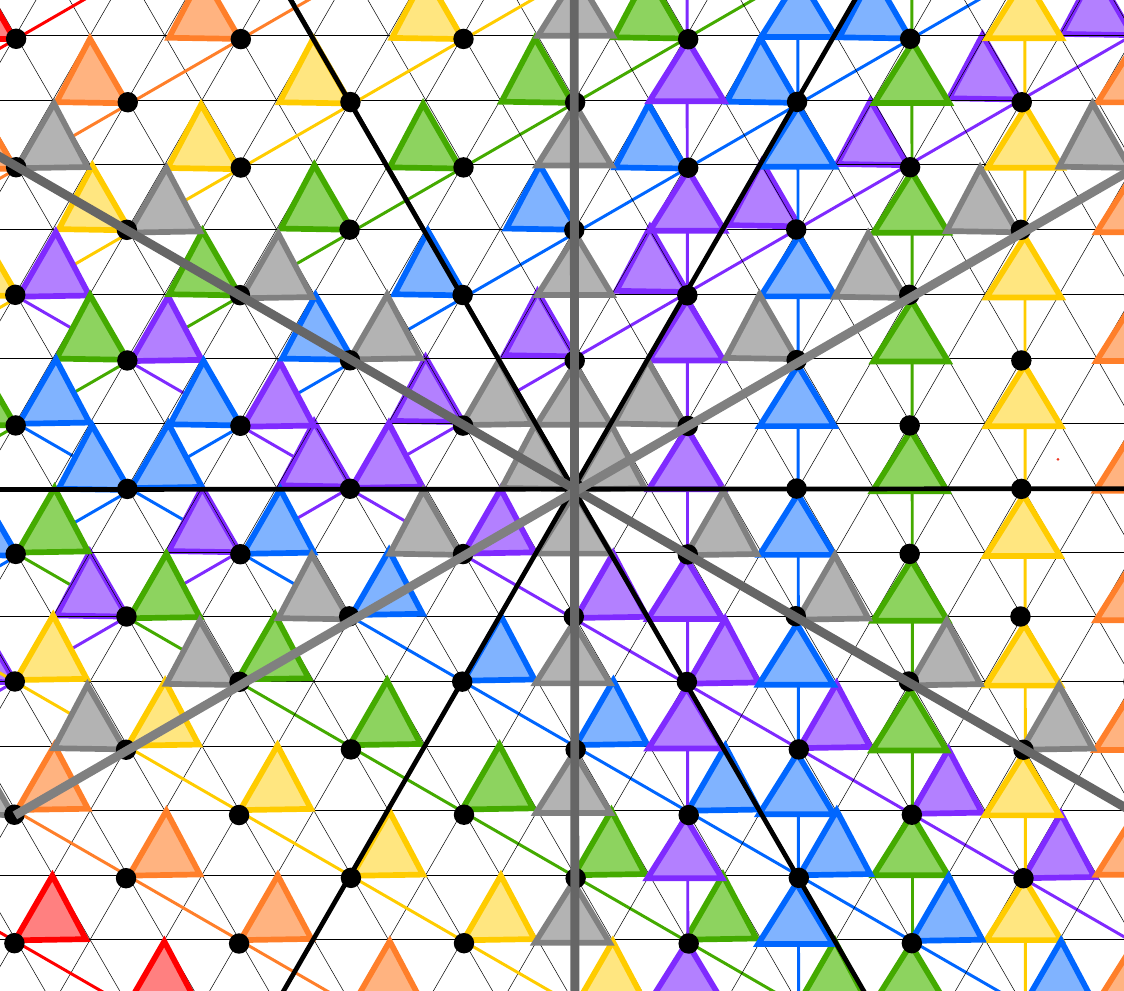}
				\put(50,47){\tiny{$\mathbf{1}$}}
				\put(50,52){\tiny{$\mathbf{s_0}$}}
				\put(46,46){\tiny{$\mathbf{s_1}$}}
				\put(53,46){\tiny{$\mathbf{s_2}$}}
				\put(49,40){\tiny{$\mathbf{w_0}$}}	
		\end{overpic}}
	\end{minipage}
	\begin{minipage}{0.56\textwidth}
		\centering
		{\begin{overpic}[width=0.95\textwidth]{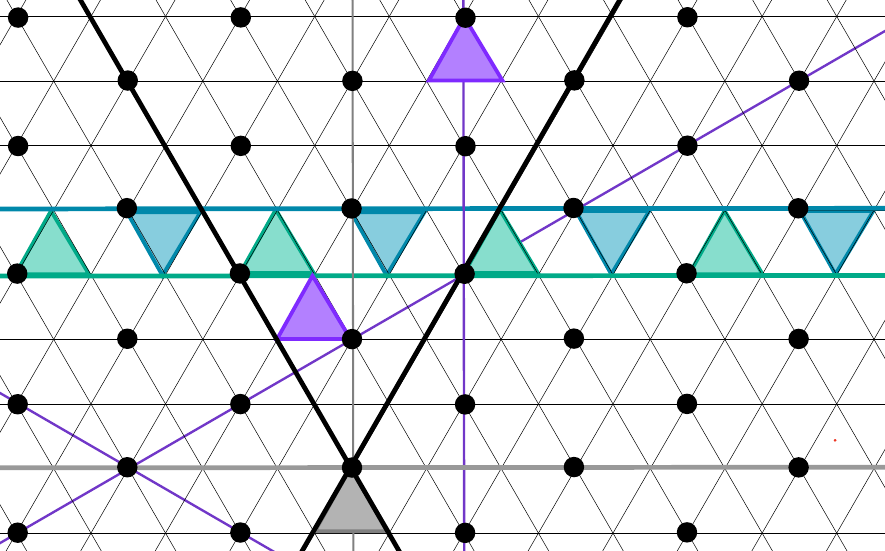}
				\put(34.5,26){\tiny{$\mathbf{x}$}}
				\put(51,54.5){\tiny{$\mathbf{x'}$}}
				\put(41,22){\tiny{$\mathbf{\lambda}$}}
				\put(54,61){\tiny{$\mathbf{\lambda'}$}}
				\put(74,29){\tiny{$\mathbf{\eta_{s_2}}$}}
				\put(83,41){\tiny{$\mathbf{\eta_{s_2 s_1}}$}}
				\put(38.5,13){\tiny{$\mathbf{1}$}}
				\put(34,12){\tiny{$\mathbf{s_1}$}}
				\put(43,12){\tiny{$\mathbf{s_2}$}}
				\put(32,7){\tiny{$\mathbf{s_1s_2}$}}
				\put(42,7){\tiny{$\mathbf{s_2s_1}$}}
				\put(37.5,4){\tiny{$\mathbf{w_0}$}}
				\put(74,10.5){\tiny{$\mathbf{\Fix(w_0)}$}}
				\put(54,17.5){\tiny{$\mathbf{\alpha_1^\vee}$}}
				\put(22,17.5){\tiny{$\mathbf{\alpha_2^\vee}$}}	
		\end{overpic}}
	\end{minipage}
	\caption{\footnotesize{On the left, some conjugacy classes $\conj{t^\lambda w}$ in type $\tilde{A}_2$, where $w$ is a reflection. On the right, the coconjugation set $\coconj{x}{x'}$. See Example \ref{eg:introA2} for details.}}
	\label{fig:introA2}
\end{figure}

Two of the main findings of~\cite{MST4} can now be stated informally as follows:
\begin{itemize}
	\item For any $x = t^\lambda w \in \aW$, the conjugacy class $\xconj$ is obtained by first translating $x$ by all elements of $\ModW(w) \subseteq \Mov(w)$, then conjugating the so-obtained collection $t^{\ModW(w)}x$ by all elements of~$\sW$. 
	\item For any $x = t^\lambda w$ and $x' = t^{\lambda'}w'$ in $\aW$, the coconjugation set $\coconj{x}{x'}$ has a closed-form description involving $\ModW(w')$, and its shape is described by translates of $\Fix(w')$. 
\end{itemize}
We give formal statements of these and other results from~\cite{MST4} in Section~\ref{sec:summary}, and illustrate them via the following examples.

\begin{example}
	\label{eg:introA2} 
	Let $\aW$ be the affine Coxeter group of type $\tilde{A}_2$. Then $\aW = T \rtimes \sW$ where $\sW \cong S_3$ is the Weyl group of type $A_2$, generated by simple reflections $s_1$ and $s_2$, and the group $\aW$ is generated by $\{s_1, s_2\}$ together with the affine simple reflection $s_0$. The action of $\aW$ induces the tessellation of $\R^2$ by equilateral triangles, as depicted in Figure~\ref{fig:introA2}.  There is a natural bijection between the elements of $\aW$ and the tiles in this tessellation,  and we identify each $x$ in $\aW$ with its corresponding triangle.  The coroot lattice $R^\vee$ is the set of heavy dots in this figure. 
	
	On the left of Figure~\ref{fig:introA2}, each set of triangles shaded in the same color is a conjugacy class $\conj{t^\lambda w}$, where $w \in \sW$ is a reflection; that is, $w \in \{ s_1, s_2, w_0\}$, where $w_0 = s_1 s_2 s_1 = s_2 s_1 s_2$ is the longest element of $\sW$. 
	For each reflection $w \in \sW$, the move-set $\Mov(w)$ is the heavy gray line orthogonal to its fixed hyperplane, and the mod-set $\ModW(w)$ is the set of coroot lattice elements on this gray line. In other words, each reflection $w \in \sW$ fills its move-set. The other, colored lines on the left of Figure~\ref{fig:introA2} are move-sets $\Mov(t^\lambda w)$ for certain $\lambda \not \in \ModW(w)$.  Each conjugacy class $\conj{t^\lambda w}$ is thus a triple of ``lines" of triangles (of the same color). The remaining conjugacy classes $[t^\lambda w]$ for $w \in \sW$ a reflection are obtained by reflecting this picture in the vertical gray line $\ModW(w_0)$.

	The right of Figure~\ref{fig:introA2} depicts the coconjugation set $\coconjW{x}{x'}$, where $x = t^\lambda s_1$  and $x' = t^{\lambda'}w_0$. To describe $\coconjW{x}{x'}$, we first determine the $u \in \sW$ such that $us_1 u^{-1} = w_0$ and $\lambda' - u\lambda \in \ModW(w_0)$; this gives $u \in \{ s_2, s_2 s_1 \}$. Then for each such $u$, we translate the horizontal gray line $\Fix(w_0)$ by a particular solution $\eta_u \in R^\vee$ to the equation $\lambda' - u\lambda = (\Id - w_0)\eta$. The elements of $\coconjW{x}{x'}$ are the triangles $t^\mu u$ along these translates, as depicted in teal and aqua on the right of Figure~\ref{fig:introA2}.
	
	\begin{figure}[htb]
	{\begin{overpic}[width=0.58\textwidth]{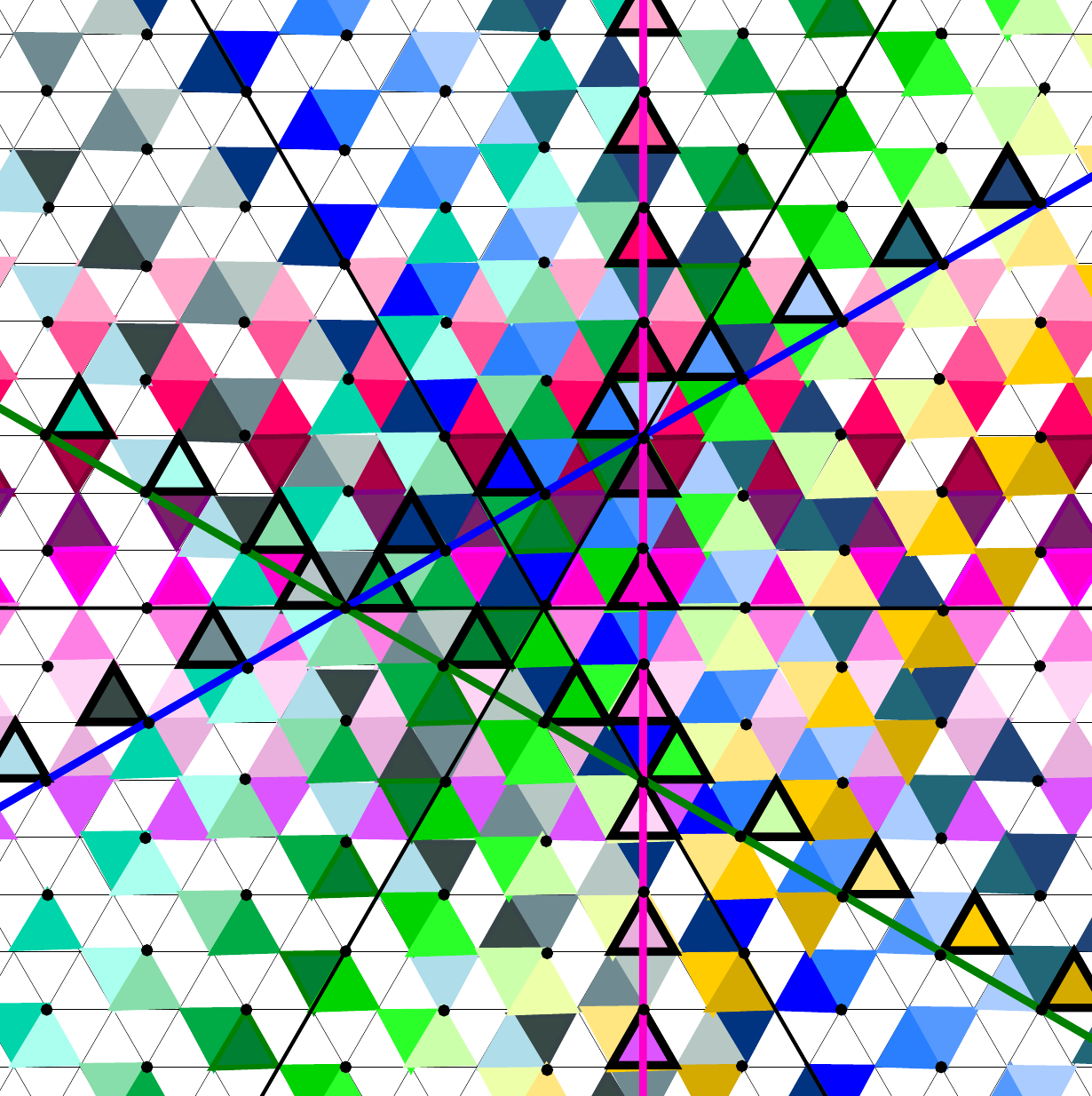}
			\put(48.5,47){\tiny{$\mathbf{1}$}}
			\put(45.5,56){\footnotesize{$\mathbf{x}$}}
			\put(48.5,51){\tiny{$\mathbf{s_0}$}}
			\put(45.5,46){\tiny{$\mathbf{s_1}$}}
			\put(51.5,46){\tiny{$\mathbf{s_2}$}}
			\put(48,40){\tiny{$\mathbf{w_0}$}}	
	\end{overpic}}
	\caption{Coconjugation sets $\coconj{x}{x'}$ in type $\tilde{A}_2$; see Example \ref{eg:introA2} for details.}
	\label{fig:coconjA2chimneys}
\end{figure}
	
	In Figure~\ref{fig:coconjA2chimneys}, which is adapted from~\cite[Figure 1.1]{MST2}, the elements of the conjugacy class $\conj{x}$ are outlined in black, for $x = t^{\lambda}s_1$ as shown in purple on the right of Figure~\ref{fig:introA2}. Then for each of the depicted elements $x' \in \conj{x}$, the coconjugation set $\coconj{x}{x'}$ is the collection of triangles of the same color as $x'$. We see that as $x' = t^{\lambda'}w'$ varies through $\conj{x}$, the elements of $\coconj{x}{x'}$ vary through translates of $\Fix(w')$. In particular,  the centralizer of $x$ is the ``band" of blue triangles running from top left to bottom right which contains $x$ and the identity element~$1$. Note that $x$ can be expressed as $x = s_0 s_1 s_2$, and hence is a Coxeter element of~$\aW$.
\end{example}

\begin{example}\label{eg:introC2} 
Now let $\aW = T \rtimes \sW$ be of type $\tilde{C}_2$, so that $\sW$ is of type $C_2$. Again, $\sW$ is generated by simple reflections $s_1$ and $s_2$, and $\aW$ by $\{ s_0, s_1, s_2\}$. In Figure~\ref{fig:conjC2}, which depicts some of the conjugacy classes $\conj{t^\lambda w}$ for $w \in \sW$ a reflection, we again see ``lines" of conjugates. On the left of this figure, the reflections $s_1$ and $s_2 s_1 s_2$ fill their move-sets, as in type $\tilde{A}_2$. However, on the right, the conjugacy classes leave ``gaps" along the colored lines which are the move-sets. This is due to the fact that the reflections $s_2$ and $s_1 s_2 s_1$ do not fill their move-sets. More precisely, for $w \in \{s_2, s_1 s_2 s_1\}$, the mod-set $\ModW(w)$ is an index 2 submodule of $\Mov(w) \cap R^\vee$.
\end{example}

	\begin{figure}[htb]
		\begin{minipage}{0.4\textwidth}
			\centering
			{\begin{overpic}[width=0.95\textwidth]{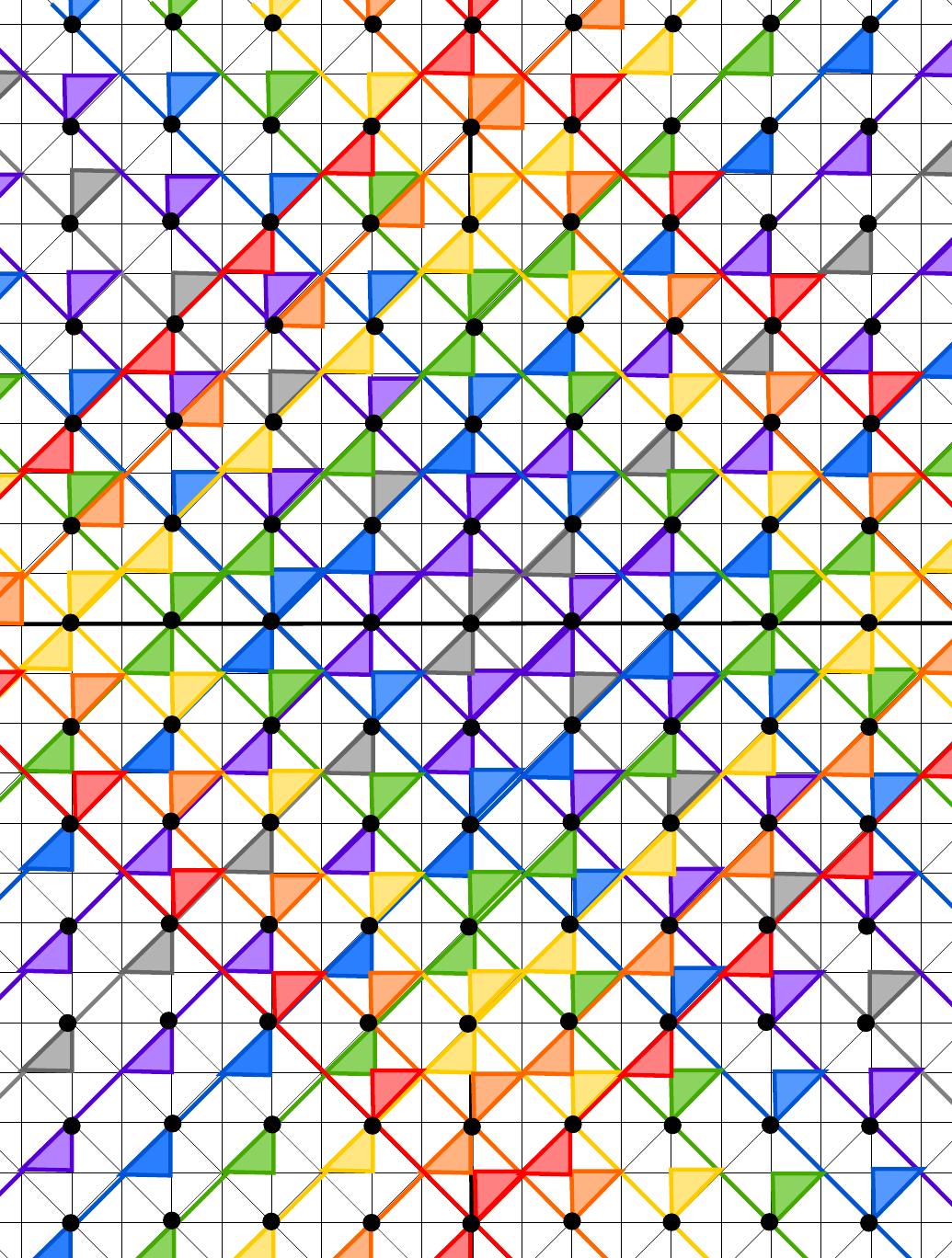}
					\put(39.5,50.8){\tiny{$\mathbf{1}$}}
					\put(42,51.2){\tiny{$\mathbf{s_0}$}}
					\put(36.5,52.7){\tiny{$\mathbf{s_1}$}}
					\put(39.3,48.3){\tiny{$\mathbf{s_2}$}}
					\put(44,40){\tiny{$\mathbf{\alpha_1^\vee}$}}
					\put(32.5,58){\tiny{$\mathbf{\alpha_2^\vee}$}}
			\end{overpic}}
		\end{minipage}
		\begin{minipage}{0.485\textwidth}
			\centering
			{\begin{overpic}[width=0.95\textwidth]{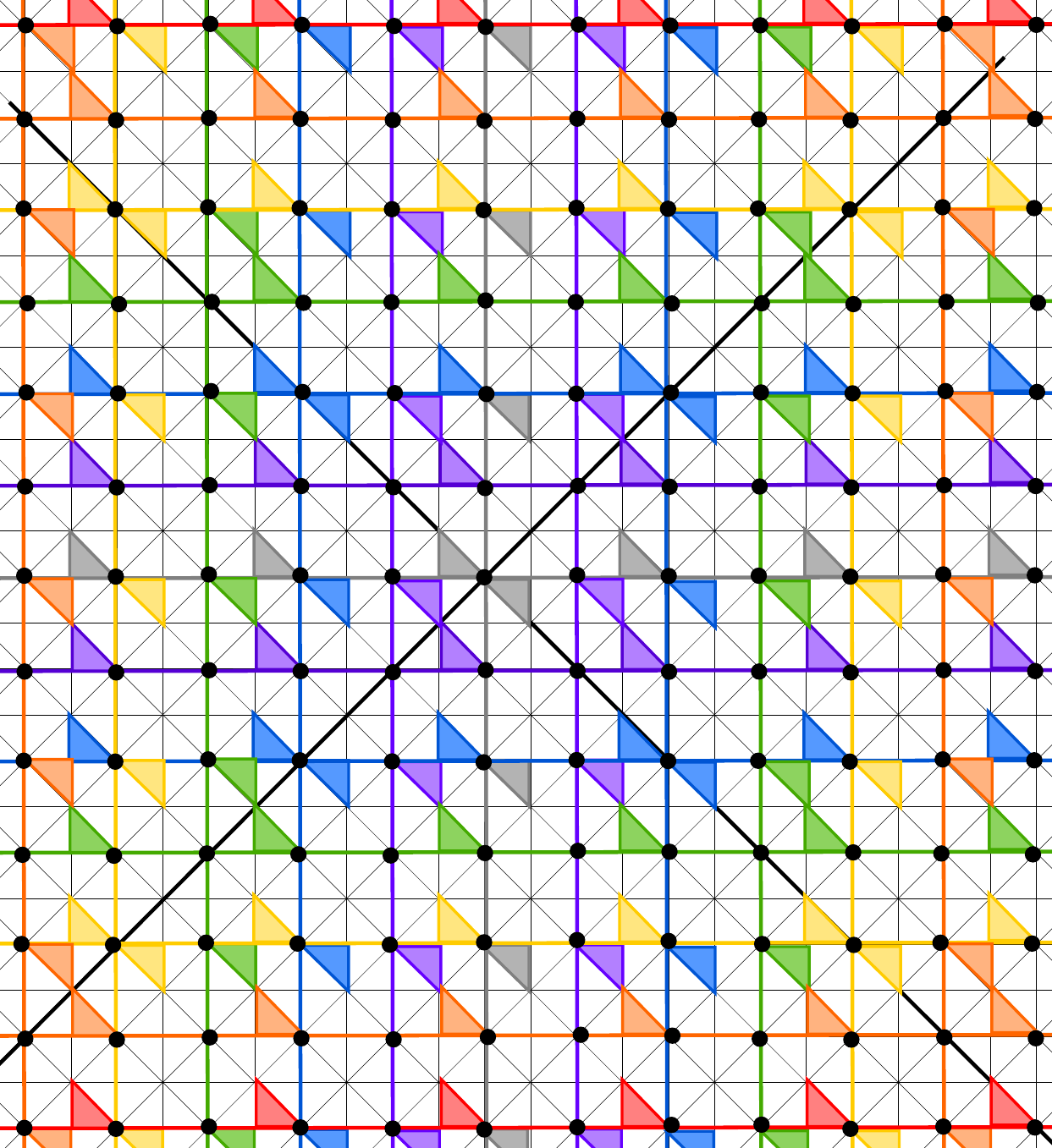}
					\put(44.5,50){\tiny{$\mathbf{1}$}}
					\put(46.5,50.5){\tiny{$\mathbf{s_0}$}}
					\put(41.5,52){\tiny{$\mathbf{s_1}$}}
					\put(44.3,47.5){\tiny{$\mathbf{s_2}$}}
					\put(48,39){\tiny{$\mathbf{\alpha_1^\vee}$}}
					\put(43,58){\tiny{$\mathbf{\alpha_2^\vee}$}}
			\end{overpic}}
		\end{minipage}
		\caption{\footnotesize{The conjugacy classes $\conj{t^\lambda w}$ in type $\tilde{C}_2$ for $w \in \{ s_1, s_2 s_1 s_2\}$ fill their move-sets (on the left), but do not fill their move-sets for $w \in \{ s_2, s_1 s_2 s_1\}$ (on the right); see Example \ref{eg:introC2} for details.}}
		\label{fig:conjC2}
	\end{figure}

\begin{rmk}\label{rmk:components} We call each ``line" of same-color triangles on the left of Figure~\ref{fig:introA2} and in both parts of Figure~\ref{fig:conjC2} a \emph{component} of the corresponding conjugacy class. The ``global" action of $\aW$ on itself by conjugation can then be described as follows: the conjugation action of $\sW$ permutes the components of any conjugacy class, while that of the translation subgroup $T$ induces a transitive action by translations within each component. 

We also observe that linearization induces a natural surjection from the components of any $\conj{t^\lambda w}$ to the components of $\conj{w}$. Although this surjection is sometimes a bijection, as on the left of Figure~\ref{fig:introA2}, in general it is not injective, as seen in Figure~\ref{fig:conjC2}. There is moreover a natural bijection between the components of $\conj{w}$ (the ``lines" of gray triangles in these figures) and the elements of the conjugacy class of $w$ in $\sW$. We prove the statements sketched in this remark for arbitrary $\aW$ in~\cite{MST4}.
\end{rmk}

\begin{rmk}\label{rmk:cyclic}  The components of a conjugacy class and the conjugation action of $\aW$ on itself, as described in the first paragraph of Remark~\ref{rmk:components}, are closely related to the minimal length conjugacy class representatives found in~\cite{HeNie14} and (the affine case of)~\cite{Marquis21}. To explain this, let $\aW = T \rtimes \sW$ be an irreducible affine Coxeter group, so that $\sW$ is generated by the simple reflections $\{ s_1,\dots, s_n\}$, and $\aW$ has Coxeter generating set  $\aS = \{ s_0, s_1, \dots, s_n \}$, where $s_0$ is the affine simple reflection. Then elements $x, x' \in \aW$ are related by a \emph{cyclic shift} if there is a reduced word $s_{i_1} \cdots s_{i_k}$ for $x$, with $s_{i_j} \in \aS$ for $1 \leq j \leq k$, such that $x' = s_{i_2} \cdots s_{i_k}s_{i_1}$ or $x' = s_{i_k}s_{i_1} \cdots s_{i_{k-1}}$. 
The characterization mentioned in the first paragraph of this introduction is equivalent to the statement that any element of a conjugacy class can be transformed into a minimal length element via a sequence of cyclic shifts (see the introduction to~\cite{Marquis21}).

Now each cyclic shift corresponds to conjugation by an element $s_i \in \aS$. If $1 \leq i \leq n$, then conjugation by $s_i$ permutes the set of components of the conjugacy class $\conj{x}$. If $i = 0$, then $s_0 = t^{\theta^\vee} s_\theta$, where $s_{\theta}$ is the reflection corresponding to the highest root $\theta$ and $\theta^\vee$ is the corresponding coroot. Conjugation by $s_\theta$ again permutes the set of components of $\conj{x}$, but conjugation by $t^{\theta^\vee}$ induces a translation within each component. That is, $t^{\theta^\vee}$ induces a shift along each of the ``lines" of triangles in Figures~\ref{fig:introA2} and~\ref{fig:conjC2} above. The effect of iterating certain cyclic shifts on any given $x \in \aW$ is thus to spiral in gradually towards the minimal length elements of its conjugacy class, sometimes by switching components, and sometimes by translating within the current component.
\end{rmk}

\begin{rmk}\label{rmk:cent} For $x \in \aW$, the coconjugation sets $\coconj{x}{x'}$ include as a special case $\Cent(x,x)$, the centralizer of $x$. If $x$ is a Coxeter element in an arbitrary irreducible Coxeter group (of finite rank), then the centralizer of $x$ is just the cyclic subgroup $\langle x \rangle$; this was established by Carter~\cite{Carter72} for finite Weyl groups, by Blokhina~\cite{Blokhina} for infinite simply-laced or affine Coxeter groups, and recently by Hollenbach and Wegener~\cite{HollenbachWegener} in full generality. For instance, as noted in the last paragraph of Example~\ref{eg:introA2}, the element $x = t^{\alpha_1^\vee + \alpha_2^\vee}s_1$ in Figure~\ref{fig:coconjA2chimneys} is a Coxeter element in $\aW$ of type $\tilde{A}_2$. Hence the elements of $\coconj{x}{x} = \langle x \rangle$ are certain alcoves lying along certain translates of the hyperplane $\Fix(s_1)$. 
\end{rmk}

\begin{rmk}\label{rmk:Marquis} Let $x$ be an arbitrary infinite-order element of an irreducible affine Coxeter group $\aW$. Marquis~\cite[Theorem D]{Marquis23} describes the algebraic structure of the conjugacy class and centralizer of $x$ in terms of an associated point $\eta = \eta_x$ in the visual boundary $\partial \aW$ of~$\aW$; for~$\aW$ of rank $n+1$, this boundary can be identified with the sphere $\mathbb{S}^{n-1}$. The point $\eta = \eta_x \in \partial \aW$ is defined to be the endpoint of some (hence any) \emph{axis} for $x$, where an axis is a line in $\R^n$ on which~$x$ acts by translations. Marquis then associates several groups and a \emph{transversal complex} $\Sigma^\eta$ to the point~$\eta \in \partial \aW$, and uses these to give algebraic descriptions of the conjugacy class~$\conj{x}$ and the centralizer $\Cent(x,x)$.  

For example, for the glide-reflection $x = t^{\alpha_1^\vee + \alpha_2^\vee}s_1$ depicted in Figure~\ref{fig:coconjA2chimneys}, the point $\eta_x \in \partial \aW$ is the top left endpoint of the line which runs halfway between $\Fix(s_1)$ and $(\alpha_1^\vee + \alpha_2^\vee) + \Fix(s_1)$ (compare~\cite[Figure 1]{Marquis23}). The associated group $\aW^\eta$ (in Marquis' notation) is the reflection subgroup of $\aW$ of type $\tilde{A}_1$ generated by $s_1$ and $s_0 s_2 s_0$, and $\Sigma^\eta$ is the corresponding Coxeter complex. We observe that, in this example, the mod-set of the spherical part of $x$, namely $\Z \alpha_1^\vee$, can then be identified with the translation subgroup of $\aW^\eta$; it would be interesting to relate the mod-set to the group $\aW^\eta$ and its action upon $\Sigma^\eta$ in general. The centralizer of $x$ in Figure~\ref{fig:coconjA2chimneys} is then described in terms of the elements of $\aW$ which fix $\eta$ and other data (see part~(6) of~\cite[Theorem D]{Marquis23}).
\end{rmk}

\begin{rmk}\label{rmk:algorithm} Our results in~\cite{MST4} include an algorithm to solve the conjugacy problem and compute coconjugation sets in all split crystallographic groups, and hence in all affine Coxeter groups. As discussed in~\cite[Section 4]{MST4}, we have not yet implemented this algorithm nor investigated its complexity, and we expect both of these tasks to be substantial. 

We note that Krammer~\cite[Section 4.2]{Krammer} gives an algorithm for the conjugacy problem in affine Coxeter groups which uses their semidirect product structure and has linear runtime. It would be desirable to further compare his algorithm to ours. 
\end{rmk}

%%%%%%%%%%%%%%%%%%%%%%%%%%%%%%%%%%%
\subsection{Mod-sets and move-sets}

We now highlight the results of this paper in greater detail. 
 Our first main theorem establishes a close relationship between the $\Z$-module $\ModW(w) = (\Id - w)R^\vee$, the subspace $\Mov(w) = \Range(\Id - w)$, and reflection length, for arbitrary $w \in \sW$. Recall that the \emph{reflection length} of $w \in \sW$, which we denote by $\ell_\cR(w)$, is the minimal integer~$k$ such that $w$ is a product of $k$ reflections in $\sW$. The following result appears as Corollary~\ref{cor:allWeyl}; see Section \ref{sec:modMove} for more details and precise definitions.

\begin{thm}\label{thm:modMoveW} Let $\aW = \TW \rtimes \sW$ be an affine Coxeter group. Then for all $w \in \sW$,
	\[
	\rank_\Z(\ModW(w)) = \dim_\R(\Mov(w)) = \ell_\cR(w).
	\]
\end{thm}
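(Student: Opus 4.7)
The plan is to prove the two equalities separately, since the second one, $\dim_\R(\Mov(w)) = \ell_\cR(w)$, is classical in the theory of finite reflection groups. For the inequality $\ell_\cR(w) \geq \dim_\R \Mov(w)$, I would invoke the submultiplicative behavior of move-sets: if $w = r_1 \cdots r_k$ is any reflection factorization, then $\Mov(w) \subseteq \Mov(r_1) + \cdots + \Mov(r_k)$, and each $\Mov(r_i)$ has dimension $1$. The reverse inequality $\ell_\cR(w) \leq \dim_\R \Mov(w) = \codim_\R \Fix(w)$ is a theorem of Carter~\cite{Carter72}, which I would cite directly (it also appears in~\cite{GeckPfeifferBook}).

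For the first equality $\rank_\Z(\ModW(w)) = \dim_\R(\Mov(w))$, I would use a short linear-algebra argument. First observe that $\ModW(w) = (w - \Id)R^\vee$ sits inside the finitely generated free abelian group $R^\vee$, so $\ModW(w)$ is itself a finitely generated, torsion-free $\Z$-module, hence free of some finite rank. For any such $\Z$-submodule $M$ of an ambient real vector space $V$, a basic fact is that $\rank_\Z M$ equals $\dim_\R(\R \cdot M)$, where $\R \cdot M$ denotes the $\R$-linear span of $M$ in $V$. Thus it suffices to compute the $\R$-span of $\ModW(w)$ inside the reflection representation $V$ of $\sW$.

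Since $\aW$ is irreducible, $\sW$ acts essentially on $V$, and the coroot lattice $R^\vee$ $\R$-spans $V$. Because the operator $w - \Id \colon V \to V$ commutes with $\R$-scalar extension, we obtain
\[
\R \cdot \ModW(w) \;=\; \R \cdot (w-\Id)(R^\vee) \;=\; (w-\Id)(\R \cdot R^\vee) \;=\; (w-\Id)(V) \;=\; \Mov(w).
\]
Taking $\R$-dimensions then gives $\rank_\Z \ModW(w) = \dim_\R \Mov(w)$, and chaining with the classical equality completes the proof.

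The main obstacle is essentially bookkeeping rather than a genuine difficulty: one must simply verify that $R^\vee$ spans all of $V$, which is where the irreducibility hypothesis on $\aW$ enters, and one must cite the right classical results for $\dim_\R \Mov(w) = \ell_\cR(w)$. No type-by-type analysis is needed here; the delicate type-by-type structure of $\ModW(w)$ as a submodule of $\Mov(w) \cap R^\vee$ (rather than merely its rank) is deferred to the later sections of the paper.
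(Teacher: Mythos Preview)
Your proof is correct and takes a genuinely different route from the paper's for the first equality. The paper (Proposition~\ref{prop:rankMod}) adapts Carter's inductive argument to work over~$\Z$: writing $w = s_{\beta_1}\cdots s_{\beta_k}$ with $k = \ell_\cR(w)$, it first bounds $\ModW(w) \subseteq \Span_\Z\{\beta_1^\vee,\dots,\beta_k^\vee\}$, and then uses the hyperplane-intersection lemmas of Section~\ref{sec:hyperplanes} to construct, one root at a time, explicit nonzero integer multiples of each $\beta_i^\vee$ inside $\ModW(w)$. Your argument bypasses all of this: since $R^\vee$ spans $V$ over $\R$ and $w - \Id$ is $\R$-linear, the $\R$-span of $(w-\Id)R^\vee$ is $(w-\Id)V = \Mov(w)$; and since $\ModW(w)$ sits inside the lattice $R^\vee$, its $\Z$-rank equals the dimension of its $\R$-span. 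This is shorter, and in fact it generalizes further than the paper's Theorem~\ref{thm:modMove}: the argument uses only that the translation lattice is full-rank and preserved by the linear part, so it applies to \emph{every} split crystallographic group and answers the question raised in Remark~\ref{rmk:cryst} affirmatively. What the paper's approach buys is constructiveness (it exhibits actual elements of $\ModW(w)$), and the lattice lemmas of Section~\ref{sec:hyperplanes} may have independent use.

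One small caution: the ``basic fact'' that $\rank_\Z M = \dim_\R(\R \cdot M)$ fails for an arbitrary free $\Z$-submodule of a real vector space (e.g.\ $\Z + \Z\sqrt{2} \subset \R$ has $\Z$-rank $2$ but $1$-dimensional $\R$-span). It holds here precisely because $M = \ModW(w)$ sits inside the lattice $R^\vee$, whose $\Z$-basis $\Delta^\vee$ is also an $\R$-basis for $V$, so that $\Z$-linear independence in $R^\vee$ forces $\R$-linear independence. You already note $M \subseteq R^\vee$, so just tighten the statement of the fact accordingly.
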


\noindent The first equality in this theorem tells us that the move-set is the ``enveloping subspace" of the mod-set. That is, the geometry of conjugacy classes in~$\aW$ is coarsely described by the linear subspaces comprising the move-sets of the elements of $\sW$ (as seen in Figures~\ref{fig:introA2} and~\ref{fig:conjC2}). 

\begin{corollary}\label{cor:modMoveW} 
Let $\aW = \TW \rtimes \sW$ be an affine Coxeter group. Then for all $w \in \sW$,
	\begin{enumerate}
		\item\label{item:indexW} $\ModW(w)$ is a finite-index submodule of $\Mov(w) \cap R^\vee$; and
		\item\label{item:equalW} $\ModW(w) = \Mov(w) \cap R^\vee$ if and only if $R^\vee / \ModW(w)$ is torsion-free.
	\end{enumerate}
\end{corollary}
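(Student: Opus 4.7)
My plan is to deduce both parts of Corollary \ref{cor:modMoveW} as direct consequences of the rank equality in Theorem \ref{thm:modMoveW}, combined with elementary properties of free $\Z$-submodules of the coroot lattice.

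For part (1), the first step is to show that $\Mov(w) \cap R^\vee$ is a lattice of full rank inside the real vector space $\Mov(w)$, i.e.\ that it has $\Z$-rank equal to $\dim_\R(\Mov(w))$. This holds because $\Mov(w) = (w-\Id)\R^n$ is spanned over $\R$ by the vectors $(w-\Id)\alpha^\vee$ as $\alpha^\vee$ runs over the simple coroots, and each such vector already lies in $\ModW(w) \subseteq \Mov(w) \cap R^\vee$; discreteness of $R^\vee$ then forces $\Mov(w) \cap R^\vee$ to be a free $\Z$-module whose rank is at most $\dim_\R(\Mov(w))$, with equality guaranteed by the spanning statement just made. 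Theorem \ref{thm:modMoveW} tells us that $\rank_\Z(\ModW(w))$ also equals $\dim_\R(\Mov(w))$, so both free $\Z$-modules have the same rank. Since $\ModW(w)$ is contained in $\Mov(w)\cap R^\vee$ by \cite{MST4} as recalled earlier in this introduction, the inclusion must therefore have finite index.

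For part (2), I will analyze the torsion of $R^\vee/\ModW(w)$ using the short exact sequence of $\Z$-modules
\[
0 \longrightarrow (\Mov(w) \cap R^\vee)/\ModW(w) \longrightarrow R^\vee/\ModW(w) \longrightarrow R^\vee/(\Mov(w) \cap R^\vee) \longrightarrow 0.
\]
The rightmost quotient embeds into $\R^n/\Mov(w)$, which is a real vector space and hence torsion-free, so the torsion subgroup of $R^\vee/\ModW(w)$ is precisely $(\Mov(w) \cap R^\vee)/\ModW(w)$. By part (1) this torsion subgroup is finite, and it vanishes if and only if $\ModW(w) = \Mov(w) \cap R^\vee$, giving the desired equivalence.

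I do not foresee a serious obstacle here: the corollary is essentially a repackaging of Theorem \ref{thm:modMoveW}. The only mildly subtle point is the spanning argument used to identify the $\Z$-rank of $\Mov(w) \cap R^\vee$ with $\dim_\R(\Mov(w))$, and this is immediate from the definitions.
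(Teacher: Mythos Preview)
Your proof is correct and follows essentially the same route as the paper's: both parts are deduced from the rank equality of Theorem~\ref{thm:modMoveW} together with the observation that $R^\vee/(\Mov(w)\cap R^\vee)$ is torsion-free because $\Mov(w)$ is a linear subspace. The only cosmetic difference is that for part~(1) the paper needs only the inequality $\rank_\Z(\Mov(w)\cap R^\vee)\le\dim_\R\Mov(w)$ (from discreteness) combined with the containment $\ModW(w)\subseteq\Mov(w)\cap R^\vee$, whereas you additionally verify equality of ranks via the spanning set $\{(w-\Id)\alpha_i^\vee\}$; this extra step is harmless (indeed, it incidentally reproves the first equality of Theorem~\ref{thm:modMoveW}), and your short exact sequence formulation of part~(2) is equivalent to the paper's direct argument.
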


\noindent Part~\eqref{item:indexW} here says that any ``gaps" between elements in the same component of a conjugacy class are bounded and constant within conjugacy classes (see the right of Figure~\ref{fig:conjC2}). Part~\eqref{item:equalW} then gives a criterion for an element to fill its move-set. See Section \ref{sec:modsetintro} for additional discussion of Corollary \ref{cor:modMoveW} and its consequences.

In fact, we prove the first equality in Theorem~\ref{thm:modMoveW} as well as Corollary~\ref{cor:modMoveW} for all split crystallographic groups which are contained in affine Coxeter groups. See Theorem~\ref{thm:modMove} for the precise statement, and Remark~\ref{rmk:cryst} below regarding the relationship between crystallographic groups and affine Coxeter groups.

\begin{figure}[htb]
	\begin{minipage}{0.5\textwidth}
		\begin{example}\label{eg:conjA2elliptic} 
			Suppose $\aW = T \rtimes \sW$ is of type~$\tilde{A}_2$.  The rotations in $\sW$ are the elements $w \in \{ s_1 s_2, s_2 s_1\}$, and for these $w$, we have $\Mov(w) = \R^2$, therefore $\Mov(w) \cap R^\vee = R^\vee$. On the other hand, we have $\ModW(w) = \{ c_1 \alpha_1^\vee + c_2 \alpha_2^\vee \mid c_1 + c_2 \equiv 0 \mod 3 \}$, where $c_1, c_2 \in \Z$ and $\{ \alpha_1^\vee, \alpha_2^\vee \}$ are the simple coroots. Hence $\ModW(w)$ has index $3$ in $\Mov(w) \cap R^\vee$. In particular, $w$ does not fill its move-set.
			
			\hspace{3mm} In Figure~\ref{fig:conjA2elliptic}, the set $\ModW(w)$ for $w \in \sW$ a rotation is shown by the gray dots, and the~$2$ other cosets of $\ModW(w)$ in $R^\vee$ are depicted by light and dark purple dots, respectively. Note that $\ModW(w)$ is $\sW$-invariant, while for any $\lambda \not \in \ModW(w)$, the $\sW$-orbit of $\lambda + \ModW(w)$ has $2$ elements, corresponding to the $2$ shades of purple dots in this figure. The conjugacy class $\conj{t^\lambda w}$ for any $\lambda \in \ModW(w)$ is the set of gray triangles, while the $2$ conjugacy classes $\conj{t^\lambda w}$ for $\lambda \not \in \ModW(w)$ are shown as light and dark pink triangles, respectively.
		\end{example}
	\end{minipage}
	\begin{minipage}{0.48\textwidth}
		\begin{center}
			\vspace{-6mm}
			{\begin{overpic}[width=0.95\textwidth]{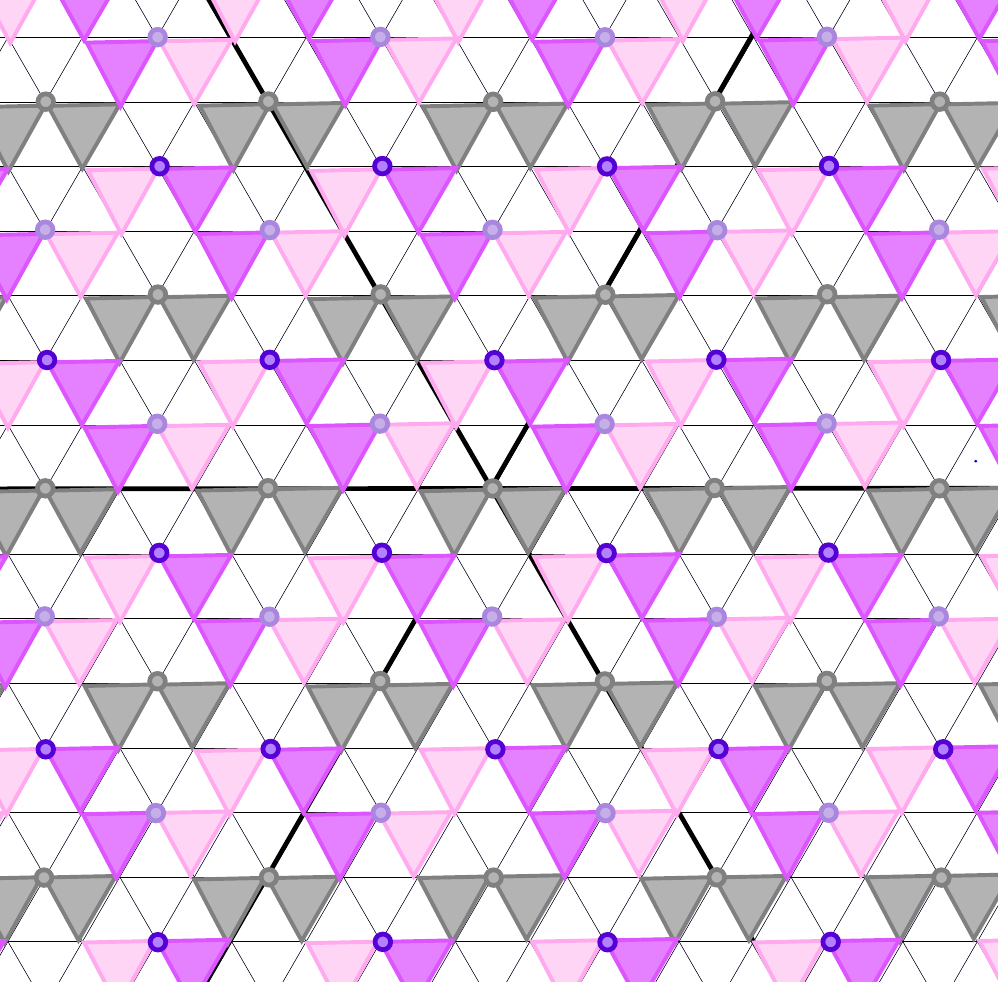}
					\put(48,52){\tiny{$\mathbf{1}$}}
					\put(48,57.5){\tiny{$\mathbf{s_0}$}}
					\put(44,51.5){\tiny{$\mathbf{s_1}$}}
					\put(51.4,51.5){\tiny{$\mathbf{s_2}$}}
					\put(41,46){\tiny{$\mathbf{s_1s_2}$}}
					\put(50,46){\tiny{$\mathbf{s_2s_1}$}}
					\put(59,57){\tiny{$\mathbf{\alpha_1^\vee}$}}
					\put(36,57){\tiny{$\mathbf{\alpha_2^\vee}$}}	
			\end{overpic}}
		\end{center}
		\caption{\footnotesize{Conjugacy classes $\conj{t^\lambda w}$ in type $\tilde{A}_2$, where $w$ is a rotation; see Example \ref{eg:conjA2elliptic} for details.}}
		\label{fig:conjA2elliptic}
	\end{minipage}
	
\end{figure} 

\vspace{-2mm}

\begin{rmk} The tesselation of Euclidean space induced by the action of $\aW$ is sometimes known as the Coxeter complex of $\aW$. Now any diagram automorphism $\tau$ of the affine Coxeter group $\aW$ induces a (non-type-preserving) automorphism of the Coxeter complex. Hence if $x, y \in \aW$ and $y = \tau(x)$, we can obtain the conjugacy class of $y$ from that of $x$ by applying the corresponding automorphism of the Coxeter complex to $\conj{x}$. 

For example, in type $\tilde{A}_2$, any permutation of the affine Coxeter generating set $\aS = \{ s_0, s_1, s_2 \}$ induces a diagram automorphism of $\aW$. Swapping $s_1$ and $s_2$ induces a reflection in the vertical gray line $\Mov(w_0)$ on the left of  Figure~\ref{fig:introA2}; as already mentioned in Example~\ref{eg:introA2}, this reflection yields the remaining conjugacy classes of the form $\conj{t^\lambda w}$ where $w$ is a reflection in $\sW$. A cyclic permutation of the elements of $\aS$ induces a diagram automorphism which cyclically permutes the $3$ conjugacy classes depicted in Figure~\ref{fig:conjA2elliptic}.

In type $\tilde{C}_2$, the only nontrivial diagram automorphism swaps $s_0$ and $s_2$, and this induces a reflection of the Coxeter complex (in a line from top left to bottom right which is not depicted in Figure~\ref{fig:conjC2}). On the left of Figure~\ref{fig:conjC2}, this reflection preserves all conjugacy classes, while on the right, it yields the remaining conjugacy classes of the form $\conj{t^\lambda w}$ where $w \in \{ s_2, s_1 s_2 s_1\}$.
\end{rmk}

Our proof of Theorem~\ref{thm:modMoveW} relies upon a result of Carter~\cite[Lemma 3]{Carter72} which characterizes minimal length reflection presentations in the finite Weyl group $\sW$. We recall this statement as Theorem~\ref{thm:Carter} below. Carter's proof of this result yields that the dimension of $\Mov(w)$ equals the reflection length of $w \in \sW$ (see Corollary~\ref{cor:dimMov}). With some care, we adapt Carter's argument to work over~$\Z$, and hence determine that $\ModW(w)$ has rank equal to $\ell_R(w)$. We then extend the first equality in Theorem~\ref{thm:modMoveW} to all split crystallographic subgroups of $\aW$, and prove Corollary~\ref{cor:modMoveW} in this level of generality. We note that our proof of Theorem~\ref{thm:modMoveW} is type- and rank-free, essentially because Carter's proof of~Theorem~\ref{thm:Carter} is.

\begin{rmk}
After publishing the first version of this work on the arXiv, Dermenjian and Evetts directed us to their recent preprint \cite{DermenjianEvetts}, in which they study conjugacy class growth for finitely generated virtually abelian groups. Lemma 4.5 in \cite{DermenjianEvetts} provides a characterization of conjugacy classes similar to \Cref{thm:ConjClass} below using an algebraic approach involving commutator subgroups in place of Mod-sets.  There is also a connection between the proof of Theorem 4.11 in \cite{DermenjianEvetts} and our proof of Theorem \ref{thm:modMoveW}. 
\end{rmk}

\begin{rmk}\label{rmk:cryst} It seems to be folklore that every (split) crystallographic group is contained in an affine Coxeter group in dimension $n = 2,3$, but that this no longer holds for~$n \geq 4$. We sketch how to obtain these facts from the literature in Appendix~\ref{app:cryst}. We do not know if the conclusions of Theorem~\ref{thm:modMove} are true for split crystallographic groups which are not contained in affine Coxeter groups.
\end{rmk}

%%%%%%%%%%%%%%%%%%%%%%%%%%%%%%%%%%%

\subsection{Structure of mod-sets}\label{sec:modsetintro}

To complete the description of conjugacy classes in all affine Coxeter groups, we provide explicit descriptions of the mod-sets $\ModW(w) = (\Id - w)R^\vee$ for all $w \in \sW$. The examples given above already show the delicate behavior of these $\Z$-modules, and our detailed results are thus necessarily type-by-type. The principal work of this paper is in establishing the theorems below.

Throughout, we follow the conventions of Bourbaki~\cite{Bourbaki4-6} (see Table~\ref{table:dynkin}). Suppose $\aW = T \rtimes \sW$  is of rank $n$. Then in all types, $\sW$ is generated (as a group) by the simple reflections $\{ s_1, \dots, s_n\}$, and the corresponding coroot lattice $R^\vee$ is generated (as a $\Z$-module) by the simple coroots $\{\alpha_1^\vee, \dots, \alpha_n^\vee \}$. Define $[n] = \{1,\dots,n\}$. To simplify notation, in this section we write $\Mod(w)$ for $\ModW(w)$.

For all $u,w \in \sW$, we have $ \Mod(uwu^{-1}) = u \Mod(w)$, by~\cite[Lemma 2.1]{MST4}.  Hence it suffices to consider the mod-sets for a single element in each conjugacy class of the finite group $\sW$. We do so using the minimal length conjugacy class representatives determined by Geck and Pfeiffer in~\cite[Chapter 3]{GeckPfeifferBook}. In the classical types, we recall and give a ``visual" rephrasing of the construction of these representatives. In types $B$ and $C$, this involves the normal form forests of du Cloux (see~\cite[Section 3.4]{BjoernerBrenti}).

We first consider $\sW$ the finite Weyl group of type $A_n$ for $n \geq 1$.  The conjugacy classes of $\sW$ are parameterized by weakly decreasing compositions (that is, partitions) $\beta = (\beta_1,\dots,\beta_p)$ of $n+1$; see~\cite[Proposition 3.4.1]{GeckPfeifferBook}, for instance. Each such $\beta$ determines a subset $J_\beta \subseteq [n]$, such that $n \in J_\beta$ if and only if $\beta_p \geq 2$. The corresponding representative $w_\beta$ is the product of the simple reflections $\{ s_j \mid j \in J_\beta \}$ in increasing order. The set $I_\beta$ records the partial sums of the parts of $\beta$, and $I_\beta - 1$ subtracts 1 from each partial sum. The following result appears as Theorem~\ref{thm:An}; see Section \ref{sec:TypeA} for more details and precise definitions.

\begin{thm}
	\label{thm:introAn}
	Suppose $\sW$ is of type $A_n$ with $n \geq 1$.  Let $\beta = (\beta_1, \dots, \beta_p)$ be a partition of $n+1$ with corresponding conjugacy class representative $w_\beta \in \sW$.  
	\begin{enumerate}
		\item The module $\Mod(w_\beta) = (\Id-w_\beta)R^\vee$ equals
		\[  \left\{\sum_{i=1}^n c_i \alpha_i^\vee \ \middle| \ c_i \in \Z, \quad c_i=0\ \text{for}\ i \in [n] \setminus J_\beta, \quad  \sum_{i=1}^n c_i \equiv 0  \; \operatorname{mod}\left( \gcd(\beta_k) \right) \right\}. \]
		\item If $\beta_p = 1$ then the module $\Mod(w_\beta) = (\Id-w_\beta)R^\vee$ has a $\Z$-basis given by 
\[ \{ \alpha_j^\vee \mid j \in J_\beta \}, \]
 and if $\beta_p \geq 2$ then the module $\Mod(w_\beta) = (\Id-w_\beta)R^\vee$ has a $\Z$-basis given by 
\[ \left\{ \alpha_i^\vee - \alpha_{i+1}^\vee \ \middle|\ i \in J_\beta \backslash (I_\beta - 1) \right\} \cup \left\{ \alpha_i^\vee - \alpha_{i+2}^\vee \ \middle| \ i \in (I_\beta - 1) \backslash \{ n\} \right\} \cup \left\{ \gcd(\beta_k)\alpha_n^\vee \right\}.\]
		\item  If $\beta = (1,\dots, 1)$, so that $w_\beta$ is trivial, then $(\Id - w_\beta)$ has Smith normal form $\diag(0^n)$. For $w_\beta$ nontrivial and any $w \in [w_\beta]$, the Smith normal form of $(\Id-w)$ equals \[ S_\beta = \diag(1^{n-p}, \gcd(\beta_k), 0^{p-1}).\]
	\item\label{item:An_quotient} For any partition $\beta$ and any $w \in [w_\beta]$, we have \[ R^\vee / \Mod(w) \cong  \left( \Z/ \gcd(\beta_k) \Z \right) \oplus \Z^{p-1}.\]
	\end{enumerate}
\end{thm}

From part \eqref{item:equalW} of Corollary~\ref{cor:modMoveW} and part~\eqref{item:An_quotient} of Theorem~\ref{thm:introAn}, we characterize those elements of $\sW$ of type $A_n$ which fill their move-sets, as follows.

\begin{corollary}\label{cor:fillAn} Suppose $\sW$ is of type $A_n$ with $n \geq 1$. Let $w \in \sW$, and let the conjugacy class of $w$ in $\sW$ be indexed by  the partition $\beta = (\beta_1,\dots,\beta_p)$. The following are equivalent:
	\begin{enumerate}
		\item $w$ fills its move-set; that is, $\Mod(w) = \Mov(w) \cap R^\vee$; and
		\item $\gcd(\beta_k) = 1$. 
	\end{enumerate}
	In particular, if $\beta_p = 1$, equivalently the conjugacy class of $w$ in $\sW$ is represented by $w_\beta$ contained in the type $A_{n-1}$ subsystem generated by $\{s_1,\dots,s_{n-1}\}$, then $w$ fills its move-set.
\end{corollary}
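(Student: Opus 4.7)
The plan is to derive the corollary directly by combining part~(2) of Corollary~\ref{cor:modMoveW} with part~(4) of Theorem~\ref{thm:introAn}. The only genuine step beyond quoting these results is a small conjugation-invariance reduction that lets us replace an arbitrary $w$ in the given conjugacy class by the chosen representative $w_\beta$.

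First I would observe that $w$ fills its move-set if and only if $w_\beta$ does. Indeed, from the formula $\Mod(uwu^{-1}) = u\Mod(w)$ recalled at the start of Section~\ref{sec:modsetintro}, and the analogous equality $\Mov(uwu^{-1}) = u\Mov(w)$ for move-sets, the condition $\Mod(w) = \Mov(w) \cap R^\vee$ is preserved under $\sW$-conjugation because each $u \in \sW$ is a $\Z$-module automorphism of $R^\vee$. In particular, the isomorphism class of the quotient $R^\vee/\Mod(w)$ depends only on the conjugacy class of $w$, so I may assume $w = w_\beta$.

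Next I would invoke part~(2) of Corollary~\ref{cor:modMoveW} to rephrase condition (1) as the torsion-freeness of $R^\vee/\Mod(w_\beta)$. By part~(4) of Theorem~\ref{thm:introAn}, this quotient is isomorphic to $(\Z/\gcd(\beta_k)\Z) \oplus \Z^{p-1}$, whose torsion subgroup is exactly $\Z/\gcd(\beta_k)\Z$. This vanishes if and only if $\gcd(\beta_k) = 1$, which yields the equivalence of (1) and (2).

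Finally, for the ``in particular'' clause, two short checks remain. If $\beta_p = 1$ then $\gcd(\beta_k)$ divides $\beta_p = 1$, so condition (2) holds automatically. For the parenthetical equivalence, I would recall from the description of $w_\beta$ given just above Theorem~\ref{thm:introAn} that $n \in J_\beta$ if and only if $\beta_p \geq 2$; since $w_\beta$ is the increasing product of the $s_j$ with $j \in J_\beta$, it follows that $\beta_p = 1$ holds precisely when $w_\beta$ is a word in $\{s_1,\dots,s_{n-1}\}$, that is, when $w_\beta$ lies in the standard parabolic of type $A_{n-1}$ generated by these simple reflections. I do not anticipate any real obstacle, since all of the heavy lifting is already contained in Theorem~\ref{thm:introAn} and Corollary~\ref{cor:modMoveW}; the only point needing a moment's care is the conjugation-invariance reduction, which however is immediate.
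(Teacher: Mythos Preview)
Your proposal is correct and matches the paper's own argument: the paper states the corollary as following directly from part~\eqref{item:equalW} of Corollary~\ref{cor:modMoveW} together with part~\eqref{item:An_quotient} of Theorem~\ref{thm:introAn}, exactly as you do. Your added conjugation-invariance reduction is a reasonable clarification that the paper leaves implicit.
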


\begin{example} For $\sW$ of type $A_2$, the conjugacy class $\{ s_1, s_2, s_1s_2s_1 \}$ of reflections corresponds to the partition $\beta = (2,1)$, and is represented by $w_\beta = s_1$. As seen in Example~\ref{eg:introA2}, we have $\Mod(s_1) = \Z \alpha_1^\vee$. The conjugacy class $\{ s_1 s_2, s_2 s_1 \}$ of rotations corresponds to the partition $\beta = (3)$, and is represented by $w_\beta = s_1 s_2$. As seen in Example~\ref{eg:conjA2elliptic}, we have $\Mod(s_1s_2) = \{ c_1 \alpha_1^\vee + c_2 \alpha_2^\vee \mid c_1, c_2 \in \Z, \, c_1 + c_2 \equiv 0 \mod 3 \}$. Our $\Z$-basis for $\Mod(s_1 s_2)$ is given by $\{ \alpha_1^\vee - \alpha_2^\vee , 3 \alpha_2^\vee \}$.
\end{example}

We next consider $\sW$ of type $C_n$, for $n \geq 2$. As will be seen later in this introduction, the statements in type $B_n$ are much more delicate than in type $C_n$, which is why we present type~$C_n$ first. Following \cite[Proposition 3.4.7]{GeckPfeifferBook}, the conjugacy classes of $\sW$ are parameterized by ordered pairs of compositions~$(\beta, \gamma)$ such that $\beta$ is weakly decreasing, $\gamma$ is weakly increasing, and $|\beta| + |\gamma| = n$. Write $|\beta| = m$. Then each such pair determines subsets $J_\beta \subseteq [m-1]$ and $ I_\gamma, J_\gamma \subseteq [n]$, such that $I_\gamma = \emptyset$ if and only if $|\gamma| = 0$. The corresponding representative is given by $w_{\beta,\gamma} = w_\beta w_\gamma$, where the element $w_\beta$ is contained in the type $A_{m-1}$ subsystem of $\sW$ generated by $\{ s_1, \dots, s_{m-1} \}$, according to our conventions (see Table \ref{table:dynkin}), and is constructed exactly as sketched in the paragraph above Theorem~\ref{thm:introAn}. The element $w_\gamma$ is contained in the type $C_{n-m}$ subsystem generated by $\{ s_{m+1}, \dots, s_n \}$ and is constructed from~$\gamma$ by a more complicated process, so that $w_\gamma$ is nontrivial if and only if $|\gamma| \geq 1$; see Proposition \ref{prop:wbgC}. The following result appears as Theorem~\ref{thm:Cn}; see Section \ref{sec:TypeC} for more details and precise definitions. 

\begin{thm}\label{thm:introCn}
	Suppose $\sW$ is of type $C_n$ with $n \geq 2$.  Let $(\beta,\gamma)$ be a pair of compositions such that $\beta = (\beta_1,\dots,\beta_p)$ is weakly decreasing, $\gamma = (\gamma_1,\dots,\gamma_q)$ is weakly increasing, and $|\beta| + |\gamma| = n$, with corresponding conjugacy class representative $\wbg \in \sW$.  Write $m = |\beta|$, so that $0 \leq m \leq n$ and $|\gamma| = n-m$. 
	\begin{enumerate}
		\item The module $\Mod(\wbg) = (\Id-\wbg)R^\vee$ equals 
		\[  \left\{\sum_{i=1}^n c_i \alpha_i^\vee \ \middle| \ c_i \in \Z, \ c_i = 0 \mbox{ for }i \in [m] \setminus J_\beta, \ c_i \equiv 0 \; \operatorname{mod}\left(2 \right) \mbox{ for } i \in I_\gamma  \right\}. \]
		\item The module $\Mod(\wbg)$ has $\Z$-basis given by \[\{ \alpha_i^\vee \mid i \in J_\beta \} \cup \{ \alpha_i^\vee \mid i \in J_\gamma \setminus I_\gamma \} \cup \{ 2\alpha_i^\vee \mid i \in I_\gamma \}.\]
		\item \label{item:CnSNF} For any $w \in [\wbg]$, the Smith normal form of $(\Id-w)$ equals 
		\[ \Sbg =  \diag(1^{n - p - q}, 2^q, 0^p).\]
		\item \label{item:Cn_quotient} For any $w \in [\wbg]$, the quotient of $R^\vee$ by the mod-set is
		\[R^\vee / \Mod(w) \cong (\Z/ 2\Z)^q \oplus  \Z^p.\]
	\end{enumerate}
\end{thm}

\noindent We note that, in contrast to our results in type $A_n$ (see Theorem~\ref{thm:introAn}), neither the value of $\gcd(\beta_k)$ nor the indexing set~$I_\beta$ appears in our statements in type $C_n$. 

From part \eqref{item:equalW} of Corollary~\ref{cor:modMoveW} and part~\eqref{item:Cn_quotient} of Theorem~\ref{thm:introCn}, we characterize those elements of $\sW$ of type $C_n$ which fill their move-sets, as follows.

\begin{corollary}\label{cor:fillCn} Suppose $\sW$ is of type $C_n$ with $n \geq 2$. Let $w \in \sW$, and let the conjugacy class of $w$ in $\sW$ be indexed by  the pair of compositions $(\beta, \gamma)$. The following are equivalent:
	\begin{enumerate}
		\item $w$ fills its move-set; that is, $\Mod(w) = \Mov(w) \cap R^\vee$; 
		\item the conjugacy class of $w$ in $\sW$ is represented by the element $w_\beta$ of the type $A_{n-1}$ subsystem on $\{ s_1,\dots,s_{n-1}\}$; and
		\item $|\gamma| = 0$.
	\end{enumerate}
\end{corollary}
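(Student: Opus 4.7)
\medskip

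The plan is to deduce the three-way equivalence by combining the ``torsion-free" criterion in Corollary~\ref{cor:modMoveW}\eqref{item:equalW} with the explicit quotient formula in Theorem~\ref{thm:introCn}\eqref{item:Cn_quotient}, and then to verify (2) $\iff$ (3) by unpacking the construction of the representative $\wbg$. The key point is that all three conditions are invariant under conjugation in $\sW$, so we may replace $w$ by its Geck--Pfeiffer representative $\wbg$ throughout.

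First I would observe that ``filling the move-set" is a $\sW$-conjugacy invariant. Indeed, for any $u \in \sW$ we have $\Mod(uwu^{-1}) = u \Mod(w)$ by \cite[Lemma 2.1]{MST4}, and clearly $\Mov(uwu^{-1}) = u \Mov(w)$, while $uR^\vee = R^\vee$ since the coroot lattice is $\sW$-stable. Therefore $w$ fills its move-set if and only if $\wbg$ does. By Corollary~\ref{cor:modMoveW}\eqref{item:equalW}, this occurs if and only if $R^\vee / \Mod(\wbg)$ is torsion-free. But by Theorem~\ref{thm:introCn}\eqref{item:Cn_quotient} we have
\[
R^\vee / \Mod(\wbg) \cong (\Z/2\Z)^q \oplus \Z^p,
\]
which is torsion-free if and only if $q = 0$, equivalently $\gamma$ is the empty partition, equivalently $|\gamma| = 0$. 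This gives (1) $\iff$ (3).

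For (2) $\iff$ (3), recall that by construction $\wbg = w_\beta w_\gamma$, where $w_\beta$ lies in the type $A_{m-1}$ subsystem on $\{s_1, \dots, s_{m-1}\}$ with $m = |\beta|$, and $w_\gamma$ lies in the type $C_{n-m}$ subsystem on $\{s_{m+1},\dots,s_n\}$ and is nontrivial precisely when $|\gamma| \geq 1$ (by Proposition~\ref{prop:wbgC}). If $|\gamma| = 0$, then $m = n$, $w_\gamma = 1$, and hence $\wbg = w_\beta$ lies in the type $A_{n-1}$ subsystem on $\{s_1,\dots,s_{n-1}\}$. Conversely, if the Geck--Pfeiffer representative of $w$ has the form $w_\beta$ inside the type $A_{n-1}$ subsystem on $\{s_1,\dots,s_{n-1}\}$, then in the pair $(\beta,\gamma)$ indexing the conjugacy class we must have $m = |\beta| = n$ and $w_\gamma = 1$, which by the proposition forces $|\gamma| = 0$.

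The main obstacle is not computational but notational: the statement (2) requires carefully matching ``the element $w_\beta$ of the type $A_{n-1}$ subsystem" with the pair $(\beta,\gamma)$ in which $|\gamma| = 0$, and verifying this via Proposition~\ref{prop:wbgC}. Once Theorem~\ref{thm:introCn}\eqref{item:Cn_quotient} is in hand, the remainder of the argument is essentially reading off information from the direct-sum decomposition of the quotient.
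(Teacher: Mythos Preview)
Your proposal is correct and follows essentially the same route as the paper: the sentence preceding the corollary states that it is obtained ``from part~\eqref{item:equalW} of Corollary~\ref{cor:modMoveW} and part~\eqref{item:Cn_quotient} of Theorem~\ref{thm:introCn},'' which is exactly what you do. Your explicit justification that filling is a $\sW$-conjugacy invariant (so one may pass to the representative $\wbg$) and your unpacking of (2)~$\iff$~(3) via Proposition~\ref{prop:wbgC} make precise what the paper leaves implicit, but the argument is the same.
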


\begin{example}\label{eg:C2intro} In type $C_2$, the conjugacy class $\{ s_1, s_2 s_1 s_2 \}$ corresponds to the pair of compositions $(\beta, \gamma)$ with $\beta = (2)$ and $|\gamma| = 0$, and is represented by $\wbg = w_\beta = s_1$. Here, $J_\beta = \{ 1 \}$ and $I_\gamma = J_\gamma = \emptyset$. As seen on the left of Figure~\ref{fig:conjC2}, we have $\Mod(s_1) = \Z\alpha_1^\vee$, and $\wbg$ fills its move-set. The conjugacy class $\{ s_2, s_1 s_2 s_1 \}$ corresponds to $\beta = (1)$ and $\gamma = (1)$, and is represented by $\wbg = w_\gamma = s_2$. Here, $J_\beta =  \emptyset$ and $J_\gamma = I_\gamma = \{ 2 \}$, so that $J_\gamma \backslash I_\gamma = \emptyset$. As seen on the right of Figure~\ref{fig:conjC2}, we have $\Mod(s_2) = 2\Z\alpha_2^\vee$, and $\wbg$ does not fill its move-set.
\end{example}

Next, suppose $\sW$ is of type $B_n$ for $n \geq 2$. The minimal length representatives of conjugacy classes $\wbg$ are identical to those in type $C_n$, and the Cartan matrices in types $B_n$ and $C_n$ differ only by exchanging the $(n-1,n)$ and $(n,n-1)$ entries. However, this small change alters the results on mod-sets considerably. The following result appears as Theorem~\ref{thm:SNFB}; see Section~\ref{sec:TypeB} for more details and precise definitions. We write $\gcd(\beta_k,2)=\gcd(\beta_1,\dots,\beta_p,2)$, and if $\beta_p \geq 2$, we write $\gcd(\beta_k, \beta_p - 2)=\gcd(\beta_1, \dots, \beta_p, \beta_p - 2)$.

\begin{thm}\label{thm:introBnSNF}
	Suppose $\sW$ is of type $B_n$ with $n \geq 2$.  Let $(\beta,\gamma)$ be a pair of compositions such that $\beta = (\beta_1,\dots,\beta_p)$ is weakly decreasing, $\gamma = (\gamma_1,\dots,\gamma_q)$ is weakly increasing, and $|\beta| + |\gamma| = n$, with corresponding conjugacy class representative $\wbg \in \sW$.   For any $w \in [\wbg]$, the Smith normal form of $(\Id-w)$ is as follows:
 \begin{enumerate}
  \item\label{introSNFB_gcd2_parity} If $\gcd(\beta_k,2) = 2$ (including $|\beta| = 0$), $|\gamma| \geq 1$, and all parts of $\gamma$ have the same parity, then 
  \[
 \Sbg = \diag(1^{n - q-p}, 2^{q},0^p).
 \]
   \item\label{introSNFB_gcd2_nonparity} If $\gcd(\beta_k,2) = 2$ (including $|\beta| = 0$), $|\gamma| \geq 1$, and $\gamma$ has a change in parity, then 
  \[
 \Sbg = \diag(1^{n - q-p+1}, 2^{q-2},4,0^p).
 \]
  \item\label{introSNFB_gcd1} If $\gcd(\beta_k,2) = 1$ and $|\gamma| \geq 1$, then 
  \[
 \Sbg = \diag(1^{n-q-p+1}, 2^{q-1}, 0^p).
 \]
\item\label{introSNFB_gamma0_2} If $|\gamma| = 0$, $\beta_p \geq 2$, and $\gcd(\beta_k,\beta_p - 2) \geq 2$, then 
 \[
 \Sbg = \diag(1^{n-p-1}, \gcd(\beta_k,\beta_p -2), 0^p).
 \]
  \item\label{introSNFB_gamma0_1} If $|\gamma| = 0$, and either $\beta_p = 1$, or $\beta_p \geq 2$ and $\gcd(\beta_k, \beta_p - 2) = 1$, then 
 \[
 \Sbg = \diag(1^{n-p}, 0^p).
 \]
 \end{enumerate}
\end{thm}

The Smith normal form is canonical and fully characterizes the isomorphism type of $R^\vee/\Mod(w)$ for any $w \in [\wbg]$. Hence, the cases required to state Theorem~\ref{thm:introBnSNF} indicate the delicate nature of the results in type $B$; compare the uniform statement for the Smith normal form in type $C$ given by part \ref{item:CnSNF} of Theorem \ref{thm:introCn}. Many further cases and lengthy descriptions appear when finding a basis for $\Mod(\wbg)$, so we refer the reader directly to Theorem~\ref{thm:BasisB} for our results on $\Z$-bases for $\Mod(\wbg)$ when either $|\beta| = 0$ or $|\gamma| = 0$.
We provide many illustrative examples in type $B_n$, including sketches of how to find a basis for all pairs $(\beta,\gamma)$, but leave the general proof of our results in type $B_n$ to the reader, since much of the argument is similar to that in types $A_n$ and $C_n$.

From part \eqref{item:equalW} of Corollary~\ref{cor:modMoveW} and Theorem~\ref{thm:introBnSNF}, we characterize those elements of $\sW$ of type $B_n$ which fill their move-sets, as follows.

\begin{corollary}\label{cor:fillBn} Suppose $\sW$ is of type $B_n$ with $n \geq 2$. Let $w \in \sW$, and let the conjugacy class of $w$ in $\sW$ be indexed by  the pair of compositions $(\beta, \gamma)$. The following are equivalent:
	\begin{enumerate}
		\item $w$ fills its move-set; that is, $\Mod(w) = \Mov(w) \cap R^\vee$; and
		\item 
		\begin{enumerate}
		\item  $\beta$ has at least one odd part, and $\gamma$ has exactly one part; or
		\item $\beta_p \geq 2$, $\gcd(\beta_k,\beta_p - 2) = 1$, and $|\gamma| = 0$; or
		\item $\beta_p = 1$ and $|\gamma| = 0$.
		\end{enumerate}
	\end{enumerate}
\end{corollary}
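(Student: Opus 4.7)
The plan is to combine part~\eqref{item:equalW} of Corollary~\ref{cor:modMoveW}, which tells us that $w$ fills its move-set if and only if $R^\vee/\Mod(w)$ is torsion-free, with the Smith normal form computations of Theorem~\ref{thm:introBnSNF}. Since $\Mod(uwu^{-1}) = u\Mod(w)$ for any $u \in \sW$ (by \cite[Lemma 2.1]{MST4}) and $u$ acts as a $\Z$-module automorphism of $R^\vee$, the isomorphism type of $R^\vee/\Mod(w)$ depends only on the conjugacy class of $w$. Thus, without loss of generality, I may assume $w = \wbg$ is the minimal length representative of its conjugacy class in $\sW$, and reduce the corollary to the claim that $R^\vee/\Mod(\wbg)$ is torsion-free if and only if $(\beta, \gamma)$ falls into one of the three cases~(a), (b), (c).

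Recall that the Smith normal form $\Sbg = \diag(d_1, \dots, d_n)$ of $(\Id - \wbg)$ yields the invariant factor decomposition
\[
R^\vee / \Mod(\wbg) \;\cong\; \bigoplus_{i=1}^{n} \Z/d_i\Z,
\]
so torsion-freeness is equivalent to each nonzero $d_i$ being equal to~$1$. With this reformulation, the corollary is a direct case-by-case inspection of the five cases in Theorem~\ref{thm:introBnSNF}. The plan is to verify:

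\textbf{Case (1):} Here $\gcd(\beta_k,2) = 2$, $|\gamma| \geq 1$, and $\gamma$ has all parts of the same parity, so $q \geq 1$ and $\Sbg$ has at least one entry equal to $2$; hence $R^\vee/\Mod(\wbg)$ has torsion. \textbf{Case (2):} Here the entry $4$ already guarantees torsion. \textbf{Case (3):} Here $\gcd(\beta_k,2) = 1$ (so $\beta$ has an odd part) and $|\gamma| = q \geq 1$, and torsion-freeness forces $q - 1 = 0$, i.e., $\gamma$ consists of exactly one part; this yields case~(a). \textbf{Case (4):} Here $\gcd(\beta_k, \beta_p - 2) \geq 2$ contributes a nontrivial invariant factor, so there is always torsion. \textbf{Case (5):} Here $\Sbg = \diag(1^{n-p}, 0^p)$ is torsion-free unconditionally; the two sub-hypotheses $\beta_p = 1$ and ($\beta_p \geq 2$, $\gcd(\beta_k,\beta_p-2) = 1$) with $|\gamma| = 0$ are precisely cases~(c) and~(b) of the corollary.

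Since Theorem~\ref{thm:introBnSNF} partitions the set of pairs $(\beta, \gamma)$ into these five exhaustive and mutually exclusive cases, the analysis above is complete. In this sense, there is no substantive obstacle: the corollary is a direct consequence of the Smith normal form computation, and the only real work is to verify that the dichotomy in Theorem~\ref{thm:introBnSNF} aligns correctly with the three sub-cases of condition~(2). The most delicate point to check is case~(3), where one must observe that despite the $\gcd(\beta_k,2) = 1$ hypothesis (which allows $\beta$ to have many parts), the condition $q = 1$ is the unique constraint needed, without any further restriction on $\beta$ beyond parity.
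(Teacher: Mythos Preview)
Your proof is correct and follows essentially the same approach as the paper, which simply states that the corollary follows from part~\eqref{item:equalW} of Corollary~\ref{cor:modMoveW} together with Theorem~\ref{thm:introBnSNF}. You have supplied the explicit case-by-case verification that the paper leaves implicit, and your reduction to the representative $\wbg$ via the conjugacy-equivariance of $\Mod$ is exactly what is needed.
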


\noindent We remark that for $|\beta| = m$ with $0 \leq m < n$, the composition $\gamma$ having exactly one part is equivalent to $w_\gamma$ being the Coxeter element $w_\gamma = s_n s_{n-1} \dots s_{m+1}$ in the type $B_{n-m}$ subsystem generated by $\{ s_{m+1}, \dots, s_n \}$.

\begin{example} In type $B_2$, as in type $C_2$ (see Example~\ref{eg:C2intro}), the conjugacy class $\{ s_1, s_2 s_1 s_2 \}$ corresponds to $\beta = (2)$ and $|\gamma| = 0$, and is represented by $\wbg = w_\beta = s_1$. By case \eqref{introSNFB_gamma0_2} of Theorem~\ref{thm:introBnSNF}, the Smith normal form for $\Id - s_1$ is $\diag(2,0)$. Thus $R^\vee/\ModW(s_1) \cong (\Z/2\Z) \oplus \Z$, and $s_1$ does not fill its move-set in type $B_2$. The conjugacy class $\{ s_2, s_1 s_2 s_1 \}$ corresponds to $\beta = (1)$ and $\gamma = (1)$, and is represented by $\wbg = w_\gamma = s_2$. We have $\Sbg = \diag(1,0)$ by case \eqref{introSNFB_gcd1} of Theorem~\ref{thm:introBnSNF}, and so $R^\vee/\ModW(s_2) \cong \Z$, and $s_2$  fills its move-set in type $B_2$. Compare Example \ref{eg:C2intro} in type $C_2$, where these situations are reversed.
\end{example}

Now suppose $\sW$ is of type $D_n$ for $n \geq 4$. Following~\cite[Section 3.4]{GeckPfeifferBook}, the conjugacy classes of $\sW$ are parameterized by ordered pairs of compositions~$(\beta, \delta)$ such that $\beta$ is weakly decreasing, $\delta$ is weakly increasing and has an even number of parts, and $|\beta| + |\gamma| = n$.  In the special case $(\beta,0)$ where all parts of $\beta$ are even, there are two distinct conjugacy classes, parameterized by $\beta^+$ and $\beta^-$. Let $|\beta|=m$. Then the conjugacy class representatives are given by $\wbd = w_\beta w_\delta$, where $w_\beta$ is contained in the type $A_{m-1}$ subsystem generated by $\{ s_1, \dots, s_{m-1} \}$ and $w_\delta$ is contained in the subsystem generated by $\{ s_{m+1},\dots,s_n\}$ (which could have various types). The following result appears as Theorem~\ref{thm:DnSNF}; see Section~\ref{sec:TypeD} for more details and precise definitions. As in type $B$, we write $\gcd(\beta_k,2)$ for $\gcd(\beta_1,\dots,\beta_p,2)$.

\begin{thm}\label{thm:introDnSNF}
Suppose $\sW$ is of type $D_n$ with $n \geq 4$.  Let $(\beta,\delta)$ be a pair of compositions such that $\beta = (\beta_1,\dots,\beta_p)$ is weakly decreasing, $\delta = (\delta_1,\dots,\delta_{2r})$ is weakly increasing with an even number of parts, and $|\beta| + |\delta| = n$.  Let $w_{\beta^\pm, \delta} \in \sW$ be the corresponding conjugacy class representative in $\sW$.  

For any $w \in [w_{\beta^\pm, \delta}]$, the Smith normal form of $(\Id-w)$ is as follows:
\begin{enumerate}
 \item\label{introSNFD_gcd2_parity} If $\gcd(\beta_k,2) = 2$ (including the case that $|\beta| = 0$), $|\delta| \geq 2$, and all parts of $\delta$ have the same parity, then 
  \[
 \Sbd = \diag(1^{n - 2r-p}, 2^{2r},0^p).
 \]
   \item\label{introSNFD_gcd2_nonparity} If $\gcd(\beta_k,2) = 2$ (including the case that $|\beta| = 0$), $|\delta| \geq 2$, and $\delta$ has a change in parity, then 
  \[
 \Sbd = \diag(1^{n - 2r-p+1}, 2^{2r-2},4,0^p).
 \]
\item\label{introD_gcd1} If $\gcd(\beta_k,2) = 1$ and $|\delta| \geq 2$, then
\[ \Sbd = \diag(1^{n-2r-p+1},2^{2r-1},0^p).\] 
\item\label{introD_beta_delta0} If $|\delta| = 0$ and $\beta = (1,\dots, 1)$, so that $\wbd$ is trivial, then $\Sbd = \diag(0^n)$. If $|\delta| = 0$ and $\beta \neq (1,\dots, 1)$, then \[ S_{\beta^\pm, \delta} = \diag(1^{n-p-1}, \gcd(\beta_k,2), 0^{p}).\] 
\end{enumerate}
\end{thm}

For bases in type $D_n$, there are even more cases than in type $B_n$, and so we refer the reader directly to Theorems \ref{thm:DnCuspidal} and \ref{thm:Dnnoncuspidal}. Roughly speaking, this proliferation of cases is due to the structural differences in type $D_n$ between $n$ odd and $n$ even, the various possibilities for the interaction of a type $A_{m-1}$ subsystem with the node of valence $3$ in the Dynkin diagram, and the various types possible for the subsystem generated by $\{ s_{m+1},\dots,s_n\}$. From part \eqref{item:equalW} of Corollary~\ref{cor:modMoveW} and Theorem~\ref{thm:introDnSNF}, we obtain the following result.

\begin{corollary}\label{cor:fillDn} Suppose $\sW$ is of type $D_n$ with $n \geq 4$. Let $w \in \sW$, and let the conjugacy class of $w$ in $\sW$ be indexed by the pair of compositions $(\beta, \delta)$. The following are equivalent:
	\begin{enumerate}
		\item $w$ fills its move-set; that is, $\Mod(w) = \Mov(w) \cap R^\vee$; and
		\item  $\beta$ has at least one odd part, and $|\delta| = 0$. 
	\end{enumerate}
	In particular, if the conjugacy class of $w$ in $\sW$ is \emph{not} represented by the element $w_\beta$ of the type $A_{n-1}$ subsystem on $\{ s_1,\dots,s_{n-1}\}$, then $w$ does \emph{not} fill its move-set.
\end{corollary}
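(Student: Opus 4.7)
The plan is to deduce this directly from Corollary~\ref{cor:modMoveW}\eqref{item:equalW} together with a case-by-case inspection of the Smith normal forms in Theorem~\ref{thm:introDnSNF}. Recall that by Corollary~\ref{cor:modMoveW}\eqref{item:equalW}, the element $w$ fills its move-set if and only if the quotient $R^\vee/\Mod(w)$ is torsion-free. Since the Smith normal form of $(\Id - w)$ determines the isomorphism type of this quotient, torsion-freeness is equivalent to every nonzero diagonal entry of the Smith normal form being equal to $1$.

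First, I would reduce to studying the conjugacy class representatives $w = w_{\beta^\pm,\delta}$, since both $\Mod(w)$ and $\Mov(w)\cap R^\vee$ are $\sW$-equivariant under conjugation (this is noted at the start of Section~\ref{sec:modsetintro}). Then I would run through the five cases of Theorem~\ref{thm:introDnSNF} as follows. In case~\eqref{introSNFD_gcd2_parity}, the hypothesis $|\delta|\ge 2$ forces $2r \ge 2$, so the Smith normal form contains at least one entry equal to $2$, and the quotient has torsion. Similarly, case~\eqref{introSNFD_gcd2_nonparity} always contains an entry equal to $4$, and case~\eqref{introD_gcd1} always contains at least one entry equal to $2$ since $2r-1\ge 1$. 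Thus in each of cases~\eqref{introSNFD_gcd2_parity}--\eqref{introD_gcd1}, corresponding to $|\delta|\ge 2$, the element $w$ does not fill its move-set.

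The only remaining possibility is case~\eqref{introD_beta_delta0}, where $|\delta|=0$. If $\beta=(1,\dots,1)$ then the Smith normal form is $\diag(0^n)$, whose quotient $\Z^n$ is torsion-free, and moreover $\beta$ consists entirely of odd parts. Otherwise, the nontrivial invariant factor is $\gcd(\beta_k,2)$, which equals $1$ precisely when $\beta$ has at least one odd part, and equals $2$ otherwise. Combining these observations yields the stated equivalence: $w$ fills its move-set if and only if $|\delta|=0$ and $\beta$ has at least one odd part.

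For the final \emph{In particular} statement, I would observe that if the conjugacy class of $w$ is represented by an element $w_\beta$ lying in the type $A_{n-1}$ subsystem generated by $\{s_1,\dots,s_{n-1}\}$, then by the conventions recalled just before Theorem~\ref{thm:introDnSNF} one has $|\beta|=n$, hence $|\delta|=0$. Contrapositively, if the conjugacy class of $w$ is not represented by such a $w_\beta$, then $|\delta|\ge 2$, and so by the equivalence just established $w$ does not fill its move-set. No step here poses a genuine obstacle; the whole argument is a bookkeeping exercise once Theorem~\ref{thm:introDnSNF} is in hand, with the only mild subtlety being to remember that the trivial case $\beta=(1,\dots,1)$ is automatically consistent with the condition that $\beta$ has at least one odd part.
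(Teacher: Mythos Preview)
Your argument for the main equivalence is correct and is exactly the approach the paper intends: reduce to the representatives, invoke Corollary~\ref{cor:modMoveW}\eqref{item:equalW}, and read off torsion from the Smith normal forms in Theorem~\ref{thm:introDnSNF}.

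There is, however, a gap in your treatment of the \emph{In particular} statement. You argue that ``represented by $w_\beta$ in the $A_{n-1}$ subsystem on $\{s_1,\dots,s_{n-1}\}$'' implies $|\delta|=0$, and then claim the contrapositive gives ``not so represented $\Rightarrow |\delta|\ge 2$''. That is not the contrapositive, and the implication you assert is false: when $|\delta|=0$ and all parts of $\beta$ are even, there are two conjugacy classes, indexed by $\beta^+$ and $\beta^-$, and the representative $w_{\beta^-}$ lies in the $A_{n-1}$ subsystem on $\{s_1,\dots,s_{n-2},s_n\}$, not on $\{s_1,\dots,s_{n-1}\}$ (see Section~\ref{sec:repsD}). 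So ``not represented in the first $A_{n-1}$ subsystem'' does not force $|\delta|\ge 2$.

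The fix is immediate. If the conjugacy class of $w$ is not represented in the $A_{n-1}$ subsystem on $\{s_1,\dots,s_{n-1}\}$, then either $|\delta|\ge 2$, and cases~\eqref{introSNFD_gcd2_parity}--\eqref{introD_gcd1} apply as you argued; or $|\delta|=0$ and the class is $\beta^-$, which by construction forces every part of $\beta$ to be even, so condition~(2) fails and $w$ does not fill its move-set by the equivalence you have already established. Either way the conclusion holds.
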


\noindent Despite the vast differences in the characterizations of mod-sets in types $A$, $B$, $C$, and $D$, note that the condition for $w \in \sW$ to fill its move-set is almost identical across types.

In all classical types, the results outlined in this section are obtained by the following process. After presenting the conjugacy class representatives from~\cite[Chapter 3]{GeckPfeifferBook}, we determine the matrix for each such representative, with respect to the $\Z$-basis $\{ \alpha_1^\vee, \dots, \alpha_n^\vee \}$ for $R^\vee$. We then formulate algorithms involving column and/or row operations over $\Z$ to obtain an explicit description of the corresponding mod-set, to find a basis for the mod-set, and to determine the Smith normal form. From the Smith normal form, one can write down the isomorphism type of the quotient of $R^\vee$ by the mod-set, and so complete the proofs. 

In order to aid readability, for the classical types we have included many examples of our algorithms in the body of this work. We caution that, in the classical types, the conventions of~\cite{Bourbaki4-6}, which we follow in this paper, differ from those used in~\cite[Chapter 3]{GeckPfeifferBook}. In Table~\ref{table:dynkin} we have set out, side-by-side for each type, the conventions from~\cite{Bourbaki4-6} and~\cite[Chapter 3]{GeckPfeifferBook}.

In Section \ref{sec:exceptional}, we provide a complete system of minimal length representatives for all conjugacy classes of $\sW$ of exceptional type. We use the tables in Appendix B of \cite{GeckPfeifferBook} for the cuspidal classes, and then iterate over all proper parabolic subgroups of $\sW$, choosing representatives for the cuspidal classes in each proper parabolic using the algorithms from~\cite[Chapter 3]{GeckPfeifferBook}. For each representative $w \in \sW$, we present the Smith normal form $S_w$ for $(\Id - w)$, which is canonical and fully characterizes the isomorphism type of the quotient $R^\vee/\Mod(w)$, and answers the question of whether or not $w$ fills its move-set.  In the exceptional types, the Smith normal form $S_w$ was calculated using the \verb!smith_form()!  command in Sage \cite{sagemath}.

%%%%%%%%%%%%%%%%%%%%%%%%%%%%%%%%%%%%%%%%%%%%%%%%%%%%%%%%
\subsection{Discussion of related work}

Our geometric approach to describing conjugacy classes and coconjugation sets is quite distinct from what has appeared in the literature. We review here some previous results on conjugacy classes and centralizers in Coxeter groups, in addition to those works already discussed in the first paragraph of the introduction and Remark~\ref{rmk:cyclic}. We do not know of any previous work on coconjugation sets in Coxeter groups, per se.

If $G$ is a simple algebraic group, then any element $g \in G$ admits a generalized Jordan decomposition $g = su$, where $s$ is semisimple and $u$ is unipotent. The conjugacy classes of $G$ can thus be understood by studying the semisimple and unipotent conjugacy classes within $G$; see the surveys \cite{Spalt,SprStein,LiebSeit}. Work of Lusztig cleverly exchanges the study of the unipotent conjugacy classes for the conjugacy classes of the finite Weyl group \cite{LuszRT11}.  Lusztig's map from conjugacy classes in $W$ to unipotent classes in $G$ is injective when restricted to the elliptic conjugacy classes in $W$, and Adams, He, and Nie generalize Lusztig's result, proving that the straight conjugacy classes in $\aW$ play the analogous role in the affine setting \cite{AHN}. These conjugacy classes in affine Weyl groups also enjoy a wide range of applications, playing a central role in the character theory of affine Hecke algebras, homological properties of affine Deligne-Lusztig varieties, and geometric representations of $p$-adic groups via an action of the cocenter of the Hecke algebra; see \cite{HeNie12,HeNie14,HeNie15,CiuHe}.

Much of the literature on centralizers, a special case of coconjugation sets, is in the setting of either finite Coxeter groups, or arbitrary Coxeter groups, the latter often motivated by the still-open isomorphism problem for Coxeter groups (discussed in~\cite{SantosRegoSchwer}, for example). Much of this work also focuses on obtaining an algebraic description of centralizers of certain ``special" elements. For example, as discussed in Remark~\ref{rmk:cent}, the centralizer of a Coxeter element $w$ is equal to the cyclic group $\langle w \rangle$. Richardson~\cite{Richardson} investigated centralizers of involutions in arbitrary Coxeter groups, and centralizers of reflections in arbitrary Coxeter groups have been studied by Howlett~\cite{Howlett}, Brink~\cite{Brink}, and Allcock~\cite{AllcockCentralizers}. For $W$ an arbitrary finite Coxeter group, the final statement in this direction seems to be the work of Konvalinka, Pfeiffer, and R\"over~\cite{KonvalinkaPfeifferRoever}, which gives an algebraic description of the centralizer of any $w \in W$; see Remark~\cite{rmk:Marquis} for a brief discussion of Marquis' work in the setting of arbitrary Coxeter groups.

%%%%%%%%%%%%%%%%%%%%%%%%%%%%%%%%%%%%%%%%%%%%%%%%%%%%%
\subsection{Structure of the paper}

In Section~\ref{sec:preliminaries} we fix notation and recall some background, and then restate the main definitions and results  of~\cite{MST4} in the setting of affine Coxeter groups. Section~\ref{sec:modMove} establishes Theorem~\ref{thm:modMoveW} and Corollary~\ref{cor:modMoveW}, concerning the relationship between mod-sets and move-sets. Our main results in types $A_n$, $C_n$, $B_n$, and $D_n$ are presented in Sections~\ref{sec:TypeA},~\ref{sec:TypeC},~\ref{sec:TypeB}, and~\ref{sec:TypeD}, respectively, and we discuss the exceptional types in Section~\ref{sec:exceptional}. Appendix~\ref{app:cryst} briefly considers the relationship between split crystallographic groups and affine Coxeter groups, and Appendix~\ref{app:dynkin} describes several relevant conventions for labeling the nodes of Dynkin diagrams.

%%%%%%%%%%%%%%% ACKNOWLEDGEMENTS %%%%%%%%%%%%%%%%%%
\subsection{Acknowledgements}

We thank Bob Howlett, Gregory Maloney, Timoth\'ee Marquis, Jon McCammond, and Travis Scrimshaw for helpful conversations and/or correspondence. We thank the Sydney Mathematical Research Institute for hosting a visit by PS in August 2019. We are grateful to MPIM Bonn for hosting EM's sabbatical in 2020, and for supporting a collaborative visit by PS in March 2020. We thank the Mathematisches Forschungsinstitut Oberwolfach for hosting PS for Research in Pairs in 2020.  We thank Haverford College for supporting a visit by AT in June 2022, as well as one by PS in April 2023. Finally, many experiments which led to these results were conducted in Sage, and we thank the Sage developers for implementing related procedures \cite{sagemath}.

%%%%%%%%%%%%%%%%%%%%%%%%%%%%%%%%%%%%%%%%%%%%%%%%%%%%%

%%%%%%%%%%%%%%%%%%%%%%%%%%%%%%%%%%%%%%%%%%%%%
%%%%%%%%%%%%%%%%%%%%%%%%%%%%%%%%%%%%%%%%%%%%%
%%%%%%%%%%%%%%%%%%%%%%%%%%%%%%%%%%%%%%%%%%%%%
%%%%%%%%%%%%%%%%%%%%%%%%%%%%%%%%%%%%%%%%%%%%%

%%%%%%%%%%%%%%%%%%%%%%%%%%%%%%%%%%%%%%%%%%%%%
\section{Preliminaries}
\label{sec:preliminaries}

We fix notation and collect some useful results on affine Coxeter systems in Section~\ref{sec:affine}, then discuss move- and fix-sets in \Cref{sec:moveFix}. We discuss cuspidal and Coxeter elements of the finite Coxeter group $\sW$ in \Cref{sec:cuspidalPrelim}.  In \Cref{sec:summary}, we state the main definitions and results of~\cite{MST4} in the setting of affine Coxeter groups.

%%%%%%%%%%%%%%%%%%%%%%%%%%%%%%%%%%%%%%%%%%%%%

\subsection{Affine Coxeter systems, roots, and coroots}\label{sec:affine}
 
This section reviews definitions and notation on affine Coxeter systems and their associated root systems and coroot lattices. We assume the reader is familiar with this material at the level of the reference \cite{Humphreys}. 

Let $(\aW,\aS)$ be an irreducible affine Coxeter system of rank $n$. Let $V$ be the associated $n$-dimensional real vector space on which $\aW$ acts by affine transformations, which we can identify with $n$-dimensional Euclidean space $\R^n$.  Denote the origin of $V$ by $0$. Then $(\aW,\aS)$ has associated spherical Coxeter system $(\sW,\sS)$ such that $\sW$ is the stabilizer in $\aW$ of $0$ and $\sS = \{ s_1, \dots, s_n \}$ is the set of elements of $\aS = \{ s_0,s_1, \dots, s_n\}$ which fix $0$.  We call $\{ s_1, \dots, s_n\}$ the \emph{simple reflections}. Denote the index set for $\sS$ by $[n]:= \{1,2,\dots, n\}$. We typically use the letters $x, y$ for elements of $\aW$, and $u, w$ for elements of $\sW$, and we write $w_0$ for the longest element of the spherical Coxeter group $\sW$.

The vector space $V$ admits an ordered basis $\Delta = (\alpha_i)_{i \in [n]}$, and a symmetric bilinear form $B(\alpha_i, \alpha_j) = -\cos \frac{\pi}{m(i,j)}$, where $m(i,j)$ is the $(i,j)$-entry of the associated Coxeter matrix.  Given any $i \in [n]$, define a linear functional $\alpha_i^\vee \in V^*$ by $\langle \alpha_i^\vee, v \rangle := 2B(\alpha_i, v)$ for any $v \in V$, where $\langle \cdot, \cdot \rangle: V^* \times V \to \Z$ denotes the evaluation pairing. The ordered set $\Delta^\vee = (\alpha_i^\vee)_{i \in [n]}$ is then a basis for the dual space $V^*$. The elements of $\Delta$ are called the \emph{simple roots}, and the elements of $\Delta^\vee$ are the \emph{simple coroots}.

The action of $\aW$ on $V$ induces an action of $\aW$ on $V^*$, as follows. Recall that, using the conventions in \cite{Bourbaki4-6}, every generator $s_i \in \sS$ acts as a linear reflection:
\begin{equation}\label{eq:siaction}
	s_i(\alpha_j^\vee) = \alpha_j^\vee - \langle \alpha_j^\vee, \alpha_i \rangle \alpha_i^\vee = \alpha_j^\vee - c_{ij} \alpha_i^\vee,
\end{equation} 
where $c_{ij}$ equals the $(i,j)$-entry of the Cartan matrix. The fix-set of the reflection $s_i$ is the linear hyperplane \[ \cH_{\alpha_i} = \{ z \in V^* \mid \langle z, \alpha_i \rangle = 0\}.\] We sometimes write $s_i = s_{\alpha_i}$ for $i \in [n]$. 

The vectors $\Phi = \{w\alpha_i \mid w \in \sW, \ i \in [n]\} \subset V$ are called \emph{roots}, and $\Phi^\vee = \{w\alpha_i^\vee  \mid w \in \sW, \ i \in [n]\} \subset V^*$ are the corresponding \emph{coroots}.
For any root $\beta = w\alpha_i$, the corresponding coroot $\beta^\vee$ is given by $\langle \beta^\vee, v \rangle := 2B(\beta, v)$ for $v \in V$, and we write $s_\beta$ for the linear reflection $ws_{\alpha_i} w^{-1} = s_{w\alpha_i}$. For $\beta = w\alpha_i$, the reflection $s_\beta$ has fix-set the linear hyperplane
\[ \cH_{\beta} = \{ z \in V^* \mid \langle z, \beta \rangle = 0\} = w\cH_{\alpha_i}. \] 

There is a distinguished lattice in $V^*$, which is preserved by the actions of $\aW$ and $\sW$, called the \emph{coroot lattice} $R^\vee = \bigoplus \Z \alpha_i^\vee$. Denote by $t^\lambda$ the translation in $V^*$ by the coroot $\lambda \in R^\vee$. The set of all such translations $T = \{ t^\lambda \mid \lambda \in R^\vee\}$ is the translation subgroup of the affine Coxeter group $\aW$.  For any $\lambda, \mu \in R^\vee$, we have $t^\lambda t^\mu = t^{\lambda + \mu} = t^{\mu + \lambda} = t^\mu t^\lambda$, and given any $w \in \sW$, we have $w t^\lambda w^{-1} = t^{w\lambda}$.  

We may also canonically identify $V$ and its dual $V^*$, and we use this identification freely throughout. Then for any root $\beta\in \Phi$, the corresponding coroot $\beta^\vee$ is identified with the vector $2\frac{\beta}{(\beta, \beta)}$, where $( \cdot, \cdot)$ is the standard inner product on $\R^n$.

%%%%%%%%%%%%%%%%%%%%%%%%%%%%%%%%%%%%%%%%%%%%%%%%%%%%%%%%%%%%%%
 
\subsection{Move-sets and fix-sets}\label{sec:moveFix}

The \emph{move-set} and \emph{fix-set} of elements of~$\aW$ play an important role in our study of conjugation. They are defined as follows.    
\begin{definition}
	\label{def:mov-fix-elliptic}
	For any $x \in \aW$: 
	\begin{enumerate}
		\item the \emph{move-set} of $x$ is $\Mov(x) = \{ p \in V^* \mid q + p = x(q), \; q \in V^* \}$; and
		\item the \emph{fix-set} of $x$ is $\Fix(x) = \{ p \in V^* \mid x(p) = p\}$. 
	\end{enumerate}
\end{definition}

\noindent For example, the reflection $s_\beta \in \sW$ has $\Mov(s_\beta) = \R \beta^\vee$ and $\Fix(s_\beta) = \cH_\beta$. Note that for $i \in [n]$ we have $\Mov(s_i) =  \R\alpha_i^\vee$ and $\Fix(s_i) = \cH_{\alpha_i}$.

For all  elements $w \in \sW$, the sets 
\[
\Mov(w)= \Range(w-\Id) \quad \text{and} \quad \Fix(w)=\Ker(w-\Id)
\] 
are sub-vector spaces of $V^*$ and in fact orthogonal complements, i.e. $V^*=\Mov(w) \oplus \Fix(w)$.
In addition, by straightforward computation, one has 
\[
\Mov(uwu^{-1})=u\Mov(w) \text{ for all } u, w \in\sW.
\] 
For move-sets of elements of $\aW$, we have, by Proposition~1.21 of~\cite{LMPS}, that 
\[
\Mov(x) = \lambda + \Mov(w), 
\]
where $x = t^\lambda w \in \aW$ with $\lambda \in R^\vee$ and $w \in \sW$. 

We write $\cR$ for the set of all reflections in $\sW$. That is, \[ \cR = \{ s_\beta \mid \beta \in \Phi \} = \{ w s_i w^{-1} \mid w \in\sW, i \in [n] \}.\] Hence for each $r = w s_i w^{-1} \in \cR$, we have $\Mov(r) = \R (w \alpha_i^\vee) = w \Mov(s_i)$. 

The \emph{reflection length} of an element $w \in\sW$, denoted $\ell_\cR(w)$, is the minimal integer~$k$ such that $w = r_1 \cdots r_k$ where each $r_i$ is a reflection in $\sW$. We will use the following fundamental result of Carter~\cite{Carter72}. 

\begin{thm}[Lemma 3 of~\cite{Carter72}]\label{thm:Carter} Let $s_{\beta_1},\dots,s_{\beta_k} \in \cR$ and let $w = s_{\beta_1} \cdots s_{\beta_k} \in \sW$. Then $\ell_\cR(w) = k$ if and only if the set of roots $\{ \beta_1, \dots, \beta_k \} \subset \Phi$ is linearly independent over~$\R$.
\end{thm}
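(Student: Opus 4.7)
The plan is to prove both directions using the fundamental identity $\Fix(w)^\perp = \Mov(w)$ (valid since $\sW$ acts by orthogonal transformations) together with the observation that $\bigcap_{i=1}^k \mathcal{H}_{\beta_i} \subseteq \Fix(w)$: indeed, any vector fixed by every $s_{\beta_i}$ is fixed by their product. Taking orthogonal complements yields the basic containment $\Mov(w) \subseteq \mathrm{span}_\R\{\beta_1^\vee, \ldots, \beta_k^\vee\}$ for \emph{any} expression $w = s_{\beta_1} \cdots s_{\beta_k}$.

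For the backward direction ($\Leftarrow$), I would use linear independence of $\{\beta_1, \ldots, \beta_k\}$ to compute dimensions: the intersection $\bigcap_i \mathcal{H}_{\beta_i}$ has codimension exactly $k$, while $\mathrm{span}\{\beta_1^\vee, \ldots, \beta_k^\vee\}$ has dimension $k$ (using the identification of $V$ and $V^*$ under which each coroot is a positive scalar multiple of its root). Since $\Fix(w) \oplus \Mov(w) = V^*$ and the two containments force $\dim \Fix(w) \geq n-k$ and $\dim \Mov(w) \leq k$, both must be equalities. Thus $\dim_\R \Mov(w) = k$. Now for any alternative expression $w = s_{\gamma_1} \cdots s_{\gamma_m}$, the same containment gives $k = \dim \Mov(w) \leq m$, so combined with $\ell_\cR(w) \leq k$ from the given expression, we conclude $\ell_\cR(w) = k$.

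For the forward direction ($\Rightarrow$), I would argue the contrapositive: if $\{\beta_1, \ldots, \beta_k\}$ is linearly dependent, then $w$ admits a presentation as a product of strictly fewer than $k$ reflections. The main tool is the conjugation identity $s_\alpha s_\beta = s_\beta \, s_{s_\beta(\alpha)}$, which permits swapping adjacent reflections in an expression while replacing $\alpha$ by another root $s_\beta(\alpha) \in \mathrm{span}\{\alpha, \beta\}$; in particular, repeated swaps preserve the $\R$-span of the roots in any window. Pick a minimal linearly dependent subset $\{\beta_{i_1}, \ldots, \beta_{i_m}\}$, so that any $m-1$ of them are independent. Using successive swaps, migrate these reflections so they occupy a contiguous block $s_{\gamma_1} \cdots s_{\gamma_m}$ whose roots still span the same $(m-1)$-dimensional subspace as $\{\beta_{i_j}\}$. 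Within this block, since the move-set of $s_{\gamma_1} \cdots s_{\gamma_m}$ sits inside a space of dimension $m-1$, one can further shuffle until two adjacent roots become proportional (equal up to sign), at which point they cancel, producing a shorter expression.

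The main obstacle will be making the cancellation step in the second direction rigorous: controlling the evolution of the roots under the swap identity and ensuring that the minimality of the dependent subset can be exploited to force a cancellation. A cleaner route, which I would probably adopt in the write-up, bypasses this combinatorial bookkeeping by establishing the sharper statement $\ell_\cR(w) = \dim_\R \Mov(w)$ directly. One direction ($\dim \Mov(w) \leq \ell_\cR(w)$) is immediate from the basic containment above. For the reverse, induct on $d = \dim \Mov(w)$: in the nontrivial case, use that $w$ restricted to $\Mov(w)$ is an orthogonal transformation with no nonzero fixed vector, so one can find a root $\beta$ with $\beta^\vee \in \Mov(w)$ such that $\dim \Mov(s_\beta w) = d - 1$, allowing the induction to conclude $w = s_\beta \cdot (s_\beta w)$ is a product of $d$ reflections. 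Combining both inequalities gives the sharp formula, and Carter's theorem then follows: when $w = s_{\beta_1}\cdots s_{\beta_k}$ attains $\ell_\cR(w) = k$, the containment $\Mov(w) \subseteq \mathrm{span}\{\beta_i^\vee\}$ combined with $\dim \Mov(w) = k$ forces the $\beta_i$ to be independent.
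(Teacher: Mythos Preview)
The paper does not supply its own proof here; the result is quoted from Carter. The paper does, however, describe Carter's argument for the direction ``independence $\Rightarrow \ell_\cR(w)=k$'' in its proof of Corollary~\ref{cor:dimMov} (and adapts it over $\Z$ in Proposition~\ref{prop:rankMod}): one shows each $\beta_j^\vee$ lies in $\Mov(w)$ by choosing $\mu_j \in \bigl(\bigcap_{i>j}\cH_{\beta_i}\bigr)\setminus\cH_{\beta_j}$ and computing $(\Id-w)\mu_j$ inductively, so that $\dim_\R\Mov(w)=k$.

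Your first argument for $(\Leftarrow)$ has a genuine gap. The containments $\bigcap_i \cH_{\beta_i}\subseteq\Fix(w)$ and $\Mov(w)\subseteq\Span_\R\{\beta_i^\vee\}$ are orthogonal complements of one another and carry exactly the same information; the inequalities $\dim\Fix(w)\ge n-k$ and $\dim\Mov(w)\le k$ are therefore a \emph{single} inequality stated twice (since $\dim\Fix(w)+\dim\Mov(w)=n$), and nothing forces equality. For any $0\le a\le k$, the values $\dim\Fix(w)=n-k+a$ and $\dim\Mov(w)=k-a$ satisfy all of your constraints. One needs a separate mechanism to exhibit $k$ independent vectors inside $\Mov(w)$, which is precisely what Carter's $\mu_j$-construction supplies.

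Your ``cleaner route'' establishing $\ell_\cR(w)=\dim_\R\Mov(w)$ directly is a legitimate alternative and, once proved, yields both directions at once. But the inductive step as you wrote it is also incomplete: to find a \emph{root} $\beta$ with $\beta^\vee\in\Mov(w)$ (rather than an arbitrary unit vector), you must know that $\Fix(w)$ is an intersection of root hyperplanes, equivalently that $\Mov(w)$ is spanned by roots. This is true, but it is itself a nontrivial theorem (a consequence of Steinberg's result that point stabilizers in finite reflection groups are parabolic). With that input your induction goes through; however, you are then importing a statement of comparable depth to the one being proved, whereas Carter's direct inductive computation avoids this dependency.
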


The next result recalls part of the statement of~\cite[Lemma 1.26]{LMPS}. 

\begin{corollary}\label{cor:dimMov} Let $w \in \sW$. Suppose $\ell_\cR(w) = k$, with $w = s_{\beta_1} \cdots s_{\beta_k}$ where each $s_{\beta_i} \in \cR$. Then $\Mov(w)$ has an $\R$-basis given by $\{ \beta_1^\vee, \dots, \beta_k^\vee \}$, and hence $\dim_\R(\Mov(w)) = \ell_R(w)$.
\end{corollary}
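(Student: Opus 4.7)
The plan is to establish equality $\Mov(w) = \R\{\beta_1^\vee, \ldots, \beta_k^\vee\}$ from which the dimension statement falls out. The starting point is Theorem~\ref{thm:Carter}: the hypothesis $\ell_\cR(w) = k$ forces $\{\beta_1, \ldots, \beta_k\}$ to be linearly independent over $\R$. Since each coroot is a positive scalar multiple of the corresponding root under the identification $V \cong V^*$, the same is true of $\{\beta_1^\vee, \ldots, \beta_k^\vee\}$.

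For the inclusion $\Mov(w) \subseteq \R\{\beta_1^\vee, \ldots, \beta_k^\vee\}$, I would use the telescoping identity
\[
w - \Id = \sum_{j=1}^{k}(s_{\beta_j} - \Id)\, s_{\beta_{j+1}}\cdots s_{\beta_k}.
\]
By \eqref{eq:siaction} the reflection formula gives $\Range(s_{\beta_j} - \Id) = \R\beta_j^\vee$, and precomposition with $s_{\beta_{j+1}} \cdots s_{\beta_k}$ does not enlarge the image, so the range of $w - \Id$ lies inside $\R\{\beta_1^\vee, \ldots, \beta_k^\vee\}$.

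The bulk of the work is the reverse inclusion, for which I would pin down the fix-set exactly: I claim $\Fix(w) = \bigcap_{j=1}^k \cH_{\beta_j}$. The containment $\supseteq$ is immediate. For $\subseteq$, take $v \in \Fix(w)$ and apply the telescoping identity to $v$, obtaining
\[
0 = w(v) - v = -\sum_{j=1}^k \bigl\langle s_{\beta_{j+1}}\cdots s_{\beta_k}(v),\, \beta_j \bigr\rangle\, \beta_j^\vee.
\]
Linear independence of the coroots $\beta_j^\vee$ forces every coefficient to vanish. I would then run downward induction on $j$: at $j=k$, the coefficient $\langle v, \beta_k\rangle$ vanishes, so $s_{\beta_k}(v) = v$; substituting into the $j=k-1$ coefficient gives $\langle v, \beta_{k-1}\rangle = 0$, hence $s_{\beta_{k-1}} s_{\beta_k}(v) = v$; and so on. This produces $\langle v, \beta_j\rangle = 0$ for all $j$, placing $v$ in the claimed intersection. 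Linear independence of the roots $\beta_j$ gives $\dim_\R \bigcap_j \cH_{\beta_j} = n - k$, and then the orthogonal decomposition $V^* = \Mov(w) \oplus \Fix(w)$ yields $\dim_\R \Mov(w) = k$. Combined with the one-sided containment, this forces $\Mov(w) = \R\{\beta_1^\vee, \ldots, \beta_k^\vee\}$, and linear independence upgrades the spanning set to a basis.

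The main obstacle is the reverse inclusion: the tautological containment $\Fix(w) \supseteq \bigcap_j \cH_{\beta_j}$ only gives the dimension bound in the wrong direction, so the downward induction exploiting linear independence of the $\beta_j^\vee$ is essential to extract the equality $\Fix(w) = \bigcap_j \cH_{\beta_j}$.
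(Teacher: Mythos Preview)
Your proof is correct. The containment $\Mov(w) \subseteq \R\{\beta_1^\vee,\dots,\beta_k^\vee\}$ via the telescoping identity matches the paper's argument in spirit (the paper phrases it as ``each $s_{\beta_i}$ moves points only in the $\beta_i^\vee$-direction''). The difference lies in the reverse inclusion: the paper appeals directly to the details of Carter's proof of Theorem~\ref{thm:Carter}, which exhibits each $\beta_j^\vee$ inside $(w-\Id)\R^n$ by choosing specific vectors; you instead pin down $\Fix(w) = \bigcap_j \cH_{\beta_j}$ via downward induction and then invoke the orthogonal decomposition $V^* = \Mov(w)\oplus\Fix(w)$ to read off $\dim\Mov(w)=k$.

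Your route has the virtue of being self-contained---it uses only the \emph{statement} of Carter's theorem (linear independence), not its proof. The paper's route, on the other hand, is chosen deliberately: the same constructive argument (producing explicit preimages under $w-\Id$) is what they adapt over~$\Z$ in Proposition~\ref{prop:rankMod} to show $\rank_\Z \ModW(w)=k$, where your orthogonal-complement trick is unavailable since $R^\vee$ need not split as $\ModW(w)\oplus(\Fix(w)\cap R^\vee)$.
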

\begin{proof} As explained at the start of the proof of \cite[Proposition 5.1]{BradyMcCammond}, since each $s_{\beta_i}$ moves points just in the $\beta_i^\vee$-direction, we have $\Mov(w) \subseteq \Span_\R\{ \beta_1^\vee, \dots, \beta_k^\vee \}$. Now by one direction of Theorem~\ref{thm:Carter}, since $\ell_\cR(w) = k$, then the set $\{ \beta_1, \dots, \beta_k \}$ is linearly independent over $\R$. Moreover, Carter's proof of the other direction of Theorem~\ref{thm:Carter} shows that when the set $\{ \beta_1, \dots, \beta_k \}$, and thus $\{ \beta_1^\vee, \dots, \beta_k^\vee \}$, is linearly independent over $\R$, the subspace $(w - \Id)\R^n$ contains each of $\beta_1, \dots, \beta_k$, and thus also $\beta_1^\vee, \dots, \beta_k^\vee$, in turn. Since $\Mov(w)=(w - \Id)\R^n$, this completes the proof.
\end{proof}

\begin{rmk} In~\cite[Lemma 1.26]{LMPS}, the claims in Corollary~\ref{cor:dimMov} are stated to be proved via~\cite[Lemma 6.4]{BradyMcCammond}, which considers move-sets for elements of the full isometry group of Euclidean space. Tracing~\cite[Lemma 6.4]{BradyMcCammond} back, we arrive at~\cite[Proposition~5.1]{BradyMcCammond}. However, we do not understand the proof of the following statement within \cite[Proposition~5.1]{BradyMcCammond}: if the roots corresponding to the reflections $r_i$ are linearly independent, then $\dim_\R(\Mov(w)) = k$. Since Corollary~\ref{cor:dimMov} is important for our results, we have included here a proof which relies upon Theorem~\ref{thm:Carter} instead, for the sake of completeness.
\end{rmk}

%%%%%%%%%%%%%%%%%%%%%%%%%%%%%%%%%%%%%%%%%%%%%%%%%%%%%%%%%%%%%%%%%%%%%%
\subsection{Cuspidal and Coxeter elements}
\label{sec:cuspidalPrelim}

In this section we recall some background on cuspidal and Coxeter elements of $\sW$, mostly following Chapter 3 of \cite{GeckPfeifferBook}.

Given any subset $J \subseteq [n]$, we denote by $W_J$ the \emph{standard parabolic subgroup} of $\sW$ generated by $S_J = \{ s_j \mid j \in J \}$. Recall that $(W_J, S_J)$ is a spherical Coxeter system. A \emph{parabolic subgroup} of $\sW$ is then any subgroup which is conjugate to a standard parabolic subgroup. 
For $w \in \sW$, the \emph{parabolic closure} of $w$, denoted $\Pc(w)$, is the smallest parabolic subgroup of~$\sW$ which contains $w$.
The relationship between conjugation in $\sW$ and parabolic closures is given by the next lemma, whose proof is an easy exercise.

\begin{lemma}
	\label{lem:ConjPc} 
	For all $u,w \in \sW$, we have $u \Pc(w) u^{-1} = \Pc(uwu^{-1})$.
\end{lemma}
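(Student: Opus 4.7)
The plan is to prove the two inclusions $u\Pc(w)u^{-1} \subseteq \Pc(uwu^{-1})$ and $u\Pc(w)u^{-1} \supseteq \Pc(uwu^{-1})$ separately, using only the definition of parabolic closure as the smallest parabolic subgroup containing a given element and the fact that $\sW$-conjugates of parabolic subgroups are again parabolic.

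For the forward inclusion, I would first observe that since $\Pc(uwu^{-1})$ is, by definition, a parabolic subgroup of $\sW$ containing $uwu^{-1}$, its conjugate $u^{-1}\Pc(uwu^{-1})u$ is also a parabolic subgroup of $\sW$ (being a conjugate of a conjugate of a standard parabolic), and it contains $u^{-1}(uwu^{-1})u = w$. By minimality of $\Pc(w)$, we conclude $\Pc(w) \subseteq u^{-1}\Pc(uwu^{-1})u$, so $u\Pc(w)u^{-1} \subseteq \Pc(uwu^{-1})$.

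For the reverse inclusion, I would apply the same reasoning with the roles swapped: $u\Pc(w)u^{-1}$ is a parabolic subgroup of $\sW$ (as a conjugate of a parabolic), and it contains $uwu^{-1}$. By the minimality of $\Pc(uwu^{-1})$, we get $\Pc(uwu^{-1}) \subseteq u\Pc(w)u^{-1}$. Combining the two inclusions gives the desired equality.

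There is no substantive obstacle here; the only point to verify carefully is the closure of the class of parabolic subgroups under $\sW$-conjugation, which is immediate from the definition (a parabolic subgroup is any conjugate of a standard parabolic, and conjugation by elements of $\sW$ preserves this class). The argument is essentially a two-line universal property manipulation, which is why the excerpt describes it as an easy exercise.
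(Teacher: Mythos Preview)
Your argument is correct and is exactly the standard universal-property manipulation one would expect; the paper itself does not give a proof, stating only that it is ``an easy exercise,'' and your two-inclusion argument using minimality of $\Pc$ together with closure of the class of parabolic subgroups under conjugation is precisely that exercise.
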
 

For any $w \in \sW$, we write $\wconj$ for its conjugacy class in $\sW$.

\begin{definition}[Cuspidal]
	\label{def:cuspidal}
	A conjugacy class $\cC$ of $\sW$ is called \emph{cuspidal} if $\cC \cap W_J = \emptyset$ for all proper subsets $J \subsetneq [n]$.  
	An element $w \in \sW$ is called \emph{cuspidal} if its conjugacy class $\wconj$ is cuspidal.
\end{definition}

These definitions can also be phrased in terms of parabolic closures, as follows: a conjugacy class $\cC$ of $\sW$ is \emph{cuspidal} if $\Pc(w) = \sW$ for some, hence by \Cref{lem:ConjPc} any, $w \in \cC$, and $w \in \sW$ is \emph{cuspidal} if $\Pc(w) = \sW$.  If $w \in \sW$ is an element of a parabolic subgroup $\sW_J$, we will sometimes describe $w$ as being \emph{cuspidal in $\sW_J$} if $\Pc(w) = \sW_J$.

Let $w \in \sW$ and let $\cC$ be a $\sW$-conjugacy class.
We write $\supp(w)$ for the \emph{support} of $w$ in~$S$; that is, the set of simple reflections appearing in some, and hence any, reduced expression for~$w$ (see Corollary~1.2.3 of~\cite{GeckPfeifferBook}).
We denote by $\Cmin$  the set of minimal length elements of~$\cC$. 	

\begin{prop}
	\label{prop:cuspidal}
	Let $w \in \sW$ have $\sW$-conjugacy class $\cC$.  The following are equivalent:
	\begin{enumerate}
		\item\label{item1} $\cC$ is cuspidal.
		\item\label{item2} $\Fix(w) = \{ 0 \}$.
		\item\label{item3} $\Mov(w) = V^*$.
		\item\label{item7} $\supp(w) = S$ for all $w \in \Cmin$.
		\item\label{item8} There exists an element $w \in \Cmin$ such that $\supp(w) = S$.
	\end{enumerate}
\end{prop}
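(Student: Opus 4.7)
The plan is to establish the equivalences via two chains: the geometric chain $(1) \Leftrightarrow (2) \Leftrightarrow (3)$ and the combinatorial chain $(1) \Leftrightarrow (4) \Leftrightarrow (5)$.

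First, $(2) \Leftrightarrow (3)$ is immediate from the fact recalled in \Cref{sec:moveFix} that $\Fix(w)$ and $\Mov(w)$ are orthogonal complements in $V^*$ for every $w \in \sW$. For $(1) \Leftrightarrow (2)$, I would use the standard fact (see e.g.~\cite[Section~1.12]{Humphreys}) that the stabilizer in the finite reflection group $\sW$ of any point of $V^*$ is a parabolic subgroup of $\sW$, which is proper whenever the point is nonzero. For the contrapositive of $(1) \Rightarrow (2)$: if $\Fix(w)$ contains a nonzero vector $v$, then $w$ lies in the proper parabolic subgroup $\Stab_\sW(v)$, so some $\sW$-conjugate of $w$ lies in a proper standard parabolic $W_J$, giving $\cC \cap W_J \neq \emptyset$. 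For the contrapositive of $(2) \Rightarrow (1)$: if $\cC \cap W_J \neq \emptyset$ for some $J \subsetneq [n]$, then some conjugate $uwu^{-1}$ lies in $W_J$, which fixes pointwise the nontrivial subspace $\bigcap_{j \in J} \cH_{\alpha_j}$; since $\Fix(uwu^{-1}) = u\Fix(w)$, the fix-set of $w$ is also nontrivial.

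For the combinatorial chain, $(4) \Rightarrow (5)$ is trivial, while $(1) \Rightarrow (4)$ is immediate: any $w \in \cC$ lies in the standard parabolic $W_{\supp(w)}$, so $\supp(w) \neq S$ would place $w$ in a proper standard parabolic, contradicting cuspidality. The subtle direction is $(5) \Rightarrow (4)$, which I would establish as follows: by the theorem of Geck-Pfeiffer and He-Nie on minimal length conjugacy class representatives (see \cite[Chapter~3]{GeckPfeifferBook} and \cite{HeNie12}), any two elements of $\Cmin$ are connected by a sequence of cyclic shifts; and a cyclic shift applied to a reduced expression for an element of $\Cmin$ yields a reduced expression for another element of $\Cmin$ using the same simple reflections with multiplicity, so cyclic shifts preserve support on $\Cmin$. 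Hence all elements of $\Cmin$ share a common support, which gives $(5) \Rightarrow (4)$. Finally, for $(4) \Rightarrow (1)$: if $\cC$ is not cuspidal, then $\cC \cap W_J \neq \emptyset$ for some proper $J \subsetneq [n]$, and the structural theorem of Geck-Pfeiffer \cite[Chapter~3]{GeckPfeifferBook} furnishes a minimum length element $w' \in \cC \cap W_J$, so $w' \in \Cmin$ with $\supp(w') \subseteq J \subsetneq S$, contradicting $(4)$.

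The main obstacle is $(5) \Rightarrow (4)$, which genuinely requires the nontrivial combinatorial theory of minimal length representatives and cyclic shifts; the remaining steps are comparatively routine, reducing to standard facts about fix-sets, stabilizers, and parabolic subgroups in finite reflection groups.
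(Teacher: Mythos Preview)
Your proposal is correct. The paper takes a much terser route: it simply cites Proposition~3.1.12 of \cite{GeckPfeifferBook} for the equivalence of \eqref{item1}, \eqref{item7}, and \eqref{item8}, cites a remark on p.~77 of the same reference for $(1)\Rightarrow(2)$, and then uses the orthogonal-complement fact for $(2)\Leftrightarrow(3)$ as you do. In fact, as written the paper only argues $(1)\Rightarrow(2)$ and not the converse, so your direct argument via point-stabilizers being parabolic subgroups actually supplies the missing direction. Your treatment of $(5)\Rightarrow(4)$ via cyclic shifts preserving support on $\Cmin$, and of $(4)\Rightarrow(1)$ via the existence of a minimal-length representative inside $W_J$, is essentially an unpacking of what goes into Proposition~3.1.12 of \cite{GeckPfeifferBook}; you are reproving that result rather than citing it. The trade-off is clear: the paper's proof is three lines of citation, while yours is self-contained and makes the logical dependencies explicit.
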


\begin{proof}  
The equivalence of items \eqref{item1}, \eqref{item7}, and \eqref{item8} is Proposition~3.1.12 of~\cite{GeckPfeifferBook}.  Now suppose $\cC$ is cuspidal.  Then as remarked on p.~77 of \cite{GeckPfeifferBook}, the fix-set of $w$ in $V$, and thus in $V^*$, is trivial, so \eqref{item2} holds.  Therefore as $\Fix(w)$ and $\Mov(w)$ are orthogonal complements in~$V^*$, we obtain \eqref{item3}.   
\end{proof}

\begin{definition}[Coxeter elements]
	\label{defn:Coxeter}
	A \emph{Coxeter element} of $\sW$ is a product of all elements of~$S$, in any given order.
\end{definition}

Theorem~3.1.4 and Proposition~3.1.6 of~\cite{GeckPfeifferBook} imply the following statement.

\begin{prop}	
	\label{prop:CoxeterCuspidal} 
	The Coxeter elements of $\sW$ are contained in a single conjugacy class, which is cuspidal.
\end{prop}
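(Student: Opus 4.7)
The plan is to prove the two assertions of Proposition~\ref{prop:CoxeterCuspidal} separately: first, that every Coxeter element is cuspidal; and second, that all Coxeter elements lie in a single $\sW$-conjugacy class.

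\textbf{Cuspidality.} Let $c = s_{\alpha_{i_1}} s_{\alpha_{i_2}} \cdots s_{\alpha_{i_n}}$ be any Coxeter element. By Definition~\ref{defn:Coxeter}, the roots $\alpha_{i_1},\dots,\alpha_{i_n}$ are precisely the simple roots, which form a basis of $V$ and are in particular linearly independent over~$\R$. Hence Carter's theorem (Theorem~\ref{thm:Carter}) yields $\ell_\cR(c) = n$, and then Corollary~\ref{cor:dimMov} gives $\dim_\R(\Mov(c)) = n = \dim_\R V^*$. Therefore $\Mov(c) = V^*$, and by the equivalence of items~\eqref{item1} and~\eqref{item3} in Proposition~\ref{prop:cuspidal}, the element $c$ is cuspidal. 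Since cuspidality is a conjugacy class invariant, the $\sW$-conjugacy class of $c$ is cuspidal.

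\textbf{Single conjugacy class.} To show any two Coxeter elements of $\sW$ are conjugate, I would exploit two word-level moves on a factorization $c = s_{i_1} \cdots s_{i_n}$: (i) the \emph{cyclic shift} replacing $c$ by $s_{i_2} \cdots s_{i_n} s_{i_1} = s_{i_1} c\, s_{i_1}$, which is conjugation by $s_{i_1}$; and (ii) the \emph{commuting swap} of adjacent factors $s_{i_k}, s_{i_{k+1}}$ whenever these commute, which leaves $c$ itself unchanged. Both moves preserve the property of being a Coxeter element; move~(i) produces a conjugate. The assertion thus reduces to a combinatorial statement: any two linear orderings of the simple reflections $S = \{s_1, \dots, s_n\}$ are related by a sequence of these moves. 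Because $\aW$ is irreducible, the Coxeter diagram of $\sW$ is an irreducible spherical Coxeter diagram, and all such diagrams (by classification) are trees. Commuting corresponds exactly to non-adjacency in the diagram, so the problem becomes the purely combinatorial claim that any two linear orderings of the vertices of a tree $T$ are related by cyclic rotations together with transpositions of adjacent-in-ordering, non-adjacent-in-$T$ pairs.

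\textbf{Main obstacle.} The main work lies in the tree lemma, which I would prove by induction on the number of vertices $n$. For the inductive step, fix a leaf $v \in T$ with unique neighbor $u$, and note that $v$ commutes with every other vertex except $u$. The key sub-step is to show that $v$ can be moved to the first position of any ordering using only cyclic rotations and commuting swaps: swaps carry $v$ past any non-neighbor it meets, and when $v$ reaches $u$ from one side, a cyclic rotation ``wraps'' $v$ around the ends of the word, bypassing $u$ on the other side. Once both Coxeter elements are normalized to begin with $s_v$, their remaining suffixes are Coxeter elements of the parabolic subgroup $\langle S \setminus \{s_v\}\rangle$, whose diagram is the connected subtree $T \setminus \{v\}$; the inductive hypothesis on these suffixes produces a sequence of moves lifting to the full words and completing the proof. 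The only delicate verification is the cyclic bypass argument above, which is straightforward but requires care to ensure that the word at each intermediate step is a legitimate Coxeter word.
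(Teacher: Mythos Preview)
Your argument is correct. The paper does not actually prove this proposition; it simply cites Theorem~3.1.4 and Proposition~3.1.6 of Geck--Pfeiffer~\cite{GeckPfeifferBook}. Your approach therefore differs by supplying an argument rather than a citation. The cuspidality half is a nice use of results already assembled in the paper (Theorem~\ref{thm:Carter}, Corollary~\ref{cor:dimMov}, Proposition~\ref{prop:cuspidal}) and is arguably more in keeping with the paper's internal logic than a bare reference. The single-conjugacy-class half is essentially the standard proof (this is in fact what Geck--Pfeiffer do), and your sketch is sound; the one point you might tighten is the lifting of the inductive hypothesis. You correctly handle moving $s_v$ to the front, but the induction also requires that a cyclic rotation of the \emph{suffix} (an ordering of $S\setminus\{s_v\}$) be realizable by moves on the full word with $s_v$ fixed in front. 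This works by the same bypass trick: rotate the full word twice to bring the suffix's first letter past $s_v$ to the end, then commute $s_v$ back to the front through the remaining letters (all non-neighbors of $v$). Once that is noted, the induction closes cleanly.
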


In type $A_n$, by Example~3.1.16 of~\cite{GeckPfeifferBook}, the unique cuspidal conjugacy class of $\sW$ is the one containing all Coxeter elements.
However in general there exist cuspidal elements which are neither Coxeter nor in the conjugacy class containing the Coxeter elements, such as $w_0 = s_1 s_2 s_1 s_2$ in type $C_2$, equivalently type $B_2$.

%%%%%%%%%%%%%%%%%%%%%%%%%%%%%%%%%%%%%%%%%%%%%
\subsection{Summary of previous results on (co)conjugation}
\label{sec:summary}

In this section we give formal statements of the main definitions and results of \cite{MST4} on Euclidean isometry groups, in the setting of affine Coxeter groups. We also recall some relevant results from~\cite{MST2}. 

We first restate a key definition from the introduction above.

\begin{definition}[Mod-set, see Definition 1.1 of \cite{MST4}]  For any $x \in \aW$, the \emph{mod-set of $x$ (with respect to $\aW$)}  is given by:
	\[ 	\label{eq:mod} 
	\ModW(x) = (x - \Id) R^\vee = (\Id - x)R^\vee. 
	\]
\end{definition}

The next result gathers some first properties of mod-sets. Parts \eqref{mod-conj}, \eqref{mod-affine-translate}, and \eqref{mod-mov} in the next statement are special cases of Lemmas 2.1, 2.2, and 2.3 of~\cite{MST4}, respectively.

\begin{lemma}[Properties of mod-sets]\label{lem:mod} Let $\lambda \in R^\vee$ and $w \in \sW$. Then:
\begin{enumerate}
\item \label{mod-conj} For all $u \in \sW$, $u \ModW(w) = \ModW(uwu^{-1})$;
\item \label{mod-affine-translate}  $ \ModW(t^\lambda w)=\lambda+\ModW(w)$; and
\item \label{mod-mov} $\ModW(w) \subseteq \Mov(w) \cap R^\vee$.
\end{enumerate}
\end{lemma}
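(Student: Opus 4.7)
The plan is to verify each of the three claims by direct computation, relying only on the semidirect product structure $\aW = T \rtimes \sW$, the defining formula $\ModW(x) = (x - \Id)R^\vee$, and the essential observation that every element of the finite Weyl group $\sW$ preserves the coroot lattice $R^\vee$. None of these three assertions should require any structural input beyond these basic facts, so I expect no real obstacle; each part reduces to a one-line manipulation.

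For part \eqref{mod-conj}, I would begin by rewriting
\[
uwu^{-1} - \Id \;=\; u(w - \Id)u^{-1},
\]
so that $\ModW(uwu^{-1}) = u(w-\Id)u^{-1}R^\vee$. Since $u^{-1} \in \sW$ preserves the coroot lattice, we have $u^{-1}R^\vee = R^\vee$, and hence
\[
\ModW(uwu^{-1}) \;=\; u(w-\Id)R^\vee \;=\; u\ModW(w),
\]
as claimed.

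For part \eqref{mod-affine-translate}, I would compute the action of $t^\lambda w$ on an arbitrary $\mu \in R^\vee$ directly. Since $t^\lambda$ is translation by $\lambda$, we have $(t^\lambda w)(\mu) - \mu = \lambda + w\mu - \mu = \lambda + (w-\Id)\mu$. Ranging over all $\mu \in R^\vee$ then gives
\[
\ModW(t^\lambda w) \;=\; \{\,\lambda + (w-\Id)\mu \mid \mu \in R^\vee\,\} \;=\; \lambda + \ModW(w).
\]

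For part \eqref{mod-mov}, the containment $\ModW(w) \subseteq \Mov(w)$ is immediate from the definitions, since $(w-\Id)R^\vee \subseteq (w-\Id)V^* = \Range(w-\Id) = \Mov(w)$. Moreover, because $w \in \sW$ preserves $R^\vee$, both $w\mu$ and $\mu$ lie in $R^\vee$ for every $\mu \in R^\vee$, so $(w - \Id)R^\vee \subseteq R^\vee$. Combining these two containments yields $\ModW(w) \subseteq \Mov(w) \cap R^\vee$, completing the proof.
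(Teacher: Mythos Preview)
Your proof is correct. The paper does not actually prove this lemma itself; it simply cites the companion paper~\cite{MST4} (Lemmas 2.1, 2.2, and 2.3 there) for each of the three parts. Your direct computations from the definitions and the $\sW$-invariance of $R^\vee$ are exactly the natural arguments, and there is nothing more to it.
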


The following theorem describes conjugacy classes in $\aW$ in terms of mod-sets. For any $x \in \aW$, we write $\xconj = \{ yxy^{-1} \mid y \in \aW \}$ for its conjugacy class in $\aW$.

\begin{thm}[Closed form of conjugacy classes, see Theorem~1.2 of \cite{MST4}]
	\label{thm:ConjClass}  Let $x = t^\lambda w \in \aW$, where $\lambda \in R^\vee$ and $w \in \sW$.  Then the conjugacy class of $x$ in $\aW$ satisfies
	\begin{equation}\label{eq:conj1}
		\xconj    
		= \bigcup_{u \in \sW} u \left( t^{\ModW(w) } x \right) u^{-1} 
	\end{equation}
	and also
	\begin{equation}\label{eq:conj2}
		\xconj    
		= \bigcup_{u \in \sW} t^{u( \lambda + \ModW(w) )} u w  u^{-1} = \bigcup_{u \in \sW} t^{u\ModW(x) } u w  u^{-1}.
	\end{equation}
\end{thm}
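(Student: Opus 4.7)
The plan is a direct semidirect product computation, leveraging the fact that every $y \in \aW$ factors uniquely as $y = t^\mu u$ with $\mu \in R^\vee$ and $u \in \sW$, and that conjugating a translation by a spherical element obeys $u t^\nu u^{-1} = t^{u\nu}$.

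First I would compute $yxy^{-1}$ explicitly for $y = t^\mu u$ and $x = t^\lambda w$. Pushing the translation $t^{-\mu}$ leftward through $uwu^{-1}$ at the end converts it to $t^{-uwu^{-1}\mu}$, and using $u t^\lambda u^{-1} = t^{u\lambda}$ yields
\[
yxy^{-1} = t^{u\lambda + (\Id - uwu^{-1})\mu}\, uwu^{-1}.
\]
This identity is the workhorse of the proof; everything after it is bookkeeping.

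Next, I would fix $u \in \sW$ and let $\mu$ vary over all of $R^\vee$. By the definition of the mod-set, $(\Id - uwu^{-1})\mu$ ranges exactly over $\ModW(uwu^{-1}) = u\,\ModW(w)$, where the second equality is Lemma~\ref{lem:mod}\eqref{mod-conj}. Hence the set of conjugates of $x$ by elements of the coset $Tu$ is precisely
\[
t^{u\lambda + u\,\ModW(w)}\, uwu^{-1} \;=\; t^{u(\lambda + \ModW(w))}\, uwu^{-1}.
\]
Taking the union over $u \in \sW$ yields the middle expression in~\eqref{eq:conj2}. Applying Lemma~\ref{lem:mod}\eqref{mod-affine-translate}, which gives $\lambda + \ModW(w) = \ModW(t^\lambda w) = \ModW(x)$, converts this to the rightmost expression in~\eqref{eq:conj2}.

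Finally, to obtain~\eqref{eq:conj1}, I would expand $u(t^{\ModW(w)} x)u^{-1}$ directly: for any $\nu \in \ModW(w)$,
\[
u\,(t^\nu x)\,u^{-1} = u\, t^{\nu + \lambda} w\, u^{-1} = t^{u(\nu + \lambda)}\, uwu^{-1},
\]
so as $\nu$ ranges over $\ModW(w)$ we recover $t^{u(\lambda + \ModW(w))}\,uwu^{-1}$, matching~\eqref{eq:conj2}. I do not anticipate a genuine obstacle here; the only point requiring care is ensuring the translation and spherical parts are commuted in the correct order so that $(\Id - uwu^{-1})\mu$ (rather than, say, $(\Id - w)\mu$) appears in the exponent. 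Once that is done correctly, the two invocations of Lemma~\ref{lem:mod} do the rest.
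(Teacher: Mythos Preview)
Your argument is correct: the computation $yxy^{-1} = t^{u\lambda + (\Id - uwu^{-1})\mu}\,uwu^{-1}$ for $y = t^\mu u$ is exactly right, and the two applications of Lemma~\ref{lem:mod} finish the job. Note, however, that the present paper does not actually prove Theorem~\ref{thm:ConjClass}; it is quoted from the companion paper~\cite{MST4} (as Theorem~1.2 there), so there is no in-paper proof to compare against. Your direct semidirect-product computation is the natural argument and is presumably what~\cite{MST4} does in the more general setting of split crystallographic groups.
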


For any $x\in\aW$ the class $\xconj$ is thus obtained by first translating $x$ by all elements of $\ModW(w)$, and then conjugating the so-obtained collection $t^{\ModW(w)}x$ by all elements of $\sW$. Alternatively, for each $u \in \sW$, we translate the $u$-conjugate of the spherical part $w$ of $x$ by the set $t^{u( \lambda + \ModW(w) )}=t^{u \lambda} t^{u\ModW(w)}$.

Let us emphasize here that the conjugacy class of every element $x=t^\lambda w\in\aW$ is determined by finite data: the conjugacy class of the spherical part $w$ in the finite Coxeter group $\sW$, together with the collection of shifted mod-sets $t^{u( \lambda + \ModW(w))}$, where $u \in \sW$ varies.

\begin{corollary}[Conjugacy classes and move-sets, see Corollary 1.3 of~\cite{MST4}] 
Let $x = t^\lambda w \in \aW$, where $\lambda \in R^\vee$ and $w \in \sW$. Then,
\begin{equation}\label{eq:conjMov1}
		\xconj    
		\subseteq \bigcup_{u \in \sW} u \left( t^{\Mov(w) \cap R^\vee} x \right) u^{-1}
		\end{equation}
and also
\begin{equation}\label{eq:conjMov2}
		\xconj  
		\subseteq \bigcup_{u \in \sW} t^{u(\Mov(x) \cap R^\vee)} u w  u^{-1}.
		\end{equation}
\end{corollary}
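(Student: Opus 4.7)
The plan is to deduce both inclusions directly from the closed-form descriptions of the conjugacy class in Theorem~\ref{thm:ConjClass} together with the containment $\ModW(w) \subseteq \Mov(w) \cap R^\vee$ recorded as part~\eqref{mod-mov} of Lemma~\ref{lem:mod}. There is no real obstacle here; the statement is essentially a corollary in the literal sense, and the work is just in correctly threading the translation part through the move-set.

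For the first inclusion, I would start from equation~\eqref{eq:conj1} of Theorem~\ref{thm:ConjClass}, which gives
\[
\xconj = \bigcup_{u \in \sW} u\bigl( t^{\ModW(w)} x \bigr) u^{-1}.
\]
Since $\ModW(w) \subseteq \Mov(w) \cap R^\vee$ by Lemma~\ref{lem:mod}\eqref{mod-mov}, we have $t^{\ModW(w)} x \subseteq t^{\Mov(w) \cap R^\vee} x$, and conjugating both sides by each $u \in \sW$ preserves this containment. Taking the union over $u \in \sW$ yields the first inclusion.

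For the second inclusion, I would first upgrade Lemma~\ref{lem:mod}\eqref{mod-mov} from $w$ to $x$, namely verify that $\ModW(x) \subseteq \Mov(x) \cap R^\vee$. By part~\eqref{mod-affine-translate} of Lemma~\ref{lem:mod} we have $\ModW(x) = \lambda + \ModW(w)$, and by the translation formula $\Mov(x) = \lambda + \Mov(w)$ recalled in Section~\ref{sec:moveFix}. So any $\mu \in \ModW(x)$ has the form $\mu = \lambda + \nu$ with $\nu \in \ModW(w) \subseteq \Mov(w) \cap R^\vee$; hence $\mu \in \lambda + \Mov(w) = \Mov(x)$, and $\mu \in R^\vee$ since both $\lambda$ and $\nu$ lie in $R^\vee$. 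Applying $u$ preserves both the coroot lattice and the move-set, so $u \ModW(x) \subseteq u(\Mov(x) \cap R^\vee)$. Substituting this into the second form of~\eqref{eq:conj2}, which reads
\[
\xconj = \bigcup_{u \in \sW} t^{u \ModW(x)} u w u^{-1},
\]
gives the second inclusion and completes the proof.
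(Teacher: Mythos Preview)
Your proof is correct and is exactly the natural argument: both inclusions follow immediately from Theorem~\ref{thm:ConjClass} together with the containment $\ModW(w)\subseteq\Mov(w)\cap R^\vee$ of Lemma~\ref{lem:mod}\eqref{mod-mov}, extended to $x$ via $\ModW(x)=\lambda+\ModW(w)$ and $\Mov(x)=\lambda+\Mov(w)$. The paper itself does not supply a proof here but simply cites the companion paper~\cite{MST4}; your derivation is precisely the intended one.
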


\noindent The second main definition of~\cite{MST4} is motivated by these containments.

\begin{definition}[Filling, see Definition 1.4 of~\cite{MST4}]\label{defn:filling} Let $x \in \aW$. We say that $x$ \emph{fills its move-set}, or that \emph{filling occurs for $x$}, if
\[
\ModW(x) = \Mov(x) \cap R^\vee.
\]
\end{definition}

In~\cite{MST2} we studied the filling condition for simple reflections, using direct computations.  
We will use the next result to study the filling property for arbitrary elements of $\aW$. Its proof (in~\cite{MST4}) is similar to that of \cite[Lemma~5.8]{MST2}. 

\begin{prop}[Conjugacy classes and filling, see Proposition 2.4 of~\cite{MST4}]\label{prop:conjFilling} For all $x = t^\lambda w \in \aW$, where $\lambda \in R^\vee$ and $w \in \sW$, the following are equivalent:
\begin{enumerate}
\item\label{fill:x} $x$ fills its move-set;
\item\label{fill:w} $w$ fills its move-set;
\item\label{fill:xconjw} $\xconj    =  \bigcup_{u \in \sW} u \left( t^{\Mov(w) \cap R^\vee} x \right) u^{-1}$; and
\item\label{fill:xconjx} $\xconj =   \bigcup_{u \in \sW} t^{u(\Mov(x) \cap R^\vee)} u w  u^{-1}$.
\end{enumerate}
\end{prop}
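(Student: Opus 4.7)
The plan is to prove the chain of implications $(1) \Leftrightarrow (2) \Rightarrow (3) \Leftrightarrow (4) \Rightarrow (2)$, with the last direction being the most subtle.

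First, $(1) \Leftrightarrow (2)$: by Lemma~\ref{lem:mod}\eqref{mod-affine-translate}, we have $\ModW(x) = \lambda + \ModW(w)$, and since $\Mov(x) = \lambda + \Mov(w)$ with $\lambda \in R^\vee$, we also have $\Mov(x) \cap R^\vee = \lambda + (\Mov(w) \cap R^\vee)$. Translating both sides by $-\lambda$ yields that $\ModW(x) = \Mov(x) \cap R^\vee$ precisely when $\ModW(w) = \Mov(w) \cap R^\vee$.

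Next, $(2) \Rightarrow (3)$ follows by substituting the equality $\ModW(w) = \Mov(w) \cap R^\vee$ into the first equality of Theorem~\ref{thm:ConjClass}. For $(3) \Leftrightarrow (4)$, I would expand $u(t^\mu x)u^{-1} = t^{u(\mu + \lambda)}\, uwu^{-1}$, apply the identity $u(\Mov(w) \cap R^\vee) = \Mov(uwu^{-1}) \cap R^\vee$ (from $u\Mov(w) = \Mov(uwu^{-1})$ and $u R^\vee = R^\vee$), and rewrite the RHS of (4) using $\Mov(x) \cap R^\vee = \lambda + (\Mov(w) \cap R^\vee)$. Both unions collapse to the common expression $\bigcup_{u \in \sW} t^{u\lambda + \Mov(uwu^{-1}) \cap R^\vee}\, uwu^{-1}$.

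The main technical step is $(3) \Rightarrow (2)$. Fix $\mu \in \Mov(w) \cap R^\vee$; the goal is to show $\mu \in \ModW(w)$. Applying (3) with $u = 1$ gives $t^\mu x \in \xconj$, so there exists $y = t^\eta v \in \aW$ with $\eta \in R^\vee$ and $v \in \sW$ such that $yxy^{-1} = t^\mu x$. Multiplying out gives $yxy^{-1} = t^{(I-w)\eta + v\lambda}\, vwv^{-1}$, and equating with $t^{\mu + \lambda} w$ yields $v \in \Cent_\sW(w)$ together with $\mu = (I-w)\eta + (v - I)\lambda$. Since $(I-w)\eta \in \ModW(w)$, the problem reduces to proving the inclusion $(v - I)\lambda \in \ModW(w)$.

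The hard part will be establishing this centralizer claim: whenever $v \in \Cent_\sW(w)$ and $(v-I)\lambda \in \Mov(w)$, one has $(v-I)\lambda \in \ModW(w)$. Since $v$ commutes with $w$, the operators $v - I$ and $I - w$ commute on $V^*$, so $v$ preserves the orthogonal decomposition $V^* = \Fix(w) \oplus \Mov(w)$ and the submodule $\ModW(w) = (I-w)R^\vee$ of $R^\vee$. The assumption $(v-I)\lambda \in \Mov(w)$ forces $v$ to fix the $\Fix(w)$-component $\lambda_f$ of $\lambda$, so $(v-I)\lambda = (v-I)\lambda_m$ where $\lambda_m$ is the $\Mov(w)$-component of $\lambda$. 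Using that $(I-w)$ is invertible on $\Mov(w)$, combined with an averaging argument over the finite cyclic group $\langle w \rangle$ and integrality considerations on $R^\vee$, one should produce an $\eta' \in R^\vee$ with $(I-w)\eta' = (v-I)\lambda$. This step is analogous to the construction in \cite[Lemma 5.8]{MST2}.
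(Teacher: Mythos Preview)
The paper does not actually prove this proposition here; it is quoted from \cite{MST4} with the remark that the argument is ``similar to that of \cite[Lemma~5.8]{MST2}''. So there is no in-paper proof to compare against, and your proposal must stand on its own.

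Your arguments for $(1)\Leftrightarrow(2)$, $(2)\Rightarrow(3)$, and $(3)\Leftrightarrow(4)$ are correct. The problem is the step $(3)\Rightarrow(2)$: your ``centralizer claim'' --- that whenever $v\in\Cent_\sW(w)$ and $(v-I)\lambda\in\Mov(w)$ one has $(v-I)\lambda\in\ModW(w)$ --- is \emph{false}. In type~$A_3$ take $w=s_1s_3$, $v=s_1\in\Cent_\sW(w)$, and $\lambda=\alpha_2^\vee$. Then $(v-I)\lambda=-\alpha_1^\vee$, which lies in $\Mov(w)\cap R^\vee=\Z\alpha_1^\vee\oplus\Z\alpha_3^\vee$ but not in $\ModW(w)=\{a\alpha_1^\vee+b\alpha_3^\vee: a+b\equiv 0\pmod 2\}$. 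Your averaging sketch only yields $N\cdot(v-I)\lambda\in\ModW(w)$ for $N$ the order of $w$, which is not enough.

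Worse, this same example shows that the implication $(3)\Rightarrow(2)$, as literally stated, \emph{fails}: for $x=t^{\alpha_2^\vee}(s_1s_3)$ one checks that the $\Cent_\sW(w)$-orbit of $\alpha_2^\vee$ meets both cosets of $\ModW(w)$ inside $\Mov(w)\cap R^\vee$, so the spherical-$w$ pieces of $[x]$ and of $\bigcup_u u(t^{\Mov(w)\cap R^\vee}x)u^{-1}$ coincide (both equal $t^{\pm\alpha_2^\vee+\Z\alpha_1^\vee+\Z\alpha_3^\vee}w$), hence (3) holds; yet (2) fails since $w$ does not fill its move-set. So either the statement recalled from \cite{MST4} carries an additional hypothesis not reproduced here, or the equivalence $(2)\Leftrightarrow(3)$ must be read as holding uniformly in $\lambda$ rather than for a single $x$. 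In either case, your reduction to the centralizer claim cannot be completed as written.
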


A \emph{component} of a conjugacy class $\xconj$ is defined in~\cite[Definition 2.5]{MST4} to be a subset of $\xconj$ of the form $u(t^{\ModW(w)} x)u^{-1}$ with $u\in \sW$; that is, a set appearing in the union given by \eqref{eq:conj1} above.
We write $\CompW(x)$ for the set of components of $\xconj$. By definition, the group $\sW$ acts transitively by conjugation on $\CompW(x)$. The following result is discussed in the introduction in Remark~\ref{rmk:components}.

\begin{thm}[Components, see Theorem~1.11 of~\cite{MST4}] 
	\label{thm:Components}   Let $x = t^\lambda w \in \aW$, where $\lambda \in R^\vee$ and $w \in \sW$. Then:
	\begin{enumerate}
		\item The conjugation action of $\TW$ induces a transitive action by translations on the elements of each component of $\xconj$.
		\item Linearization induces a natural surjection from $\CompW(x)$ to $\CompW(w)$. 
		\item There is a natural bijection between  $\CompW(w)$ and  $[w]_{\sW}$.   
	\end{enumerate}
\end{thm}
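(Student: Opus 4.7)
The plan is to prove the three parts in sequence, all flowing from the elementary computation
\[
t^\nu(t^\kappa v)t^{-\nu} \;=\; t^{\kappa + (\Id - v)\nu}\,v,
\]
valid for any $t^\kappa v \in \aW$ and $\nu \in R^\vee$. This identity says that the $T$-conjugation action on $t^\kappa v$ affects only the translation part, and shifts it precisely by an element of $\ModW(v) = (\Id - v)R^\vee$.

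For part~(1), I would fix a component $C = u(t^{\ModW(w)}x)u^{-1}$ of $\xconj$ and write its general element as $t^\kappa v$ with $v = uwu^{-1}$ and $\kappa = u(\mu + \lambda)$ for some $\mu \in \ModW(w)$. The displayed identity, together with part~\eqref{mod-conj} of \Cref{lem:mod} (which gives $\ModW(v) = u\ModW(w)$), then shows that the $T$-conjugation orbit of this element coincides exactly with the entire component $C$, establishing transitivity.

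For part~(3), I would define $\Phi: \CompW(w) \to \wconj$ by sending the component $u(t^{\ModW(w)}w)u^{-1}$ to $uwu^{-1}$. The key observation is that $uwu^{-1}$ is the unique element of this component with trivial translation part: starting from $t^{u\mu}\,uwu^{-1}$ with $\mu \in \ModW(w)$, the translation part $u\mu$ vanishes iff $\mu = 0$. This makes $\Phi$ well-defined and injective, while surjectivity is immediate from the definition of $\wconj$. Then for part~(2), the linearization map $\aW \to \sW$, $t^\lambda v \mapsto v$, is constant on each component of $\xconj$ with value $uwu^{-1}$ for any representative $u$; composing this constant value with $\Phi^{-1}$ yields the desired map $\CompW(x) \to \CompW(w)$, explicitly sending $u(t^{\ModW(w)}x)u^{-1}$ to $u(t^{\ModW(w)}w)u^{-1}$, and surjectivity is clear since both sides are parameterized by the same $u \in \sW$.

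The main challenge is bookkeeping rather than geometry, since components are parameterized by (but not in bijection with) elements of $\sW$: I must consistently check that each map respects the equivalence relation on $\sW$ whereby $u_1 \sim u_2$ iff $u_1(t^{\ModW(w)}\cdot)u_1^{-1} = u_2(t^{\ModW(w)}\cdot)u_2^{-1}$. Once part~(1) identifies each component as a single $T$-conjugation orbit, parts~(2) and~(3) become essentially formal, so the computational core of the theorem lies in recognizing the $T$-orbit as the mod-set translate of any element's spherical part.
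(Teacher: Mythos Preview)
The paper does not actually prove this theorem: it is stated in Section~2.4 as a summary of results from the companion paper~\cite{MST4}, with the explicit citation ``see Theorem~1.11 of~\cite{MST4}'', and Remark~1.2 says ``We prove the statements sketched in this remark for arbitrary $\aW$ in~\cite{MST4}.'' So there is no in-paper proof to compare against.

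That said, your argument is correct and is essentially the natural one. The identity $t^\nu(t^\kappa v)t^{-\nu} = t^{\kappa + (\Id - v)\nu}v$ together with $\ModW(uwu^{-1}) = u\ModW(w)$ (Lemma~\ref{lem:mod}\eqref{mod-conj}) does identify each component with a single $T$-conjugation orbit, giving part~(1). Your treatment of~(3) via the unique element of trivial translation part in each component of $[w]$ is clean; for injectivity you should also observe (as you implicitly do) that if $u_1 w u_1^{-1} = u_2 w u_2^{-1}$ then $u_1\ModW(w) = \ModW(u_1 w u_1^{-1}) = \ModW(u_2 w u_2^{-1}) = u_2\ModW(w)$, so the two components really coincide. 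Part~(2) then follows as you describe. Your proof is self-contained and matches what one would expect the argument in~\cite{MST4} to be.
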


\medskip

For $x,x' \in \aW$ we define the \emph{coconjugation set} $$\coconjW{x}{x'} = \{ y \in \aW \mid yxy^{-1} = x' \},$$ and for $w,w' \in \sW$ we define the \emph{spherical coconjugation set} $$\scoconjW{w}{w'} = \{ u \in \sW \mid uwu^{-1} = w' \}.$$ Our geometric description of $\coconjW{x}{x'}$ involves the following subset of the corresponding spherical coconjugation set.

\begin{definition}[see Definition 1.12 of~\cite{MST4}] Let $x = t^\lambda w$ and $x' = t^{\lambda'} w'$ be elements of~$\aW$, where $\lambda, \lambda' \in R^\vee$ and $w,w' \in \sW$. The \emph{translation-compatible part} of the coconjugation set $\scoconjW{w}{w'}$ is the set
\[
\tcCoconj_\sW^{\lambda,\lambda'}(w,w') = \{ u \in \scoconjW{w}{w'} \mid \lambda' - u\lambda \in \ModW(w') \}.
\]
\end{definition}

The final main result of~\cite{MST4} describes coconjugation sets as follows. 

\begin{thm}[Coconjugation, see Theorem 1.13 of~\cite{MST4}]
	\label{thm:coconj} Let $x=t^{\lambda}w$ and $x' = t^{\lambda'}w'$ be elements of $\aW$, where $\lambda, \lambda' \in R^\vee$ and $w, w' \in \sW$.  Then 
	\begin{equation}\label{eq:coconjNonempty}
		\coconjW{x}{x'} \neq \emptyset \;\; \Longleftrightarrow\;  \tcCoconj_\sW^{\lambda,\lambda'}(w,w') \neq \emptyset.
	\end{equation}
	Moreover, if these sets are nonempty, then
	\begin{equation}\label{eq:coconjFix}
	\coconjW{x}{x'} = \bigsqcup_{u \in \tcCoconj_\sW^{\lambda,\lambda'}(w,w')} t^{\eta_{u}+ (\Fix(w') \cap R^\vee)} u 
	\end{equation}
	where for each $u$, the element $\eta_{u} \in R^\vee$ is a solution to the equation 
	\begin{equation}\label{eq:coconj}
		\lambda'- u\lambda=(\Id - w')\eta.
	\end{equation}
	In the special case that $\Fix(w)=\{0\}$, we have that 
	\[ 
	\eta_u=(\Id-w')^{-1}(\lambda'-u\lambda)
	\] 
	is the unique solution to~\eqref{eq:coconj}, and $\coconjW{x}{x'}$ is in bijection with $\tcCoconj_\sW^{\lambda,\lambda'}(w,w')$.	
\end{thm}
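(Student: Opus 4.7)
The plan is to unfold the definitions, write a general element of $\aW$ in the form $y = t^\mu u$ with $\mu \in R^\vee$ and $u \in \sW$, and directly compute $yxy^{-1}$ using the semidirect product relation $w'' t^\nu = t^{w'' \nu} w''$ for $w'' \in \sW$ and $\nu \in R^\vee$. One obtains
\[
yxy^{-1} = t^{\mu + u\lambda - (uwu^{-1})\mu}\, uwu^{-1}.
\]
Setting this equal to $x' = t^{\lambda'} w'$ and using the uniqueness of the decomposition $\aW = T \rtimes \sW$ yields the two coupled conditions
\[
uwu^{-1} = w' \qquad \text{and} \qquad (\Id - w')\mu = \lambda' - u\lambda.
\]

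The first condition says exactly that $u \in \scoconjW{w}{w'}$. Given the first, the second condition for some $\mu \in R^\vee$ amounts to $\lambda' - u\lambda \in (\Id - w')R^\vee = \ModW(w')$. Combining these two statements shows that $u$ is the spherical part of some element $y \in \coconjW{x}{x'}$ if and only if $u \in \tcCoconj_\sW^{\lambda,\lambda'}(w,w')$, which immediately establishes the equivalence in \eqref{eq:coconjNonempty}. For a fixed $u$ in this translation-compatible part, the set of $\mu \in R^\vee$ satisfying $(\Id - w')\mu = \lambda' - u\lambda$ is an affine subspace: choosing any particular solution $\eta_u$, the general solution is $\eta_u + (\Ker(\Id - w') \cap R^\vee) = \eta_u + (\Fix(w') \cap R^\vee)$.

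To assemble \eqref{eq:coconjFix}, I would observe that the spherical part $u$ of $y = t^\mu u$ is uniquely determined by $y$, so the union over $u \in \tcCoconj_\sW^{\lambda,\lambda'}(w,w')$ is automatically disjoint, which justifies the symbol $\bigsqcup$. The special case when $\Fix(w) = \{0\}$ then follows from the observation that if $u \in \scoconjW{w}{w'}$, then $\Fix(w') = u\,\Fix(w) = \{0\}$, so $\Id - w'$ is invertible on $V^*$ and the formula $\eta_u = (\Id - w')^{-1}(\lambda' - u\lambda)$ gives the unique solution, making each term in the disjoint union a singleton. There is no real obstacle in this argument beyond careful bookkeeping of the semidirect product identities; the only point requiring mild care is ensuring that the $\eta_u$ can be chosen in $R^\vee$ (rather than merely in $V^*$), which is precisely guaranteed by the translation-compatibility condition $\lambda' - u\lambda \in \ModW(w') = (\Id - w')R^\vee$.
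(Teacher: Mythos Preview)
Your argument is correct and complete. The paper itself does not prove this theorem; it is stated in Section~\ref{sec:summary} as a result imported from the companion paper~\cite{MST4} (specifically, their Theorem~1.13), so there is no in-paper proof to compare against. Your direct computation in the semidirect product $\aW = T \rtimes \sW$ is the natural approach and is almost certainly what \cite{MST4} does as well: the identity $yxy^{-1} = t^{\mu + u\lambda - (uwu^{-1})\mu}\,uwu^{-1}$ immediately separates the problem into a spherical conjugacy condition and a linear equation over $R^\vee$, and your handling of both pieces (including the observation that $\Fix(w') = u\,\Fix(w)$ for the final special case) is clean and accurate.
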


%%!TEX root = ConjugationAffineCoxeter-main-20240629-PS.tex

%%%%%%%%%%%%%%%%%%%%%%%%%%%%%%%%%%%%%%%%%%%%%%%%%%%%%%%%%%%%%%%%%%%%%%%%%%%%%
%%%%%%%%%%%%%%%%%%%%%%%%%%%%%%%%%%%%%%%%%%%%%%%%%%%%%%%%%%%%%%%%%%%%%%%%%%%%%
\section{Mod-sets and move-sets}\label{sec:modMove}

%%%%%%%%%%%%%%%%%%%%%%%%%%%%%%%%%%%%%%%%%%%%%%%%%%%%%%%%%%%%%%%%%%%%%%%%%%%%%
%%%%%%%%%%%%%%%%%%%%%%%%%%%%%%%%%%%%%%%%%%%%%%%%%%%%%%%%%%%%%%%%%%%%%%%%%%%%%

In this section we prove Theorem~\ref{thm:modMoveW} and Corollary~\ref{cor:modMoveW} of the introduction. We also prove the generalization Theorem~\ref{thm:modMove} below, which applies to both affine Coxeter groups and their split crystallographic subgroups.

The bulk of this section is devoted to the proof of Theorem~\ref{thm:modMoveW}. This says that for $w \in \sW$, 
\[
\rank_\Z(\ModW(w)) = \dim_\R(\Mov(w)) = \ell_\cR(w),
\]
where $\ell_R(w)$ is the reflection length of $w$ (see Section~\ref{sec:affine}). To prove these equalities, we establish some results on intersections of hyperplanes in Section~\ref{sec:hyperplanes}, then use these to complete the proof of  Theorem~\ref{thm:modMoveW} in Section~\ref{sec:proofsModMoveW}. We then state and prove Theorem~\ref{thm:modMove} in Section~\ref{sec:proofsModMoveH}.

%%%%%%%%%%%%%%%%%%%%%%%%%%%%%%%%%%%%%%%%%%%%%
\subsection{Hyperplane intersections}\label{sec:hyperplanes}

In this section we establish two technical results on hyperplane intersections that we will use in our proof of Theorem~\ref{thm:modMoveW}. These two results are obvious if working over $\R$, but require some care over $\Z$.

Recall from Section~\ref{sec:affine} that for any root $\alpha \in \Phi$, we denote by $\cH_{\alpha}$ the (linear) hyperplane which is the fix-set of the reflection $s_\alpha$.

\begin{lemma}\label{lem:intersection} Let $\{ \beta_1, \dots, \beta_k \} \subset \Phi$ be a set of roots such that the set of corresponding coroots $\{ \beta_1^\vee, \dots, \beta_k^\vee \}$ is linearly independent over~$\Z$. Then the submodule of $R^\vee$ given by
\begin{equation}\label{eq:intersection}
\left( \bigcap_{i=1}^k \cH_{\beta_i} \right) \cap R^\vee
\end{equation}
has rank $(n-k)$ as a $\Z$-module.
\end{lemma}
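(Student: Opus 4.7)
The plan is to translate the hyperplane intersection into the kernel of an explicit $\Z$-linear map, so that rank-nullity for finitely generated abelian groups does the counting work. Concretely, I would define
\[
\phi \colon R^\vee \to \Z^k, \qquad \phi(z) = \bigl(\langle z,\beta_1\rangle,\ldots,\langle z,\beta_k\rangle\bigr).
\]
This is well-defined with values in $\Z^k$ because on the generators $\alpha_j^\vee$ of $R^\vee$, each pairing $\langle \alpha_j^\vee,\beta_i\rangle$ is an integer (it is the $(i,j)$-entry of a Cartan-type matrix). By the definition of $\cH_{\beta_i}$, the submodule in \eqref{eq:intersection} is precisely $\Ker\phi$.

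Since $R^\vee$ and $\Z^k$ are free abelian and $\phi$ is a homomorphism, the short exact sequence $0 \to \Ker\phi \to R^\vee \to \Image\phi \to 0$ gives $\rank_\Z \Ker\phi = n - \rank_\Z \Image\phi$, so it remains to show that $\Image\phi$ has rank $k$. Let $M\in M_{k\times n}(\Z)$ be the matrix of $\phi$ in the $\Z$-bases $\{\alpha_1^\vee,\ldots,\alpha_n^\vee\}$ and the standard basis of $\Z^k$. Its $\Z$-rank equals its $\R$-rank, which can be computed after extending scalars: the induced map $\phi_\R \colon V^* \to \R^k$ sends $z$ to $(\langle z,\beta_i\rangle)_{i=1}^k$, and its kernel is exactly $\bigcap_{i=1}^k \cH_{\beta_i}$.

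The crucial step is then to check that this $\R$-kernel has dimension $n-k$, for which I need $\{\beta_1,\ldots,\beta_k\}$ to be $\R$-linearly independent in $V$. Here is where the hypothesis enters: $\{\beta_1^\vee,\ldots,\beta_k^\vee\}$ is $\Z$-linearly independent in $R^\vee$, and because $R^\vee = \bigoplus_{j=1}^n \Z\alpha_j^\vee$ has a $\Z$-basis which is simultaneously an $\R$-basis of $V^*$, any $\Z$-linearly independent subset of $R^\vee$ remains $\R$-linearly independent inside $V^*$. Identifying $V$ with $V^*$ via the form $B$, the map $\beta\mapsto\beta^\vee = 2\beta/(\beta,\beta)$ rescales each root by a positive scalar, so $\R$-linear independence of the $\beta_i^\vee$ transfers to $\R$-linear independence of the $\beta_i$. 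Hence $\phi_\R$ has rank $k$, $M$ has rank $k$ over $\Z$, and $\rank_\Z \Ker\phi = n-k$, as desired.

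The main obstacle, such as it is, lies in the bookkeeping between $\Z$-linear independence of the coroots (what is assumed) and $\R$-linear independence of the roots (what is needed to compute the dimension of the hyperplane intersection). Once this passage is handled — which reduces to the fact that $\Z$-independence inside the lattice $R^\vee$ is equivalent to $\R$-independence inside the enveloping vector space, combined with $\beta^\vee \in \R_{>0}\cdot\beta$ — the argument is a standard application of rank-nullity to an integer matrix.
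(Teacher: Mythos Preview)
Your proof is correct and follows essentially the same approach as the paper: both identify the submodule as the kernel of the $\Z$-linear map $z \mapsto (\langle z,\beta_i\rangle)_i$ and compute its rank by passing to $\Q$ or $\R$, where linear independence of the $\beta_i$ forces the solution space to have dimension $n-k$. Your packaging via rank--nullity is slightly cleaner than the paper's separate upper- and lower-bound arguments, and you are more explicit than the paper about the passage from $\Z$-independence of the $\beta_i^\vee$ to $\R$-independence of the $\beta_i$.
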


\begin{proof} 
To simplify notation, for $1 \leq i \leq k$ write $\cH_i$ for $\cH_{\beta_i}$, and write $M$ for the intersection given in~\eqref{eq:intersection}. Then $M = 
 \bigcap_{i=1}^k \left( \cH_i \cap R^\vee \right)$, and for $1 \leq i \leq k$, we have 
 \[ 
 \cH_i \cap R^\vee = \{ \mu \in R^\vee \mid \langle \mu,  \beta_i \rangle = 0 \}.
 \] 
Write $\mu = \sum_{i \in [n]} m_i \alpha_i^\vee$ for unknown $m_i \in \Z$, and expand $\beta_i = \sum_{j \in [n]} b_{ij} \alpha_j$ where $b_{ij} \in \Z$.
Since the evaluation pairing on $V^* \times V$ is bilinear,  the condition $\langle \mu, \beta_i \rangle = 0$ then becomes a single equation in the $n$ variables $m_i$.  Moreover, since $\langle \alpha_i^\vee, \alpha_j \rangle \in \Z$ for all $1 \leq i,j \leq n$, and all  $b_{ij} \in \Z$, the equation  $\langle \mu, \beta_i \rangle = 0$ has $\Z$-coefficients.
We may thus identify the points of $M$ with the set of solutions over $\Z$ to the corresponding system of $k$ homogeneous linear equations in $n$ variables, with coefficients in $\Z$. 

Now the set $\{ \beta_1^\vee, \dots, \beta_k^\vee \}$ is linearly independent over $\Q$, since by assumption it is linearly independent over $\Z$. Hence over the field $\Q$, the set of solutions to this system of homogeneous equations is a subspace of dimension $(n - k)$. Therefore the $\Z$-rank of the submodule $M$ of~$R^\vee$ is at most $(n-k)$.

If $\rank_\Z(M) = m  < n-k$, let $\{ \eta_1^\vee, \dots, \eta_m^\vee \}$ be a $\Z$-basis for $M$. Then the $\Q$-span of 
$\{ \eta_1^\vee, \dots, \eta_m^\vee \}$ is a proper subspace of the set of solutions over $\Q$, so there is a $\lambda \in R^\vee$ which is a solution over $\Q$ with $\lambda \not \in \Span_\Q \{ \eta_1^\vee, \dots, \eta_m^\vee \}$. Now $\lambda \in \Span_\Q\{\alpha_1^\vee, \dots, \alpha_n^\vee \}$,  so for large enough $N$ we have $N\lambda \in \Span_\Z\{\alpha_1^\vee, \dots, \alpha_n^\vee \}$. That is, for large enough $N$ we have $N\lambda \in R^\vee$. And since the system of equations defining $M$ is homogeneous, then $N\lambda$ is also a solution to this system. Hence $N\lambda \in \Span_\Z\{ \eta_1^\vee, \dots, \eta_m^\vee \}$. But then $\lambda \in \Span_\Q \{ \eta_1^\vee, \dots, \eta_m^\vee \}$, a contradiction. We conclude that $\rank_\Z(M) = n-k$.
\end{proof}

\begin{corollary}\label{cor:intersection}  Let $\{ \beta_1, \dots, \beta_k \} \subset \Phi$ be a set of roots such that the set of corresponding coroots $\{ \beta_1^\vee, \dots, \beta_k^\vee \}$ is linearly independent over~$\Z$.  Then for all $1 \leq j \leq k$ there exists $\mu_j \in R^\vee$ such that
\[
\mu_j \in \left(\bigcap_{i=j+1}^k  \cH_{\beta_i} \right) \quad \mbox{but} \quad \mu_j \not \in \cH_{\beta_j}.
\]
\end{corollary}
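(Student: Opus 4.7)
The plan is to derive Corollary~\ref{cor:intersection} as a direct rank comparison using Lemma~\ref{lem:intersection}. For each $1 \leq j \leq k$, define the two submodules of $R^\vee$ given by
\[
M_{j+1} = \left(\bigcap_{i=j+1}^{k} \cH_{\beta_i}\right) \cap R^\vee \quad \text{and} \quad M_{j} = \left(\bigcap_{i=j}^{k} \cH_{\beta_i}\right) \cap R^\vee,
\]
with the convention that $M_{k+1} = R^\vee$ (the empty intersection). Clearly $M_j \subseteq M_{j+1}$, and $M_j = M_{j+1} \cap \cH_{\beta_j}$, so any element of $M_{j+1} \setminus M_j$ will serve as the required $\mu_j$.

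First I would observe that any subset of a $\Z$-linearly independent set is itself $\Z$-linearly independent. Hence both $\{\beta_{j+1}^\vee,\dots,\beta_k^\vee\}$ and $\{\beta_j^\vee,\beta_{j+1}^\vee,\dots,\beta_k^\vee\}$ are linearly independent over $\Z$, so Lemma~\ref{lem:intersection} applies to both collections. This yields
\[
\rank_\Z(M_{j+1}) = n - (k-j) \quad \text{and} \quad \rank_\Z(M_j) = n - (k-j+1),
\]
which differ by exactly $1$ (the case $j=k$ is handled by the convention $M_{k+1}=R^\vee$ of rank $n$, with $M_k = \cH_{\beta_k}\cap R^\vee$ of rank $n-1$).

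Since $M_j$ is a proper submodule of $M_{j+1}$ (strict containment is forced by the rank gap), the set $M_{j+1}\setminus M_j$ is nonempty. Choose any $\mu_j$ in this set. By construction, $\mu_j$ lies in $\cH_{\beta_i}$ for all $i>j$ but not in $\cH_{\beta_j}$, as required. No step here presents a real obstacle: the only nontrivial input is Lemma~\ref{lem:intersection}, and the rest is a one-line observation that strict containment of submodules follows from distinct ranks.
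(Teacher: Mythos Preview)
Your proof is correct and follows essentially the same approach as the paper's own proof: both apply Lemma~\ref{lem:intersection} to compute that $\rank_\Z(M_{j+1}) = n - (k-j)$ while $\rank_\Z(M_j) = n - (k-j+1)$, and then observe that the rank gap forces strict containment, so any element of $M_{j+1} \setminus M_j$ works. Your version is slightly more explicit about the edge case $j=k$ and the fact that subsets of $\Z$-linearly independent sets remain $\Z$-linearly independent, but the argument is identical in substance.
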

\begin{proof} By Lemma~\ref{lem:intersection}, the intersection $ \left(\bigcap_{i=j+1}^k  \cH_{\beta_i} \right) \cap R^\vee$ has $\Z$-rank $n - (k-j) = n-k +j$, while its submodule $\left(\bigcap_{i=j}^k  \cH_{\beta_i} \right) \cap R^\vee$ has $\Z$-rank $n-(k-(j-1)) = n-k + j -1$. The result follows.
\end{proof}

%%%%%%%%%%%%%%%%%%%%%%%%%%%%%%%%%%%%%%%%%%%%%
\subsection{Rank vs dimension for affine Coxeter groups}\label{sec:proofsModMoveW}

In this section we establish Theorem~\ref{thm:modMoveW} of the introduction. We consider reflections in $\sW$, then arbitrary elements of $\sW$.

\begin{lemma}\label{lem:ModWr} For all reflections $r = s_\alpha \in \sW$, the mod-set $\ModW(r)$ is generated as a $\Z$-module by some nonzero integer multiple of $\alpha^\vee$.
\end{lemma}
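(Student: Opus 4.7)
The plan is to compute $\ModW(s_\alpha)$ directly from the reflection formula and recognize the resulting set as a $\Z$-submodule of $\Z\alpha^\vee$.

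First, I would recall the action of a reflection on $V^*$: for any $\lambda \in V^*$ and any root $\alpha \in \Phi$,
\[
s_\alpha(\lambda) = \lambda - \langle \lambda, \alpha \rangle \alpha^\vee,
\]
so that $(\Id - s_\alpha)(\lambda) = \langle \lambda, \alpha \rangle \alpha^\vee$. This is just the coordinate-free version of equation~\eqref{eq:siaction}, and indeed~\eqref{eq:siaction} together with the $\sW$-equivariance of the pairing recovers it for an arbitrary root $\alpha = w\alpha_i$. Applying this to every element of the coroot lattice gives
\[
\ModW(s_\alpha) = (\Id - s_\alpha)R^\vee = \bigl\{ \langle \lambda, \alpha \rangle \alpha^\vee \ \big| \ \lambda \in R^\vee \bigr\}.
\]

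Next, I would observe that the map $\phi_\alpha \colon R^\vee \to \Z$ defined by $\phi_\alpha(\lambda) = \langle \lambda, \alpha \rangle$ is a well-defined homomorphism of $\Z$-modules. It is $\Z$-valued because $\alpha$ lies in the $\Z$-span of the simple roots (the crystallographic condition), so each $\langle \alpha_j^\vee, \alpha \rangle$ is an integer linear combination of Cartan matrix entries. Therefore $\Range(\phi_\alpha)$ is a subgroup of $\Z$, hence of the form $N\Z$ for some nonnegative integer $N$.

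It remains to rule out $N = 0$, which is immediate: taking $\lambda = \alpha^\vee \in R^\vee$ gives $\phi_\alpha(\alpha^\vee) = \langle \alpha^\vee, \alpha \rangle = 2 \neq 0$. Consequently $N$ is a positive integer and
\[
\ModW(s_\alpha) = \{ n \alpha^\vee \mid n \in N\Z \} = \Span_\Z\{ N\alpha^\vee \},
\]
which proves the lemma. No serious obstacle arises here; the one point to handle carefully is the integrality of the pairing $\langle \lambda, \alpha \rangle$ for arbitrary (not simple) roots $\alpha$, which is exactly where the crystallographic hypothesis enters.
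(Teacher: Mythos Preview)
Your proof is correct and takes a more direct route than the paper's. The paper argues indirectly: it first observes $\ModW(s_\alpha) \subseteq \Mov(s_\alpha) \cap R^\vee = \Z\alpha^\vee$, and then invokes Corollary~\ref{cor:intersection} (the $j=k=1$ case) to produce some $\mu \in R^\vee \setminus \cH_\alpha$, yielding a nonzero element $(\Id - s_\alpha)\mu \in \ModW(s_\alpha)$. By contrast, you compute $\ModW(s_\alpha)$ explicitly as $\{\langle\lambda,\alpha\rangle\alpha^\vee : \lambda \in R^\vee\}$ via the reflection formula, and then identify the image of the pairing map $\phi_\alpha$ as a nonzero subgroup of $\Z$. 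Your approach avoids the hyperplane-intersection machinery of Lemma~\ref{lem:intersection} and Corollary~\ref{cor:intersection} entirely, and in fact yields slightly more: since $\phi_\alpha(\alpha^\vee)=2$, the generator $N$ must divide $2$, recovering the dichotomy of Proposition~\ref{prop:simpleSub} in a type-free way. The paper's choice to use Corollary~\ref{cor:intersection} here is natural because the same machinery is reused immediately afterward in the inductive proof of Proposition~\ref{prop:rankMod} for arbitrary $w$; your argument is self-contained for reflections but does not obviously extend to longer products.
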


\begin{proof} 
 Since $\ModW(s_\alpha) \subset \Mov(s_\alpha) =  \R\alpha^\vee$, we have $\rank_\Z(\ModW(s_\alpha)) \leq 1$. Now by the case $j = k = 1$ of Corollary~\ref{cor:intersection}, there is a $\mu \in R^\vee$ such that $\mu \not \in \cH_{\alpha} = \Fix(s_\alpha)$. Then $(\Id - s_\alpha)\mu \neq 0$, and \[ (\Id - s_\alpha)\mu \in \ModW(s_\alpha) \subseteq \left(\Mov(s_\alpha) \cap R^\vee\right) = \left(\R\alpha^\vee \cap R^\vee\right) = \Z \alpha^\vee.\] Thus $\ModW(s_\alpha)$ contains some nonzero integer multiple of $\alpha^\vee$. The result follows.
\end{proof}

\begin{corollary}\label{cor:reflection} For all reflections $r \in \sW$, we have \[ \rank_\Z(\ModW(r)) = \dim_R(\Mov(r)) = \ell_\cR(r) = 1.\]
\end{corollary}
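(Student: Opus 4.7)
The plan is to simply assemble three results that have already been established in this section and the preliminaries. Write $r = s_\alpha$ for some root $\alpha \in \Phi$. First, by the definition of reflection length, $\ell_\cR(r) = 1$, since $r$ is itself a reflection. Second, I would apply Corollary~\ref{cor:dimMov} to the one-term reflection factorization $r = s_\alpha$: the singleton $\{\alpha\}$ is trivially linearly independent over $\R$, so Corollary~\ref{cor:dimMov} gives that $\Mov(r)$ has $\R$-basis $\{\alpha^\vee\}$, whence $\dim_\R(\Mov(r)) = \ell_\cR(r) = 1$.

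For the remaining equality, I would invoke Lemma~\ref{lem:ModWr}, which states that $\ModW(r)$ is generated as a $\Z$-module by a nonzero integer multiple of $\alpha^\vee$. Hence $\ModW(r)$ is a free $\Z$-module of rank $1$, giving $\rank_\Z(\ModW(r)) = 1$. Combining the three computations yields the corollary.

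There is no real obstacle here; the work has all been done in Lemma~\ref{lem:ModWr} (which in turn relied on Corollary~\ref{cor:intersection}) and Corollary~\ref{cor:dimMov} (which relied on Carter's Theorem~\ref{thm:Carter}). This corollary simply records the base case $k=1$ that will serve as the launching point for the induction on reflection length needed to prove Theorem~\ref{thm:modMoveW}.
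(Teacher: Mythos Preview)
Your proposal is correct and matches the paper's approach: the corollary is stated in the paper without proof, immediately after Lemma~\ref{lem:ModWr}, precisely because it follows at once from that lemma together with Corollary~\ref{cor:dimMov} and the trivial fact $\ell_\cR(r)=1$. One minor remark on your closing commentary: the paper's proof of Proposition~\ref{prop:rankMod} does not actually invoke this corollary as an inductive base case, but rather uses Lemma~\ref{lem:ModWr} directly; the corollary serves more as a standalone record of the reflection case than as a formal launching point.
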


We now consider arbitrary elements of $\sW$. We make essential use of one direction of Theorem~\ref{thm:Carter}, and generalize Carter's proof of the other direction of that result. 

\begin{prop}\label{prop:rankMod} Let $w \in \sW$. Then $\rank_\Z(\ModW(w)) = \ell_\cR(w)$.
\end{prop}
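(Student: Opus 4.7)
The plan is to establish the equality by proving the upper and lower bounds separately. For the upper bound, Lemma~\ref{lem:mod}\eqref{mod-mov} gives $\ModW(w) \subseteq \Mov(w) \cap R^\vee$, and any $\Z$-linearly independent subset of $\Mov(w) \cap R^\vee$ is automatically $\R$-linearly independent in $\Mov(w)$; hence by Corollary~\ref{cor:dimMov},
\[
\rank_\Z(\ModW(w)) \leq \rank_\Z(\Mov(w) \cap R^\vee) \leq \dim_\R(\Mov(w)) = \ell_\cR(w).
\]

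For the lower bound, I would mimic Carter's argument from Theorem~\ref{thm:Carter}, carefully keeping everything integral. Set $k = \ell_\cR(w)$ and write $w = s_{\beta_1} \cdots s_{\beta_k}$ as a minimal reflection presentation. By Theorem~\ref{thm:Carter}, the set $\{\beta_1,\dots,\beta_k\}$ is $\R$-linearly independent, hence so is $\{\beta_1^\vee,\dots,\beta_k^\vee\}$, and in particular this set is $\Z$-linearly independent. Corollary~\ref{cor:intersection} then produces, for each $1 \leq j \leq k$, an element $\mu_j \in R^\vee$ lying in $\bigcap_{i > j} \cH_{\beta_i}$ but not in $\cH_{\beta_j}$. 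Define $\nu_j := (\Id - w)\mu_j \in \ModW(w)$.

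The main point is to show that $\{\nu_1,\dots,\nu_k\}$ is $\Z$-linearly independent. Since $s_{\beta_i} \mu_j = \mu_j$ for all $i > j$, we have $w \mu_j = s_{\beta_1}\cdots s_{\beta_j} \mu_j$, and a short computation gives
\[
\nu_j = (\Id - s_{\beta_1}\cdots s_{\beta_{j-1}})\mu_j + \langle \mu_j, \beta_j\rangle \, s_{\beta_1}\cdots s_{\beta_{j-1}}\beta_j^\vee.
\]
Since each $s_{\beta_i}$ modifies a vector by an integer multiple of $\beta_i^\vee$, induction on $j$ shows that $s_{\beta_1}\cdots s_{\beta_{j-1}}\beta_j^\vee = \beta_j^\vee + v$ for some $v \in \Span_\R\{\beta_1^\vee,\dots,\beta_{j-1}^\vee\}$, and similarly $(\Id - s_{\beta_1}\cdots s_{\beta_{j-1}})\mu_j \in \Span_\R\{\beta_1^\vee,\dots,\beta_{j-1}^\vee\}$. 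Thus in the $\R$-basis $\{\beta_1^\vee,\dots,\beta_k^\vee\}$ of $\Mov(w)$, the vector $\nu_j$ has $\beta_j^\vee$-coefficient equal to $\langle \mu_j, \beta_j\rangle$, which is nonzero by choice of $\mu_j$.

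This upper-triangular structure (with nonzero diagonal entries) forces $\{\nu_1,\dots,\nu_k\}$ to be $\R$-linearly independent and hence $\Z$-linearly independent. Therefore $\rank_\Z(\ModW(w)) \geq k = \ell_\cR(w)$, completing the proof. The main obstacle I anticipate is simply being careful about integrality: Carter's original argument operates over $\R$, but since the $\mu_j$ are chosen in $R^\vee$ via the integral intersection Lemma~\ref{lem:intersection}, and $w$ preserves $R^\vee$, everything remains in $R^\vee$ and the linear independence transfers correctly from $\R$ to $\Z$.
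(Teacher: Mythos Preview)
Your proof is correct and follows essentially the same strategy as the paper: both use Carter's theorem together with Corollary~\ref{cor:intersection} to produce coroot-lattice elements $\mu_j$ lying in successively fewer hyperplanes, and both exploit the resulting triangular structure of $(\Id - w)\mu_j$ in the basis $\{\beta_1^\vee,\dots,\beta_k^\vee\}$. Your execution of the lower bound is in fact slightly cleaner than the paper's: rather than inductively scaling by $c = \prod_{i<j} c_i$ to extract an explicit nonzero multiple of each $\beta_j^\vee$ inside $\ModW(w)$, you simply observe that the vectors $\nu_j = (\Id - w)\mu_j$ themselves form an upper-triangular system with nonzero diagonal, which suffices for the rank statement.
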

\begin{proof} Let $w = r_1 \cdots r_k$ be a minimal length reflection factorization of $w$, so that $\ell_\cR(w) = k$, and for $1 \leq i \leq k$ let $\beta_i \in \Phi$ be a root such that $r_i = s_{\beta_i}$. To simplify notation, write $\cH_i$ for $\cH_{\beta_i}$. Then by Theorem~\ref{thm:Carter}, the set $\{ \beta_1, \dots, \beta_k \}$ is linearly independent over $\R$. Hence the set $\{ \beta_1^\vee, \dots, \beta_k^\vee \}$ is linearly independent over $\R$, and so $\{ \beta_1^\vee, \dots, \beta_k^\vee \} \subset \Phi^\vee$ is linearly independent over $\Z$. 

By Lemma~\ref{lem:ModWr}, each $r_i$ only moves elements of $R^\vee$ by integer multiples of $\beta_i^\vee$, and so the motion of any $\mu \in R^\vee$ under $w$ is a $\Z$-linear combination of the $\beta_i^\vee$. Thus $\ModW(w) \subseteq \Span_\Z\{ \beta_1^\vee, \dots, \beta_k^\vee \}$ and so $\rank_\Z(\ModW(w)) \leq k$. 

We now use Corollary~\ref{cor:intersection} to prove by induction that $\ModW(w)$ contains some nonzero integer multiple of each of $\beta_1^\vee, \dots, \beta_k^\vee$. Since the $\beta_i^\vee$ are linearly independent over $\Z$, this establishes $\rank_\Z(\ModW(w)) = k$.

First, by the case $j = 1$ of Corollary~\ref{cor:intersection}, there is a $\mu_1 \in R^\vee$ such that $\mu_1 \in \left(\cap_{i=2}^k  \cH_i \right)$ but $\mu_1 \not \in \cH_1$. Then since $r_2, \dots, r_k$ fix $\mu_1$ while $r_1$ does not, by Lemma~\ref{lem:ModWr} we have $w \mu_1 = r_1 \mu_1 = \mu_1 - c_1 \beta_1^\vee$ for some $c_1 \in \Z$ with $c_1 \neq 0$. It follows that $\ModW(w)$ contains a nonzero integer multiple of $\beta_1^\vee$, namely $c_1 \beta_1^\vee$.

Assume inductively that $\ModW(w)$ contains $c_1 \beta_1^\vee, \dots, c_{j-1} \beta_{j-1}^\vee$ where $c_1,\dots,c_{j-1}$ are nonzero integers. Define $c = \prod_{i=1}^{j-1} c_i$, so that $c \in \Z$ and $c \neq 0$. By Corollary~\ref{cor:intersection} there is a $\mu_j \in R^\vee$ such that $\mu_j \in \left(\cap_{i=j+1}^k  \cH_i \right)$ but $\mu_j \not \in \cH_j$. Hence $c\mu_j \in R^\vee$, $c \mu_j \in \left(\cap_{i=j+1}^k  \cH_i \right)$, but $c\mu_j \not \in \cH_j$. Now 
 \[w (c\mu_j) = cr_1 \cdots  r_j \mu_j = cr_1 \cdots r_{j-1}(\mu_j - c_j \beta_j^\vee) = c\mu_j - (cc_j) \beta_j^\vee - \sum_{i = 1}^{j-1} c_{ij}\beta_i^\vee,\] for some $c_j \in \Z$ with $c_j \neq 0$ and some $c_{ij} \in \Z$ divisible by $c_i$, for $1 \leq i \leq j-1$. Then by subtracting suitable multiples of  $c_1 \beta_1^\vee, \dots, c_{j-1} \beta_{j-1}^\vee$, we obtain that $\ModW(w)$ contains a nonzero scalar multiple of $\beta_j^\vee$, namely $cc_j\beta_j^\vee$. This completes the proof.
\end{proof}

\begin{corollary}\label{cor:allWeyl} For all $w \in\sW$, we have \[ \rank_\Z(\ModW(w)) = \dim_\R(\Mov(w)) = \ell_\cR(w).\]
\end{corollary}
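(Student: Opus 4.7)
The statement of Corollary~\ref{cor:allWeyl} is a direct consequence of two earlier results in the excerpt, so the plan is simply to assemble them. First I would invoke Proposition~\ref{prop:rankMod}, which establishes the equality $\rank_\Z(\ModW(w)) = \ell_\cR(w)$ for every $w \in \sW$. Then I would invoke Corollary~\ref{cor:dimMov}, which establishes $\dim_\R(\Mov(w)) = \ell_\cR(w)$ for every $w \in \sW$ (that corollary is proved by combining both directions of Carter's Theorem~\ref{thm:Carter} with the observation that each reflection moves points only in its coroot direction).

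Chaining these two equalities through the common value $\ell_\cR(w)$ immediately yields the three-way equality $\rank_\Z(\ModW(w)) = \dim_\R(\Mov(w)) = \ell_\cR(w)$. No further case analysis or type-by-type verification is required, since both Proposition~\ref{prop:rankMod} and Corollary~\ref{cor:dimMov} have been proved uniformly, essentially because Carter's characterization of minimal length reflection factorizations (Theorem~\ref{thm:Carter}) is type- and rank-free.

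The only subtle point, which is really the content of the new Proposition~\ref{prop:rankMod} rather than of this corollary, is that Carter's argument needs to be adapted to work over $\Z$ rather than $\R$; this was handled via Lemma~\ref{lem:intersection} and Corollary~\ref{cor:intersection}, together with Lemma~\ref{lem:ModWr} which ensures that each reflection $r = s_\alpha \in \sW$ actually moves some element of $R^\vee$ by a nonzero integer multiple of $\alpha^\vee$. So there is no genuine obstacle remaining at the level of Corollary~\ref{cor:allWeyl} itself; the proof is a one-line assembly of Proposition~\ref{prop:rankMod} and Corollary~\ref{cor:dimMov}.
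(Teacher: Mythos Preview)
Your proposal is correct and matches the paper's own proof exactly: the paper simply cites Corollary~\ref{cor:dimMov} and Proposition~\ref{prop:rankMod}, which is precisely the one-line assembly you describe.
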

\begin{proof} This follows from Corollary~\ref{cor:dimMov} and Proposition~\ref{prop:rankMod}.
\end{proof}

%%%%%%%%%%%%%%%%%%%%%%%%%%%%%%%%%%%%%%%%%%%%%
\subsection{Mod-sets and move-sets for split crystallographic groups}
\label{sec:proofsModMoveH}

We now state and prove Theorem~\ref{thm:modMove}. This generalizes the first equality in Theorem~\ref{thm:modMoveW}, and both parts of Corollary~\ref{cor:modMoveW}.

In order to state Theorem~\ref{thm:modMove}, recall that an \emph{$n$-dimensional crystallographic group} is a discrete, cocompact group of isometries of $n$-dimensional Euclidean space $\R^n$. We define two $n$-dimensional crystallographic groups to be \emph{equivalent}, denoted~$\sim$, if they are conjugate under some affine transformation of $\R^n$ (the classification of crystallographic groups is usually up to this relation). Then by abuse of terminology, we say that an $n$-dimensional crystallographic group~$\aH$ is \emph{contained in an affine Coxeter group} if there is an $n$-dimensional crystallographic group $\aH'$, and an affine Coxeter group $\aW$ (not necessarily irreducible) which is also an $n$-dimensional crystallographic group, such that $\aH \sim \aH'$ and $\aH' \leq \aW$. 

It is classical that the full isometry group $G$ of $n$-dimensional Euclidean space splits as $G = \R^n \rtimes \On$, where $\R^n$ is the additive group of all translations and $\On$ is the orthogonal group. An $n$-dimensional crystallographic group $H$ is \emph{split} if it respects this splitting, that is, $\aH = T_\aH \rtimes \sH$ where $T_\aH$ is the translation subgroup of $\aH$ and $\sH = \aH \cap \On$.  The set $L_\aH = \{ \lambda : t^\lambda \in T_\aH \}$ is a cocompact lattice in $\R^n$, and so $L_\aH$ can be viewed as a free $\Z$-module of rank $n$.  

Theorem~\ref{thm:modMove} describes the relationships between the mod-set $\ModH(h_0)$ for $h_0 \in \sH$, the move-set $\Mov(h_0)$ for $h_0 \in \sH$, and the lattice $L_\aH$, in the case that $\aH$ is contained in an affine Coxeter group.

\begin{thm}[Mod-sets and move-sets]\label{thm:modMove} Let $\aH = T_\aH \rtimes \sH$ be a split crystallographic group. Assume that $\aH$ is contained in an affine Coxeter group $\sW = T \rtimes \sW$. Then for all $h_0 \in \sH$:
\begin{enumerate}
\item\label{item:rank} $\rank_\Z(\ModH(h_0)) = \dim_\R(\Mov(h_0))$; 
\item\label{item:index} $\ModH(h_0)$ is a finite-index submodule of $\Mov(h_0) \cap L_\aH$; and
\item\label{item:equal} $\ModH(h_0) = \Mov(h_0) \cap L_\aH$ if and only if $L_\aH / \ModH(h_0)$ is torsion-free.
\end{enumerate}
\end{thm}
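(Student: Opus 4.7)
After replacing $\aH$ by an equivalent split crystallographic group, I may assume $\aH \leq \aW$ as subgroups. Then $\sH = \aH \cap \On \leq \sW$, and since every translation in $\aH$ already lies in $T$, we have $L_\aH \leq R^\vee$. Both $L_\aH$ and $R^\vee$ are cocompact lattices in $\R^n$ of $\Z$-rank $n$, so $[R^\vee : L_\aH]$ is finite. This embedding is exactly the leverage afforded by the hypothesis, as it will allow me to import the rank identity of Corollary~\ref{cor:allWeyl} from $\aW$ down to $\aH$.

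For part~\eqref{item:rank}, Corollary~\ref{cor:allWeyl} applied to $h_0 \in \sH \leq \sW$ gives $\rank_\Z(\ModW(h_0)) = \dim_\R \Mov(h_0)$. To transfer this identity to $\ModH(h_0)$, I would apply the snake lemma to the commutative diagram of $\Z$-modules
\[
\begin{array}{ccccccccc}
0 & \to & \Fix(h_0) \cap L_\aH & \hookrightarrow & L_\aH & \twoheadrightarrow & \ModH(h_0) & \to & 0 \\
  &     & \downarrow          &                & \downarrow &                    & \downarrow &     &   \\
0 & \to & \Fix(h_0) \cap R^\vee & \hookrightarrow & R^\vee & \twoheadrightarrow & \ModW(h_0) & \to & 0,
\end{array}
\]
whose rows are exact and whose vertical maps are the canonical inclusions (using that $h_0$ preserves each lattice by the semidirect product structure). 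The snake lemma exhibits $\ModW(h_0)/\ModH(h_0)$ as a quotient of $R^\vee/L_\aH$, so it is finite; hence $\rank_\Z(\ModH(h_0)) = \rank_\Z(\ModW(h_0)) = \dim_\R \Mov(h_0)$.

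For part~\eqref{item:index}, I would observe that $\ModH(h_0) \subseteq \Mov(h_0) \cap L_\aH$, where the inclusion in $L_\aH$ uses $h_0 L_\aH = L_\aH$. Since $\rank_\Z(\Mov(h_0) \cap L_\aH) \leq \dim_\R \Mov(h_0)$, part~\eqref{item:rank} forces all three ranks to agree, and finite index follows.

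For part~\eqref{item:equal}, a standard torsion argument applies. If equality $\ModH(h_0) = \Mov(h_0) \cap L_\aH$ holds and $k\lambda \in \ModH(h_0)$ for some $\lambda \in L_\aH$ and nonzero $k \in \Z$, then $k\lambda \in \Mov(h_0)$ forces $\lambda \in \Mov(h_0) \cap L_\aH = \ModH(h_0)$, so $L_\aH/\ModH(h_0)$ is torsion-free. Conversely, finite index from part~\eqref{item:index} combined with torsion-freeness forces every $\lambda \in \Mov(h_0) \cap L_\aH$ to already lie in $\ModH(h_0)$. The main technical obstacle is the rank identity of Corollary~\ref{cor:allWeyl} itself; once that is available, the snake lemma and routine module theory handle the passage to crystallographic subgroups.
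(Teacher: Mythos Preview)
Your proof is correct, and parts~\eqref{item:index} and~\eqref{item:equal} are essentially the same as the paper's. Your treatment of part~\eqref{item:rank}, however, is genuinely different from the paper's argument. The paper does not invoke Corollary~\ref{cor:allWeyl} as a black box and then transfer the rank identity down via homological algebra; instead, it reruns the constructive argument of Proposition~\ref{prop:rankMod} directly for $L_\aH$. Concretely, the paper picks the elements $\mu_1,\dots,\mu_k \in R^\vee$ from Corollary~\ref{cor:intersection}, scales them by the index $d = [R^\vee : L_\aH]$ so that $d\mu_j \in L_\aH$, and then repeats Carter's inductive computation to show $\ModH(h_0)$ contains a nonzero integer multiple of each $\beta_j^\vee$. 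Your snake-lemma argument sidesteps this repetition: once $\rank_\Z(\ModW(h_0))$ is known, the finiteness of $R^\vee/L_\aH$ forces $\ModW(h_0)/\ModH(h_0)$ to be finite, and the rank equality drops out immediately. This is cleaner and more structural, at the cost of being less explicit about \emph{which} elements witness the rank. The paper's approach, by contrast, produces concrete elements of $\ModH(h_0)$ and makes the dependence on Carter's Theorem~\ref{thm:Carter} visible at every step.
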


The next result establishes part \eqref{item:rank} of this theorem.

\begin{corollary}\label{cor:containedAffine} Let $\aH = T_{\aH} \rtimes\sH$ be a split crystallographic group which is contained in an affine Coxeter group $\aW = \TW \rtimes \sW$. Then for every $h_0 \in \sH$, we have $$\rank_\Z(\Mod_{\aH}(h_0))= \dim_\R(\Mov(h_0)).$$
\end{corollary}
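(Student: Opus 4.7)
The plan is to reduce to Corollary \ref{cor:allWeyl} via an equivalence argument followed by a lattice-index comparison. By hypothesis, there exists an affine transformation $\phi$, with linear part $A \in \GL_n(\R)$, such that $\aH' := \phi \aH \phi^{-1}$ satisfies $\aH' \leq \aW$. A direct computation shows that $\phi h_0 \phi^{-1}$ is the affine transformation with linear part $w_0 := A h_0 A^{-1}$, and that $L_{\aH'} = A L_\aH$. Since $\phi h_0 \phi^{-1}$ lies in $\aW$ and the decomposition $\aW = T \rtimes \sW$ is unique, the linear part $w_0$ of $\phi h_0 \phi^{-1}$ must equal its $\sW$-component, so $w_0 \in \sW$ even though $A$ need not be orthogonal. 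Multiplication by $A$ intertwines $(h_0 - \Id)$ acting on $L_\aH$ with $(w_0 - \Id)$ acting on $L_{\aH'}$, and similarly on $\R^n$, giving $\rank_\Z(\Mod_\aH(h_0)) = \rank_\Z(\Mod_{\aH'}(w_0))$ and $\dim_\R(\Mov(h_0)) = \dim_\R(\Mov(w_0))$. It therefore suffices to prove the statement with $\aH'$ in place of $\aH$ and $w_0 \in \sW$ in place of $h_0$.

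Both $L_{\aH'}$ and $R^\vee$ are full-rank lattices in $\R^n$, being translation lattices of $n$-dimensional crystallographic groups, so $L_{\aH'}$ has finite index in $R^\vee$. Choose $N \in \N$ with $N R^\vee \subseteq L_{\aH'} \subseteq R^\vee$ and apply the linear map $(w_0 - \Id)$ to obtain
$$N \cdot \Mod_\aW(w_0) \subseteq \Mod_{\aH'}(w_0) \subseteq \Mod_\aW(w_0).$$
Since $N \cdot \Mod_\aW(w_0)$ has the same $\Z$-rank as $\Mod_\aW(w_0)$, this sandwich forces $\rank_\Z(\Mod_{\aH'}(w_0)) = \rank_\Z(\Mod_\aW(w_0))$. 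Applying Corollary \ref{cor:allWeyl} to $w_0 \in \sW$ then gives $\rank_\Z(\Mod_\aW(w_0)) = \dim_\R(\Mov(w_0))$, and chaining the equalities yields the desired identity.

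The main obstacle is the reduction step, since the equivalence $\phi$ need not preserve the splitting property: even though $\aH$ is split, the conjugate $\aH' \leq \aW$ may fail to be split when $A \notin \On$, so there is no a priori ``linear part'' subgroup of $\aH'$ to work with. The key observation is that the canonical semidirect product structure of the ambient group $\aW$ picks out a well-defined $w_0 \in \sW$ as the linear part of $\phi h_0 \phi^{-1}$, at which point the problem reduces to the already-established rank formula over $\sW$ via a routine scaling of lattices.
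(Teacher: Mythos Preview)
Your proof is correct and is organised somewhat differently from the paper's. Both proofs begin by passing to a conjugate $\aH' \leq \aW$ (the paper does this implicitly via ``we may assume without loss of generality''), and both exploit that $L_{\aH'}$ has finite index in $R^\vee$. The difference is in how this finite-index fact is used. The paper essentially re-runs the inductive Carter-style argument of Proposition~\ref{prop:rankMod}: it picks the specific elements $\mu_j \in R^\vee$ from Corollary~\ref{cor:intersection}, scales them by an integer $d$ so that $d\mu_j \in L_{\aH'}$, and then shows directly that $\Mod_{\aH'}(h_0)$ contains a nonzero multiple of each $\beta_i^\vee$. You instead treat Corollary~\ref{cor:allWeyl} as a black box and use the single sandwich
\[
N\cdot\ModW(w_0) \;\subseteq\; \Mod_{\aH'}(w_0) \;\subseteq\; \ModW(w_0),
\]
which forces equality of ranks in one line. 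Your route is shorter and avoids revisiting the reflection-factorisation argument; the paper's route is more self-contained in that it shows exactly which elements witness the lower bound on the rank. You are also more explicit than the paper about the equivalence step: you correctly observe that the conjugate $\aH'$ need not itself be split, but that this is harmless because the ambient splitting $\aW = T \rtimes \sW$ already identifies the linear part $w_0 \in \sW$, and $\Mod_{\aH'}(w_0) = (w_0 - \Id)L_{\aH'}$ is well-defined regardless.
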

\begin{proof} We may assume without loss of generality that $\aH$ is a subgroup of $\aW$, and so in particular $\sH \leq \sW$ and $L_\aH \subseteq R^\vee$.

Let $h_0 \in \sH$. Then $h_0 \in\sW$, so we may write $h_0$ as a minimal length product of reflections in $\sW$, say $h_0 = r_1 \cdots r_k$ with $r_i = s_{\beta_i}$ for $1 \leq i \leq k$.
Since $L_\aH \subseteq R^\vee$, the same argument as in the proof of Proposition~\ref{prop:rankMod} then tells us that $\Mod_{\aH}(h_0) \subseteq \Span_\Z\{ \beta_1^\vee, \dots, \beta_k^\vee \}$, and hence $\rank_\Z(\ModH(h_0)) \leq k$.

Choose $\mu_1, \dots, \mu_k \in R^\vee$ as in the statement of Corollary~\ref{cor:intersection}. Now the lattices $L_{\aH}$ and $R^\vee$ are both free $\Z$-modules of rank $n$, so $L_{\aH}$ must be a finite-index submodule of $R^\vee$.  Thus there is an integer, say $d$, such that $d\mu_i \in L_{\aH}$ for all $1 \leq i \leq k$. Then since each $\cH_{\beta_i}$ is a subspace, we have
\[
d\mu_j \in \left(\bigcap_{i=j+1}^k  \cH_{\beta_i} \right) \quad \mbox{but} \quad d\mu_j \not \in \cH_{\beta_j}.
\]
 Now by a similar argument to that given in the proof of Proposition~\ref{prop:rankMod}, but replacing each $\mu_j$ by $d\mu_j$, we see that $\Mod_{\aH}(h_0)$ contains some nonzero integer multiple of each of $\beta_1^\vee, \dots, \beta_k^\vee$, and so it has $\Z$-rank equal to  $k$.
\end{proof}

We now prove the remainder of Theorem~\ref{thm:modMove}. In particular, the next result establishes Corollary~\ref{cor:modMoveW}.

\begin{corollary} Let $\aH = T_{\aH} \rtimes\sH$ be a split crystallographic group which is contained in an affine Coxeter group. Then for all $h_0 \in \aH$:
\begin{enumerate}
 \item $\ModH(h_0)$ is a finite-index submodule of $\Mov(h_0) \cap L_{\aH}$; and
 \item $\ModH(h_0) = \Mov(h_0) \cap L_{\aH}$ if and only if $L_{\aH}/\ModH(h_0)$ is torsion-free.
 \end{enumerate}
\end{corollary}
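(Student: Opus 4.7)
The plan is to combine Corollary~\ref{cor:containedAffine}, which already establishes $\rank_\Z(\ModH(h_0)) = \dim_\R(\Mov(h_0))$, with some elementary module-theoretic arguments. Following the setup in the proof of Corollary~\ref{cor:containedAffine}, I would assume without loss of generality that $\aH \leq \aW$, so that $L_\aH \subseteq R^\vee$ and $\sH \leq \sW$, and consider an arbitrary $h_0 \in \sH$.

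For part (1), I would first record the containment $\ModH(h_0) \subseteq \Mov(h_0) \cap L_\aH$: membership in $\Mov(h_0) = (h_0 - \Id)\R^n$ is immediate from the definition $\ModH(h_0) = (h_0 - \Id)L_\aH$, and membership in $L_\aH$ follows because $\sH$ normalizes $T_\aH$, so $(h_0 - \Id)L_\aH \subseteq L_\aH$. Next, since $L_\aH$ is a discrete subgroup of $\R^n$, the intersection $L_\aH \cap \Mov(h_0)$ is a discrete subgroup of the real vector space $\Mov(h_0)$, and is therefore a free $\Z$-module of rank at most $d := \dim_\R \Mov(h_0)$. Combined with $\rank_\Z(\ModH(h_0)) = d$ from Corollary~\ref{cor:containedAffine}, this forces $\rank_\Z(L_\aH \cap \Mov(h_0)) = d$ as well, so $\ModH(h_0)$ is a full-rank and hence finite-index submodule of $\Mov(h_0) \cap L_\aH$.

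For part (2), the reverse direction uses part (1) directly: if $L_\aH/\ModH(h_0)$ is torsion-free, then for any $\mu \in \Mov(h_0) \cap L_\aH$ we have $N\mu \in \ModH(h_0)$ for some positive integer $N$ by finite index, and torsion-freeness forces $\mu \in \ModH(h_0)$. For the forward direction, assume $\ModH(h_0) = \Mov(h_0) \cap L_\aH$, and suppose $\mu + \ModH(h_0)$ is a torsion element of $L_\aH/\ModH(h_0)$, say $N\mu \in \ModH(h_0)$ for some $N \geq 1$. Then $N\mu \in \Mov(h_0)$, and since $\Mov(h_0)$ is an $\R$-vector space closed under division by $N$, it follows that $\mu \in \Mov(h_0)$. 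Combined with $\mu \in L_\aH$, this yields $\mu \in \Mov(h_0) \cap L_\aH = \ModH(h_0)$, so the coset $\mu + \ModH(h_0)$ is trivial in the quotient.

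I do not expect any substantial obstacle. The main conceptual input is the rank equality from Corollary~\ref{cor:containedAffine}, and the remaining steps reduce to two standard observations: that the intersection of a lattice with a linear subspace is discrete of rank at most the dimension of the subspace, and that torsion in a quotient of a free $\Z$-module by a submodule of a real subspace is controlled by whether that submodule already captures all lattice points of the subspace.
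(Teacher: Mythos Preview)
Your proof is correct and follows essentially the same line as the paper's: both invoke Corollary~\ref{cor:containedAffine} for the rank equality, bound $\rank_\Z(\Mov(h_0)\cap L_\aH)$ by $\dim_\R \Mov(h_0)$, and then use the fact that $\Mov(h_0)$ is a linear subspace (so $N\mu \in \Mov(h_0)$ forces $\mu \in \Mov(h_0)$) to handle part~(2). The only difference is organizational: you argue both implications of~(2) directly, whereas the paper reduces~(2) to the single observation that $L_\aH/(\Mov(h_0)\cap L_\aH)$ is torsion-free, which via the tower $\ModH(h_0)\subseteq \Mov(h_0)\cap L_\aH\subseteq L_\aH$ and part~(1) yields both directions at once.
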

\begin{proof}  By Corollary~\ref{cor:containedAffine}, we have $\rank_\Z(\ModH(h_0)) = \dim_\R(\Mov(h_0)) = k$, say, and so $\rank_\Z(\Mov(h_0) \cap L_\aH) \leq \dim_\R(\Mov(h_0)) = k$. Hence $\Mov(h_0) \cap L_{\aH}$  and its submodule $\ModH(h_0)$ are both free $\Z$-modules of rank $k$, and (1) follows. 

For (2), given (1) it suffices to show that the quotient $L_\aH / (\Mov(h_0) \cap L_\aH)$ is torsion-free. If $\Mov(h_0) = \R^n$ then $\Mov(h_0) \cap L_\aH = L_\aH$ so this quotient is trivial and we are done. Assume now that $\Mov(h_0)$ is a proper subspace of $\R^n$, and suppose $\lambda \in L_\aH$ represents a torsion element of $L_\aH / (\Mov(h_0) \cap L_\aH)$. Then $\lambda \not \in \Mov(h_0)$ but there is some $c \in \Z$ with $c \neq 0$ so that $c\lambda \in \Mov(h_0)$. This is a contradiction, as $\Mov(h_0)$ is a subspace of $\R^n$.
\end{proof}

In light of the results in this section and the main results obtained in \cite{MST4} (as restated in \Cref{sec:summary}), it has become clear that the mod-sets are the crucial objects ruling the shape of conjugacy classes in affine Coxeter groups, and are critical to the structure of their coconjugation sets as well. The remainder of the paper, starting with \Cref{sec:TypeA}, is hence devoted to the study of precisely these $\Z$-modules.

%%%%%%%%%%%%%%%%%%%
%%%%%%%%%%%%%%%%%%%
\section{Type $A$ mod-sets} \label{sec:TypeA}
%%%%%%%%%%%%%%%%%%%
%%%%%%%%%%%%%%%%%%%

In this section, we give an explicit description of all mod-sets in type $A$, stated as Theorem \ref{thm:An}.  These results will also be used in Sections~\ref{sec:TypeC},~\ref{sec:TypeB}, and~\ref{sec:TypeD}, since types $B, C,$ and $D$ all have type $A$ subsystems.  Let $\sW$ be the finite Weyl group of type $A_n$, with $n \geq 1$.  In \Cref{sec:repsA}, we recall the complete system of minimal length representatives for the conjugacy classes of~$\sW$, which we rephrase in Proposition \ref{prop:wbetaA}.  We then give several key examples in \Cref{sec:examplesA}, before proving our results in type $A_n$ in \Cref{sec:proofsA}.  

Throughout this paper, we order the nodes of the Dynkin diagram for $\sW$ of type $A_n$ increasing from left to right, as in both \cite{Bourbaki4-6} and \cite{GeckPfeifferBook}.  See Table \ref{table:dynkin} in~\Cref{app:dynkin} for a direct comparison of choices of labeling.

%%%%%%%%%%%%%%%%%%%%%%%%%%%%%%%%%%%%%%%%%%%%%%%%%%%%%%%%%%%%
\subsection{Conjugacy class representatives and mod-sets in type $A$}
\label{sec:repsA}

Recall from \cite[Prop.~3.4.1]{GeckPfeifferBook}, for example, that the conjugacy classes of $\sW$ of type $A_n$ are parameterized by partitions of $n+1$; that is, weakly decreasing sequences $\beta = (\beta_1, \dots, \beta_p)$ of nonnegative integers such that $| \beta | = \sum \beta_i = n+1$.   For each such $\beta$, we will define a standard parabolic subgroup $\sW_\beta$ of $\sW$, and a Coxeter (equivalently cuspidal, in type $A$) element $w_\beta$ of $\sW_\beta$, so that the set of all such $w_\beta$ forms a complete system of minimal length representatives for the conjugacy classes of~$\sW$.

We first provide a summary of our algorithm for obtaining a minimal length conjugacy class representative $w_\beta$ associated to a partition $\beta$. The following result is classical, though we emphasize a visualization, as this perspective will be helpful in the other classical types.

\begin{prop}\label{prop:wbetaA}
Let $\beta = (\beta_1, \dots, \beta_p)$ be a partition of $n+1$. Define an element $w_\beta$ of the finite Weyl group $\sW$ of type $A_n$ as follows:
\begin{enumerate}
\item subtract $1$ from each part of $\beta$, 
\item take the connected subdiagrams of the Dynkin diagram with  $\beta_1 - 1$, $\beta_2 - 1$, \dots, $\beta_p - 1$ nodes, respectively, going from left to right, omitting only a single node between any two such nonempty subdiagrams, and 
\item multiply together the simple reflections indexed by the nodes of these subdiagrams in increasing order to obtain $w_\beta$.
\end{enumerate}
Then the set of all such $w_\beta$ forms a complete system of minimal length representatives for the conjugacy classes of $\sW$.
\end{prop}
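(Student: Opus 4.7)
The plan is to verify that our visualization reproduces the standard set of minimal length conjugacy class representatives of $\sW \cong S_{n+1}$, as given in \cite[Proposition 3.4.1]{GeckPfeifferBook}. Since conjugacy classes in $S_{n+1}$ are parametrized by cycle types, equivalently by partitions of $n+1$, it suffices to verify for each partition $\beta$ of $n+1$ that: (a) under the identification $s_i \leftrightarrow (i,i+1)$, the element $w_\beta$ has cycle type $\beta$; and (b) $w_\beta$ is of minimal Coxeter length in its conjugacy class.

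First I would unpack the construction. Writing $\beta = (\beta_1, \dots, \beta_p)$ and $a_k := 1 + \sum_{j<k} \beta_j$, the $k$-th subdiagram consists of the $\beta_k - 1$ consecutive nodes $a_k, a_k + 1, \ldots, a_k + \beta_k - 2$, and the node $a_k + \beta_k - 1 = a_{k+1} - 1$ serves as the separator before the $(k+1)$-st subdiagram, so the total count of used plus skipped nodes is $(n+1-p) + (p-1) = n$, as required. The product $s_{a_k} s_{a_k+1} \cdots s_{a_k + \beta_k - 2}$ associated to the $k$-th subdiagram is a Coxeter element of the type $A_{\beta_k - 1}$ parabolic subsystem it generates, and realizes the cycle $(a_k, a_k+1, \ldots, a_k + \beta_k - 1)$ of length $\beta_k$ in $S_{n+1}$. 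The separator nodes force these cycles to have pairwise disjoint supports, so the factors commute and $w_\beta$ has cycle type exactly $\beta$, yielding~(a).

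For (b), the displayed expression for $w_\beta$ is already reduced, being the concatenation of reduced words for Coxeter elements of commuting standard parabolic subsystems on disjoint simple reflection sets; hence $\ell(w_\beta) = \sum_k (\beta_k - 1) = (n+1) - p$. On the other hand, any permutation of cycle type $\beta$ has reflection length $\sum_k(\beta_k - 1) = (n+1) - p$, and since reflection length is bounded above by Coxeter length (reduced expressions consist of reflections), every element of the conjugacy class has Coxeter length at least $(n+1) - p$. Thus $w_\beta$ attains the minimum, and as $\beta$ ranges over all partitions of $n+1$ we obtain one minimal length representative per conjugacy class.

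The only subtle step is matching our pictorial recipe to the explicit formula for the representative used in \cite[Proposition 3.4.1]{GeckPfeifferBook}; this is a straightforward bookkeeping exercise, given that both references share the same Dynkin diagram labeling in type $A_n$ (see Table~\ref{table:dynkin}). Once this identification is made, the proposition is immediate from the classical result.
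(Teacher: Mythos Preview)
Your argument is correct. The paper itself does not prove this proposition: it simply states that the result is classical and refers to \cite[Proposition~3.4.1]{GeckPfeifferBook}, emphasizing only that the visual reformulation will be reused in types $B$, $C$, and $D$. By contrast, you give a self-contained verification: you identify the cycle type of $w_\beta$ directly from the block product of adjacent transpositions, and you establish minimality via the inequality $\ell_\cR \leq \ell$ together with the standard formula $\ell_\cR(\sigma) = (n+1) - (\text{number of cycles})$ in $S_{n+1}$. This is a cleaner and more informative route than a bare citation, and it has the advantage of making explicit why the length $(n+1)-p$ is forced. One small remark: your node-count $(n+1-p)+(p-1)=n$ tacitly assumes a separator after every part, including parts equal to $1$; this matches the paper's formal definition via the sets $J_\beta$ and $[n]\setminus J_\beta$, even though the informal phrase ``between any two such nonempty subdiagrams'' in the proposition could be read more restrictively.
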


The following figure illustrates the construction in Proposition \ref{prop:wbetaA}; see Section \ref{sec:examplesA} for additional examples, including Example \ref{eg:431A7}  for more details on Figure \ref{fig:wbetaA}.

\begin{figure}[h]
\begin{center}
 \resizebox{3in}{!}
 {
\begin{overpic}{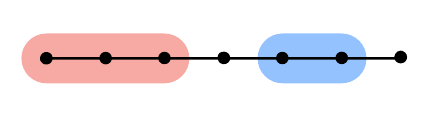}
\put(8,5){$s_1$}
\put(22,5){$s_2$}
\put(36,5){$s_3$}
\put(50,5){$s_4$}
\put(63.5,5){$s_5$}
\put(77.5,5){$s_6$}
\put(91,5){$s_7$}
\put(14,25){$4-1 = \textcolor{red}{3}$}
\put(61,25){$3-1 = \textcolor{blue}{2}$}
\end{overpic}
}
\caption{Constructing $w_\beta = \textcolor{red}{(}s_1s_2s_3\textcolor{red}{)}\textcolor{blue}{(}s_5s_6\textcolor{blue}{)}$ for $\beta = (4,3,1)$ in type $A_7$.}
\label{fig:wbetaA}
\end{center}
\end{figure}

We now establish the notation needed to formally state Theorem \ref{thm:An} characterizing all mod-sets in type $A$.  Fix a partition $\beta = (\beta_1,\dots,\beta_p)$ of $n+1$.  We define a strictly increasing subsequence of $0,1,\dots,n$ as follows.  Let $j^\beta_1 = 0$, and for $2 \leq k \leq p$, define \[j^\beta_k = \sum_{i=1}^{k-1} \beta_i = \beta_1 + \dots + \beta_{k-1}. \]  
In other words, for $p \geq 2$ and any $1 \leq k < p$, we have $j^\beta_{k+1} = j^\beta_k + \beta_k$.  Hence in particular, $0 = j^\beta_1 < j^\beta_2 < \dots < j^\beta_p \leq n$.

We next define subsets $J^\beta_k$ of $[n]$, so that $[n]$ will be the disjoint union of the $J^\beta_k$ with the set of integers $\{ j^\beta_k \mid 2 \leq k \leq p \}$.  For $k \in [p]$, if $\beta_k = 1$ put $J^\beta_k = \emptyset$, and if $\beta_k \geq 2$ define $J^\beta_k$ to be the subinterval of $[n]$ given by
\begin{equation}\label{eq:Jbetak}
J^\beta_{k} = \{ j^\beta_k+1, j^\beta_k+2, \dots, j^\beta_k+(\beta_k-1)\}.
\end{equation}
Thus in particular, if $\beta_1 \geq 2$ then $J^{\beta}_1$ is the initial subinterval $J^\beta_1 = \{1,2,\dots,\beta_1 - 1\}$, and if $p \geq 2$ and $\beta_p \geq 2$ then $J^\beta_p$ is the terminal subinterval $J^\beta_p = \{ \beta_1 + \dots + \beta_{p-1} + 1, \dots, n\}$.  Notice that the $J^\beta_k$ are pairwise disjoint, with each $J^\beta_k$ containing $\beta_k -1$ elements, and that if $J^\beta_k$ and $J^\beta_{k+1}$ are both nonempty, then the unique element of $[n]$ which lies strictly between these subintervals is the integer $j_{k+1}^\beta = j_k^\beta + \beta_k$.  When $\beta_k \geq 2$, we may identify the elements of the nonempty interval $J^\beta_k$ with the nodes of the corresponding (connected) subdiagram of type $A_{\beta_k - 1}$ of the Dynkin diagram, as illustrated in Figure \ref{fig:wbetaA}.   We also define the union $J_\beta = \sqcup_{k=1}^p J_k^\beta \subseteq [n]$.  Then if $p =1$ we have $J_\beta = J_1^\beta = [n]$, while for all $p \geq 2$,  we have \[ [n] \setminus J_\beta = \{ j^\beta_k \mid 2 \leq k \leq p \} = \{ \beta_1, \beta_1 + \beta_2, \dots, \beta_1 + \dots + \beta_{p-1} \}.  \] 

Next, for each $k \in [p]$, write $W_{J^{\beta}_k}$ for the (possibly trivial) standard parabolic subgroup of $\sW$ generated by the simple reflections $\{ s_j \mid j \in J^\beta_k\}$.  Then each nontrivial $W_{J^\beta_k}$ is of type $A_{\beta_k - 1}$, and the $W_{J^\beta_k}$ pairwise commute.  Hence the (possibly trivial) standard parabolic subgroup of $\sW$ generated by the simple reflections $\{ s_j \mid j \in J_\beta\}$ has the form
\[
W_\beta = W_{J_\beta} = W_{J^{\beta}_1} \times \dots \times W_{J^{\beta}_p}.
\]
Now if $\beta_k = 1$ let $w^\beta_k$ be the trivial element of $\sW$, and if $\beta_k \geq 2$ let $w^{\beta}_k$ be the Coxeter element of $\sW_{J^{\beta}_k}$ given by:
\begin{equation}\label{eq:w^beta_k}
w^{\beta}_k = s_{j^\beta_k+1}s_{j^\beta_k+2} \cdots s_{j^\beta_k+(\beta_k-1)} \in \sW_{J^{\beta}_k}.
\end{equation}
That is, $w^\beta_k$ is the product of the simple reflections in $\sW_{J^\beta_k}$, in increasing order. (In the language of \cite[Sec.~3.4.2]{GeckPfeifferBook}, the element $w^{\beta}_k$ is the positive block $b_{j^\beta_k, \beta_k}^+$ of length $\beta_k$ starting at $j^\beta_k$.)   

Finally, the partition $\beta = (\beta_1, \dots, \beta_p)$ corresponds to the product \[w_\beta = w^\beta_1 \cdots w^\beta_p \in \sW_\beta.\] That is, $w_\beta$ is the Coxeter element of $W_\beta$ which is the product of its simple reflections, in increasing order.  Note that $\supp(w_\beta) = \{ s_j \mid j \in J_\beta \}$, so that $S \setminus \supp(w_\beta)$ is indexed by $[n] \setminus J_\beta = \{ j_k^\beta \mid 2 \leq k \leq p \}$.  

To simplify notation in the theorem below, given any partition $\beta = (\beta_1, \dots, \beta_p)$ of $n+1$, we define the following $p$-element subsets of $[n+1]$:
\[ I_\beta = \{ j_k^\beta \mid 2 \leq k \leq p \} \cup \{n+1\} \quad \text{and} \quad I_\beta - 1 = \{ i-1 \mid i \in I_\beta\}.\]

With this notation established, we can now state our results describing mod-sets in type~$A$. 

\begin{thm}
	\label{thm:An}
	Suppose $\sW$ is of type $A_n$ with $n \geq 1$.  Let $\beta = (\beta_1, \dots, \beta_p)$ be a partition of $n+1$ with corresponding conjugacy class representative $w_\beta \in \sW$.  
\begin{enumerate}
\item\label{ModA} The module $\Mod(w_\beta) = (\Id-w_\beta)R^\vee$ equals
\[  \left\{\sum_{i=1}^n c_i \alpha_i^\vee \ \middle| \ c_i \in \Z, \quad c_i=0\ \text{for}\ i \in [n] \setminus J_\beta, \quad  \sum_{i=1}^n c_i \equiv 0  \; \operatorname{mod}\left( \gcd(\beta_k) \right) \right\}. \]
\item\label{BasisA} If $\beta_p = 1$ then the module $\Mod(w_\beta) = (\Id-w_\beta)R^\vee$ has a $\Z$-basis given by 
\[ \{ \alpha_j^\vee \mid j \in J_\beta \}, \]
 and if $\beta_p \geq 2$ then the module $\Mod(w_\beta) = (\Id-w_\beta)R^\vee$ has a $\Z$-basis given by 
\[ \left\{ \alpha_i^\vee - \alpha_{i+1}^\vee \ \middle|\ i \in J_\beta \backslash (I_\beta - 1) \right\} \cup \left\{ \alpha_i^\vee - \alpha_{i+2}^\vee \ \middle| \ i \in (I_\beta - 1) \backslash \{ n\} \right\} \cup \left\{ \gcd(\beta_k)\alpha_n^\vee \right\}.\]
\item\label{SNFA}  If $\beta = (1,\dots, 1)$ so that $w_\beta$ is trivial, then $(\Id - w_\beta)$ has Smith normal form $\diag(0^n)$. For $w_\beta$ nontrivial and any $w \in [w_\beta]$, the Smith normal form of $(\Id-w)$ equals \[ S_\beta = \diag(1^{n-p}, \gcd(\beta_k), 0^{p-1}).\]
\item\label{quotientA} For any partition $\beta$ and any $w \in [w_\beta]$, we have \[ R^\vee / \Mod(w) \cong  \left( \Z/ \gcd(\beta_k) \Z \right) \oplus \Z^{p-1}.\]
\end{enumerate}
\end{thm}

\noindent We illustrate this theorem with several examples in Section \ref{sec:examplesA}, and provide the proof in Section \ref{sec:proofsA}.

%%%%%%%%%%%%%%%%%%%%%%%%%%%%%%%%%%%%%%%%%%%%%%
\subsection{Examples in type $A$}
\label{sec:examplesA}

In this section, we describe four examples which illustrate our general results and proof techniques in type $A$.  We consider the partitions:
\begin{itemize}
	\item $\beta=(4)$  in Example~\ref{eg:CoxeterA3};
	\item $\beta=(6,4)$  in Example~\ref{eg:64A9};
	\item  $\beta=(4,1)$  in Example~\ref{eg:41A4}; and 
	\item $\beta = (4,3,1)$ in Example~\ref{eg:431A7}.
\end{itemize}
For each of these examples, we give an explicit description of the $\Z$-module $\Mod(w_\beta) = (\Id - w_\beta)R^\vee$, construct a $\Z$-basis for $\Mod(w_\beta)$, find the Smith normal form for $(\Id - w_\beta)$, and determine the isomorphism class of the quotient $R^\vee / \Mod(w_\beta)$.
We continue all notation from \Cref{sec:repsA}.

\begin{example}
	\label{eg:CoxeterA3}  
	Let $\sW$ be of type $A_3$, and consider the partition $\beta = (\beta_1) = (4)$ of $n+1 = 4$.  Then $J_\beta = J^\beta_1 = \{1,2,3\}$, so $w_\beta$ is the Coxeter element $w_\beta = s_1 s_2 s_3$ of $\sW$.   We will show that $(\Id - w_\beta)$ has Smith normal form $(1,1,4)$, and hence $R^\vee / \Mod(w_\beta) \cong \Z/4\Z = \Z/\beta_1 \Z$.

	We work with the $\Z$-basis $\Delta^\vee = \{ \alpha_1^\vee, \alpha_2^\vee, \alpha_3^\vee \}$ for $R^\vee$ and compute using \eqref{eq:siaction} that 
	\begin{eqnarray*}
		w_\beta(\alpha^\vee_1) & = & s_1 s_2(\alpha^\vee_1) = s_1(\alpha^\vee_1 + \alpha^\vee_2) = -\alpha^\vee_1 + (\alpha^\vee_1 + \alpha^\vee_2) = \alpha_2^\vee \\
		w_\beta(\alpha^\vee_2) & = & s_1 s_2 s_3 (\alpha^\vee_2) = s_1 s_2 (\alpha^\vee_2 + \alpha^\vee_3) = s_1(\alpha^\vee_3) = \alpha^\vee_3 \\
		w_\beta(\alpha^\vee_3) & = & s_1 s_2 s_3(\alpha^\vee_3) = s_1 s_2(-\alpha^\vee_3) = s_1 (-\alpha^\vee_2 - \alpha^\vee_3) =  -\alpha^\vee_1 - \alpha^\vee_2 -\alpha^\vee_3.
	\end{eqnarray*}
	Hence, with respect to the basis $\Delta^\vee$, the matrix for $w_\beta$ is as on the left, and the matrix $M_\beta$ for $(\Id - w_\beta)$ is as on the right:
	\[
	w_\beta = \begin{pmatrix}
		0 & 0 &  -1 \\ 
		1 & 0 & -1 \\ 
		0 & 1 & -1 
	\end{pmatrix} \quad \quad \quad
	M_\beta = \begin{pmatrix}
		1 &  0 & 1  \\ 
		-1 & 1 & 1 \\ 
		0 & -1 & 2
	\end{pmatrix}.
	\]

	We now solve the $\Z$-linear system $M_\beta \x = \bfc$ by performing certain row operations on $M_\beta$.  We replace row 1 of $M_\beta$ by the sum of its rows 1, 2, and 3, to obtain the matrix
	\[
	M'_\beta = \begin{pmatrix}
		0 &  0 & 4  \\ 
		-1 & 1 & 1 \\ 
		0 & -1 & 2
	\end{pmatrix}.
	\]
	Tracking the effect of these row operations on $\bfc$, and noticing that for $i = 2,3$, the leading entry in row $i$ of $M'_\beta$ is a $-1$ at position $(i,i-1)$,  we see that the corresponding system of linear equations has solution over $\Z$ if and only if
	\[ c_1 + c_2 + c_3 \equiv 0 \; \operatorname{mod}\left( 4 \right). \]
	Hence for the Coxeter element $w_\beta = s_1 s_2 s_3$ in $\sW$ of type $A_3$,
	\[
	\Mod(w_\beta) = M_\beta R^\vee =  \left\{ \sum_{i=1}^3 c_i \alpha_i^\vee \ \middle| \ c_i \in \Z \mbox{ and } c_1 + c_2 + c_3 \equiv 0 \; \operatorname{mod}\left(4 \right) \right\}.
	\]

	Next, we obtain the Smith normal form $S_\beta$ for $M_\beta$.  To do this, we first, for $i = 2,3$, replace row $i$ of $M_\beta$ by the sum of rows $1,\dots,i$ of $M_\beta$, to obtain the upper-triangular matrix 
	\[
	T_\beta = \begin{pmatrix}
	\boxed{1} & 0 & \circled{1}  \\ 
		0 & \boxed{1} & \circled{2} \\ 
		0 & 0 & 4
	\end{pmatrix}.
	\]
	Here, we have boxed the pivot entries $1$ in rows 1 and 2, and we have circled the non-zero entries (in column 3) which we will clear at the next step.  Using the boxed pivots, for $i = 1, 2$ we subtract $i$ times column $i$ from column $3$, to obtain 
	\[
	S_\beta = \begin{pmatrix}
		1 & 0 & 0  \\ 
		0 & 1 & 0 \\ 
		0 & 0 & 4
	\end{pmatrix}
	\]
	which is in Smith normal form.  That is, for $w_\beta = s_1 s_2 s_3$ in $\sW$ of type $A_3$, the matrix $M_\beta$ has Smith normal form $\diag(1,1,4)$.  Hence
	\[
	R^\vee / \Mod(w_\beta) = R^\vee / M_\beta R^\vee \cong \Z/4\Z.
	\]

	Finally, to obtain a $\Z$-basis for $\Mod(w_\beta)$, we go back to $M_\beta$ and perform column operations.  We replace column 3 of $M_\beta$ by the sum of column 3 and $-i$ times column $i$ for $i = 1,2$, to obtain the matrix
	\[
	B_\beta = \begin{pmatrix}
		1 & 0 & 0  \\ 
		-1 & 1 & 0  \\ 
		0 & -1 & 4
	\end{pmatrix}.
	\]
	From here, we see that a $\Z$-basis for $\Mod(w_\beta)$ is given by the set
	\[
 	\left\{ \alpha_1^\vee-\alpha_2^\vee, \alpha_2^\vee-\alpha_3^\vee, 4\alpha_3^\vee \right\}.
	 \]
 	In this example, $J_\beta = \{1,2,3\}$ and $I_\beta = \{ 4\}$, in which case $I_\beta-1 = \{3\}$ and $(I_\beta - 1) \backslash \{ 3\} = \emptyset$. Therefore, this basis matches the one given in Theorem \ref{thm:An}. 
\end{example}

\begin{example}
	\label{eg:64A9}
 	Let $\sW$ be of type $A_9$, and consider the partition $\beta = (\beta_1,\beta_2) = (6,4)$ of $n+1 = 10$.  Then $J^\beta_1 = \{1,2,3,4,5\}$ and $J^\beta_2 = \{7,8,9\}$, so $J_\beta = \{ 1, \dots, 9 \} \setminus \{ 6 \}$ and $w_\beta = w^\beta_1 w^\beta_2 =  (s_1 s_2 s_3 s_4 s_5)(s_7 s_8 s_9)$ is Coxeter in $\sW_\beta = \sW_{J^\beta_1} \times \sW_{J^\beta_2}$.   We will  show that $(\Id - w_\beta)$ has Smith normal form $(1^7,2,0)$, and hence $R^\vee / \Mod(w_\beta) \cong  \Z/2\Z \oplus \Z$.  That is, the torsion in this quotient is determined by $2=\gcd(6,4)=\gcd(\beta_1, \beta_2)$, while its free rank equals $1= p-1$, where $p = 2$ is the number of parts of $\beta$. 
 
	We work with the $\Z$-basis $\Delta^\vee = \{ \alpha_1^\vee, \dots, \alpha_9^\vee \}$ for $R^\vee$.  Now $w^\beta_1 = s_1 s_2 s_3 s_4 s_5$ fixes $\alpha_i^\vee$ for $7 \leq i \leq 9$, and~$w^\beta_2 = s_7 s_8 s_9$ fixes $\alpha_i^\vee$ for $1 \leq i \leq 5$, so we can compute similarly to Example~\ref{eg:CoxeterA3} that $w_\beta(\alpha_i^\vee) = \alpha^\vee_{i+1}$ for $i \in \{1,2,3,4\} \cup \{7,8\}$, while
 	\[
	w_\beta(\alpha^\vee_5) =   - \sum_{i=1}^5 \alpha_i^\vee \quad \mbox{and} \quad 
	 w_\beta(\alpha^\vee_9) = - \sum_{i=7}^9 \alpha_i^\vee.
	\]
	By inserting $s_6 s_6$ at the end of $w^\beta_1$, we also compute using \eqref{eq:siaction} that
	\begin{eqnarray*}
		w_\beta(\alpha^\vee_6)  
		& = & (s_1 s_2 s_3 s_4 s_5) (s_7 s_8 s_9) (\alpha^\vee_6) \\
		& = & (s_1 s_2 s_3 s_4 s_5) s_7 (\alpha^\vee_6) \\
		& = & s_1 s_2 s_3 s_4 s_5 (\alpha_6^\vee + \alpha_7^\vee) \\
		& = & s_1 s_2 s_3 s_4 s_5 s_6 (s_6 \alpha_6^\vee) + \alpha_7^\vee \\ 
		& = & -  s_1 s_2 s_3 s_4 s_5 s_6 ( \alpha_6^\vee) + \alpha_7^\vee \\
		& = & - \left( - \sum_{i=1}^6 \alpha_i^\vee \right) + \alpha_7^\vee \\
		& = & \sum_{i=1}^7 \alpha_i^\vee.
	\end{eqnarray*}
	Hence the matrices for  $w_\beta$ and $M_\beta = \Id - w_\beta$ with respect to the basis $\Delta^\vee$ are:
	\[
	w_\beta = \begin{pmatrix}
		0 & 0 & 0 & 0 & -1 & 1 &  &  &  \\ 
		1 & 0 & 0 & 0 & -1 & 1 &  &  & \\ 
		0 & 1 & 0 & 0 & -1 & 1 &  &  &  \\
		0 & 0 & 1 & 0 & -1 & 1 &  &  &  \\
		0 & 0 & 0 & 1 & -1 & 1 & &  &  \\
		0 &  0&  0&  0& 0 & 1 & 0 &  0&  0\\
 		  &  &  &  &  & 1 & 0 & 0 & -1 \\
 		  &  &  &  &  & 0 & 1 & 0 & -1 \\
 	 	  &  &  &  &  & 0 & 0 & 1 & -1 
	\end{pmatrix}
	\quad
	M_\beta = \begin{pmatrix}
		1 & 0 & 0 & 0 & 1 & -1 & &  &  &  \\ 
		-1 & 1 & 0 & 0 & 1 & -1 & &  &  &  \\ 
		0 & -1 & 1 & 0 & 1 & -1 & &  &  &   \\
		0 & 0 & -1 & 1 & 1 & -1 & &  &  &   \\
		0 & 0 & 0 & -1 & 2 & -1 & &  &  &   \\
		0 & 0 &0  &0  &  0 & 0 & 0&  0& 0 &   \\
 		&  &  &  &   & -1  & 1 & 0 & 1 \\
 		&  &  &  &   & 0  & -1 & 1 & 1 \\
 		&  &  &  &   & 0  & 0 & -1 & 2 
	\end{pmatrix}.
	\]  
	Here, omitted entries are all $0$s.

	We now solve the $\Z$-linear system $M_\beta \x = \bfc$ using row operations.  We replace row 1 of $M_\beta$ by the sum of its rows $1,\dots,9$ to obtain the matrix
	\[
	M'_\beta = \begin{pmatrix}
		0 &  0 & 0  & 0 & 6 & -6 & 0 & 0 & 4\\ 
		-1 & 1 & 0 & 0 & 1 & -1 &  &  & \\ 
		0 & -1 & 1 & 0 & 1 & -1 &  &  &  \\
		0 & 0 & -1 & 1 & 1 & -1 &  &  &  \\
		0 & 0 & 0 & -1 & 2 & -1 &  &  &  \\
		0 & 0 & 0 & 0 & 0 & 0 & 0 & 0 & 0 \\
 		&  &  &  &  & -1 & 1 & 0 & 1 \\
 		&  &  &  &  & 0 & -1 & 1 & 1 \\
 		&  &  &  &  & 0 & 0 & -1 & 2 
	\end{pmatrix}.
	\]
	Tracking the effect of these row operations on $\bfc$, and noticing that for $i = 2,3,4,5,7,8,9$, the leading entry in row $i$ of $M'_\beta$ is its $(i,i-1)$ entry, which equals $-1$, while row 6 of $M'_\beta$ is all~$0$s, we see that the corresponding system of linear equations has solution over $\Z$ if and only if $c_6 = 0$ and there are $x_5, x_6, x_9 \in \Z$ such that
	\[ 6x_5 - 6x_6 + 4x_9 = \sum_{i=1}^9 c_i.\]
	Since $\gcd(6,-6,4) = 2$, by Bezout's Theorem this equation has integer solution if and only if $
	\sum_{i=1}^9 c_i \equiv 0 \; \operatorname{mod}\left( 2\right)$.
	Hence 
	\[
	\Mod(w_\beta) = M_\beta R^\vee =  \left\{ \sum_{i=1}^9 c_i \alpha_i^\vee \ \middle| \ c_i \in \Z, \ c_6 = 0, \ \sum_{i=1}^n c_i \equiv 0 \; \operatorname{mod}\left( 2\right)  \right\}.
	\]

	We next obtain the Smith normal form $S_\beta$ for $M_\beta$, using a combination of column and row operations.  Starting with the matrix~$M_\beta$, we add columns 5, 7, and 8 to column $6$, to obtain 
	\[
	\hat{M}_\beta = \begin{pmatrix}
		1 & 0 & 0 & 0 & 1 & 0 &  &  &  \\ 
		-1 & 1 & 0 & 0 & 1 & 0 &  &  & \\ 
		0 & -1 & 1 & 0 & 1 & 0 &  &  &  \\	
		0 & 0 & -1 & 1 & 1 & 0 &  &  &  \\
		0 & 0 & 0 & -1 & 2 & 1 &  &  &  \\
		0 & 0 & 0 & 0 & 0 & 0 & 0 & 0 & 0 \\
 		&  &  &  &  & 0 & 1 & 0 & 1 \\
		 &  &  &  &  & 0 & -1 & 1 & 1 \\
 		&  &  &  &  & -1 & 0 & -1 & 2 
	\end{pmatrix}.
	\]  
	Next, for $i_1 = 2,3,4,5$ we replace row $i_1$ of $\hat{M}_\beta$ by the sum of its rows $1,\dots,i_1$, and for $i_2 = 8,9$ we replace row $i_2$ of $\hat{M}_\beta$ by the sum of its rows $7,\dots,i_2$.  This yields the matrix
	\[
	T_\beta = \begin{pmatrix}
		\boxed{1} & 0 & 0 & 0 & \circled{1} & 0 &  &  &  \\ 
		0 & \boxed{1} & 0 & 0 & \circled{2} & 0 &  &  & \\ 
		0 & 0 & \boxed{1} & 0 & \circled{3} & 0 &  &  &  \\
		0 & 0 & 0 & \boxed{1} & \circled{4} & 0 &  &  &  \\
		0 & 0 & 0 & 0 & 6 & 1 &  &  &  \\
		0 & 0 & 0 & 0 & 0 & 0 & 0 & 0 & 0 \\
 		&  &  &  &  & 0 & \boxed{1} & 0 & \circled{1} \\
 		&  &  &  &  & 0 & 0 & \boxed{1} & \circled{2} \\
 		&  &  &  &  & -1 & 0 & 0 & 4 
	\end{pmatrix}.
	\]  
	Here, we have boxed the pivot entries 1 in columns $i_1 = 1,2,3,4$ and $i_2 = 7,8$, and circled the non-zero entries in columns $5$ and $9$ which we will clear, using these pivots and column operations, at the next step.  This results in the matrix
	\[
	T_\beta' = \begin{pmatrix}
		\boxed{1} & 0 & 0 & 0 & 0 & 0 &  &  &  \\ 
		0 & \boxed{1} & 0 & 0 & 0 & 0 &  &  & \\ 
		0 & 0 & \boxed{1} & 0 & 0 & 0 &  &  &  \\
		0 & 0 & 0 & \boxed{1} & 0 & 0 &  &  &  \\
		0 & 0 & 0 & 0 & \circled{6} & 1 & 0 & 0 & 0 \\
		0 & 0 & 0 & 0 & 0 & 0 & 0 & 0 & 0 \\
 		&  &  &  &  & 0 & \boxed{1} & 0 & 0 \\
		 &  &  &  &  & 0 & 0 & \boxed{1} & 0 \\
 		&  &  &  &  & \circled{$-1$} & 0 & 0 & 4 \\
	\end{pmatrix}.
	\] 
	We now add row 5 to row 9, which clears the circled $-1$ and puts a 6 in position $(9,5)$, and then subtract 6 times column 6 from column 5, which clears the circled 6, to obtain
	\[
	T_\beta'' = \begin{pmatrix}
		\boxed{1} & 0 & 0 & 0 & 0 & 0 &  &  &  \\ 
		0 & \boxed{1} & 0 & 0 & 0 & 0 &  &  & \\ 
		0 & 0 & \boxed{1} & 0 & 0 & 0 &  &  &  \\
		0 & 0 & 0 & \boxed{1} & 0 & 0 &  &  &  \\
		0 & 0 & 0 & 0 & 0 & \boxed{1} &  &  &  \\
		0 & 0 & 0 & 0 & 0 & 0 & 0 & 0 & 0 \\
 		&  &  &  & 0 & 0 & \boxed{1} & 0 & 0 \\
 		&  &  &  & 0 & 0 & 0 & \boxed{1} & 0 \\
 		0& 0 & 0 & 0 & 6 & 0 & 0 & 0 & 4 \\
	\end{pmatrix},
	\] 
	where we have boxed the pivot entries $1$ in columns $1,2,3,4,6,7,8$.  
	By applying Bezout's Theorem to the last row, whose nonzero entries are $\beta_1 = 6$ and $\beta_2 = 4$, we obtain that the Smith normal form $S_\beta$ for $M_\beta$ is $ \diag(1^7,2,0)$, where $2 = \gcd(6,4)$.  Therefore, the quotient $R^\vee / \Mod(w_\beta) \cong \Z/2\Z \oplus \Z$.

	Finally, to obtain a $\Z$-basis for $\Mod(w_\beta)$, we go back to $M_\beta$ above and perform the following column operations.  First replace column 6 by the sum of columns 5 and 6.  Then, apply the same column operations as used to obtain $B_\beta$ in Example \ref{eg:CoxeterA3}, here on the upper $5\times 5$ block and the lower $3 \times 3$ block, respectively.  Then, replace column 5 by $-6$ times the sum of columns 6, 7, 8 to obtain 
\[
	B'_\beta = \begin{pmatrix}
		1 & 0 & 0 & 0 & 0 & 0 &  &  &  \\ 
		-1 & 1 & 0 & 0 & 0 & 0 &  &  & \\ 
		0 & -1 & 1 & 0 & 0 & 0 &  &  &  \\
		0 & 0 & -1 & 1 & 0 & 0 &  &  &  \\
		0 & 0 & 0 & -1 & 0 & 1 &  &  &  \\
		0 & 0 & 0 & 0 & 0 & 0 & 0 & 0 & 0 \\
 		&  &  &  & 0 & -1 & 1 & 0 & 0 \\
 		&  &  &  & 0 & 0 & -1 & 1 & 0 \\
 		&  &  &  & 6 & 0 & 0 & -1 & 4 
	\end{pmatrix}.
	\]
Since $\gcd(6,4)=2$, by Bezout's Theorem, there exist column operations on columns 5 and 9 which first replaces column 9 with $\gcd(6,4)=2$ and then clears the 6 from column 5, resulting in
\[
	B_\beta = \begin{pmatrix}
		1 & 0 & 0 & 0 &  &  &  &  &  \\ 
		-1 & 1 & 0 & 0 &  &  &  &  & \\ 
		0 & -1 & 1 & 0 &  &  &  &  &  \\
		0 & 0 & -1 & 1 &  &  &  &  &  \\
		0 & 0 & 0 & -1 & 0 & 1 &  &  &  \\
		&  &  &  &  & 0 &  &  &  \\
 		&  &  &  &  & -1 & 1 & 0 & 0 \\
 		&  &  &  &  & 0 & -1 & 1 & 0 \\
 		&  &  &  &  & 0 & 0 & -1 & 2 
	\end{pmatrix}.
	\]
	
	Therefore, one $\Z$-basis for $\Mod(w_\beta)$ is given by
	\[ \{ \alpha_{i}^\vee -\alpha_{i+1}^\vee \mid i =1,2,3,4, 7, 8 \}  \ \cup\  \{\alpha_5^\vee - \alpha_7^\vee\} \ \cup\  \{ 2\alpha_9^\vee \}. \]
In this example, $J_\beta = \{1,2,3,4,5,7,8,9\}$ and $I_\beta = \{ 6,10\}$, in which case $I_\beta-1 = \{5,9\}$ and $(I_\beta - 1) \backslash \{ 9\} = \{5\}$. Therefore, this basis matches the one given in Theorem \ref{thm:An}. 
\end{example}

\begin{example}\label{eg:41A4} 
	Let $\sW$ be of type $A_4$, and consider the partition $\beta = (\beta_1,\beta_2) = (4,1)$ of $n+1 = 5$.  We now have $J^\beta_1 = \{1,2,3\}$ and $J^\beta_2 = \emptyset$, so $J_\beta = J^\beta_1 = \{1,2,3\}$ and $w_\beta = s_1 s_2 s_3$ is Coxeter in $\sW_\beta = \sW_{J^\beta_1}$.   We will show that $(\Id - w_\beta)$ has Smith normal form $(1,1,1,0) = (1^{n-p + 1},0^{p-1})$, and hence $R^\vee / \Mod(w_\beta) \cong \Z = \Z^{p-1}$ is free of rank $p - 1$, where $p = 2$ is the number of parts of $\beta$.

	We work with the $\Z$-basis $\Delta^\vee = \{ \alpha_1^\vee, \alpha_2^\vee, \alpha_3^\vee, \alpha_4^\vee \}$ for $R^\vee$, and similarly to Example~\ref{eg:CoxeterA3} compute that
	\[
		w_\beta(\alpha_1^\vee) = \alpha^\vee_2, \quad 
		w_\beta(\alpha_2^\vee)  =  \alpha^\vee_3, \quad \mbox{ and } \quad w_\beta(\alpha^\vee_3) =  -\alpha^\vee_1 - \alpha^\vee_2 -\alpha^\vee_3.
	\]
	This time we also need
	\[w_\beta(\alpha^\vee_4) = s_1 s_2 s_3 (\alpha^\vee_4) = s_1 s_2 s_3 s_4(s_4\alpha^\vee_4) = - s_1 s_2 s_3 s_4(\alpha_4^\vee) = 
 	\alpha_1^\vee + \alpha_2^\vee + \alpha_3^\vee + \alpha_4^\vee.\]
	Hence the matrices  for $w_\beta$ and $M_\beta = \Id - w_\beta$ with respect to the basis $\Delta^\vee$ are given by
\[
w_\beta = \begin{pmatrix}
0 & 0 &  -1 & 1\\ 
1 & 0 & -1 & 1\\ 
0 & 1 & -1 & 1 \\
0 & 0 & 0 & 1
\end{pmatrix}
\quad \mbox{and} \quad
M_\beta = \begin{pmatrix}
1 &  0 & 1  & -1\\ 
-1 & 1 & 1 & -1\\ 
0 & -1 & 2 & -1 \\
0 & 0 & 0 & 0
\end{pmatrix}.
\]

We now solve the $\Z$-linear system $M_\beta \x = \bfc$.  We replace row 1 of $M_\beta$ by the sum of its rows 1, 2, and 3 to obtain the matrix
\[
M'_\beta = \begin{pmatrix}
0 &  0 & 4  & -3\\ 
-1 & 1 & 1 & -1\\ 
0 & -1 & 2 & -1 \\
0 & 0 & 0 & 0
\end{pmatrix}.
\]
We see that the corresponding system of linear equations has solution over $\Z$ if and only if $c_4 = 0$ and there are  $x_3, x_4 \in \Z$ such that
\[ 4x_3 - 3x_4 = c_1 + c_2 + c_3.\]
However since $\gcd(4,-3) = 1$, by Bezout's Theorem this equation has integer solution for all $c_1, c_2, c_3 \in \Z$.  
Hence for $w_\beta = s_1 s_2 s_3$ in $\sW$ of type $A_4$, we have
\[
\Mod(w_\beta) = M_\beta R^\vee =  \left\{ \sum_{i=1}^3 c_i \alpha_i^\vee \ \middle| \  c_i \in \Z  \right\} = \left\{ \sum_{i=1}^4 c_i \alpha_i^\vee  \ \middle| \  c_i \in \Z  \mbox{ and } c_4 = 0 \right\}.
\]
It is immediate that a $\Z$-basis for $\Mod(w_\beta)$ is given by
$\{ \alpha_1^\vee, \alpha_2^\vee, \alpha_3^\vee \} = \{ \alpha_j^\vee : j \in J_\beta \}$.

Finally, in this case we can use just column operations to obtain the Smith normal form.  Starting with $M_\beta$, for $i = 1,2,3$ we replace column $i$ of $M_\beta$ by the sum of its columns $i,\dots,4$, to obtain the upper-triangular matrix 
\[
T_\beta = \begin{pmatrix}
\boxed{1} & 0 & 0 & \circled{$-1$}  \\ 
0 & \boxed{1} & 0 & \circled{$-1$}  \\ 
0 & 0 & \boxed{1} & \circled{$-1$}  \\
0 & 0 & 0 & 0
\end{pmatrix}.
\]
Here, we have boxed the pivot entries $1$, and we have circled the non-zero entries which we will clear, using column operations, at the next step.  This yields Smith normal form
\[
S_\beta = \begin{pmatrix}
1 & 0 & 0 & 0 \\ 
0 & 1 & 0 & 0\\ 
0 & 0 & 1 & 0 \\
0 & 0 & 0 & 0
\end{pmatrix}.
\]
Hence $
R^\vee / \Mod(w_\beta) = R^\vee / M_\beta R^\vee \cong \Z.
$
\end{example}

Our final example in type $A$ is as follows. 

\begin{example}
	\label{eg:431A7}  
	Let $\sW$ be of type $A_7$, and consider the partition $\beta = (\beta_1,\beta_2,\beta_3) = (4,3,1)$ of $n+1 = 8$.  We now have $J^\beta_1 = \{1,2,3\}$, $J^\beta_2 = \{5,6\}$, and $J^\beta_3 = \emptyset$, so  $w_\beta = (s_1 s_2 s_3) (s_5 s_6)$.   We will show that $(\Id - w_\beta)$ has Smith normal form $(1^5,0^2) = (1^{n-p+1},0^{p-1})$, hence $R^\vee / \Mod(w_\beta) \cong \Z^2 = \Z^{p-1}$, where $p = 3$ is the number of parts of $\beta$.

We work with the $\Z$-basis $\Delta^\vee = \{ \alpha_1^\vee, \dots,\alpha_7^\vee \}$ for $R^\vee$, and similarly to previous examples compute that the matrices for $w_\beta$ and $M_\beta=\Id - w_\beta$ with respect to the basis $\Delta^\vee$ are
\[
w_\beta = \begin{pmatrix}
0 & 0 & -1 &  1 & &&\\ 
1 & 0 & -1 & 1 &  && \\ 
0 & 1 & -1 & 1 & & & \\
0 & 0 & 0 & 1 & 0 & 0 & 0 \\
 & &  & 1 & 0 & -1 & 1\\
& &  & 0 & 1 & -1 & 1 \\
&  &  & 0 &0 &0 & 1
\end{pmatrix}
\quad \mbox{and} \quad
M_\beta = \begin{pmatrix}
1 & 0 & 1 &  -1 & &&\\ 
-1 & 1 & 1 & -1 &  && \\ 
0 & -1 & 2 & -1 & & & \\
0 & 0 & 0 & 0 & 0 & 0 & 0 \\
 & &  & -1 & 1 & 1 & -1\\
& &  & 0 & -1 & 2 & -1 \\
&  &  & 0 &0 &0 & 0
\end{pmatrix}.
\]

We now solve the $\Z$-linear system $M_\beta \x = \bfc$, using row operations.  We replace row 1 of~$M_\beta$ by the sum of its rows $1$ through $7$ to obtain the matrix
\[
M'_\beta = \begin{pmatrix}
0 & 0 & 4 &  -4 & 0 & 3 & -2\\ 
-1 & 1 & 1 & -1 &  && \\ 
0 & -1 & 2 & -1 & & & \\
0 & 0 & 0 & 0 & 0 & 0 & 0 \\
 & &  & -1 & 1 & 1 & -1\\
& &  & 0 & -1 & 2 & -1 \\
&  &  & 0 &0 &0 & 0
\end{pmatrix}.
\]
Since $\gcd(4,-4,3,-2) = \gcd(3,-2) = 1$, while rows $4$ and $7$ are all $0$s, we see that 
\[
\Mod(w_\beta) = M_\beta R^\vee =  \left\{ \sum_{i=1}^7 c_i \alpha_i^\vee \ \middle| \ c_i \in \Z  \mbox{ and } c_4 = c_7 = 0 \right\}.
\]
It is immediate that a $\Z$-basis for $\Mod(w_\beta)$ is given by $
\{ \alpha_1^\vee, \alpha_2^\vee, \alpha_3^\vee, \alpha_5^\vee,  \alpha_6^\vee \} = \{ \alpha_j^\vee \mid j \in J_\beta \}$.

Finally, we obtain the Smith normal form $S_\beta$ for $M_\beta$, using just column operations.  To do this, starting with $M_\beta$, for $i = 4,5,6$ we replace column $i$ by the sum of columns $i,\dots,7$,  to obtain
\[
T_\beta = \begin{pmatrix}
1 & 0 & 1 &  -1 & &&\\ 
-1 & 1 & 1 & -1 &  && \\ 
0 & -1 & 2 & -1 & & & \\
0 & 0 & 0 & 0 & 0 & 0 & 0 \\
 & &  & 0 & \boxed{1} & 0 & \circled{$-1$}\\
& &  & 0 & 0 & \boxed{1} & \circled{$-1$} \\
&  &  & 0 &0 &0 & 0
\end{pmatrix}.
\]
Now for $i = 1,2,3$ we replace column $i$ of $T_\beta$ by the sum of columns $i,\dots,4$.  This yields
\[
T'_\beta = \begin{pmatrix}
\boxed{1} & 0 & 0 &  \circled{$-1$} & &&\\ 
0 & \boxed{1} &  & \circled{$-1$} &  && \\ 
0 & 0 & \boxed{1} & \circled{$-1$} & & & \\
0 & 0 & 0 & 0 & 0 & 0 & 0 \\
 & &  & 0 & \boxed{1} & 0 & \circled{$-1$}\\
& &  & 0 & 0 & \boxed{1} & \circled{$-1$} \\
&  &  & 0 &0 &0 & 0
\end{pmatrix}.
\]
We then use column operations with the boxed pivots to clear the circled entries, and so obtain that the Smith normal form is $S_\beta = (1^5,0^2)$, and hence $R^\vee / \Mod(w_\beta) \cong \Z^2$.
\end{example}

%%%%%%%%%%%%%%%%%%%%%%%%%%%%%%%%%%%%%%%%%%%%%%%%%%%%
\subsection{Proofs in type $A$}
\label{sec:proofsA}

In order to prove \Cref{thm:An}, in Section~\ref{sec:matricesA} we define a collection of auxiliary matrices and record some related $\Z$-linear-algebraic results.  We then determine the matrix for $w_\beta$, and hence for $M_\beta = \Id - w_\beta$, with respect to $\Delta^\vee$ in Section~\ref{sec:wMA}, and complete the proof of \Cref{thm:An} in Section~\ref{sec:propsAn}.

%%%%%%%%%%%%%%%%%%%%%%%%%%%%%%%%%
\subsubsection{Matrix definitions and results in type $A$}\label{sec:matricesA}
%%%%%%%%%%%%%%%%%%%%%%%%%%%%%%%%%

In this section, we define a collection of useful matrices, and record some results related to these matrices.  

For $r \geq 1$, we write $\Id_r$ or sometimes just $\Id$ for the $r \times r$ identity matrix, and $0_r$ for the $r \times r$ zero matrix.  For any matrix $M$, we write $\cC_i(M)$ for the $i$th column of $M$ and $\cR_i(M)$ for the $i$th row of~$M$.

We next define $1 \times 1$ matrices $W_1 = [-1]$ and $M_1 = \Id_1 - W_1 = [2]$, and for $r \geq 2$, define 
\begin{equation}
W_r = \begin{pmatrix}\label{eq:WrMr}
0 & 0 & \dots & \dots   & 0 & -1 \\ 
1 & 0 & \ddots & \ddots & 0 & -1 \\ 
0 & 1 & \ddots & \ddots & \vdots & \vdots \\
\vdots & 0 & \ddots & 0 & 0 & -1 \\
\vdots &  & \ddots & 1 & 0 & -1 \\
0 & \dots & \dots & 0 & 1 & -1 
\end{pmatrix}, \quad
M_r = \begin{pmatrix}
1 & 0 & \cdots & \cdots & 0 & 1  \\ 
-1 & 1 & 0  & \ddots & 0 & 1 \\ 
0 & -1 & \ddots & \ddots & \vdots  & \vdots \\
\vdots & 0 & \ddots &  1 & 0 & 1 \\
\vdots &  & \ddots &  -1 & 1 & 1 \\
0 & \cdots & \cdots & 0 & -1 & 2
\end{pmatrix}.
\end{equation}
That is, the $(r,r)$-entry of $M_r= \Id - W_r$ is $2$,  its $(i,i)$- and $(i,r)$-entries are $1$ for $1 \leq i \leq r-1$, its $(i+1,i)$-entry is $-1$ for $1 \leq i \leq r-1$,  and all other entries of $M_r$ are $0$.  We also define $(r+1) \times (r+1)$ upper block-triangular matrices $W_{r,1}$ and $M_{r,1}$ by
\begin{equation}\label{eq:Wr1}
W_{r,1} = \begin{pmatrix} &&& 1 \\ & W_r & & \vdots \\ &&& 1 \\ 0 & \cdots & 0 & 1 \end{pmatrix} = 
\begin{pmatrix}
0 & 0 & \dots & \dots   & 0 & -1 & 1 \\ 
1 & 0 & \ddots & \ddots & 0 & -1 & 1\\ 
0 & 1 & \ddots & \ddots & \vdots & \vdots & \vdots \\
\vdots & 0 & \ddots & 0 & 0 & -1 & 1\\
\vdots &  & \ddots & 1 & 0 & -1 & 1\\
0 & \dots & \dots & 0 & 1 & -1 & 1 \\
0 & \dots & \dots & 0 & 0 & 0 & 1 
\end{pmatrix} \quad \text{and}
\end{equation}

\begin{equation}\label{eq:Mr1}
M_{r,1} = \Id - W_{r,1} = \begin{pmatrix} &&& -1 \\ & M_r & & \vdots \\ &&& -1 \\ 0 & \cdots & 0 & 0 \end{pmatrix} =  \begin{pmatrix}
1 & 0 & \cdots & \cdots & 0 & 1 & -1 \\ 
-1 & 1 & 0  & \ddots & 0 & 1 & -1\\ 
0 & -1 & \ddots & \ddots & \vdots  & \vdots & \vdots \\
\vdots & 0 & \ddots &  1 & 0 & 1 & -1\\
\vdots &  & \ddots &  -1 & 1 & 1 & -1\\
0 & \cdots & \cdots & 0 & -1 & 2 & -1 \\
0 & \cdots & \cdots & 0 & 0 & 0 & 0
\end{pmatrix}.
\end{equation}

Next, define $M_r'$  to be the matrix obtained from $M_r$ by replacing $\cR_1(M_r)$ by the sum $\sum_{i=1}^r \cR_i(M_r)$, so that $M_1' = M_1 = [2]$ and for $r \geq 2$, we have
\[
M_r' = \begin{pmatrix}
0 & 0 & \cdots & \cdots & 0 & r+1  \\ 
-1 & 1 & 0  & \ddots & 0 & 1 \\ 
0 & -1 & \ddots & \ddots & \vdots  & \vdots \\
\vdots & 0 & \ddots &  1 & 0 & 1 \\
\vdots &  & \ddots &  -1 & 1 & 1 \\
0 & \cdots & \cdots & 0 & -1 & 2
\end{pmatrix}.
\]
That is, the matrix $M_r'$ is the same as $M_r$ in rows $2$ through $r$, and the only nonzero entry in the first row of $M_r'$ is the $(r+1)$ in position $(1,r)$.  Similarly, we define $M'_{r,1}$ to be the matrix obtained from $M_{r,1}$ by replacing $\cR_1(M_{r,1})$ by the sum $\sum_{i=1}^r \cR_i(M_{r,1})$, so that 
\[
M'_{r,1} = \begin{pmatrix}
0 & 0 & \cdots & \cdots & 0 & r+1 & -r \\ 
-1 & 1 & 0  & \ddots & 0 & 1 & -1\\ 
0 & -1 & \ddots & \ddots & \vdots  & \vdots & \vdots \\
\vdots & 0 & \ddots &  1 & 0 & 1 & -1\\
\vdots &  & \ddots &  -1 & 1 & 1 & -1\\
0 & \cdots & \cdots & 0 & -1 & 2 & -1 \\
0 & \cdots & \cdots & 0 & 0 & 0 & 0
\end{pmatrix}.
\]
That is, the matrix $M'_{r,1}$ is the same as $M_{r,1}$ in rows $2$ through $r+1$, and the only nonzero entries in the first row of $M'_{r,1}$ are  the $(r+1)$ in position $(1,r)$, and the $-r$ in position $(1,r+1)$.  We use $M_r'$ and $M'_{r,1}$ to establish the following result.

\begin{lemma}\label{lem:MrModSets}
For all $r \geq 1$:
\begin{enumerate}
\item The $\Z$-linear system $M_r \x = \bfc$ has solution $\x \in \Z^r$ if and only if \[\sum_{i=1}^r c_i \equiv 0 \; \operatorname{mod}\left(r+1\right).\] 
\item The $\Z$-linear system $M_{r,1} \x = \bfc$ has solution $\x \in \Z^{r+1}$ if and only if $c_{r+1} = 0$.
\end{enumerate}
\end{lemma}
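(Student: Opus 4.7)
The plan is to exploit the auxiliary matrices $M_r'$ and $M_{r,1}'$ already introduced in the preamble, which arise from $M_r$ and $M_{r,1}$ by a single integer row operation. Specifically, $M_r'$ is obtained from $M_r$ by replacing $\cR_1(M_r)$ with $\sum_{i=1}^r \cR_i(M_r)$, and this operation has an integer inverse (subtract rows $2,\ldots,r$ from the new row $1$). Consequently, the $\Z$-system $M_r \x = \bfc$ is integrally equivalent to $M_r' \x = \bfc'$, where $\bfc'$ differs from $\bfc$ only in its first coordinate, which has become $\sum_{i=1}^r c_i$. The analogous statement applies to $M_{r,1}$ and $M_{r,1}'$.

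For part~(1), the transformed first equation reads $(r+1) x_r = \sum_{i=1}^r c_i$, which is solvable in $x_r \in \Z$ if and only if $\sum_{i=1}^r c_i \equiv 0 \pmod{r+1}$. I would then argue that once such an integer $x_r$ is fixed, rows $2$ through $r$ of $M_r'$ (which coincide with the corresponding rows of $M_r$) determine $x_1, \ldots, x_{r-1}$ uniquely and integrally by a back-substitution: from row $r$ one has $x_{r-1} = 2 x_r - c_r$, and from each row $i$ with $2 \leq i \leq r-1$ one has $x_{i-1} = x_i + x_r - c_i$. A short consistency check (amounting to re-deriving the transformed row-$1$ equation as a telescoping sum) confirms no hidden obstruction, so the stated congruence is both necessary and sufficient.

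For part~(2), the final row of $M_{r,1}$ is identically zero, which immediately forces $c_{r+1} = 0$ as a necessary condition. Assuming $c_{r+1} = 0$, the remaining $r$ equations of $M_{r,1} \x = \bfc$ can be recast as $M_r (x_1,\ldots,x_r)^T = \bfc''$ with $c_i'' = c_i + x_{r+1}$ for $1 \leq i \leq r$. By part~(1), this subsystem has an integer solution $(x_1,\ldots,x_r)$ if and only if $\sum_{i=1}^r c_i + r x_{r+1} \equiv 0 \pmod{r+1}$. Since $\gcd(r, r+1) = 1$, the integer $x_{r+1}$ can always be chosen to satisfy this congruence for any $\bfc$, so $c_{r+1} = 0$ is both necessary and sufficient.

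The main obstacle, such as it is, is structural recognition rather than calculation: one must notice that the row-sum trick exposes the modulus $r+1$ in position $(1,r)$ of $M_r'$ thanks to the single $2$ in the $(r,r)$-entry of $M_r$, and that the auxiliary $-1$ column in $M_{r,1}$ provides the extra degree of freedom needed to kill any residue modulo $r+1$ via the choice of $x_{r+1}$. With these observations in hand, the remainder is elementary integer linear algebra together with B\'ezout's identity in the coprime case $\gcd(r, r+1) = 1$.
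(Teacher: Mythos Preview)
Your proposal is correct and follows essentially the same approach as the paper: both use the row-sum operation to pass to $M_r'$ (respectively $M_{r,1}'$), read off the divisibility condition from the transformed first row, and use the lower rows to back-solve for the remaining variables. The only minor difference is in part~(2): the paper works directly with $M_{r,1}'$ and applies B\'ezout to the equation $(r+1)x_r - r x_{r+1} = \sum_{i=1}^r c_i$, whereas you reduce to part~(1) by treating $x_{r+1}$ as a parameter shifting the right-hand side---but this amounts to the same coprimality observation $\gcd(r,r+1)=1$.
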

\begin{proof}  Since $M_1 = [2]$, the result (1) is obvious for $r = 1$.  For $r \geq 2$, write $\bfc = (c_1, \dots, c_r) \in \Z^r$.  Then, tracking the effect of the row operations used to obtain $M_r'$ from $M_r$ on $\bfc$, we see that the system $M_r \x = \bfc$ has solution $\x \in \Z^r$ if and only if the system $M_r' \x = \bfc'$ has solution $\x \in \Z^r$, where $\bfc' = \left( \sum_{i=1}^r c_i, c_2, \dots, c_r\right)$.  Now for $2 \leq i \leq r$, row $i$ of $M_r'$ has leading entry given by the $-1$ in position $(i,i-1)$.  Hence $M_r' \x = \bfc'$ has solution $\x \in \Z^r$ if and only if the equation $(r+1)x_r = \sum_{i=1}^r c_i$, coming from the first row of $M_r'$ and first entry of $\bfc'$, has solution $x_r \in \Z$.  This is equivalent to $\sum_{i=1}^r c_i \equiv 0 \; \operatorname{mod}\left(r+1\right)$.

For (2), notice that for $2 \leq i \leq r$, the leading entry in row $i$ of $M'_{r,1}$ is the $-1$ in position $(i,i-1)$, while row $r+1$ is all $0$s.  Therefore $M_{r,1} \x = \bfc$ has solution $\x \in \Z^{r+1}$ if and only if there are $x_r, x_{r+1} \in \Z$ such that $(r+1)x_r - rx_r = \sum_{i=1}^r c_i$ (from the first row) and $c_{r+1} = 0$.  But $\gcd(r+1, -r) = 1$, so by Bezout's Theorem, the equation $(r+1)x_r - rx_r = c$ in fact has a solution $x_r, x_{r+1} \in \Z$ for any $c \in \Z$.  Thus the only condition for $M_{r,1}\x = \bfc$ to have solution over $\Z$ is that $c_{r+1} = 0$.
\end{proof}

We now consider the coroot lattice $R^\vee$ as a free $\Z$-module with $\Z$-basis $\Delta^\vee = \{ \alpha_i^\vee \}$. When taking the subsystem of type $A_r$ for $1 \leq r < n$ determined by the first $r$ nodes of the Dynkin diagram, we denote the coroot lattice by $R_r^\vee$ and the corresponding basis by $\Delta^\vee_r$. 

The following is then an immediate corollary of Lemma \ref{lem:MrModSets}.

\begin{corollary}\label{cor:ModA}  For all $r \geq 1$, we have
  \[ M_r R_r^\vee = (\Id - W_r)R_r^\vee = \left\{ \sum_{i=1}^r c_i \alpha_i^\vee  \ \middle| \  c_i \in \Z, \ \sum_{i=1}^r c_i \equiv 0 \; \operatorname{mod}\left( r+1\right) \right\}\] and 
 \[ M_{r,1} R^\vee_{r+1} = (\Id - W_{r,1})R^\vee_{r+1} = \left\{ \sum_{i=1}^{r+1} c_i \alpha_i^\vee  \ \middle| \  c_i \in \Z, \ c_{r+1} = 0\right\}.\]
\end{corollary}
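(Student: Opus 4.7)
The proof is essentially a direct translation from the matrix language of Lemma~\ref{lem:MrModSets} to the module language of the coroot lattice. My plan is as follows.

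Fix $r \geq 1$ and identify $R^\vee_r$ with $\Z^r$ via the $\Z$-basis $\Delta^\vee_r = \{\alpha_1^\vee, \dots, \alpha_r^\vee\}$, so that $\sum_{i=1}^r x_i \alpha_i^\vee \in R_r^\vee$ corresponds to the column vector $\x = (x_1, \dots, x_r)^T \in \Z^r$. Under this identification, the map on $R_r^\vee$ given by left multiplication by $M_r = \Id - W_r$ is precisely the $\Z$-linear endomorphism of $R_r^\vee$ with matrix $M_r$ relative to $\Delta^\vee_r$. Hence, writing $\bfc = (c_1, \dots, c_r)^T$, an element $\sum_{i=1}^r c_i \alpha_i^\vee$ of $R_r^\vee$ lies in $M_r R_r^\vee = (\Id - W_r) R_r^\vee$ if and only if the $\Z$-linear system $M_r \x = \bfc$ admits a solution $\x \in \Z^r$. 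By part~(1) of Lemma~\ref{lem:MrModSets}, this in turn is equivalent to the congruence $\sum_{i=1}^r c_i \equiv 0 \;\operatorname{mod}(r+1)$, which gives the first claimed equality.

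For the second equality, identify $R^\vee_{r+1}$ with $\Z^{r+1}$ via $\Delta^\vee_{r+1}$, and note that $M_{r,1}$ is the matrix of $\Id - W_{r,1}$ relative to this basis. Exactly as above, an element $\sum_{i=1}^{r+1} c_i \alpha_i^\vee$ of $R^\vee_{r+1}$ lies in $M_{r,1} R^\vee_{r+1}$ if and only if $M_{r,1} \x = \bfc$ has a solution $\x \in \Z^{r+1}$, and by part~(2) of Lemma~\ref{lem:MrModSets} this happens exactly when $c_{r+1} = 0$. This gives the second claimed equality.

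There is no genuine obstacle: the corollary is truly an immediate reformulation of Lemma~\ref{lem:MrModSets}, and the only care needed is to keep straight which free $\Z$-module of which rank is being considered in each part (namely $R^\vee_r$ of rank $r$ for the first statement and $R^\vee_{r+1}$ of rank $r+1$ for the second), so that the image of the corresponding matrix is identified with the correct submodule.
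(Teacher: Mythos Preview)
Your proposal is correct and matches the paper's approach exactly: the paper simply states that this is ``an immediate corollary of Lemma~\ref{lem:MrModSets}'' without further detail, and you have spelled out precisely the identification of $R_r^\vee$ (resp.\ $R_{r+1}^\vee$) with $\Z^r$ (resp.\ $\Z^{r+1}$) that makes this immediate.
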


We next define $T_r$ to be the matrix obtained from $M_r$ by replacing $\cR_i(M_r)$ by the sum $\sum_{j=1}^i \cR_j(M_r)$ for $2 \leq i \leq r$, so that $T_1 = M_1 = [2]$ and for $r \geq 2$, we have
\begin{equation}\label{eq:Tr}
T_r =  \begin{pmatrix}
1 & 0 & \cdots & \cdots & 0 & 1  \\ 
0 & 1 & 0  & \cdots & 0 & 2 \\ 
0 & 0 & \ddots & \ddots & \vdots  & \vdots \\
\vdots & 0 & \ddots &  1 & 0 & r-2 \\
\vdots & \vdots & \ddots &  0 & 1 & r-1 \\
0 & 0 & \cdots & 0 & 0 & r+1
\end{pmatrix}.
\end{equation}
The definition of $T_{r,1}$ is qualitatively different to that of $T_r$: for $1 \leq i \leq r$, we replace $\cC_i(M_{r,1})$ by the sum $\sum_{i=1}^{r+1} \cC_i(M_{r,1})$, to obtain
\begin{equation}\label{eq:Tr1}
T_{r,1} =  \begin{pmatrix}
1 & 0 & \cdots & \cdots & 0 & -1  \\ 
0 & 1 & 0  & \cdots & 0 & -1 \\ 
0 & 0 & \ddots & \ddots & \vdots  & \vdots \\
\vdots & 0 & \ddots &  1 & 0 & -1 \\
\vdots & \vdots & \ddots &  0 & 1 & -1 \\
0 & 0 & \cdots & 0 & 0 & 0
\end{pmatrix}.
\end{equation}

We then define $S_r$ to be the matrix obtained from $T_r$ by replacing $\cC_r(T_r)$ by the difference $\cC_r(T_r) - \sum_{i=1}^{r-1} i\cC_i(T_r)$.  Then $S_r$ is the diagonal matrix $S_r = \diag(1^{r-1},r+1)$.  We also define $S_{r,1}$ to be the matrix obtained from $T_{r,1}$ by replacing $\cC_{r+1}(T_{r,1})$ by the difference $\cC_{r+1}(T_{r,1}) + \sum_{i=1}^{r} \cC_i(T_{r,1})$.  Then $S_{r,1}$ is the diagonal matrix $S_{r,1} = \diag(1^r,0)$.  Since $S_r$ (respectively, $S_{r,1}$) is in Smith normal form, and has been obtained from $M_r$ (respectively, $M_{r,1}$) by $\Z$-linear row and/or column operations, we obtain:

\begin{lemma}  For all $r \geq 1$, we have
\begin{enumerate}
\item $M_r = \Id - W_r$ has Smith normal form $S_r = \diag(1^{r-1},r+1)$; and
\item $M_{r,1} = \Id - W_{r,1}$ has Smith normal form $S_{r,1} = \diag(1^r,0)$.
\end{enumerate}
\end{lemma}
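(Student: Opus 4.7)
The plan is to verify that the explicit sequences of $\Z$-linear row and column operations introduced immediately before the statement — encoded by the matrices $T_r$ and $T_{r,1}$ — indeed transform $M_r$ and $M_{r,1}$ into the claimed diagonal forms $S_r=\diag(1^{r-1},r+1)$ and $S_{r,1}=\diag(1^r,0)$. Since elementary row and column operations over $\Z$ preserve the Smith normal form, it suffices to check that the claimed end-matrices are reached, and that the resulting diagonal entries form a valid Smith chain (nonnegative and each dividing the next), which is immediate: $1 \mid \dots \mid 1 \mid (r+1)$ and $1 \mid \dots \mid 1 \mid 0$.

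For part (1), starting from $M_r$ in \eqref{eq:WrMr}, the first step is the cumulative row replacement $\cR_i \mapsto \sum_{j=1}^{i} \cR_j(M_r)$ for $2 \leq i \leq r$. A short computation — using that the $(i+1,i)$-entry of $M_r$ is $-1$ (canceling the diagonal $1$s one row down), while the last column of $M_r$ is $(1,1,\dots,1,2)^T$ — shows that the result is exactly the upper-triangular matrix $T_r$ of \eqref{eq:Tr}, with diagonal entries $(1,1,\dots,1,r+1)$ and last column $(1,2,\dots,r-1,r+1)^T$. Using the unit pivots in positions $(i,i)$ for $1 \leq i \leq r-1$, the column operation $\cC_r \mapsto \cC_r - \sum_{i=1}^{r-1} i\,\cC_i$ clears the last column above the pivot $r+1$, producing $S_r$.

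For part (2), starting from $M_{r,1}$ in \eqref{eq:Mr1}, the first step is the cumulative column replacement $\cC_i \mapsto \sum_{j=i}^{r+1} \cC_j(M_{r,1})$ for $1 \leq i \leq r$ (the index is given ambiguously in the text just before the lemma, but this is the intended operation, as can be verified by direct calculation in each row). The telescoping along rows of $M_{r,1}$, together with the fact that row $r+1$ of $M_{r,1}$ is zero and the final column of $M_{r,1}$ is $(-1,\dots,-1,0)^T$, produces $T_{r,1}$ of \eqref{eq:Tr1}, with $1$s on the main diagonal in positions $(i,i)$ for $1 \leq i \leq r$, entries $-1$ in column $r+1$ for rows $1,\dots,r$, and $0$s elsewhere. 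The column operation $\cC_{r+1} \mapsto \cC_{r+1} + \sum_{i=1}^{r} \cC_i$ then clears the $-1$s using the unit pivots, leaving $S_{r,1}$.

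The main obstacle is purely computational: carefully verifying the entries of $T_r$ and $T_{r,1}$ after the cumulative operations. This is an easy induction on $r$ (or a direct check row-by-row and column-by-column), made transparent by the highly structured sparsity of $M_r$ and $M_{r,1}$ — each has nonzero entries only on the diagonal, the subdiagonal, the last column, and (for $M_{r,1}$) a zero final row. I would relegate this verification to a brief direct computation rather than a separate inductive argument, since the pattern is visible from the displayed matrices.
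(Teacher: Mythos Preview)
Your proposal is correct and follows essentially the same approach as the paper: the paper defines $T_r$, $T_{r,1}$, $S_r$, and $S_{r,1}$ via exactly these row and column operations immediately before the lemma, and the proof is then just the observation that $S_r$ and $S_{r,1}$ are in Smith normal form and were obtained from $M_r$ and $M_{r,1}$ by $\Z$-linear row and column operations. You have simply written out the verification of the intermediate matrices more explicitly (and correctly flagged the index typo in the definition of $T_{r,1}$).
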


\begin{corollary}  For all $r \geq 1$, we have
\begin{enumerate}
\item $R_r^\vee / (\Id - W_r)R_r^\vee \cong \Z/(r+1)\Z$; and
\item  $R_{r+1}^\vee / (\Id - W_{r,1})R_{r+1}^\vee \cong \Z$.
\end{enumerate}
\end{corollary}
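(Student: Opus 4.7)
The plan is to deduce this corollary directly from the preceding lemma, which identifies the Smith normal forms of $M_r = \Id - W_r$ and $M_{r,1} = \Id - W_{r,1}$ as $S_r = \diag(1^{r-1}, r+1)$ and $S_{r,1} = \diag(1^r, 0)$, respectively. The statement to be proved is then a standard consequence of the structure theorem for finitely generated modules over a principal ideal domain (applied to $\Z$), which says that if an integer matrix $M$ acts on $\Z^n$ and has Smith normal form $\diag(d_1, \dots, d_n)$, then $\Z^n / M\Z^n \cong \bigoplus_{i=1}^n \Z/d_i\Z$, with the convention that $\Z/1\Z = 0$ and $\Z/0\Z = \Z$.

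Concretely, for part (1), the Smith normal form $S_r = \diag(1^{r-1}, r+1)$ contributes $r-1$ trivial factors $\Z/\Z = 0$ together with a single factor $\Z/(r+1)\Z$, so
\[
R_r^\vee / (\Id - W_r) R_r^\vee \cong \underbrace{0 \oplus \dots \oplus 0}_{r-1} \oplus \Z/(r+1)\Z \cong \Z/(r+1)\Z.
\]
For part (2), the Smith normal form $S_{r,1} = \diag(1^r, 0)$ contributes $r$ trivial factors together with a single factor $\Z/0\Z = \Z$, yielding
\[
R_{r+1}^\vee / (\Id - W_{r,1}) R_{r+1}^\vee \cong \underbrace{0 \oplus \dots \oplus 0}_{r} \oplus \Z \cong \Z.
\]

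There is no genuine obstacle here: the substantive $\Z$-linear algebra has already been carried out in the preceding lemma, via explicit row and column operations reducing $M_r$ and $M_{r,1}$ to $S_r$ and $S_{r,1}$. The corollary is simply the invariant-factor translation of those computations into an isomorphism type for the quotient $\Z$-module.
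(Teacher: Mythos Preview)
Your proof is correct and matches the paper's approach exactly: the paper states this corollary immediately after the lemma computing the Smith normal forms $S_r = \diag(1^{r-1}, r+1)$ and $S_{r,1} = \diag(1^r, 0)$, leaving the deduction implicit as the standard invariant-factor translation you have written out.
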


Finally, we define $B_r$ to be the matrix obtained from $M_r$ by replacing $\cC_r(M_r)$ by the sum $- \sum_{j=i}^{r-1} j\cC_j(M_r)+ \cC_r(M_r)$.  Then 
\begin{equation}\label{eq:Br}
B_r = \begin{pmatrix}
1 & 0 & \cdots & \cdots & 0 & 0  \\ 
-1 & 1 & 0  & \ddots & 0 & 0 \\ 
0 & -1 & \ddots & \ddots & \vdots  & \vdots \\
\vdots & 0 & \ddots &  1 & 0 & 0 \\
\vdots &  & \ddots &  -1 & 1 & 0 \\
0 & \cdots & \cdots & 0 & -1 & r+1
\end{pmatrix}.
\end{equation}
In other words, $B_r$ and $M_r$ differ only in column $r$, which has a single nonzero entry $r+1$ in the last entry. Since $M_r$ and $B_r$ have the same column-space, while $M_{r,1}$ and $S_{r,1}$ have the same column-space (since we used only column operations to obtain $S_{r,1}$ from $M_{r,1}$), it follows that:

\begin{lemma}  For all $r \geq 1$, we have
\begin{enumerate}
\item $M_r R_r^\vee = (\Id - W_r)R_r^\vee$ has a $\Z$-basis given by $
\left\{ \alpha_i^\vee-\alpha_{i+1}^\vee  \mid 1 \leq i \leq r-1 \right\} \cup \{ (r+1)\alpha_r^\vee \}.$
\item $M_{r,1} R_{r+1}^\vee = (\Id - W_{r,1})R_{r+1}^\vee$ has a $\Z$-basis given by $\{ \alpha_i^\vee \mid 1 \leq i \leq r \}  = \Delta_{r+1}^\vee \setminus \{ \alpha_{r+1}^\vee \}$.
\end{enumerate}
\end{lemma}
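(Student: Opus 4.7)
The plan is to read off both bases directly from the matrices $B_r$ and $S_{r,1}$ constructed in the paragraphs preceding the lemma, using the general fact that a sequence of $\Z$-linear column operations preserves the image of an integer matrix viewed as a map of free $\Z$-modules.

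First I would record the underlying principle: if $M'$ is obtained from $M$ by $\Z$-linear column operations, then $M' = MU$ for some $U \in \mathrm{GL}(r,\Z)$, and since $U R_r^\vee = R_r^\vee$, we have $M R_r^\vee = M' R_r^\vee$. This is the only tool needed for both parts of the lemma.

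For part (1), I would apply this principle to $M_r$ and $B_r$: the matrix $B_r$ is obtained from $M_r$ by the single column operation replacing $\cC_r(M_r)$ by $\cC_r(M_r) - \sum_{j=1}^{r-1} j\,\cC_j(M_r)$, so $M_r R_r^\vee = B_r R_r^\vee$. Reading off the columns of $B_r$ from \eqref{eq:Br}, for $1 \leq i \leq r-1$ the $i$th column is $\alpha_i^\vee - \alpha_{i+1}^\vee$, while the $r$th column is $(r+1)\alpha_r^\vee$. These $r$ vectors are $\Z$-linearly independent since $B_r$ is lower triangular with nonzero entries on the diagonal, so they form the advertised $\Z$-basis.

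For part (2), I would apply the same principle to $M_{r,1}$ and $S_{r,1} = \diag(1^r,0)$, noting that $S_{r,1}$ was produced from $M_{r,1}$ using only column operations, hence $M_{r,1} R_{r+1}^\vee = S_{r,1} R_{r+1}^\vee$. The nonzero columns of $S_{r,1}$ are precisely $\alpha_1^\vee, \dots, \alpha_r^\vee$, which are manifestly $\Z$-linearly independent, yielding the stated basis $\Delta_{r+1}^\vee \setminus \{\alpha_{r+1}^\vee\}$. There is no real obstacle here: once the preceding column-reduction computations are in place, the lemma is essentially a matter of transcribing the nonzero columns and confirming their independence from the triangular (respectively diagonal) shape.
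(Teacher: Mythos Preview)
Your proposal is correct and follows essentially the same approach as the paper. The paper's argument is the single sentence immediately preceding the lemma: ``Since $M_r$ and $B_r$ have the same column-space, while $M_{r,1}$ and $S_{r,1}$ have the same column-space (since we used only column operations to obtain $S_{r,1}$ from $M_{r,1}$), it follows that\ldots'' --- you have simply spelled out this reasoning in more detail, including the explicit $\mathrm{GL}(r,\Z)$ justification and the triangular/diagonal shape giving linear independence.
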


%%%%%%%%%%%%%%%%%%%%%%%%%%%%%%%%%%%%%
\subsubsection{The matrices for $w_\beta$ and $M_\beta$}\label{sec:wMA}
%%%%%%%%%%%%%%%%%%%%%%%%%%%%%%%%%%%%%

In this section, we determine the matrices for $w_\beta$ and $M_\beta = \Id - w_\beta$ with respect to the basis~$\Delta^\vee$ for $R^\vee$ in type $A_n$ for $n \geq 1$, in Corollaries~\ref{cor:matrixwbetaA} and~\ref{cor:matrixMbetaA}, respectively. This result will involve the auxiliary matrices $W_r$, $W_{r,1}$, $M_r$, and $M_{r,1}$ defined in \Cref{sec:matricesA}.

We begin with the following observation.

\begin{lemma}\label{lem:CoxeterA}  Let $\sW$ be of type $A_n$, for $n\geq 1$.  If $\beta = (n+1)$, then $w_\beta$ is the Coxeter element $s_1 s_2 \dots s_n$ of $\sW$, and the matrix for~$w_\beta$ with respect to $\Delta^\vee$ is $W_n$ from \eqref{eq:WrMr}.
\end{lemma}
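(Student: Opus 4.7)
The plan is to first verify the identification $w_\beta = s_1 s_2 \cdots s_n$ from the construction in \Cref{sec:repsA}, then compute the action $w_\beta(\alpha_i^\vee)$ for each $i \in [n]$ column-by-column and compare with $W_n$ in \eqref{eq:WrMr}.

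For the identification: when $\beta = (n+1)$, we have $p = 1$ and $\beta_1 = n+1$, so $j_1^\beta = 0$ and by \eqref{eq:Jbetak}, $J_\beta = J_1^\beta = \{1, 2, \dots, n\}$. Then \eqref{eq:w^beta_k} gives $w_\beta = w_1^\beta = s_1 s_2 \cdots s_n$, which is indeed the Coxeter element of $\sW$.

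For the matrix computation, the columns of the matrix of $w_\beta$ with respect to $\Delta^\vee$ are precisely the coordinate vectors of $w_\beta(\alpha_i^\vee)$ for $i = 1, \dots, n$. In type $A_n$ the Cartan entries satisfy $c_{ij} = -1$ for $|i-j|=1$, $c_{ii} = 2$, and $c_{ij} = 0$ otherwise, so by \eqref{eq:siaction} the reflection $s_j$ fixes $\alpha_i^\vee$ whenever $|i - j| \geq 2$. First I would show that $w_\beta(\alpha_i^\vee) = \alpha_{i+1}^\vee$ for $1 \leq i \leq n-1$: applying $s_n, s_{n-1}, \dots, s_{i+2}$ in turn to $\alpha_i^\vee$ changes nothing; then $s_{i+1}(\alpha_i^\vee) = \alpha_i^\vee + \alpha_{i+1}^\vee$; then $s_i(\alpha_i^\vee + \alpha_{i+1}^\vee) = -\alpha_i^\vee + (\alpha_{i+1}^\vee + \alpha_i^\vee) = \alpha_{i+1}^\vee$; and finally $s_{i-1}, \dots, s_1$ all fix $\alpha_{i+1}^\vee$. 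This matches columns $1, \dots, n-1$ of $W_n$, whose nonzero entry is a $1$ in position $(i+1,i)$.

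Next I would handle the final column, $i = n$. Starting from $s_n(\alpha_n^\vee) = -\alpha_n^\vee$, I would prove by a short downward induction on $j$ that
\[
s_j s_{j+1} \cdots s_{n-1}(\alpha_n^\vee) = \alpha_j^\vee + \alpha_{j+1}^\vee + \cdots + \alpha_n^\vee
\]
for $1 \leq j \leq n-1$, using the base case $s_{n-1}(\alpha_n^\vee) = \alpha_n^\vee + \alpha_{n-1}^\vee$ and the fact that at each subsequent step only $s_j$ acts nontrivially, contributing a new $\alpha_j^\vee$. Setting $j = 1$ gives $s_1 s_2 \cdots s_{n-1}(\alpha_n^\vee) = \sum_{i=1}^n \alpha_i^\vee$, so
\[
w_\beta(\alpha_n^\vee) = s_1 s_2 \cdots s_n(\alpha_n^\vee) = -s_1 s_2 \cdots s_{n-1}(\alpha_n^\vee) = -\sum_{i=1}^n \alpha_i^\vee,
\]
which matches the last column of $W_n$ (all entries equal to $-1$). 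There is no real obstacle here; the main care is just in correctly tracking the sequence of reflections and the inductive step for the last column, which already appears in the low-rank computations carried out in Example~\ref{eg:CoxeterA3}.
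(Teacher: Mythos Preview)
Your proposal is correct and follows essentially the same approach as the paper: the paper's proof simply says ``This is obtained by straightforward generalization of the corresponding computations in Example~\ref{eg:CoxeterA3},'' and you have spelled out precisely that generalization, computing $w_\beta(\alpha_i^\vee)$ column-by-column via \eqref{eq:siaction}.
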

\begin{proof}  This is obtained by straightforward generalization of the corresponding computations in Example~\ref{eg:CoxeterA3}.
\end{proof}

We next consider the individual parts $\beta_k \geq 2$ of the partition $\beta$.  These determine blocks of size $(\beta_k -1) \times (\beta_k - 1)$ in the matrix for $w_\beta$, as described by the next result.

\begin{lemma}\label{lem:w_beta-blocks}
Let $\sW$ be of type $A_n$, for $n \geq 1$, and let $\beta = (\beta_1,\dots,\beta_p)$ be a partition of $n+1$.
Then for all $k \in [p]$ such that $\beta_k \geq 2$: 
\begin{enumerate}
\item
the $(\beta_k -1) \times (\beta_k-1)$ square submatrix of $w_\beta$ with entries $(w_\beta)_{ij}$ for $i,j \in J^\beta_k$ is equal to the matrix $W_{r}$ from \eqref{eq:WrMr} with $r = \beta_k - 1$; and
\item for all $j \in J^\beta_k$ and $i \in [n] \setminus J^\beta_k$, the $(i,j)$-entry of $w_\beta$ equals $0$.
\end{enumerate}
\end{lemma}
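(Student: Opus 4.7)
The strategy is to reduce the computation to the Coxeter element case already handled by Lemma~\ref{lem:CoxeterA}, by exploiting the fact that the factors $w^\beta_l$ of $w_\beta = w^\beta_1 \cdots w^\beta_p$ act on disjoint collections of simple coroots. The plan is to fix $k$ with $\beta_k \geq 2$, and to show that for each $j \in J^\beta_k$ the image $w_\beta(\alpha_j^\vee)$ equals $w^\beta_k(\alpha_j^\vee)$, and in particular lies in $\Span_\Z\{\alpha_i^\vee \mid i \in J^\beta_k\}$. Both (1) and (2) would then follow, since (2) is exactly the statement that no $\alpha_i^\vee$ with $i \notin J^\beta_k$ appears in $w_\beta(\alpha_j^\vee)$.

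The first step is to check that for all $l \neq k$, the element $w^\beta_l$ fixes every vector in $\Span_\Z\{\alpha_i^\vee \mid i \in J^\beta_k\}$. Since $w^\beta_l$ is a product of simple reflections $s_m$ with $m \in J^\beta_l$, it suffices to show each such $s_m$ fixes $\alpha_i^\vee$ for every $i \in J^\beta_k$. By~\eqref{eq:siaction}, $s_m(\alpha_i^\vee) = \alpha_i^\vee$ whenever the Cartan integer $c_{mi} = 0$, and in type $A_n$ this occurs precisely when $|m - i| \geq 2$. Assuming without loss of generality $l < k$, the maximum of $J^\beta_l$ is $j^\beta_{l+1} - 1 \leq j^\beta_k - 1$, while the minimum of $J^\beta_k$ is $j^\beta_k + 1$, so $|m - i| \geq 2$ as required. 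A symmetric argument handles $l > k$.

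Applying this observation, the equality $w_\beta(\alpha_j^\vee) = w^\beta_k(\alpha_j^\vee)$ follows by reading the product $w^\beta_1 \cdots w^\beta_p$ from right to left: the factors $w^\beta_l$ with $l > k$ fix $\alpha_j^\vee$, then $w^\beta_k$ sends this vector into $\Span_\Z\{\alpha_i^\vee \mid i \in J^\beta_k\}$ (since each constituent $s_m$ of $w^\beta_k$ preserves this span, by the Cartan computation for $m, i \in J^\beta_k$), and finally the $w^\beta_l$ with $l < k$ act trivially on this span.

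To identify the resulting matrix, set $r = \beta_k - 1$ and observe that $w^\beta_k = s_{j^\beta_k + 1} \cdots s_{j^\beta_k + r}$ is precisely the Coxeter element of the type $A_r$ subsystem generated by the simple reflections indexed by $J^\beta_k$. Under the index shift $m \mapsto m - j^\beta_k$, this subsystem is identified with the standard type $A_r$ root system on nodes $1, \ldots, r$, and Lemma~\ref{lem:CoxeterA} then states that the matrix of $w^\beta_k$ with respect to the ordered basis $(\alpha^\vee_{j^\beta_k + 1}, \ldots, \alpha^\vee_{j^\beta_k + r})$ is exactly $W_r$. There is no genuine obstacle to the argument; the only care needed is in tracking this index shift between the standard $A_r$ labeling used in Lemma~\ref{lem:CoxeterA} and the embedded position of $J^\beta_k$ inside $[n]$.
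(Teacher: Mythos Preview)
Your proof is correct and follows essentially the same approach as the paper: both reduce to showing $w_\beta(\alpha_j^\vee) = w^\beta_k(\alpha_j^\vee)$ for $j \in J^\beta_k$, using that the other factors $w^\beta_l$ fix these coroots, and then apply Lemma~\ref{lem:CoxeterA} after the index shift. Your version is simply more explicit about the Cartan-integer verification and the preservation of $\Span_\Z\{\alpha_i^\vee \mid i \in J^\beta_k\}$, whereas the paper's proof is terse.
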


\begin{proof}   
Recall that the element $w^\beta_k$ is the product, in increasing order, of the simple reflections in the type $A_{\beta_k - 1}$ subsystem indexed  by $J^\beta_k = \{ j^\beta_k + 1,\dots,j^\beta_k + (\beta_k - 1)\}$.  Hence $w^\beta_k$ fixes $\alpha_i^\vee$ for all $1 \leq i \leq j^\beta_k - 1$ and all $j^\beta_k + \beta_k + 1 \leq i \leq n$.  The result is then obtained by shifting the indexing and applying \Cref{lem:CoxeterA}.
\end{proof}

For partitions $\beta$ which have at least two parts equal to $1$, we have the following.

\begin{lemma}\label{lem:w_beta-Idblock}
Let $\sW$ be of type $A_n$ for $n \geq 1$, and let $\beta = (\beta_1,\dots,\beta_p)$ be a partition of $n+1$.
Suppose that the last $m \geq 2$ parts of $\beta$ are equal to $1$.  Then:
\begin{enumerate}
\item the $(m -1) \times (m-1)$ square submatrix of $w_\beta$ with entries $(w_\beta)_{ij}$ for $n-m+2 \leq i,j\leq n$ is equal to $\Id_{m-1}$; and
\item for all $1 \leq i \leq n-m+1$ and $n-m+2 \leq j\leq n$, the $(i,j)$-entry of $w_\beta$ equals $0$.
\end{enumerate}
\end{lemma}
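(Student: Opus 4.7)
The plan is to prove both parts at once by showing that $w_\beta$ fixes $\alpha_i^\vee$ for every $i$ with $n-m+2 \leq i \leq n$. Once this is established, the $i$th column of the matrix for $w_\beta$ in the basis $\Delta^\vee$ equals the $i$th standard basis vector for each such $i$, which simultaneously yields that the $(m-1)\times(m-1)$ submatrix of interest equals $\Id_{m-1}$ and that all entries above that block (in rows $1 \leq i \leq n-m+1$ and columns $n-m+2 \leq j \leq n$) vanish.

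First I would locate the support of $w_\beta$. Since $\beta_{p-m+1} = \cdots = \beta_p = 1$, each $J^\beta_k$ with $k \geq p-m+1$ is empty, and hence $J_\beta = \bigsqcup_{k=1}^{p-m} J^\beta_k$. Using $\beta_{p-m+1} + \cdots + \beta_p = m$, I would then compute
\[ j^\beta_{p-m+1} = \beta_1 + \cdots + \beta_{p-m} = (n+1)-m. \]
By the definition \eqref{eq:Jbetak}, the maximum element of any nonempty $J^\beta_k$ with $k \leq p-m$ equals $j^\beta_k + \beta_k - 1 \leq j^\beta_{p-m+1} - 1 = n - m$. Consequently every simple reflection appearing in $w_\beta$ has index at most $n-m$.

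Next I would apply \eqref{eq:siaction}: for any $j \leq n-m$ and any $i \geq n-m+2$, the Cartan integer $c_{ji}$ vanishes (since $|i-j| \geq 2$ in type $A_n$), and therefore $s_j(\alpha_i^\vee) = \alpha_i^\vee$. Because $w_\beta$ is a product of such simple reflections, it fixes $\alpha_i^\vee$ throughout the range $n-m+2 \leq i \leq n$, which completes the argument. I do not anticipate any substantive obstacle; the proof is pure bookkeeping with the indexing of $J_\beta$, and is in fact simpler than the proof of \Cref{lem:w_beta-blocks}, since here the relevant simple reflections act trivially rather than via a nontrivial block.
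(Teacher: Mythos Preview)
Your proposal is correct and follows essentially the same approach as the paper: both arguments observe that the support of $w_\beta$ is contained in $\{s_1,\dots,s_{n-m}\}$ (since $\sum_{k=1}^{p-m}\beta_k = (n+1)-m$), whence $w_\beta$ fixes $\alpha_i^\vee$ for $n-m+2 \leq i \leq n$, giving both conclusions at once. Your version simply spells out the bookkeeping on $J_\beta$ and the Cartan entries more explicitly than the paper does.
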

\begin{proof}  We have that $\sum_{k=1}^{p-m} \beta_k = (n+1) - m = (n-m) + 1$ is the sum of the parts of $\beta$ which are $\geq 2$.  Hence  $w_\beta$ is a product of a subset of the simple reflections $\{ s_1, \dots, s_{n-m} \}$, and so $w_\beta$ fixes $\alpha_i^\vee$ for $n-m+2 \leq i \leq n$.  The result follows.
\end{proof}

To describe the remaining columns of $w_\beta$; that is, the columns not in the blocks given by the previous two lemmas, we use the following.

\begin{lemma}\label{lem:w_beta-action}
Let $\sW$ be of type $A_n$ for $n \geq 1$, and let $\beta = (\beta_1,\dots,\beta_p)$ be a partition of $n+1$.  
\begin{enumerate}
\item If $2 \leq k \leq p$ is such that $\beta_{k-1} \geq 2$ and $\beta_k \geq 2$, then \[w_\beta\left(\alpha^\vee_{j^\beta_k}\right) = \sum_{i=j^\beta_{k-1} + 1}^{j^\beta_k + 1} \alpha_i^\vee.\]
\item If $2 \leq k \leq p$ is such that $\beta_{k-1} \geq 2$ and $\beta_k = 1$,  then \[w_\beta\left(\alpha^\vee_{j^\beta_k}\right) = \sum_{i=j^\beta_{k-1} + 1}^{j^\beta_k} \alpha_i^\vee.\]
\end{enumerate}
\end{lemma}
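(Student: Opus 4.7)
The plan is to use the commuting factorization $w_\beta = w^\beta_1 w^\beta_2 \cdots w^\beta_p$ and reduce the computation to the action of the two factors whose supports are adjacent to the index $j^\beta_k$. Specifically, for $\ell \notin \{k-1,k\}$, the support of $w^\beta_\ell$ (which is a subset of $J^\beta_\ell$) consists of indices $i$ satisfying $|i - j^\beta_k| \geq 2$, so by \eqref{eq:siaction} and the type $A$ Cartan matrix, each such $w^\beta_\ell$ fixes $\alpha^\vee_{j^\beta_k}$. Since the factors commute, it therefore suffices to compute $w^\beta_{k-1} w^\beta_k (\alpha^\vee_{j^\beta_k})$.

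Next I would compute the inner action of $w^\beta_k = s_{j^\beta_k+1} s_{j^\beta_k+2} \cdots s_{j^\beta_k + \beta_k - 1}$ on $\alpha^\vee_{j^\beta_k}$. Applying the reflections from right to left and using that each $s_{j^\beta_k + i}$ with $i \geq 2$ fixes $\alpha^\vee_{j^\beta_k}$, one sees that everything collapses to a single application of $s_{j^\beta_k + 1}$, yielding
\[
w^\beta_k(\alpha^\vee_{j^\beta_k}) = \alpha^\vee_{j^\beta_k} + \alpha^\vee_{j^\beta_k + 1} \quad \text{if } \beta_k \geq 2,
\]
while $w^\beta_k(\alpha^\vee_{j^\beta_k}) = \alpha^\vee_{j^\beta_k}$ if $\beta_k = 1$ (since then $w^\beta_k$ is trivial).

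I would then apply the outer factor $w^\beta_{k-1} = s_{j^\beta_{k-1}+1} \cdots s_{j^\beta_{k-1} + \beta_{k-1} - 1} = s_{j^\beta_{k-1}+1} \cdots s_{j^\beta_k - 1}$, using $j^\beta_k = j^\beta_{k-1} + \beta_{k-1}$. Here $w^\beta_{k-1}$ fixes $\alpha^\vee_{j^\beta_k + 1}$ (no index in its support is adjacent to $j^\beta_k + 1$), so the remaining computation is $w^\beta_{k-1}(\alpha^\vee_{j^\beta_k})$. Setting $b = j^\beta_{k-1}$ and $s = \beta_{k-1} - 1$, this is the action of the Coxeter element $s_{b+1} s_{b+2} \cdots s_{b+s}$ on the coroot $\alpha^\vee_{b+s+1}$ just outside its support. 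Applying the reflections from right to left, each step adds precisely one new simple coroot via \eqref{eq:siaction}, giving the cascade
\[
w^\beta_{k-1}(\alpha^\vee_{j^\beta_k}) = \sum_{i = j^\beta_{k-1}+1}^{j^\beta_k} \alpha_i^\vee.
\]
Combining this with the inner computation yields exactly the formulas in (1) and (2).

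The argument is essentially a careful unwinding of definitions, so the only real obstacle is bookkeeping with the shifted indices $j^\beta_k, J^\beta_k$, and the somewhat subtle point that although $j^\beta_k$ itself lies outside $\supp(w_\beta)$, the coroot $\alpha^\vee_{j^\beta_k}$ is moved precisely because $j^\beta_k$ is adjacent to indices in both $J^\beta_{k-1}$ and $J^\beta_k$. I would illustrate the cascade with the small case $\beta_{k-1} = 2, 3$ to make the indexing transparent, but no new technique beyond \eqref{eq:siaction} and the computation already performed in Example~\ref{eg:CoxeterA3} is required.
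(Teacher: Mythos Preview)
Your proposal is correct and follows essentially the same approach as the paper: reduce to the two adjacent commuting factors $w^\beta_{k-1}$ and $w^\beta_k$, then compute directly via \eqref{eq:siaction}. The only cosmetic difference is the order: the paper applies $w^\beta_{k-1}$ first (proving (2) before (1), and using the insertion $s_{j_k}s_{j_k}$ to recognize the Coxeter-element formula) and then $w^\beta_k$ on the outside, whereas you apply $w^\beta_k$ first and then $w^\beta_{k-1}$; since the factors commute, both orderings are equally valid and the computations are the same.
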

\begin{proof}  To simplify notation, we write $j_k$ for $j^\beta_k$.  We will prove (2) first. In the notation of \eqref{eq:w^beta_k}, we compute using \eqref{eq:siaction} that
\begin{eqnarray*}
w^\beta_{k-1}\left(\alpha_{j_k}^\vee\right)  & = & s_{j_{k-1} + 1} \dots s_{j_k - 1} \left( \alpha_{j_k}^\vee \right) \\
& = & s_{j_{k-1} + 1} \dots s_{j_k - 1} s_{j_k} \left( s_{j_k}\alpha_{j_k}^\vee \right) \\
& = & - \left( - \sum_{i=j_{k-1} + 1}^{j_k } \alpha_{i}^\vee \right) \\
& = & \sum_{i=j_{k-1}+1}^{j_k} \alpha_{i}^\vee.
\end{eqnarray*} 
Since $w_\beta = w^\beta_1 \dots w^\beta_{k-1}$ in this case, and the product $w^\beta_1 \dots w^\beta_{k-2}$ fixes $\alpha_i^\vee$ for all $j^\beta_{k-1} + 1 \leq i \leq n$, we obtain that $w_\beta\left(\alpha_{j_k}^\vee\right) = w^\beta_{k-1}\left(\alpha_{j_k}^\vee\right)$ has the required formula.

Now for (1), since $w^\beta_\ell$ fixes $\alpha^\vee_{j_k}$ for all $\ell \notin \{k-1,k\}$ and $w^\beta_{k-1}$ and $w^\beta_k$ commute, we obtain using the proof of (2) that
\begin{eqnarray*}
w_\beta\left(\alpha_{j_k}^\vee\right) 
 & = & w^\beta_{k}w^\beta_{k-1}\left(\alpha_{j_k}^\vee\right) \\
 & = & s_{j_{k} + 1} \dots s_{j_k + \beta_k - 1}\left( \sum_{i=j_{k-1}+1}^{j_k} \alpha_{i}^\vee \right) \\
 & = & \left( \sum_{i=j_{k-1}+1}^{j_k - 1} \alpha_{i}^\vee \right) + s_{j_{k} + 1}\left( \alpha_{j_k}^\vee \right) \\
 & = & \left(\sum_{i=j_{k-1}+1}^{j_k - 1} \alpha_{i}^\vee \right) +  \alpha_{j_k}^\vee  +  \alpha_{j_k + 1}^\vee \\
\end{eqnarray*} 
as required.
\end{proof}

Applying Lemmas \ref{lem:w_beta-blocks}, \ref{lem:w_beta-Idblock}, and \ref{lem:w_beta-action}, we can now describe the matrix for $w_\beta$ in type $A_n$.  In the next two results, (1) generalizes Examples~\ref{eg:CoxeterA3} and~\ref{eg:64A9}, and (2) generalizes Examples~\ref{eg:41A4} and~\ref{eg:431A7}.  The matrix for $w_\beta$ is as follows.

\begin{corollary}\label{cor:matrixwbetaA}   Let $\sW$ be of type $A_n$ for $n \geq 1$, and let $\beta = (\beta_1,\dots,\beta_p)$ be a partition of $n+1$.  
\begin{enumerate}
\item If $\beta_p \geq 2$, then the matrix for $w_\beta$ with respect to the basis $\Delta^\vee$ for $R^\vee$ satisfies:
\begin{enumerate}
\item for all $k \in [p]$, the $(\beta_k - 1) \times (\beta_k -1)$ square submatrix of $w_\beta$ with entries $(w_\beta)_{ij}$ for $i,j \in J^\beta_k$ is equal to $W_{\beta_k - 1}$.
\item for all $2 \leq k \leq p$ and all $j^\beta_{k-1} + 1 \leq i \leq j^\beta_k + 1$, the $(i,j^\beta_k)$-entry of $w_\beta$ is equal to $1$.
\item all other entries of $w_\beta$ are zero.  
\end{enumerate}
\item If the last $m \geq 1$ parts of the partition $\beta$ are equal to $1$, then the matrix for $w_\beta$ with respect to the basis $\Delta^\vee$ for $R^\vee$ satisfies:
\begin{enumerate}
\item for all $k \in [p]$ such that $\beta_k \geq 2$, the $\beta_k \times \beta_k$ square submatrix of $w_\beta$ with entries $(w_\beta)_{ij}$ for $i,j \in J^\beta_k \cup \{ j^\beta_k \}$ is equal to $W_{\beta_k - 1,1}$.
\item for all $2 \leq k \leq p$ such that $\beta_k \geq 2$, the $(j^\beta_k+1,j^\beta_k)$-entry of $w_\beta$ is equal to $1$.
\item the $(m -1) \times (m-1)$ square submatrix of $w_\beta$ with entries $(w_\beta)_{ij}$ for $n-m+2 \leq i,j\leq n$ is equal to $\Id_{m-1}$.
\item all other entries of $w_\beta$ are zero.  
\end{enumerate}
\end{enumerate}
\end{corollary}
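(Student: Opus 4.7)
The plan is to assemble the three preceding lemmas, each of which describes a distinct family of columns of the matrix for $w_\beta$ with respect to $\Delta^\vee$. I would partition the index set $[n]$ into (i) the support $J_\beta = \bigsqcup_{k=1}^p J^\beta_k$, (ii) the gap indices $\{j^\beta_k : 2 \leq k \leq p\}$, and, in case (2), (iii) the trailing indices indexing the identity block. Since these three families partition $[n]$, it will suffice to describe each column of $w_\beta$ via the appropriate lemma and to verify that the resulting entries agree with the claimed form.

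For case (1), all parts satisfy $\beta_k \geq 2$. Lemma~\ref{lem:w_beta-blocks} immediately gives (a) and forces the vanishing of entries in columns $J^\beta_k$ outside the corresponding block. For every gap column $j^\beta_k$ with $k \geq 2$, both $\beta_{k-1}$ and $\beta_k$ are at least $2$, so Lemma~\ref{lem:w_beta-action}(1) applies and produces exactly the $1$-entries described in (b). Since these two lemmas then describe every column, the remaining entries asserted zero in (c) follow automatically.

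For case (2), Lemma~\ref{lem:w_beta-Idblock} supplies the identity block in (c) along with the vanishing of entries immediately above. For each $k$ with $\beta_k \geq 2$, the $\beta_k \times \beta_k$ block promised in (a) will be built from two pieces: Lemma~\ref{lem:w_beta-blocks} contributes the $W_{\beta_k - 1}$ subblock on $J^\beta_k$ together with zeros elsewhere in those columns, and Lemma~\ref{lem:w_beta-action}, applied in part~(1) or part~(2) depending on whether the adjacent part is $\geq 2$ or equals $1$, contributes the extended gap column as an all-ones column across the block and places a $1$ on the diagonal at the gap position. A direct comparison against the explicit form in~\eqref{eq:Wr1} will then show this assembled block equals $W_{\beta_k - 1, 1}$. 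When $\beta_{k+1} \geq 2$, part~(1) of Lemma~\ref{lem:w_beta-action} additionally places a $1$ one row beyond the block, which is exactly the connecting entry described in (b).

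The one real obstacle is the bookkeeping: I will need to check that the split between parts (1) and (2) of Lemma~\ref{lem:w_beta-action} matches the adjacency pattern within $\beta$ (in particular, using that $\beta$ is weakly decreasing, so $\beta_k \geq 2$ forces $\beta_{k-1} \geq 2$, which is required to apply Lemma~\ref{lem:w_beta-action}), and that the three families of columns interact correctly at the boundaries so that every matrix entry is specified by exactly one lemma with consistent values. Once this is set up, each entry of $w_\beta$ is determined by precisely one of the preceding lemmas and the corollary will follow by direct inspection.
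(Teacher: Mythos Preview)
Your proposal is correct and follows essentially the same approach as the paper: the paper's proof is simply the one-line assertion that the corollary follows by applying Lemmas~\ref{lem:w_beta-blocks}, \ref{lem:w_beta-Idblock}, and~\ref{lem:w_beta-action}, and your write-up is a careful expansion of precisely that assembly, with the added observation that weak decrease of $\beta$ guarantees the hypothesis $\beta_{k-1}\ge 2$ needed in Lemma~\ref{lem:w_beta-action}.
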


Therefore, we immediately obtain a formula for the matrix $M_\beta = \Id - w_\beta$ as follows.

\begin{corollary}\label{cor:matrixMbetaA}   Let $\sW$ be of type $A_n$ for $n \geq 1$, and let $\beta = (\beta_1,\dots,\beta_p)$ be a  partition of $n+1$.  Let $M_\beta$ be the matrix for $\Id - w_\beta$ with respect to the basis $\Delta^\vee$ for $R^\vee$.
\begin{enumerate}
\item\label{Abeta2} If $\beta_p \geq 2$, then $M_\beta$ satisfies:
\begin{enumerate}
\item for all $k \in [p]$, the $(\beta_k - 1) \times (\beta_k -1)$ square submatrix of $w_\beta$ with entries $(w_\beta)_{ij}$ for $i,j \in J^\beta_k$ is equal to $M_{\beta_k - 1}$.
\item for all $2 \leq k \leq p$ and all $i \in J^\beta_{k-1} \cup \{ j^\beta_k + 1\}$, the $(i,j^\beta_k)$-entry of $w_\beta$ is equal to $-1$.
\item all other entries of $M_\beta$ are zero.  
\end{enumerate}
\item\label{Abeta1} If the last $m \geq 1$ parts of the partition $\beta$ are equal to $1$, then $M_\beta$ satisfies:
\begin{enumerate}
\item for all $k \in [p]$ such that $\beta_k \geq 2$, the $\beta_k \times \beta_k$ square submatrix of $w_\beta$ with entries $(w_\beta)_{ij}$ for $i,j \in J^\beta_k \cup \{ j^\beta_k \}$ is equal to $M_{\beta_k - 1,1}$.
\item for all $2 \leq k \leq p$ such that $\beta_k \geq 2$, the $(j^\beta_k+1,j^\beta_k)$-entry of $w_\beta$ is equal to $-1$.
\item all other entries of $M_\beta$ are zero.  
\end{enumerate}
\end{enumerate}
\end{corollary}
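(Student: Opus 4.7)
The plan is to deduce this corollary directly from Corollary~\ref{cor:matrixwbetaA} by computing $M_\beta = \Id - w_\beta$ entrywise. The core observation is that the auxiliary matrices were set up to facilitate exactly this step: by the definitions in \eqref{eq:WrMr} and \eqref{eq:Mr1}, we have $M_r = \Id_r - W_r$ and $M_{r,1} = \Id_{r+1} - W_{r,1}$. Consequently, wherever Corollary~\ref{cor:matrixwbetaA} locates a $W_{\beta_k-1}$ (respectively $W_{\beta_k-1,1}$) block of $w_\beta$ on a set of indices which already meets the main diagonal of the ambient matrix in its full diagonal, the corresponding block of $M_\beta$ is exactly $M_{\beta_k-1}$ (respectively $M_{\beta_k-1,1}$). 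This handles claim (a) in both cases of the statement.

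For the remaining entries, I would treat the two cases in turn. In case \eqref{Abeta2} (when $\beta_p \geq 2$), the entries of $w_\beta$ outside of the diagonal blocks are precisely the $1$s in columns $j^\beta_k$ for $2 \leq k \leq p$, occupying rows $j^\beta_{k-1}+1, \dots, j^\beta_k+1$. The subtle point is that row $j^\beta_k$ lies on the main diagonal of the ambient matrix, so in $M_\beta$ this diagonal contribution $(1-1)$ cancels, while the other $1$s become $-1$s; the rows remaining are exactly $J^\beta_{k-1} \cup \{j^\beta_k+1\}$, since $J^\beta_{k-1} = \{j^\beta_{k-1}+1,\dots,j^\beta_k - 1\}$ by the defining relation $j^\beta_k = j^\beta_{k-1} + \beta_{k-1}$ and \eqref{eq:Jbetak}. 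This yields (b), and (c) follows because the remaining positions of $w_\beta$ are also off-diagonal zeros. In case \eqref{Abeta1} (last $m \geq 1$ parts equal to $1$), the $W_{\beta_k-1,1}$ blocks already absorb column $j^\beta_k$ for $\beta_k \geq 2$; the only extra nonzero entries of $w_\beta$ outside these blocks are the $1$s at $(j^\beta_k+1, j^\beta_k)$ (when $\beta_k \geq 2$ and $k \geq 2$), which are off-diagonal and thus become $-1$ in $M_\beta$, and the trailing $\Id_{m-1}$ block, which cancels against the corresponding block of $\Id$ and contributes only zeros.

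The only potential obstacle is purely bookkeeping: verifying that the listed row/column index sets are consistent with the partition of $[n]$ into the $J^\beta_k$ and the separating indices $\{j^\beta_k \mid 2 \leq k \leq p\}$, and that the diagonal cancellations occur exactly at the positions where the description in Corollary~\ref{cor:matrixwbetaA} places a $1$ on the main diagonal. Once this indexing check is in place, no further computation is needed and the corollary follows.
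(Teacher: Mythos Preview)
Your proposal is correct and follows exactly the route the paper intends: the paper presents Corollary~\ref{cor:matrixMbetaA} as an immediate consequence of Corollary~\ref{cor:matrixwbetaA} (the lead-in sentence is ``Therefore, we immediately obtain a formula for the matrix $M_\beta = \Id - w_\beta$ as follows''), with no separate proof given. Your entrywise deduction using $M_r = \Id_r - W_r$ and $M_{r,1} = \Id_{r+1} - W_{r,1}$, together with the diagonal-cancellation and index bookkeeping, is precisely what the word ``immediately'' encapsulates.
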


Equipped with explicit formulas for the matrix $M_\beta$ from Corollary \ref{cor:matrixMbetaA}, we are now prepared to prove our main theorem in type $A$.

%%%%%%%%%%%%%%%%%%%%%%%%%%%%%
\subsubsection{Proof of \Cref{thm:An}}\label{sec:propsAn}
%%%%%%%%%%%%%%%%%%%%%%%%%%%%%

We now complete the proof of \Cref{thm:An}.  We will use the description of the matrix $M_\beta = \Id - w_\beta$ from \Cref{cor:matrixMbetaA}, as well as all of the auxiliary matrices and results concerning them from \Cref{sec:matricesA}.

Let $\sW$ be of type~$A_n$ with $n \geq 1$ and let $\beta = (\beta_1,\dots,\beta_p)$ be a partition of $n+1$.  Note that the partition $\beta = (1,\dots,1)$ of $n+1$ corresponds to $w_\beta$ being the trivial element of $\sW$.  The statement of \Cref{thm:An}\eqref{SNFA} in this case is obvious, so we assume from now on that $\beta_1 \geq 2$. 

We next consider the case that $\beta_p \geq 2$.  If $p = 1$ then $\beta = (n+1)$, so $\gcd(\beta_k) = \beta_1 = n+1$, and \Cref{thm:An} in this case is immediate from \Cref{lem:CoxeterA} and the results for the matrix~$W_n$ in \Cref{sec:matricesA}.  

We now assume that $p \geq 2$. Note that $M_\beta = \Id - w_\beta$ is as described in \Cref{cor:matrixMbetaA}\eqref{Abeta2}. We proceed to generalize Examples~\ref{eg:CoxeterA3} and~\ref{eg:64A9}, and we use the same notation as developed in those examples here in the general case.

To determine $\Mod(w_\beta) = M_\beta R^\vee$, we replace row 1 of $M_\beta$ by the sum of its rows $1,\dots,n$, to obtain a matrix $M'_\beta$.  Then for $2 \leq k \leq p$, the $(1,j^\beta_k -1)$-entry of $M'_\beta$ is equal to $\beta_k$ and the $(1,j^\beta_k)$-entry of $M'_\beta$ is equal to $-\beta_k$.  Also the $(1,n)$-entry of $M'_\beta$ is equal to $\beta_p$, and all other entries in row~ 1 of $M'_\beta$ are 0.  Rows $2,\dots,n$ of $M'_\beta$ are the same as in $M_\beta$, and hence $M'_\beta$ has row~$i$ all $0$s for $i \in [n] \setminus J_\beta$, while for $i \in J_\beta$ with $i \neq 1$, the leading entry in row $i$ of $M'_\beta$ is its $(i,i-1)$ entry, which equals $-1$.  Since $\gcd(\beta_1,-\beta_1,\dots,\beta_{p-1},-\beta_{p-1}, \beta_p) = \gcd(\beta_k)$, it follows that 
\[
\Mod(w_\beta) 
 = \left\{ \sum_{i=1}^n c_i \alpha_i^\vee \ \middle| \ c_i \in \Z, \ c_i = 0 \mbox{ for all } i \in [n] \setminus J_\beta, \ \sum_{i=1}^n c_i \equiv 0 \; \operatorname{mod}\left( \gcd(\beta_k)\right) \right\}.
\]

We next obtain the Smith normal form $S_\beta$ for $M_\beta$.  To do this, starting with the matrix $M_\beta$, for all $2 \leq k \leq p$, we add to column $j^\beta_k$ of $M_\beta$ its columns with indexes $i \in \{ j^\beta_k - 1\} \cup \{ j^\beta_k + 1, j^\beta_k +2, \dots, j^\beta_k + \beta_k \}$.  That is, we add to column $j^\beta_k$ the column with the highest index in $J^\beta_{k-1}$,  and the columns with all indexes in $J^\beta_k$ except for the highest.  Let $\hat{M}_\beta$ be the so-obtained matrix.  Then $\hat{M}_\beta$ is the same as $M_\beta$ except that for $2 \leq k \leq p$,  column $j^\beta_k$ is now all $0$s except for an entry $1$ in row $j^\beta_k - 1$ (the highest index in $J^\beta_{k-1}$), and an entry $-1$ in row $j^\beta_k + (\beta_{k} - 1)$ (the highest index in $J^\beta_k$).  

Next, for all $k \in [p]$, we carry out row operations on the rows of $\hat{M}_\beta$ which contain the submatrix $M_{\beta_k - 1}$, so as to replace $M_{\beta_k - 1}$ by the matrix $T_{\beta_k - 1}$ defined in \eqref{eq:Tr}.  Call the resulting matrix $T_\beta$.  For each $k \in [p]$, and each $i_k \in J^\beta_k$ with $i_k$ not the highest index in $J^\beta_k$, the matrix $T_\beta$ has a pivot entry $1$ in its $(i_k,i_k)$-position.  At the next step, for $k \in [p]$ we use these pivots to clear all of the nonzero entries in column $i_k$,  where $i_k$ is the highest index in $J^\beta_k$, other than the $(i_k,i_k)$-entry (which equals $\beta_k$).  Call the resulting matrix $T'_\beta$.
We now, for $2 \leq k \leq p$, replace the row of $T'_\beta$ with index the highest in $J^\beta_k$ by the sum of its rows with indices the highest in $J^\beta_1,\dots,J^\beta_{k-1},J^\beta_k$.  Finally, we use column operations to clear all entries in rows $1,\dots,n-1$ which are not already pivot $1$s.  The resulting matrix $T''_\beta$ satisfies the following:
\begin{enumerate}
\item for all $1 \leq k < p$, the $(\beta_k - 1) \times (\beta_k -1)$ square submatrix of $T''_\beta$ with entries $(T''_\beta)_{ij}$ for $i,j \in J^\beta_k$ is $\Id_{\beta_k - 1}$;
\item the $(\beta_p - 1) \times (\beta_p -1)$ square submatrix of $T''_\beta$ with entries $(T''_\beta)_{ij}$ for $i,j \in J^\beta_p$ is $\diag(1^{\beta_p -1}, \beta_p)$;
\item for all $2 \leq k \leq p$, the $(j^\beta_k - 1,j^\beta_k)$-entry is $1$;
\item for all $2 \leq k \leq p$, the $(n,j^\beta_{k} - 1)$-entry is $\beta_{k-1}$;
\item the $(n,n)$-entry is $\beta_p$; and
\item all other entries are $0$.
\end{enumerate}  
By applying Bezout's Theorem to the last row, where the nonzero entries are $\beta_1,\dots,\beta_p$, we obtain that the Smith normal form $S_\beta$ for $M_\beta$ is $\diag(1^{n-p},\gcd(\beta_k),0^{p-1})$.

Finally, to obtain a $\Z$-basis for $\Mod(w_\beta)$, we perform the following column operations on $M_\beta$.  For $2 \leq k \leq p$, first add column $(j_k^\beta - 1)$ to column $j_k^\beta$, which clears all but two entries in column $j_k^\beta$; namely the 1 in row $(j_k^\beta - 1)$ and $-1$ in row $(j_k^\beta + 1)$.  Then use the column operations  which transform each submatrix $M_{\beta_k - 1}$ into the matrix $B_{\beta_k - 1}$ defined in \eqref{eq:Br}, which places a unique nonzero entry $\beta_k$ on the diagonal in column $(j_k^\beta - 1)$, for $2 \leq k \leq p$.  Now for $2 \leq k \leq p$, add to column $(j_k^\beta-1)$ the sum of $-\beta_k$ times all columns strictly to its right, excluding the other columns numbered $(j_\ell^\beta - 1)$ for $k < \ell \leq p$.  The result moves the nonzero entries $\beta_k$ in columns $(j_k^\beta - 1)$ all down to row $n$, as seen in $B'_\beta$ from Example \ref{eg:64A9}.  To produce the matrix $B_\beta$ from which the claimed $\Z$-basis for $\Mod(w_\beta)$ can easily be read, we apply Bezout's Theorem on successive pairs of columns with a nonzero entry in row $n$, clearing out all but $\gcd(\beta_k)$ in position $(n,n)$.

To complete the proof of \Cref{thm:An}, we suppose that the last $m \geq 1$ parts of $\beta = (\beta_1,\dots,\beta_p)$ are equal to $1$.  Thus we are generalizing Examples~\ref{eg:41A4} and~\ref{eg:431A7}.   Notice that in this case, $\gcd(\beta_k) = 1$.  Let $M_\beta = \Id - w_\beta$ be as described in \Cref{cor:matrixMbetaA}\eqref{Abeta1}.

To determine $\Mod(w_\beta)$, we replace row $1$ of $M_\beta$ by the sum of its rows $1$ through $n-m = \sum_{k=1}^{p-m} \beta_k$, to obtain a matrix $M'_\beta$ in which row $i$ is zero for all $i \in [n] \setminus J_\beta$, while for $2 \leq i \leq n-m$ with $i \in J_\beta$, the first entry in row $i$ of $M'_\beta$ is its $(i,i-1)$-entry, which equals $-1$.  Now the first row of $M'_\beta$ has nonzero entries as follows: for $2 \leq k \leq p-m+1$, the $(1,j^\beta_k - 1)$-entry is $\beta_{k-1}$; for $2 \leq k \leq p-m$, the $(1,j^\beta_k)$-entry is $-\beta_{k-1}$; and the $(1,n-m+1)$-entry is $-(\beta_{p-m} - 1)$.  Since $\gcd(\beta_1,-\beta_1,\beta_2,\dots,\beta_{p-m},-(\beta_{p-m} - 1)) = \gcd(\beta_{p-m},-(\beta_{p-m} - 1)) = 1$, it follows that 
\[
\Mod(w_\beta) = \left\{ \sum_{i=1}^n c_i \alpha_i^\vee \ \middle| \ c_i \in \Z \mbox{ and } c_i = 0 \mbox{ for all $i \in [n] \setminus J_\beta$} \right\}.
\]
with a $\Z$-basis given by $\{ \alpha_j^\vee \mid j \in J_\beta \}$.

For the Smith normal form, we start with $M_\beta$ and for $i = j^\beta_{p-m-1},\dots, n-m$, we replace column $i$ of $M_\beta$ by the sum of its columns $i, \dots, n-m+1$.  The resulting matrix has the matrix $T_{\beta_{p-m}-1,1}$ from \eqref{eq:Tr1} replacing $M_{\beta_{p-m}-1,1}$ from \eqref{eq:Mr1}, while column $j^\beta_{p-m-1}$ of the resulting matrix now has all $0$s except for the $-1$s in rows $i \in J^\beta_{p-m-1}$.  We then inductively repeat this process, moving from right to left, to obtain an upper-triangular matrix $T_\beta$ which has blocks $T_{\beta_1-1,1}, T_{\beta_2-1,1}, \dots, T_{\beta_{p-m}-1,1}$ going down the diagonal, up to row $n-m +1$, and then all-$0$ blocks (if $m \geq 2$).  

Each of the $T_{\beta_k-1,1}$ can then be replaced by its Smith normal form $S_{\beta_k - 1,1}$.  Hence for $1 \leq k \leq p-m$, the $k$th block  contributes $\beta_k - 1$ entries equal to $1$ and one entry equal to $0$ to the Smith normal form $S_\beta$.  Now \[ \sum_{k=1}^{p-m} (\beta_k - 1) = \left( \sum_{k=1}^{p-m} \beta_k \right) - (p-m) = \left( \sum_{k=1}^{p} \beta_k \right) - m - (p-m) = n+1 - p.\]
Therefore $S_\beta$ has $n-p+1$ diagonal entries equal to $1$, and all remaining entries $0$.  Hence $S_\beta = (1^{n-p+1},0^{p-1})= (1^{n-p},\gcd(\beta_k),0^{p-1})$, and thus $R^\vee / \Mod(w_\beta) \cong \Z^{p-1}$.

This completes the proof of \Cref{thm:An}. \qed

%%%%%%%%%%%%%%%%%%%%%%%%%%%%%%%%%%%%%%%%%%%%%%%%%%%
%%%%%%%%%%%%%%%%%%%%%%%%%%%%%%%%%%%%%%%%%%%%%%%%%%%
%%%%%%%%%%%%%%%%%%%%%%%%%%%%%%%%%%%%%%%%%%%%%%%%%%%
\section{Type $C$ mod-sets}\label{sec:TypeC}
%%%%%%%%%%%%%%%%%%%%%%%%%%%%%%%%%%%%%%%%%%%%%%%%%%%
%%%%%%%%%%%%%%%%%%%%%%%%%%%%%%%%%%%%%%%%%%%%%%%%%%%
%%%%%%%%%%%%%%%%%%%%%%%%%%%%%%%%%%%%%%%%%%%%%%%%%%%

In this section we give an explicit description of all mod-sets in type $C$, as stated in Theorem \ref{thm:Cn}. Although the finite Weyl groups in types $B$ and $C$ are identical, the mod-sets in these two types differ substantially.  We treat type $C$ first, since the results are considerably more straightforward in type $C$;  compare Theorems \ref{thm:SNFB} and \ref{thm:BasisB} in type $B$.

Let $\sW$ be the finite Weyl group of type $C_n$, with $n \geq 2$.  In \Cref{sec:repsC}, we review the complete system of minimal length representatives for the conjugacy classes of $\sW$ provided in \cite[Ch.~3]{GeckPfeifferBook}, which we rephrase in Proposition \ref{prop:wbgC}.  We then give several key examples in \Cref{sec:examplesC}, and prove our results in type $C_n$ in \Cref{sec:proofsC}.

Throughout this section, we order the nodes of the Dynkin diagram increasing from left to right, as in both Bourbaki \cite{Bourbaki4-6} and Sage \cite{sagemath}, so that the first $n-1$ nodes form a type $A_{n-1}$ subsystem, and the special node is indexed by $n$ on the right.  We note that this is the reverse of the ordering of nodes used in~\cite{GeckPfeifferBook}. We also note that in Sage \cite{sagemath} the Cartan matrices in types $B_n$ and $C_n$ are reversed with respect to the Dynkin diagrams in these types. See~\Cref{app:dynkin} for a direct comparison of these conventions.

%%%%%%%%%%%%%%%%%%%%%%%%%%%%%%%%%%%%%%%%%%%%%%%%%%%
%%%%%%%%%%%%%%%%%%%%%%%%%%%%%%%%%%%%%%%%%%%%%%%%%%%
\subsection{Conjugacy class representatives and mod-sets in type $C$}\label{sec:repsC}
%%%%%%%%%%%%%%%%%%%%%%%%%%%%%%%%%%%%%%%%%%%%%%%%%%%
%%%%%%%%%%%%%%%%%%%%%%%%%%%%%%%%%%%%%%%%%%%%%%%%%%%

Following \cite[Proposition 3.4.7]{GeckPfeifferBook}, we first explain how the conjugacy classes of $\sW$ of type $C_n$ are parameterized by ordered pairs of compositions~$(\beta, \gamma)$ such that $\beta$ is weakly decreasing, $\gamma$ is weakly increasing, and $|\beta| + |\gamma| = n \geq 2$.  (In particular, note that at most one of $|\beta| = 0$ and $|\gamma| = 0$ is permitted.)  For each such pair $(\beta,\gamma)$, we will define standard parabolic subgroups $\sW_{\beta}$ and $\sW_\gamma$ of $\sW$, and an  element $\wbg = w_\beta \cdot w_\gamma$ with $w_\beta$ cuspidal in $\sW_\beta$ and $w_\gamma$ cuspidal in $ \sW_\gamma$, so that the set of all such $\wbg$ forms a complete system of minimal length representatives for the conjugacy classes of $\sW$.

We first provide a summary of our algorithm for obtaining a minimal length conjugacy class representative $\wbg$ associated to the pair of compositions $(\beta, \gamma)$.  The following result is contained in \cite[Proposition 3.4.7]{GeckPfeifferBook}, though we present a visual algorithm for quickly constructing these elements; see Figure \ref{fig:wgammaC} for an illustration of Proposition \ref{prop:wbgC}\eqref{C_wgamma}.

\begin{figure}[h]
\begin{center}
 \resizebox{3.5in}{!}
 {
\begin{overpic}{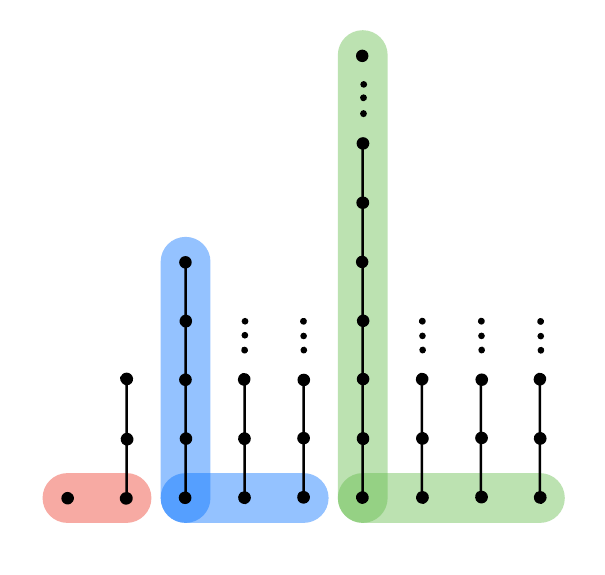}
\put(9,5){$s_9$}
\put(19,5){$s_8$}
\put(29,5){$s_7$}
\put(39,5){$s_6$}
\put(49,5){$s_5$}
\put(59,5){$s_4$}
\put(69,5){$s_3$}
\put(78.5,5){$s_2$}
\put(88.5,5){$s_1$}
\put(15,21.5){$s_9$}
\put(15,31){$s_8$}
\put(24.5,21.5){$s_8$}
\put(24.5,31){$s_9$}
\put(24.5,41){$s_8$}
\put(24.5,51){$s_7$}
\put(35,21.5){$s_7$}
\put(35,31){$s_8$}
\put(45,21.5){$s_6$}
\put(45,31){$s_7$}
\put(64.5,21.5){$s_4$}
\put(64.5,31){$s_5$}
\put(74.5,21.5){$s_3$}
\put(74.5,31){$s_4$}
\put(84.5,21.5){$s_2$}
\put(84.5,31){$s_3$}
\put(54.5,21.5){$s_5$}
\put(54.4,31){$s_6$}
\put(54.5,41){$s_7$}
\put(54.5,51){$s_8$}
\put(54.4,61){$s_9$}
\put(54.5,71){$s_8$}
\put(54.5,86){$s_4$}
\end{overpic}
}
\caption{$w_\gamma = \textcolor{red}{(}s_9s_8\textcolor{red}{)}\textcolor{blue}{(}s_7s_8s_9s_8s_7\cdot s_6s_5\textcolor{blue}{)} \liz{(}s_4s_5s_6s_7s_8s_9s_8s_7s_6s_5s_4\cdot s_3s_2s_1\liz{)}$ for the composition $\gamma = (2,3,4)$ in type $C_9$.}
\label{fig:wgammaC}
\end{center}
\end{figure}

\begin{prop}\label{prop:wbgC}
Let $(\beta, \gamma)$ be a pair of compositions such that $\beta$ is weakly decreasing, $\gamma$ is weakly increasing, and $|\beta| + |\gamma| = n \geq 2$. Define an element $\wbg$ of the finite Weyl group $\sW$ of type $C_n$ as follows:
\begin{enumerate}
\item\label{C_wbeta} Let $|\beta| = m$, and define $w_\beta$ as in Proposition \ref{prop:wbetaA}, using the type $A_{m-1}$ subsystem formed by the first nodes of the Dynkin diagram; note that $w_\beta$ is trivial if $m \in \{0,1\}$.
\item\label{C_wgamma}  Let $|\gamma| = n-m$, and write $\gamma = (\gamma_1, \dots, \gamma_q)$.  If $|\gamma| = 0$, then $w_\gamma$ is trivial.  Otherwise, define $w_\gamma$ as follows:
\begin{enumerate}
\item\label{trees} Draw $|\gamma |$ trees, rooted at $s_{n-i}$ for $i \in \{0, 1, \dots, n-m-1\}$ from left to right.
\item\label{label} Label the vertices of the tree rooted at $s_{n-i}$ vertically by the palindrome \[s_{n-i}, s_{n-i+1}, \dots, s_{n-1}, s_n, s_{n-1}, \dots, s_{n-i+1}, s_{n-i}.\]
\item\label{shade} For each part $\gamma_i$ of the composition $\gamma$ from $i = 1, \dots, q$, shade the entire tree to the right of the previous shaded area, together with the next consecutive $\gamma_i$ roots.
\item\label{multiply} Multiply together the simple reflections indexed by the shaded subgraphs, reading down trees and then left to right within shaded areas, to obtain $w_\gamma$.
\end{enumerate}
\end{enumerate}
Then the set of all $\wbg = w_\beta \cdot w_\gamma$ forms a complete system of minimal length representatives for the conjugacy classes of $\sW$. 
\end{prop}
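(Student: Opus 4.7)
The plan is to show that the proposition is essentially a translation of \cite[Proposition 3.4.7]{GeckPfeifferBook} into our Dynkin diagram conventions, together with a visual repackaging of the negative block construction as the palindromic tree procedure described in part \eqref{C_wgamma}. The parameterization of conjugacy classes of $\sW$ of type $C_n$ by ordered pairs of partitions $(\beta,\gamma)$ with $|\beta|+|\gamma|=n$, $\beta$ weakly decreasing, $\gamma$ weakly increasing, and the fact that the elements $\wbg=w_\beta\cdot w_\gamma$ form a complete set of minimal length representatives, is precisely the content of \cite[Prop.~3.4.7]{GeckPfeifferBook}. So the task is to check that the element $\wbg$ produced by our algorithm coincides with the one in loc.~cit.~after relabeling the nodes of the Dynkin diagram.

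First I would handle the $w_\beta$ factor. In \cite[Sec.~3.4.2]{GeckPfeifferBook}, $w_\beta$ is built out of positive blocks $b_{j,\beta_k}^+$ inside a type $A_{m-1}$ parabolic subgroup, where $m=|\beta|$. Under our Bourbaki labeling, this type $A_{m-1}$ subsystem is generated by $\{s_1,\dots,s_{m-1}\}$, and Proposition \ref{prop:wbetaA} (and the notation of \Cref{sec:repsA}, in particular \eqref{eq:w^beta_k}) already says exactly that the corresponding representative is our $w_\beta = w_1^\beta\cdots w_p^\beta$. So \eqref{C_wbeta} is immediate once the conventions are matched: it is just the type $A_{m-1}$ case embedded in the parabolic $W_{\{s_1,\dots,s_{m-1}\}}$.

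Next I would handle the $w_\gamma$ factor, which is the main technical point. In \cite[Sec.~3.4.2]{GeckPfeifferBook}, $w_\gamma$ is a product of negative blocks $b_{j,\gamma_i}^-$ inside a type $C_{n-m}$ parabolic subgroup. After reversing the indexing of the Dynkin diagram to pass from Geck--Pfeiffer's labeling to Bourbaki's, this type $C_{n-m}$ subsystem is generated by $\{s_{m+1},\dots,s_n\}$, with the special node at $s_n$. I would verify that each negative block $b_{j,\gamma_i}^-$ corresponds to the product obtained from one shaded tree together with an initial palindromic prefix: namely, a palindrome of the form $s_a s_{a+1}\cdots s_{n-1}s_n s_{n-1}\cdots s_{a+1}s_a$ (read top to bottom in the tree) followed by a type $A$ factor $s_{a-1}s_{a-2}\cdots s_{a-t}$ (read left to right in the shaded area). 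This is a direct translation: the palindrome encodes the long cycle in type $C$ using the signed permutation generator $s_n$, while the $A$-factor accounts for the additional length $\gamma_i$ of the block beyond a single generator. Having verified this correspondence block by block, and in the order specified by the weakly increasing sequence $\gamma_1,\dots,\gamma_q$, the product over shaded trees in our algorithm reproduces the product of negative blocks in \cite[Prop.~3.4.7]{GeckPfeifferBook}.

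The main obstacle will be the combinatorial bookkeeping: matching the root where each tree is planted, the palindromic labeling in step \eqref{label}, and the ``shade to the right of the previous shaded area plus $\gamma_i$ new roots'' rule of step \eqref{shade} with the indices $(j,\gamma_i)$ of the negative blocks in \cite{GeckPfeifferBook}. I would do this by induction on $q$, the number of parts of $\gamma$: the base case $q=1$ reduces to checking that the palindromic word read off a single tree of $\gamma_1$ generators is a minimal length Coxeter-type element inside a type $C_{\gamma_1}$ parabolic, which follows from a direct reduced word computation (or by recognizing the word as a reduced expression for the standard Coxeter element of $C_{\gamma_1}$ conjugated into our subsystem). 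The inductive step then amounts to checking that attaching one more tree and extending the shading matches the corresponding new negative block, with the commutation relations between blocks at disjoint supports ensuring the factorization $\wbg=w_\beta w_\gamma$ is well defined and of minimal length. Once these checks are in place, minimality of length and the fact that distinct pairs $(\beta,\gamma)$ give nonconjugate elements both transfer directly from \cite[Prop.~3.4.7]{GeckPfeifferBook}, completing the proof.
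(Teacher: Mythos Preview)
Your proposal is correct and follows essentially the same approach as the paper: the proposition is stated as a repackaging of \cite[Prop.~3.4.7]{GeckPfeifferBook}, and the verification amounts to matching the tree/palindrome description of $w_\gamma$ with the negative blocks $b_{j,\gamma_k}^-$ under the reversed node labeling. The paper is in fact more terse than your outline: rather than doing an induction on $q$, it simply writes out the conjugates $t_i = s_{n-i}t_{i-1}s_{n-i}$ and the formulas \eqref{eq:wgamma1}--\eqref{eq:wgammak} for $w_k^\gamma$, observes parenthetically that $w_k^\gamma = b_{j_k^\gamma,\gamma_k}^-$, and then cites \cite[Props.~3.4.6 and 3.4.7]{GeckPfeifferBook} directly for cuspidality and the complete system of representatives.
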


Figure \ref{fig:wgammaC} illustrates the construction of $w_\gamma$ in Proposition \ref{prop:wbgC}; see  Section \ref{sec:examplesC} for additional examples, including Example \ref{eg:beta0gamma234C9} for more details on Figure \ref{fig:wgammaC}. 
We remark that the underlying collection of trees in Figure \ref{fig:wgammaC} is a normal form forest for the Coxeter group $W$ of type $C_9$, in the sense of du Cloux (see \cite[Section~3.4]{BjoernerBrenti}), using the ordering on generators which goes from right to left on the Dynkin diagram.

We now establish the notation needed to formally state Theorem \ref{thm:Cn} characterizing all mod-sets in type $C_n$.  If either $|\beta| = 0$ or $|\beta| = 1$, we define both $w_\beta$ and $\sW_\beta$ to be trivial.   If $\beta$ is a  partition of $m$ with $2 \leq m \leq n$, then we define both $w_\beta$ and $\sW_\beta$ as in \Cref{sec:repsA}, so that the $w_\beta$ are the minimal length conjugacy class representatives in the type $A_{m-1}$ subsystem of $\sW$ indexed by the first $m-1$ nodes of the Dynkin diagram, as stated in Proposition \ref{prop:wbgC}.

Now let $\gamma = (\gamma_1, \dots, \gamma_q)$ be any weakly increasing composition of $n-m$. If $|\gamma| = 0$, we define both $w_\gamma$ and $\sW_\gamma$ to be trivial.  It thus remains to define $w_\gamma$ and $\sW_\gamma$ for $0 \leq m < n$.  For this, following \cite[Section~3.4]{GeckPfeifferBook} but using the opposite ordering, we define $t_0 = s_n$, and for $1 \leq i \leq n-1$, define $t_i$ to be the conjugate
\begin{equation}\label{eq:ti}
t_i = s_{n-i} t_{i-1} s_{n-i}.
\end{equation}
That is, 
\[
t_1  =  s_{n-1} s_n s_{n-1}, \quad t_2 =  s_{n-2} s_{n-1} s_n s_{n-1} s_{n-2}, \quad \dots,  \quad
t_{n-1}  =  s_1 \dots s_{n-1} s_n s_{n-1} \dots s_1.
\]
Note that the elements $t_i$ provide the labeling of the rooted trees in \ref{C_wgamma}\eqref{label} of Proposition \ref{prop:wbgC}.
We also define a strictly increasing subsequence of $\{0,1,\dots,n-m\}$  in the same way as we did for the partition $\beta$ in \Cref{sec:repsA}.  That is, let $j^\gamma_1 = 0$, and for all $2 \leq k \leq q$, define
\[
j^\gamma_k = \sum_{i=1}^{k-1} \gamma_i = \gamma_1 + \dots + \gamma_{k-1}.
\]
It will be convenient to additionally define $j^\gamma_{q+1} = \sum_{i=1}^q \gamma_i = n-m$.

Now we define subintervals $J^\gamma_k$ of the interval $[n]$ by putting
\[
J^\gamma_k = \{ n-j^\gamma_{k+1}+1, n-j^\gamma_{k+1}+2, \dots, n \} = \left\{ n - \left(\sum_{i=1}^k \gamma_i\right) + 1,  \dots, n \right\}
\]
 for all $k \in [q]$; for example, $J^\gamma_1 = \{ n - \gamma_1 + 1, \dots, n \}$. In the language of Proposition \ref{prop:wbgC}, the set $J^\gamma_k$ indexes the support of the shaded diagram corresponding to part $\gamma_k$ in step \ref{C_wgamma}\eqref{shade}. Note that the $J^\gamma_k$ are \emph{not} defined the same way as for $\beta$.  In particular, these sets are all nonempty, with $n \in J^\gamma_k$ for all $k \in [q]$, and they satisfy the following strict inclusions
\[
J^\gamma_1 \subsetneq J^\gamma_2 \subsetneq \dots \subsetneq J^\gamma_q.
\]
Observe also that for all $k \in [q]$, the set $J^\gamma_k$ has $\sum_{i=1}^k \gamma_i = j^\gamma_{k+1}$ elements. Therefore, $J^\gamma_1 = \{ n \}$ if and only if $\gamma_1 = 1$, and $J^\gamma_q = \{n - (n-m) + 1, \dots, n\} = \{ m+1,\dots,n\}$, and hence $|J^\gamma_q| = |\gamma| = n-m$ for all $\gamma$.  We may thus identify the elements of each~$J^\gamma_k$ with the last $j^\gamma_{k+1} \geq 1$ nodes of the Dynkin diagram for $\sW$; that is, with the nodes of its (connected and nonempty) subdiagram of type $C_{j^\gamma_{k+1}} $.  To simplify notation, we also define \[J_\gamma = J^\gamma_q = \cup_{k=1}^q J^\gamma_k.\]  

Next, for each $k \in [q]$, we write $\sW_{J^\gamma_k}$ for the nontrivial standard parabolic subgroup of $\sW$ generated by the simple reflections $\{ s_j \mid j \in J^\gamma_k \}$.  Then each $\sW_{J^\gamma_k}$ is of type $C_{j^\gamma_{k+1}} $, and we have the strict inclusions
\[
\sW_{J^\gamma_1} \lneq \sW_{J^\gamma_2} \lneq \dots \lneq \sW_{J^\gamma_q}.
\]
We write $W_\gamma$ for the nontrivial standard parabolic subgroup of $\sW$ generated by the simple reflections $\{ s_j \mid j \in J_\gamma \}$, so that $W_\gamma = W_{J^\gamma_q} = \cup_{k=1}^q W_{J^\gamma_k}$.  

Now for all $k \in [q]$, we define an element $w^\gamma_k \in \sW_{J^\gamma_k}$ by
 \begin{equation}\label{eq:wgammak-t}
 w^\gamma_k = t_{j^\gamma_k}s_{n-(j^\gamma_k+1)}s_{n-(j^\gamma_k+2)} \cdots s_{n-j^\gamma_{k+1}+1}.
 \end{equation}
Thus as $j^\gamma_1 = 0$ and $t_0 = s_n$, we have
 \begin{equation}\label{eq:wgamma1}
 w^\gamma_1 = s_n s_{n-1} \cdots s_{n-\gamma_1 + 1}
 \end{equation}
 while for all $2 \leq k \leq q$, by expanding out the expression \eqref{eq:ti} for $i = j^\gamma_k$, we get
 \begin{equation}\label{eq:wgammak}
 w^\gamma_k = s_{n-j^\gamma_k} s_{n-j^\gamma_k+1} \cdots s_{n-1} \cdot s_n s_{n-1} \cdots  s_{n-j^\gamma_{k+1}+1}.
 \end{equation}
 That is, the support of $w^\gamma_k$ is the set $J^\gamma_k$, and $w^\gamma_k$ is the product of the simple reflections $s_j$ for $j \in \{ n- j^\gamma_k, \dots, n-1 \} \subsetneq J^\gamma_k$ in increasing order, followed by the product of the simple reflections $s_j$ for all $j \in J^\gamma_k$ in decreasing order. For the example in Figure \ref{fig:wgammaC}, we have $w_1^\gamma = s_9s_8$, whereas $w^\gamma_2 = s_7s_8\cdot s_9s_8s_7 s_6s_5$ and $w^\gamma_3 = s_4s_5s_6s_7s_8\cdot s_9s_8s_7s_6s_5s_4 s_3s_2s_1$. In particular, each $w^\gamma_k$ has support $J^\gamma_k$, and is a nontrivial element of $\sW_{J^\gamma_k}$.   (In the language of \cite[Section~3.4.2]{GeckPfeifferBook}, the element~$w^\gamma_k$ is the negative block $b_{j^\gamma_k, \gamma_k}^-$ of length $\gamma_k$ starting at $j^\gamma_k$.) 

Finally, the composition $\gamma = (\gamma_1,\dots,\gamma_q)$ corresponds to the product
\[ w_\gamma = w^\gamma_1 \cdots w^\gamma_q \in W_\gamma. \] 
By Proposition 3.4.6 of \cite{GeckPfeifferBook}, the element $w_\gamma$ is cuspidal in the parabolic subgroup $W_\gamma$ of type $C_{n-m}$, generated by the simple reflections indexed by the last $|\gamma| = n-m$ nodes of the Dynkin diagram.  By Proposition 3.4.7 of \cite{GeckPfeifferBook}, the set of $w_{\beta, \gamma} = w_\beta \cdot w_\gamma$ for all distinct pairs of compositions $(\beta, \gamma)$ such that $\beta$ is weakly decreasing, $\gamma$ is weakly increasing, and $|\beta| + |\gamma| = n$, forms a complete system of minimal length representatives of the conjugacy classes of $\sW$ in type $C_n$, as recorded in Proposition \ref{prop:wbgC}.

To simplify notation in the theorem below, for any weakly increasing composition $\gamma = (\gamma_1, \dots, \gamma_q)$ with $|\gamma| \leq n$, we define the following $q$-element subset of $[n]$:
\[
I_\gamma = \left\{ n - j^\gamma_k \mid 1 \leq k \leq q \right\} = \left\{ n, n - \gamma_1, \dots, n - \sum_{k=1}^{q-1} \gamma_k \right\} = \left\{ n - (|\gamma| - \gamma_q), \dots, n - \gamma_1, n \right\}.
\]

With this notation established, we now state our results describing mod-sets in type~$C$.  

\begin{thm}\label{thm:Cn}
Suppose $\sW$ is of type $C_n$ with $n \geq 2$.  Let $(\beta,\gamma)$ be a pair of compositions such that $\beta = (\beta_1,\dots,\beta_p)$ is weakly decreasing, $\gamma = (\gamma_1,\dots,\gamma_q)$ is weakly increasing, and $|\beta| + |\gamma| = n$, with corresponding conjugacy class representative $\wbg \in \sW$.  Write $m = |\beta|$, so that $0 \leq m \leq n$ and $|\gamma| = n-m$. 
\begin{enumerate}
\item\label{ModC} The module $\Mod(\wbg) = (\Id-\wbg)R^\vee$ equals 
\[  \left\{\sum_{i=1}^n c_i \alpha_i^\vee \ \middle| \ c_i \in \Z, \ c_i = 0 \mbox{ for }i \in [m] \setminus J_\beta, \ c_i \equiv 0 \; \operatorname{mod}\left(2 \right) \mbox{ for } i \in I_\gamma  \right\}. \]
\item\label{BasisC} The module $\Mod(\wbg)$ has $\Z$-basis given by \[\{ \alpha_i^\vee \mid i \in J_\beta \} \cup \{ \alpha_i^\vee \mid i \in J_\gamma \setminus I_\gamma \} \cup \{ 2\alpha_i^\vee \mid i \in I_\gamma \}.\]
\item\label{SNFC} For any $w \in [\wbg]$, the Smith normal form of $(\Id-w)$ equals 
\[ \Sbg =  \diag(1^{n - p - q}, 2^q, 0^p).\]
\item\label{quotientC} For any $w \in [\wbg]$, the quotient of $R^\vee$ by the mod-set is
 \[R^\vee / \Mod(w) \cong (\Z/ 2\Z)^q \oplus  \Z^p.\]
\end{enumerate}
\end{thm}

\noindent We illustrate this theorem with several examples in Section \ref{sec:examplesC}, and provide a detailed proof in Section \ref{sec:proofsC}.

%%%%%%%%%%%%%%%%%%%%%%%%%%%%%%%%%%%%%%%%%%%%%%
%%%%%%%%%%%%%%%%%%%%%%%%%%%%%%%%%%%%%%%%%%%%%%
\subsection{Examples in type $C$}\label{sec:examplesC}
%%%%%%%%%%%%%%%%%%%%%%%%%%%%%%%%%%%%%%%%%%%%%%
%%%%%%%%%%%%%%%%%%%%%%%%%%%%%%%%%%%%%%%%%%%%%%

In this section, we present a sequence of examples to illustrate our general results and proof techniques in type $C$.  We mostly restrict to the case $|\beta| = 0$, since the element $w_\beta$ is in a type $A$ subsystem, and the contribution of $w_\beta$ in type $C$ can thus be explained quickly via our results from \Cref{sec:TypeA}. 
The examples we consider are:
\begin{itemize}
\item $|\beta| = 0$ and $\gamma = (4)$  in Example~\ref{eg:beta0gamma4C4};
\item $|\beta| = 0$ and $\gamma = (3,4)$  in Example~\ref{eg:beta0gamma34C7};
\item $|\beta| = 0$ and $\gamma = (2,3,4)$  in Example~\ref{eg:beta0gamma234C9};
\item $|\beta| = 0$ and $\gamma = (1,3)$  in Example~\ref{eg:beta0gamma13C4}; 
\item $\beta = (1)$ and $\gamma = (3)$  in Example~\ref{eg:beta1gamma3C4};
\item $\beta = (4)$ and $\gamma = (3)$  in Example~\ref{eg:beta4gamma3C7}; and
\item $\beta = (3,3)$ and $\gamma = (3)$  in Example~\ref{eg:beta33gamma3C9}.
\end{itemize}
For each of these examples, we give an explicit description of the $\Z$-module $\Mod(\wbg) = (\Id - \wbg)R^\vee$, construct a $\Z$-basis for $\Mod(\wbg)$, and find the Smith normal form for $(\Id - \wbg)$ and the isomorphism class of the quotient $R^\vee / \Mod(\wbg)$.
We continue all notation from \Cref{sec:repsC}.

\begin{example}\label{eg:beta0gamma4C4}   Let $\sW$ be of type $C_4$ and suppose $|\beta| = 0$ and $\gamma = (4)$, so that $\wbg = w_\gamma$.  We will see that the Smith normal form for $\Id - w_\gamma$ is $\diag(1,1,1,2)$, so that $R^\vee/\Mod(w_\gamma) \cong \Z/2\Z$ is torsion of rank equal to the number of parts of $\gamma$.

We have that $w_\gamma = s_4 s_3 s_2 s_1$ is Coxeter in $\sW$.  Observing that the subexpression $s_3 s_2 s_1$ is Coxeter in the type $A_3$ subsystem indexed by the first $3$ nodes, we may, after reversing the labelings, use our computations for $w_\beta = s_1 s_2 s_3$ in Example~\ref{eg:CoxeterA3} above (which considered the partition $(4)$ in type $A_3$).  Since $s_4(\alpha_3^\vee) = \alpha_3^\vee + 2\alpha_4^\vee$ in type $C_4$ by \eqref{eq:siaction}, we thus obtain 
\begin{eqnarray*}
w_\gamma(\alpha_1^\vee) & = & s_4 (-\alpha_1^\vee - \alpha_2^\vee - \alpha_3^\vee) = -\alpha_1^\vee - \alpha_2^\vee -\alpha_3^\vee - 2\alpha_4^\vee \\
w_\gamma(\alpha_2^\vee) & = & s_4 (\alpha_1^\vee) = \alpha_1^\vee \\
w_\gamma(\alpha_3^\vee) & = & s_4 (\alpha_2^\vee) = \alpha_2^\vee.
\end{eqnarray*}
We also compute
\[
w_\gamma(\alpha_4^\vee) = s_4 s_3(\alpha_4^\vee) = s_4(\alpha_3^\vee + \alpha_4^\vee) = \alpha_3^\vee + 2\alpha_4^\vee - \alpha_4^\vee = \alpha_3^\vee + \alpha_4^\vee.
\]
Hence the matrices for $w_\gamma$ and $N_\gamma = \Id - w_\gamma$ with respect to the basis $\Delta^\vee$ are given by
\[
w_\gamma = \begin{pmatrix}
-1 & 1 &  0 & 0\\ 
-1 & 0 & 1 & 0\\ 
-1 & 0 & 0 & 1 \\
-2 & 0 & 0 & 1
\end{pmatrix}
\quad \mbox{and} \quad
N_\gamma = \begin{pmatrix}
2 & \boxed{-1} &  0 & 0\\ 
1 & 1 & \boxed{-1} & 0\\ 
1 & 0 & 1 & \boxed{-1} \\
\circled{2} & 0 & 0 & 0
\end{pmatrix}.
\]

To determine $\Mod(w_\gamma)$, notice that in rows $i = 1,2,3$ of $N_\gamma$, the last nonzero entry is the boxed $-1$ in the $(i,i+1)$-position.  Since the only nonzero entry in row $4$ is the $2$ in column $1$, it follows that the system $N_\gamma \x = \bfc$ has solution over $\Z$ if and only if $c_4 \equiv 0 \; \operatorname{mod}\left( 2\right)$.  Thus
\[
\Mod(w_\gamma) = N_\gamma R^\vee = \left\{ \sum_{i=1}^4 c_i \alpha_i^\vee \ \middle| \  c_i \in \Z  \mbox{ and } c_4 \equiv 0\; \operatorname{mod}\left( 2\right) \right\}.
\]
It is immediate that a $\Z$-basis for $\Mod(\wbg)$ is given by $\{ \alpha_1^\vee, \alpha_2^\vee, \alpha_3^\vee, 2\alpha_4^\vee \}$.

To determine the Smith normal form $S_\gamma$ for $N_\gamma$, we carry out the following column operations.  Replace $\cC_1(N_\gamma)$ by $\sum_{i=1}^4 i \cC_i(N_\gamma)$, and for $i = 2,3$ replace $\cC_i(N_\gamma)$ by $\sum_{j=i}^4 \cC_j(N_\gamma)$.  This gives the matrix 
\[
T_\gamma = \begin{pmatrix}
0 & -1 &  0 & 0\\ 
0 & 0 & -1 & 0\\ 
0 & 0 & 0 & -1 \\
2 & 0 & 0 & 0
\end{pmatrix}.
\]
Hence $S_\gamma = \diag(1,1,1,2) $, and $R^\vee /\Mod(w_\gamma) \cong \Z/2\Z$. 
 
\end{example}

\begin{example}\label{eg:beta0gamma34C7}  Let $\sW$ be of type $C_7$ and suppose $|\beta| = 0$ and $\gamma = (3,4)$, so that $\wbg = w_\gamma$.  We will see that the Smith normal form for $\Id - w_\gamma$ is $\diag(1^5, 2^2)$, so that $R^\vee/\Mod(w_\gamma) \cong (\Z/2\Z)^2$ is torsion of rank equal to the number of parts of $\gamma$.

We have $w_\gamma = w^\gamma_1 w^\gamma_2 = (s_7 s_6 s_5)(s_4 s_5 s_6 s_7 s_6 s_5 s_4 s_3 s_2 s_1)$.  We first determine the matrices for $w^\gamma_1$ and $w^\gamma_2$, with respect to the basis $\Delta^\vee$, then multiply these together to obtain the matrix for $w_\gamma$.  Now $w^\gamma_1 = s_7 s_6 s_5$ fixes $\alpha_i^\vee$ for $1 \leq i \leq 3$.  Since $w^\gamma_1$ is Coxeter in the type~$C_3$ subsystem indexed by the last $3$ nodes, we obtain using similar arguments to those in Example~\ref{eg:beta0gamma4C4} that 
\[
w_\gamma^1(\alpha_5^\vee) = -\alpha_5^\vee -\alpha_6^\vee -2\alpha_7^\vee, \quad w_\gamma(\alpha_6^\vee) = \alpha_5^\vee, \quad \mbox{and} \quad w_\gamma(\alpha_7^\vee) = \alpha_6^\vee +\alpha_7^\vee.
\]
We also have, by inserting $s_4 s_4$ and working in the type $C_4$ subsystem on nodes $4,5,6,7$, that 
\[
w_\gamma^1(\alpha_4^\vee) = s_7 s_6 s_5 s_4 (s_4 \alpha_4^\vee) = s_7 s_6 s_5 s_4 (-\alpha_4^\vee) = \alpha_4^\vee + \alpha_5^\vee + \alpha_6^\vee + 2\alpha_7^\vee.
\]
Hence the matrix for $w^\gamma_1$ is block lower-triangular of the form
\[
w^\gamma_1 = 
\begin{pmatrix}
1 &0&0&0&&& \\
0 &1&0&0&&& \\
0 &0&1&0&&& \\
0&0&0& 1 &  &  &  \\
0&0&0  & 1 & -1 & 1 & 0 \\
0&0&0 & 1 & -1 & 0 & 1 \\
0&0& 0 & 2 & -2 & 0 & 1 \\
\end{pmatrix}.
\]

We now consider $w^\gamma_2$. Since this ends with the Coxeter element $s_7 s_6 s_5 s_4 s_3 s_2 s_1$, by \eqref{eq:siaction} we have 
\begin{eqnarray*}
w^\gamma_2(\alpha_1^\vee) & = & s_4 s_5 s_6 (-\alpha_1^\vee - \dots - \alpha_6^\vee - 2\alpha_7^\vee) \\
w^\gamma_2(\alpha_i^\vee) & = & s_4 s_5 s_6(\alpha_{i-1}^\vee) \quad \mbox{ for } 2 \leq i \leq 6 \\
w^\gamma_2(\alpha_7^\vee) & = & s_4 s_5 s_6(\alpha^\vee_6 + \alpha^\vee_7).
\end{eqnarray*}
Next we compute
\begin{eqnarray*}
s_4 s_5 s_6(\alpha_i^\vee) & = & \alpha_i^\vee \quad \mbox{ for }i = 1,2 \\
s_4 s_5 s_6(\alpha_3^\vee) & = & \alpha_3^\vee + \alpha_4^\vee \\
s_4 s_5 s_6(\alpha_i^\vee) & = & \alpha_{i+1}^\vee  \quad \mbox{ for }i = 4,5 \\
s_4 s_5 s_6(\alpha_6^\vee) & = & -\alpha_4^\vee - \alpha_5^\vee - \alpha_6^\vee \\
s_4 s_5 s_6(\alpha_7^\vee) & = & \alpha_4^\vee + \alpha_5^\vee + \alpha_6^\vee + \alpha_7^\vee.
\end{eqnarray*}
Putting these two collections of equations together, we compute that $w^\gamma_2(\alpha^\vee_1)$ equals 
\[
(-\alpha_1^\vee - \alpha_2^\vee) - (\alpha_3^\vee + \alpha_4^\vee) - (\alpha_5^\vee + \alpha_6^\vee) + (\alpha_4^\vee + \alpha_5^\vee + \alpha_6^\vee) - 2(\alpha_4^\vee + \alpha_5^\vee + \alpha_6^\vee + \alpha_7^\vee) 
\]
which simplifies to 
\[w^\gamma_2(\alpha^\vee_1) = -\alpha_1^\vee - \alpha_2^\vee - \alpha_3^\vee - 2(\alpha_4^\vee + \alpha_5^\vee + \alpha_6^\vee + \alpha_7^\vee),\]
while 
\begin{eqnarray*}
w^\gamma_2(\alpha_i^\vee) & = & \alpha_{i-1}^\vee \mbox{ for } i = 2,3 \\
w^\gamma_2(\alpha_4^\vee) & = & \alpha_3^\vee + \alpha_4^\vee \\
w^\gamma_2(\alpha_i^\vee) & = & \alpha_{(i-1) + 1}^\vee = \alpha_i^\vee \mbox{ for } i = 5,6 \\
w^\gamma_2(\alpha_7^\vee) & = & -(\alpha_4^\vee + \alpha_5^\vee + \alpha_6^\vee) + (\alpha_4^\vee + \alpha_5^\vee + \alpha_6^\vee + \alpha_7^\vee) = \alpha_7^\vee.
\end{eqnarray*}
Hence the matrix for $w^\gamma_2$ is block lower-triangular of the form
\[
w^\gamma_2 = 
\begin{pmatrix}
-1 & 1 & 0 & 0 &  &  &  \\
-1 & 0 & 1 & 0 &  &  &  \\
-1 & 0 & 0 & 1 &  &  &  \\
-2 & 0 & 0 & 1 &  &  &  \\
-2 & 0 & 0 & 0 & 1 & 0 & 0 \\
-2 & 0 & 0 & 0 & 0 & 1 & 0 \\
-2 & 0 & 0 & 0 & 0 & 0 & 1 \\
\end{pmatrix}.
\]

To compute the matrix product $w_\gamma = w^\gamma_1 w^\gamma_2$, we observe that $w^\gamma_1$ and $w^\gamma_2$ are both block lower-triangular, with $\Id_4$ in the top left-hand corner of $w^\gamma_1$ (respectively, the bottom right-hand corner of $w^\gamma_2$).  Hence $w_\gamma$ is block lower-triangular, with its first $4$ rows the same as in $w^\gamma_2$, and its last $4$ columns the same as in $w^\gamma_1$.
For the remaining $3 \times 3$ block in the lower left-hand corner, we observe that for $i = 5,6,7$, the entries in row $i$ of $w^\gamma_1$ sum to $1$, and these rows are all $0$ in columns $1,2,3$.  Also, column $1$ of $w^\gamma_2$ has all entries in rows $4,5,6,7$ equal to $-2$, and columns $2$ and $3$ of $w^\gamma_2$ are all $0$s in rows $4,5,6,7$.  Hence for $i = 5,6,7$, the $(i,1)$-entry of $w_\gamma$ is $-2$, and the $(i,2)$- and $(i,3)$-entries are $0$.  That is,
\[
w_\gamma = 
\begin{pmatrix}
-1 & 1 & 0 & 0 &  &  &  \\
-1 & 0 & 1 & 0 &  &  &  \\
-1 & 0 & 0 & 1 &  &  &  \\
-2 & 0 & 0 & 1 &  &  &  \\
-2 & 0 & 0 & 1 & -1 & 1 & 0 \\
-2 & 0 & 0 & 1 & -1 & 0 & 1 \\
-2 & 0 & 0 & 2 & -2 & 0 & 1 \\
\end{pmatrix}.
\]
Therefore $N_\gamma = \Id - w_\gamma$ is given by
\[
N_\gamma = 
\begin{pmatrix}
2 & \boxed{-1} & 0 & 0 &  &  &  \\
1 & 1 & \boxed{-1} & 0 &  &  &  \\
1 & 0 & 1 & \boxed{-1} &  &  &  \\
2 & 0 & 0 & 0 &  &  &  \\
2 & 0 & 0 & -1 & 2 & \boxed{-1} & 0 \\
2 & 0 & 0 & -1 & 1 & 1 & \boxed{-1} \\
2 & 0 & 0 & -2 & 2 & 0 & 0 \\
\end{pmatrix}.
\]

For $\Mod(w_\gamma)$, we observe just from $N_\gamma$ that for $i = 1,2,3,5,6$, the last nonzero entry in row $i$ is the boxed $-1$ in position $(i,i+1)$, while for $i = 4,7$, all nonzero entries in row $i$ are $\pm 2$.  Therefore 
\[
\Mod(w_\gamma) = \left\{ \sum_{i=1}^7 c_i \alpha_i^\vee \ \middle| \  c_i \in \Z, \ c_i \equiv 0 \; \operatorname{mod}\left(2\right) \mbox{ for }i \in \{4,7\} \right\},
\]
and it is clear that a $\Z$-basis for $\Mod(w_\gamma)$ is given by $\{ \alpha_1^\vee, \alpha_2^\vee, \alpha_3^\vee, 2\alpha_4^\vee, \alpha_5^\vee, \alpha_6^\vee, 2\alpha_7^\vee \}$.

For the Smith normal form, we carry out column operations on $N_\gamma$.  We replace $\cC_5(N_\gamma)$ by $\cC_5(N_\gamma) + 2\cC_6(N_\gamma) + 3\cC_7(N_\gamma)$, and replace $\cC_6(N_\gamma)$ by $\cC_6(N_\gamma) + C_7(N_\gamma)$, to obtain
\[
T_\gamma = 
\begin{pmatrix}
2 & -1 & 0 & 0 &  &  &  \\
1 & 1 & -1 & 0 &  &  &  \\
1 & 0 & 1 & -1 &  &  &  \\
2 & 0 & 0 & 0 &  &  &  \\
\circled{2} & 0 & 0 & \circled{$-1$} & 0 & \boxed{-1} & 0 \\
\circled{2} & 0 & 0 & \circled{$-1$} & 0 & 0 & \boxed{-1} \\
\circled{2} & 0 & 0 & \circled{$-2$} & \boxed{2} & 0 & 0 \\
\end{pmatrix}.
\]
We can now use the boxed pivot entries in columns $5$, $6$, and $7$ of $T_\gamma$ to clear all of the circled entries, so that the resulting matrix is block-diagonal.  We then replace $\cC_1(T_\gamma)$ by $\cC_1(T_\gamma) + 2\cC_2(T_\gamma) + 3\cC_3(T_\gamma) + 4\cC_4(T_\gamma)$, and for $i = 2,3,4$ replace $\cC_i(T_\gamma)$ by $\sum_{j=i}^4 \cC_j(T_\gamma)$, to obtain
\[
T'_\gamma = 
\begin{pmatrix}
0 & -1 & 0 & 0 &  &  &  \\
0 & 0 & -1 & 0 &  &  &  \\
0 & 0 & 0 & -1 &  &  &  \\
2 & 0 & 0 & 0 &  &  &  \\
 &  & & & 0 & -1 & 0 \\
&  &  &  & 0 & 0 & -1 \\
 & &  &  & 2 & 0 & 0 \\
\end{pmatrix}.
\]
Thus $S_\gamma = \diag(1^5, 2^2)$, and so $R^\vee/\Mod(w_\gamma) \cong (\Z/2\Z)^2$.
\end{example}

\begin{example}\label{eg:beta0gamma234C9}  
Let $\sW$ be of type $C_9$ and suppose $|\beta| = 0$ and $\gamma = (2,3,4)$, so that $\wbg = w_\gamma$.  We will see that the Smith normal form for $\Id - w_\gamma$ is $\diag(1^6, 2^3)$, so that $R^\vee/\Mod(w_\gamma) \cong (\Z/2\Z)^3$ is torsion of rank equal to the number of parts of $\gamma$.

We have $w_\gamma = w^\gamma_1 w^\gamma_2 w^\gamma_3 = (s_9 s_8)(s_7 s_8 s_9 s_8s_7s_6 s_5)(s_4 \cdots s_8 s_9 s_8 \cdots s_4 s_3s_2  s_1)$; see Figure \ref{fig:wgammaC}.  Let $\gamma'$ be the composition $\gamma' = (2,3)$ of $5$.  Then since the product $w^\gamma_1 w^\gamma_2$ has support given by the type $C_5$ subsystem  indexed by $J^\gamma_2 = \{ s_5, \dots, s_9 \}$, we see that the matrix for $w^\gamma_1 w^\gamma_2$ with respect to $\Delta^\vee = \{ \alpha^\vee_1, \dots, \alpha^\vee_9\}$ has a $5 \times 5$ block in its bottom right-hand corner which is equal to the matrix for $w^{\gamma'}_1 w^{\gamma'}_2$ with respect to the subsystem $\Delta_5^\vee = \{ \alpha^\vee_1, \dots, \alpha^\vee_5\}$, and is all $0$s above this block.  Moreover, $w^\gamma_1 w^\gamma_2$ fixes $\alpha_i^\vee$ for $i = 1,2,3$.  Now as 
\[
w^\gamma_1 w^\gamma_2(\alpha_4^\vee) = (s_9 s_8)(s_7 s_8 s_9 s_8s_7s_6 s_5)s_4s_4(\alpha_4^\vee) = -(s_9 s_8)(s_7 s_8 s_9 s_8s_7s_6 s_5s_4)(\alpha_4^\vee),
\]
by considering the composition $\gamma'' = (2,4)$ of $6$ and then shifting indexes, we see that column~$4$ of the matrix for $w^\gamma_1 w^\gamma_2$ has entries $[0,0,0,1,1,1,2,2,2]$.  Therefore the matrix for $w^\gamma_1 w^\gamma_2$ is block lower-triangular of the form
\[
w^\gamma_1 w^\gamma_2 = 
\begin{pmatrix}
\Id_3 &&&& \\
 & 1 &&&& \\
&1 & -1 & 1 & 0 &      \\
&1 & -1 & 0 & 1 &      \\
&2 & -2 & 0 & 1 &      \\
&2 & -2 & 0 & 1 &  -1 & 1    \\
&2 & -2 & 0 & 2 & -2 & 1
\end{pmatrix}.
\]
Now $w^\gamma_3$ is block lower-triangular of the form
\[
w^\gamma_3 = 
\begin{pmatrix}
-1 & 1 & 0 & 0 &      \\
-1 & 0 & 1 & 0 &      \\
-1 & 0 & 0 & 1 &      \\
-2 & 0 & 0 & 1 &      \\
-2 & 0 & 0 &  0& \Id_{5}
\end{pmatrix}
\]
where for $5 \leq i \leq 9$ and $2 \leq j \leq 4$, the $(i,1)$-entry of $w^\gamma_3$ is $-2$ and the $(i,j)$-entry is $0$.  Therefore multiplying these together, we obtain that the matrix for $w_\gamma$ is as on the left, and hence that the matrix for $N_\gamma = \Id - w_\gamma$ is as on the right:
\[
w_\gamma = 
\begin{pmatrix}
-1 & 1 & 0 & 0 & &&&&     \\
-1 & 0 & 1 & 0 &  &&&&    \\
-1 & 0 & 0 & 1 &   &&&&   \\
-2 & 0 & 0 & 1 &    &&&&  \\
-2&0&0&1 & -1 & 1 & 0 &      \\
-2&0&0&1 & -1 & 0 & 1 &      \\
-2&0&0&2 & -2 & 0 & 1 &      \\
-2&0&0&2 & -2 & 0 & 1 &  -1 & 1    \\
-2&0&0&2 & -2 & 0 & 2 & -2 & 1
\end{pmatrix}
\quad
N_\gamma = 
\begin{pmatrix}
2 & -1 & 0 & 0 & &&&&     \\
1 & 1 & -1 & 0 &  &&&&    \\
1 & 0 & 1 & -1 &   &&&&   \\
2 & 0 & 0 & 0 &    &&&&  \\
2&0&0&-1 & 2 & -1 & 0 &      \\
2&0&0&-1 & 1 & 1 & -1 &      \\
2&0&0&-2 & 2 & 0 & 0 &      \\
2&0&0&-2 & 2 & 0 & -1 &  2 & -1    \\
2&0&0&-2 & 2 & 0 & -2 & 2 & 0
\end{pmatrix}.
\]

By similar arguments to the previous examples, we see that
\[
\Mod(w_\gamma) = \left\{ \sum_{i=1}^9 c_i \alpha_i^\vee \ \middle| \ c_i \in \Z, \ c_i \equiv 0 \; \operatorname{mod}\left( 2\right) \mbox{ for }i \in \{4,7,9\} \right\}
\]
and that $\Mod(w_\gamma)$ has a $\Z$-basis given by $\{ \alpha_1^\vee, \alpha_2^\vee, \alpha_3^\vee, 2\alpha_4^\vee, \alpha_5^\vee, \alpha_6^\vee, 2\alpha_7^\vee, \alpha_8^\vee, 2\alpha_9^\vee \}$.  

For the Smith normal form, we replace $\cC_8(N_\gamma)$ by $\cC_8(N_\gamma) + 2\cC_9(N_\gamma)$, to obtain
\[
T_\gamma = 
\begin{pmatrix}
2 & -1 & 0 & 0 & &&&&     \\
1 & 1 & -1 & 0 &  &&&&    \\
1 & 0 & 1 & -1 &   &&&&   \\
2 & 0 & 0 & 0 &    &&&&  \\
2&0&0&-1 & 2 & -1 & 0 &      \\
2&0&0&-1 & 1 & 1 & -1 &      \\
2&0&0&-2 & 2 & 0 & 0 &      \\
\circled{2}&0&0&\circled{$-2$} & \circled{2} & 0 & \circled{$-1$} &  0 & \boxed{-1}    \\
\circled{2}&0&0&\circled{$-2$} & \circled{2} & 0 & \circled{$-2$} & \boxed{2} & 0
\end{pmatrix}
\]
We then use the boxed pivots in columns $8$ and $9$ to clear all of the nonzero entries in rows $8$ and $9$ (these entries have been circled).  Now we carry out similar column operations to those in Example~\ref{eg:beta0gamma34C7} on the $7 \times 7$ block in the top left of $T_\gamma$, to eventually obtain
\[
T'_\gamma = 
\begin{pmatrix}
0 & -1 & 0 & 0 &  &  & && \\
0 & 0 & -1 & 0 &  &  &  &&\\
0 & 0 & 0 & -1 &  &  &  &&\\
2 & 0 & 0 & 0 &  &  &  &&\\
 &  & & & 0 & -1 & 0 &&\\
&  &  &  & 0 & 0 & -1 &&\\
 & &  &  & 2 & 0 & 0 &&\\
  & &  &  &  &  &  &0 &-1\\
    & &  &  &  &  &  &2 &0\\
\end{pmatrix}.
\]
Thus $S_\gamma = \diag(1^6, 2^3)$, and so $R^\vee/\Mod(w_\gamma) \cong (\Z/2\Z)^3$.
\end{example}

\begin{example}\label{eg:beta0gamma13C4}  
Let $\sW$ be of type $C_4$ and suppose $|\beta| = 0$ and $\gamma = (1,3)$, so that $\wbg = w_\gamma$.  We will see that the Smith normal form for $\Id - w_\gamma$ is $\diag(1,1, 2,2)$, so that $R^\vee/\Mod(w_\gamma) \cong (\Z/2\Z)^2$ is torsion of rank equal to the number of parts of $\gamma$.  In particular, $\gamma$ having a part equal to $1$ does \emph{not} change the final result, unlike the situation for $\beta$ in type~$A$; compare Example \ref{eg:41A4}.

Here we have $w_\gamma = w_1^\gamma w_2^\gamma = (s_4)(s_3 s_4 s_3 s_2 s_1)$.  In this case, since $w^\gamma_1 = s_4$ we compute immediately that the matrix for $w^\gamma_1$ is as on the left, while using similar arguments to those in Example~\ref{eg:beta0gamma34C7}, we obtain that $w^\gamma_2$ is as on the right:
\[
w^\gamma_1 = 
\begin{pmatrix} 
1 &0&0&0 \\
0 & 1 &0&0 \\
0&0&1& 0\\
0&0&2&-1
\end{pmatrix}
\quad
w^\gamma_2 = 
\begin{pmatrix} 
-1 &1&0&0 \\
-1 & 0 &1& 0\\
-2&0&1&0 \\
-2&0&2&1
\end{pmatrix}.
\]
Therefore the matrices for $w_\gamma$ and $N_\gamma = \Id - w_\gamma$ are:
\[
w_\gamma = 
\begin{pmatrix} 
-1 &1&0& 0\\
-1 & 0 &1&0 \\
-2&0&1&0 \\
-2&0&2&-1
\end{pmatrix}
\quad
N_\gamma = 
\begin{pmatrix} 
2 &\boxed{-1}&0& 0\\
1 & 1 &\boxed{-1}&0 \\
\boxed{2}&0&0& 0\\
\circled{2}&0&\circled{$-2$}&\boxed{2}
\end{pmatrix}.
\]
Hence
\[
\Mod(w_\gamma) = \left\{ \sum_{i=1}^4 c_i \alpha_i^\vee \  \middle| \  c_i \in \Z, \ c_i \equiv 0 \; \operatorname{mod}\left(2\right) \mbox{ for }i \in \{3,4\} \right\}
\]
has a $\Z$-basis given by $\{ \alpha_1^\vee, \alpha_2^\vee, 2\alpha_3^\vee, 2\alpha_4^\vee\}$.  

To obtain the Smith normal form, we used the boxed pivot in column 4 of $N_\gamma$ to clear the circled entries in row $4$, and then carry out column operations as in previous examples on the $3 \times 3$ block in the top left, to obtain the block-diagonal matrix
\[
T_\gamma = 
\begin{pmatrix} 
0 &-1&0& \\
0 & 0 &-1& \\
2&0&0& \\
&&&2
\end{pmatrix}.
\]
Thus $S_\gamma = \diag (1,1,2,2)$, and so $R^\vee / \Mod(w_\gamma) \cong (\Z/2\Z)^2$.
\end{example}

Our final three examples in type $C$ illustrate the case when the compositions $\beta$ and $\gamma$ are both nonempty.

\begin{example}\label{eg:beta1gamma3C4}   
Let $\sW$ be of type $C_4$ and suppose $\beta = (1)$ and $\gamma = (3)$, so that $w_\beta$ is trivial and hence $\wbg = w_\gamma$.  We will see that the Smith normal form for $\Id - w_\gamma$ is $\diag(1,1,2,0)$, so that $R^\vee/\Mod(w_\gamma) \cong \Z/2\Z \oplus \Z$ has torsion part of rank equal to the number of parts of $\gamma$, and free part of rank equal to the number of parts of $\beta$.

We have that $w_\gamma = s_4 s_3 s_2$.  This is Coxeter in the type $C_3$ subsystem indexed by the last~$3$ nodes, and so by considering $w_\gamma(\alpha_1^\vee) = -w_\gamma s_1 (\alpha_1^\vee)$, similarly to Example~\ref{eg:beta0gamma34C7} above, we obtain that the matrices for $w_\gamma$ and $N_\gamma = \Id - w_\gamma$ with respect to the basis $\Delta^\vee$ are 
\[
w_\gamma = 
\begin{pmatrix}
 1 &  0& 0 & 0 \\
 1 & -1 & 1 & 0 \\
  1 & -1 & 0 & 1 \\
 2 & -2 & 0 & 1 
\end{pmatrix}
\quad \mbox{and} \quad
N_\gamma = \begin{pmatrix}
0 & 0 &  0 & 0\\ 
-1 & 2 & \boxed{-1} & 0\\ 
-1 & 1 & 1 & \boxed{-1} \\
-2 & 2 & 0 & 0
\end{pmatrix}.
\]

To determine $\Mod(w_\gamma)$, notice that row $1$ of $N_\gamma$ is all $0$s, and that in rows $i = 2,3$ of $N_\gamma$, the last nonzero entry is the boxed $-1$ in the $(i,i+1)$-position.  Since the only nonzero entries in row $4$ are the $\pm 2$ in columns $1$ and $2$, it follows that the system $N_\gamma \x = \bfc$ has solution over $\Z$ if and only if $c_1 = 0$ and $c_4 \equiv 0 \; \operatorname{mod}\left( 2\right)$.  Thus
\[
\Mod(w_\gamma) = N_\gamma R^\vee = \left\{ \sum_{i=1}^4 c_i \alpha_i^\vee \ \middle| \  c_i \in \Z, \ c_1 = 0, \mbox{ and } c_4 \equiv 0\; \operatorname{mod}\left( 2\right) \right\}.
\]
It is immediate that a $\Z$-basis for $\Mod(\wbg)$ is given by $\{ \alpha_2^\vee, \alpha_3^\vee, 2\alpha_4^\vee \}$.

To determine the Smith normal form $S_\gamma$ for $N_\gamma$, we carry out the following column operations. Replace $\cC_2(N_\gamma)$ by $\sum_{i=2}^4 (i-1) \cC_i(N_\gamma)$, and for $i = 3,4$ replace $\cC_i(N_\gamma)$ by $\sum_{j=i}^4 \cC_j(N_\gamma)$.  This gives the matrix 
\[
T_\gamma = \begin{pmatrix}
0 & 0 &  0 & 0\\ 
-1 & 0 & \boxed{-1} & 0\\ 
-1 & 0 & 0 & \boxed{-1} \\
-2 & \boxed{2} & 0 & 0
\end{pmatrix}.
\]
Column operations using the boxed pivots can now be used to clear column $1$. Hence $S_\gamma = \diag(1,1,2,0) $, and $R^\vee /\Mod(w_\gamma) \cong \Z/2\Z \oplus \Z$.
 
\end{example}

\begin{example}\label{eg:beta4gamma3C7}  

Let $\sW$ be of type $C_7$ and suppose $\beta = (4)$ and $\gamma = (3)$.  We will see that the Smith normal form for $\Id - w_\gamma$ is $\diag(1^5, 2,0)$, so that $R^\vee/\Mod(w_\gamma) \cong (\Z/2\Z) \oplus \Z$ has torsion part of rank equal to the number of parts of $\gamma$, and free part of rank equal to the number of parts of $\beta$.  In particular, $\gcd(\beta_k)=4$ plays no role in this description, in contrast to type $A$; compare Example \ref{eg:64A9}. Note also that, since $\beta = (4)$ has only one part, the indexing set $(I_\beta - 1) \setminus \{ 3 \}$ is empty here (as in Example \ref{eg:CoxeterA3}).

We have $\wbg = w_\beta w_\gamma = (s_1 s_2 s_3)(s_7 s_6 s_5)$.  So $w_\beta$ fixes $\alpha_i^\vee$ for $i = 5,6,7$.  Now let $\beta'$ be the partition of $4 + 1 = 5$ given by $\beta' = (4,1)$, and observe that, in type $A_4$, we have $w_{\beta'} = s_1 s_2 s_3$.  Thus we can apply results from \Cref{sec:TypeA} to obtain that the matrix for $w_\beta$, with respect to the type $C_7$ basis of simple coroots $\Delta^\vee$, is block-diagonal of the form shown on the left:
\[
w_\beta = \begin{pmatrix} w_{\beta'} & \\ & \Id_3 \end{pmatrix} = 
\begin{pmatrix}
0&0&-1&1& \\
1&0&-1&1& \\
0&1&-1&1& \\
0&0&0&1& \\
&&&& \Id_3
\end{pmatrix}, 
\quad \mbox{while} \quad
w_\gamma = 
\begin{pmatrix}
\Id_3&&&& \\
& 1 &  0& 0 & 0 \\
& 1 & -1 & 1 & 0 \\
 & 1 & -1 & 0 & 1 \\
& 2 & -2 & 0 & 1 \\
\end{pmatrix}
\]
by considering $w_\gamma(\alpha_4^\vee) = -w_\gamma s_4(\alpha_4^\vee)$, as in Example~\ref{eg:beta0gamma34C7} above.  Therefore $\wbg = w_\beta w_\gamma$ and $\Mbg = \Id - w_\beta w_\gamma $ have matrices given by
\[
\wbg = 
\begin{pmatrix}
0&0&-1&1& \\
1&0&-1&1& \\
0&1&-1&1& \\
0&0&0&1&0&0&0\\
&&& 1 & -1 & 1 & 0 \\
&& & 1 & -1 & 0 & 1 \\
&&& 2 & -2 & 0 & 1 \\
\end{pmatrix} \quad
\Mbg = 
\begin{pmatrix}
1&0&1&-1& \\
-1&1&1&-1& \\
0&-1&2&-1& \\
0&0&0&0&0&0&0\\
&&& -1 & 2 & -1 & 0 \\
&& & -1 & 1 & 1 & -1 \\
&&& -2 & 2 & 0 & 0 \\
\end{pmatrix}. 
\]

Now from the shape of $\Mbg$, we see that $\Mbg \x = \bfc$ has solution over $\Z$ if and only if $c_7 \equiv 0 \; \operatorname{mod}\left( 2\right)$ and the equations represented by the $4 \times 4$ matrix $M_\beta' = \Id - w_{\beta'}$ in the top left of $\Mbg$ have solution over $\Z$.  Hence as the partition $\beta'$ has last part equal to $1$, we deduce from results of \Cref{sec:TypeA} that
\[
\Mod(\wbg) = \left\{ \sum_{i=1}^7 c_i \alpha_i^\vee \ \middle| \ c_i \in \Z, \ c_4 = 0, \ c_7 \equiv 0 \; \operatorname{mod}\left( 2\right) \right\},
\]
which has a $\Z$-basis given by $\{ \alpha_1^\vee, \alpha_2^\vee, \alpha_3^\vee\} \cup \{\alpha_5^\vee, \alpha_6^\vee, 2\alpha_7^\vee\}$.  

To find the Smith normal form $\Sbg$, we start with $\Mbg$, and first use column operations on columns $5,6,7$ to obtain
\[
\Tbg = 
\begin{pmatrix}
1&0&1&-1& \\
-1&1&1&-1& \\
0&-1&2&-1& \\
0&0&0&0&0&0&0\\
&&& \circled{$-1$} & 0 & \boxed{-1} & 0 \\
&& & \circled{$-1$} & 0 & 0 & \boxed{-1} \\
&&& \circled{$-2$} & \boxed{2} & 0 & 0 \\
\end{pmatrix}. 
\]
We then use the boxed pivots to clear the circled entries in rows $5,6,7$.  Now we can apply the same column operations as in \Cref{sec:TypeA} to the submatrix $M_{\beta'}$, to obtain
\[
\Tbg' = 
\begin{pmatrix}
\boxed{1}&0&0&0& \\
0&\boxed{1}&0&0& \\
0&0&\boxed{1}&0& \\
0&0&0&0&0&0&0\\
&&& 0& 0 & \boxed{-1} & 0 \\
&& & 0 & 0 & 0 & \boxed{-1} \\
&&& 0 & \boxed{2} & 0 & 0 \\
\end{pmatrix}. 
\]
Therefore $\Sbg = \diag(1^5, 2,0)$, and so $R^\vee/\Mod(\wbg) \cong (\Z/2\Z) \oplus \Z$.

\end{example}

\begin{example}\label{eg:beta33gamma3C9}

Let $\sW$ be of type $C_9$ and suppose $\beta = (3,3)$ and $\gamma = (3)$.  We will see that the Smith normal form for $\Id - w_\gamma$ is $\diag(1^6, 2,0^2)$, so that $R^\vee/\Mod(w_\gamma) \cong (\Z/2\Z) \oplus \Z^2$ has torsion part of rank equal to the number of parts of $\gamma$, and free part of rank equal to the number of parts of $\beta$.  In particular, $\gcd(\beta_k)=3$ and the indexing set $(I_\beta - 1)\setminus \{ 5 \} = \{ 2 \}$ play no role in this description, in contrast to type $A$; compare Example \ref{eg:64A9}.

We have $\wbg = w_\beta w_\gamma = (s_1 s_2)(s_4 s_5)(s_9 s_8 s_7)$, and so $w_\beta$ fixes $\alpha_i^\vee$ for $i = 7,8,9$.  Now let $\beta'$ be the partition of $6 + 1 = 7$ given by $\beta' = (3,3,1)$, and observe that, in type $A_6$, we also have $w_{\beta'} = (s_1 s_2)(s_4 s_5)$.  Thus we can apply results from \Cref{sec:TypeA} to obtain that the matrix for $w_\beta$, with respect to the type $C_9$ basis of simple coroots $\Delta^\vee$, is block-diagonal of the form shown on the left:
\[
w_\beta = \begin{pmatrix} w_{\beta'} & \\ & \Id_3 \end{pmatrix} = 
\begin{pmatrix}
0 & -1 & 1 & & & & \\
1 & -1 & 1 & & & & \\
0 & 0 & 1 & 0 & 0 & 0 \\
& & 1&0&-1&1& \\
 &  & &1&-1&1& \\
 & & &0&0&1& \\
&&&&&& \Id_3
\end{pmatrix}, 
\quad \mbox{and} \quad
w_\gamma = 
\begin{pmatrix}
\Id_5&&&& \\
& 1 &  0& 0 & 0 \\
& 1 & -1 & 1 & 0 \\
 & 1 & -1 & 0 & 1 \\
& 2 & -2 & 0 & 1 \\
\end{pmatrix}
\]
by considering $w_\gamma(\alpha_6^\vee) = -w_\gamma s_6(\alpha_6^\vee)$, similarly to Example~\ref{eg:beta0gamma34C7} above.  Therefore $\Mbg = \Id - w_\beta w_\gamma $ has matrix given by
\[
\Mbg = 
\begin{pmatrix}
1 & 1 & -1 & & & & && \\
-1 & 2 & -1 & & & & &&\\
0 & 0 & 0 & 0 & 0 & 0 && \\
& & -1&1&1&-1&&& \\
 &  & &-1&2&-1& &&\\
 & & &0&0&0& 0 &0&0\\
&&&&& -1 & 2 & -1 & 0 \\
&&&& & -1 & 1 & 1 & -1 \\
&&&&& -2 & 2 & 0 & 0 \\
\end{pmatrix}. 
\]

Now from the shape of $\Mbg$, we see that $\Mbg \x = \bfc$ has solution over $\Z$ if and only if $c_9 \equiv 0 \; \operatorname{mod}\left( 2\right)$ and the equations represented by the $6 \times 6$ matrix $M_\beta' = \Id - w_{\beta'}$ in the top left of $\Mbg$ have solution over $\Z$.  Hence, as the partition $\beta'$ has last part equal to $1$, we deduce from results of \Cref{sec:TypeA} that
\[
\Mod(\wbg) = \left\{ \sum_{i=1}^9 c_i \alpha_i^\vee \ \middle| \ c_i \in \Z, \ c_3 = 0, \ c_6 = 0, \ c_9 \equiv 0 \; \operatorname{mod}\left( 2\right) \right\},
\]
which has a $\Z$-basis given by $\{ \alpha_1^\vee, \alpha_2^\vee\} \cup \{ \alpha_4^\vee, \alpha_5^\vee\} \cup \{\alpha_7^\vee, \alpha_8^\vee, 2\alpha_9^\vee\}$.  

To find the Smith normal form $\Sbg$, we start with $\Mbg$, and first use column operations on columns $7,8,9$ to obtain
\[
\Tbg' = 
\begin{pmatrix}
1 & 1 & -1 & & & & && \\
-1 & 2 & -1 & & & & &&\\
0 & 0 & 0 & 0 & 0 & 0 && \\
& & -1&1&1&-1&&& \\
 &  & &-1&2&-1& &&\\
&&&0&0&0&0&0&0\\
&&&&& \circled{$-1$} & 0 & \boxed{-1} & 0 \\
&&&& & \circled{$-1$} & 0 & 0 & \boxed{-1} \\
&&&&& \circled{$-2$} & \boxed{2} & 0 & 0 \\
\end{pmatrix}. 
\]
We then use the boxed pivots to clear the circled entries in rows $7,8,9$.  Now we can apply the same column operations as in \Cref{sec:TypeA} to the submatrix $M_{\beta'}$, to obtain
\[
\Tbg = 
\begin{pmatrix}
\boxed{1} & 0 & 0 & & & & && \\
0 & \boxed{1} & 0 & & & & &&\\
0 & 0 & 0 & 0 & 0 & 0 && \\
& & 0&\boxed{1}&0&0&& \\
 &  & &0&\boxed{1}&0& &&\\
&&&0&0&0&0&0&0\\
&&&&& 0& 0 & \boxed{-1} & 0 \\
&&&& & 0 & 0 & 0 & \boxed{-1} \\
&&&&& 0 & \boxed{2} & 0 & 0 \\
\end{pmatrix}. 
\]
Therefore $\Sbg = \diag(1^6, 2,0^2)$, and so $R^\vee/\Mod(\wbg) \cong (\Z/2\Z) \oplus \Z^2$.

\end{example}

%%%%%%%%%%%%%%%%%%%%%%%%%%%%%%%%%%%%%%%%%%%%%%%%%%%
%%%%%%%%%%%%%%%%%%%%%%%%%%%%%%%%%%%%%%%%%%%%%%%%%%%
\subsection{Proofs in type $C$}\label{sec:proofsC}
%%%%%%%%%%%%%%%%%%%%%%%%%%%%%%%%%%%%%%%%%%%%%%%%%%%
%%%%%%%%%%%%%%%%%%%%%%%%%%%%%%%%%%%%%%%%%%%%%%%%%%%

In order to prove \Cref{thm:Cn}, we will use many of our definitions and results from \Cref{sec:TypeA}.  We also define several additional auxiliary matrices and record some results for them in~\Cref{sec:matricesC}.   We then determine the matrix for $w_{\beta,\gamma}$, and hence for $M_{\beta,\gamma} = \Id - w_{\beta,\gamma}$, with respect to $\Delta^\vee$ in Section~\ref{sec:wMC}, and complete the proof of \Cref{thm:Cn} in Section~\ref{sec:propsCn}.

%%%%%%%%%%%%%%%%%%%%%%%%%%%%%%%%
\subsubsection{Matrix definitions and results in type $C$}\label{sec:matricesC}  
%%%%%%%%%%%%%%%%%%%%%%%%%%%%%%%%

In this section, we define several additional matrices which we will use in type $C$, and record some results for these.  

We define $1 \times 1$ matrices $V_1 = [-1]$ and $N_1 = \Id - V_1 = [2]$.  Then for all $r \geq 2$ we define $r \times r$ matrices
\begin{equation}\label{eq:Vr}
V_r = \begin{pmatrix}
-1 & 1 &  0 & \cdots & \cdots & 0 \\ 
-1 & 0 & \ddots & \ddots &  & \vdots \\ 
\vdots & \vdots & \ddots & \ddots & \ddots & \vdots \\ 
\vdots & \vdots &  & \ddots & 1 & 0 \\
-1 & 0 & \cdots & \cdots & 0 & 1 \\
-2 & 0 & \cdots & \cdots  & 0 & 1
\end{pmatrix}
\quad \mbox{and} \quad
N_r = \begin{pmatrix}
2 & -1 &  0 & \cdots & \cdots & 0 \\ 
1 & 1 & \ddots & \ddots &  & \vdots \\ 
1 & 0 & \ddots & \ddots & \ddots & \vdots \\ 
\vdots & \vdots & \ddots & 1 & -1 & 0 \\
1 & 0 & \cdots & 0 & 1 & -1 \\
2 & 0 & \cdots & 0  & 0 & 0
\end{pmatrix}.
\end{equation}
That is, the $(1,1)$- and $(r,1)$-entry of $N_r = \Id - V_r$ is $2$, the $(i,1)$- and $(i,i)$-entries are $1$ for $2 \leq i \leq r-1$, the $(i,i+1)$-entry is $-1$ for $1\leq i \leq r-1$, and all other entries of $N_r$ are $0$.

\begin{lemma}  For all $r \geq 1$, the $\Z$-linear system $N_r \x = \bfc$ has solution $\x \in \Z^r$ if and only if $c_r \equiv 0 \; \operatorname{mod}\left(2\right)$.
\end{lemma}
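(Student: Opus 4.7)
The plan is to exploit the fact that $N_r$, as described in \eqref{eq:Vr}, is already almost in triangular form: the subdiagonal entries in positions $(i,i+1)$ are all $-1$ for $1\leq i\leq r-1$, and the only "offending" row is row $r$, whose sole nonzero entry is the $2$ in position $(r,1)$. This suggests handling the two directions of the biconditional directly from the row structure, without needing the kind of elaborate row operations used in the longer examples.

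For the necessity direction, I would simply read off the last row: the equation encoded by $\cR_r(N_r)$ is $2 x_1 = c_r$, so any integer solution forces $c_r\equiv 0\pmod{2}$.

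For the sufficiency direction, assume $c_r\equiv 0\pmod{2}$ and set $x_1 = c_r/2\in\Z$. I would then back-substitute through rows $1, 2, \dots, r-1$ in order. Because each such row $i$ has its rightmost nonzero entry equal to the $-1$ in position $(i,i+1)$, row~$1$ forces $x_2 = 2x_1 - c_1$, and for $2\leq i\leq r-1$ row~$i$ forces $x_{i+1} = x_1 + x_i - c_i$. Since $x_1\in\Z$ and each $c_i\in\Z$, this recursion produces integers $x_2,\dots,x_r$, giving the required integer solution $\x\in\Z^r$. This mirrors exactly the analysis carried out for $N_4$ in Example~\ref{eg:beta0gamma4C4}, now in arbitrary rank.

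I do not expect any substantive obstacle: the only subtlety is simply identifying which row produces the modular constraint (namely the last one) and verifying that no other row contributes additional divisibility requirements, which is immediate from the fact that every pivot in rows $1,\dots,r-1$ is $\pm 1$. The proof can therefore be written very compactly.
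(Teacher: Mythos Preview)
Your proposal is correct and takes essentially the same approach as the paper: both arguments observe that row $r$ yields the constraint $2x_1=c_r$, while for $1\leq i\leq r-1$ the rightmost nonzero entry of row $i$ is the $-1$ in position $(i,i+1)$, so the remaining variables are determined by back-substitution. The paper's proof simply records these structural facts and says ``the result follows,'' whereas you spell out the recursion explicitly.
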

\begin{proof}  This is clear for $r =1$.  When $r \geq 2$, for $1 \leq i \leq r-1$, the last nonzero entry in row $i$ of $N_r$ is the $-1$ in position $(i,i+1)$, while the only nonzero entry in row $r$ of $N_r$ is the $2$ in column $1$.  The result follows. \end{proof}

Consider the coroot lattice $R^\vee$ as a free $\Z$-module with $\Z$-basis $\Delta^\vee = \{ \alpha_i^\vee \}$. In this section, when taking a subsystem of type $C_r$ for some $1 \leq r < n$, we denote the coroot lattice by $R_r^\vee$ with corresponding basis $\Delta^\vee_r = \{ \alpha_1^\vee, \dots, \alpha_r^\vee\}$, where $r$ indexes the special node.

\begin{corollary}  For all $r \geq 1$, we have
\[N_r R_r^\vee = (\Id - V_r)R_r^\vee = \left\{ \sum_{i=1}^r c_i \alpha_i^\vee \ \middle|\  c_i \in \Z, \ c_r \equiv 0 \; \operatorname{mod}\left(2\right) \right\},\]
with a $\Z$-basis given by $\{ \alpha_1^\vee, \dots, \alpha^\vee_{r-1}, 2\alpha_r^\vee \}$.
\end{corollary}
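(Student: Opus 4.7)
The plan is to observe that this corollary follows almost immediately from the preceding lemma, once we translate the statement about solvability of $N_r \x = \bfc$ into a description of the image $N_r R_r^\vee$.

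First I would note that $N_r R_r^\vee = (\Id - V_r) R_r^\vee$ is by definition the set of all $\bfc = \sum_{i=1}^r c_i \alpha_i^\vee \in R_r^\vee$ such that there exists $\x \in \Z^r$ with $N_r \x = \bfc$. By the preceding lemma, such an $\x$ exists if and only if $c_r \equiv 0 \pmod{2}$, which immediately gives the claimed equality
\[
N_r R_r^\vee = \left\{ \sum_{i=1}^r c_i \alpha_i^\vee \ \middle|\  c_i \in \Z, \ c_r \equiv 0 \; \operatorname{mod}\left(2\right) \right\}.
\]

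Next I would verify that the proposed set $B = \{ \alpha_1^\vee, \dots, \alpha_{r-1}^\vee, 2\alpha_r^\vee \}$ is a $\Z$-basis. Each element of $B$ clearly lies in the module just described, since their $\alpha_r^\vee$-coefficients are $0$ or $2$. Conversely, given any $\sum_{i=1}^r c_i \alpha_i^\vee$ with $c_r = 2k$ for some $k \in \Z$, one writes
\[
\sum_{i=1}^r c_i \alpha_i^\vee = \sum_{i=1}^{r-1} c_i \alpha_i^\vee + k \cdot (2\alpha_r^\vee),
\]
showing $B$ spans over $\Z$. The elements of $B$ are obviously $\Z$-linearly independent because they are nonzero scalar multiples of distinct elements of the basis $\Delta_r^\vee$. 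Hence $B$ is a $\Z$-basis, completing the proof.

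There is no real obstacle here; the work was done in the lemma, and the corollary is a straightforward reformulation together with an elementary basis check.
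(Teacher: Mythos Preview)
Your proposal is correct and takes essentially the same approach as the paper: the corollary is stated there without proof, as an immediate consequence of the preceding lemma, and you have simply filled in the routine translation from solvability of $N_r\x=\bfc$ to the image description, together with the elementary basis check.
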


Now define $U_r$ to be the matrix obtained from $N_r$ by replacing $\cC_1(N_r)$ by $\sum_{i=1}^r i\cC_i(N_r)$, and replacing $\cC_i(N_r)$ for $2 \leq i \leq r-1$ by $\sum_{j=i}^r \cC_j(N_r)$.  Thus
\begin{equation}\label{eq:Ur}
U_r = 
\begin{pmatrix}
0 & -1 &  0 & \cdots & \cdots & 0 \\ 
0 & 0 & -1 & \ddots &  & \vdots \\ 
\vdots & \ddots & \ddots & \ddots & \ddots & \vdots \\ 
\vdots &  & \ddots & 0 & -1 & 0 \\
0 & \cdots & \cdots & 0 & 0 & -1 \\
2 & 0 & \cdots & 0  & 0 & 0
\end{pmatrix}.
\end{equation}
For $2 \leq i \leq r$, multiply $\cC_i(U_r)$ by $-1$, and then permute columns to obtain the Smith normal form.

\begin{lemma}\label{lem:UrSNF}  
The matrix $N_r$ has Smith normal form $\diag(1^{r-1},2)$.
\end{lemma}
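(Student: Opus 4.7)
The plan is to exhibit a sequence of $\Z$-invertible row and column operations transforming $N_r$ into the diagonal matrix $\diag(1^{r-1},2)$; since such operations preserve the Smith normal form, this will establish the claim. The case $r=1$ is immediate since $N_1 = [2]$, so we assume $r \geq 2$ throughout.

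First I would verify that $U_r$ as displayed in \eqref{eq:Ur} is indeed obtained from $N_r$ via the column operations stated immediately before its definition. This is the main computational step, though it is entirely routine: adding $\sum_{i=2}^r \cC_i(N_r)$ with appropriate integer multiples cancels the $1$'s in column $1$ (rows $2,\dots,r-1$) by using the $1$'s on the diagonal, and produces the entry $2$ in position $(r,1)$ after accounting for the $-1$'s in positions $(i,i+1)$. Since each of these column operations is of the form $\cC_i \mapsto \cC_i + a\cC_j$ for $i \neq j$ and $a \in \Z$, they are all $\Z$-invertible.

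Next, following the hint given just before the lemma, I would multiply $\cC_i(U_r)$ by $-1$ for each $2 \leq i \leq r$ (each such operation is $\Z$-invertible). The result is the matrix with entry $+1$ in every position $(i, i+1)$ for $1 \leq i \leq r-1$, entry $2$ in position $(r,1)$, and zeros elsewhere. Finally, I would apply the cyclic column permutation sending column $i+1$ to position $i$ for $1 \leq i \leq r-1$ and column $1$ to position $r$, which is also a $\Z$-invertible operation. This produces the diagonal matrix $\diag(1,1,\dots,1,2)$, which is in Smith normal form.

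I do not anticipate any genuine obstacle in this proof; the only point requiring care is confirming that the stated column operations really do transform $N_r$ into $U_r$, which amounts to a short bookkeeping check on how the $1$'s and $2$'s in column $1$ of $N_r$ interact with the $-1$'s on the superdiagonal. Since $\diag(1^{r-1},2)$ has all diagonal entries dividing the next, it is in Smith normal form, so no further reduction is needed.
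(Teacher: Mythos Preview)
Your proposal is correct and follows essentially the same approach as the paper: the paper likewise obtains $U_r$ from $N_r$ via the stated column operations, then multiplies columns $2,\dots,r$ by $-1$ and permutes columns to reach $\diag(1^{r-1},2)$. Your write-up is in fact more explicit than the paper's, which simply records the column operations defining $U_r$ and then states the lemma with the one-line remark about sign changes and permutation.
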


\begin{corollary}\label{cor:UrQuotient}
$R_r^\vee /N_r R_r^\vee = R_r^\vee / (\Id - V_r)R_r^\vee \cong \Z/2\Z.$
\end{corollary}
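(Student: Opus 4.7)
The plan is to derive this corollary directly from Lemma \ref{lem:UrSNF}. Since $N_r$ acts on $R_r^\vee$ as a $\Z$-linear map with respect to the basis $\Delta_r^\vee = \{\alpha_1^\vee, \dots, \alpha_r^\vee\}$, the quotient $R_r^\vee / N_r R_r^\vee$ is determined up to isomorphism by the Smith normal form of the matrix $N_r$.

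By Lemma \ref{lem:UrSNF}, this Smith normal form is $\diag(1^{r-1}, 2)$. The standard structure theorem for finitely generated modules over a PID then yields
\[
R_r^\vee / N_r R_r^\vee \cong (\Z/1\Z)^{r-1} \oplus \Z/2\Z \cong \Z/2\Z,
\]
which is the stated isomorphism. There is no genuine obstacle here: the substantive content has already been established in Lemma \ref{lem:UrSNF} via the explicit column operations producing $U_r$ and then the sign change and column permutation that yield the diagonal form.
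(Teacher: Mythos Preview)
Your proposal is correct and matches the paper's approach exactly: the corollary is stated immediately after Lemma~\ref{lem:UrSNF} without further proof, so the intended argument is precisely the standard passage from Smith normal form to the quotient structure that you describe.
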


We also define several additional matrices, which will appear as blocks in subsequent arguments, as follows.  For $k \in \Z$ and $m,n \geq 1$ we define $m \times n$ matrices
\[
L_{m,n}(k) = 
\begin{pmatrix}
k & 0 & \cdots & 0 \\
\vdots & \vdots &  & \vdots \\
k & 0 & \cdots & 0
\end{pmatrix}
\quad\mbox{and}\quad
R_{m,n}(k) = 
\begin{pmatrix}
0 & \cdots & 0 & k\\
\vdots & & \vdots &  \vdots \\
0 & \cdots& 0 & k
\end{pmatrix}.
\]
That is, $L_{m,n}(k)$ (respectively, $R_{m,n}(k)$) is all $0$s except for its first (respectively, last) column, which is all $k$s. 
Then for $k,k' \in \Z$, $m \geq 1$, and $n \geq 2$, we define
\[
N_{m,n}(k;k')   =  L_{m,n}(k) + R_{m,n}(k') = 
\begin{pmatrix}
k & 0 & \cdots & 0 & k'\\
\vdots & \vdots & &\vdots & \vdots  \\
k & 0 & \cdots& 0 & k'
\end{pmatrix}.
\]
For $k \in \Z$, $m \geq 2$, and $n \geq 1$, we define
\[
R_{m,n}(1,2)=\begin{pmatrix}
0 & \cdots & 0 & 1\\
\vdots & & \vdots &  \vdots \\
0 & \cdots& 0 & 1 \\
0 & \cdots& 0 & 2
\end{pmatrix}
\quad\mbox{and}\quad
R_{m,n}(-1,-2)=\begin{pmatrix}
0 & \cdots & 0 & -1\\
\vdots & & \vdots &  \vdots \\
0 & \cdots& 0 & -1 \\
0 & \cdots& 0 & -2
\end{pmatrix}.
\]
That is, $R_{m,n}(1,2)$ is all $0$s except for its last column, where the $(m,n)$-entry is $2$ and all other entries are $1$, while $R_{m,n}(-1,-2)=-R(1,2)$. 
Finally, for $k \in \Z$, $m \geq 2$, and $n \geq 1$, we define 
\begin{eqnarray*}
N_{m,n}(k; 1,2) & = & L_{m,n}(k) + R_{m,n}(1,2) \\
N_{m,n}(k; -1,-2) & = &L_{m,n}(k) + R_{m,n}(-1,-2).
\end{eqnarray*}
To simplify notation, we will omit the subscript $m,n$ when the size of these matrices is clear.

%%%%%%%%%%%%%%%%%%%%%%%%%%%%%%%%%%%%%%%%%%%%%%%
\subsubsection{The matrices for $w_{\beta,\gamma}$ and $M_{\beta,\gamma}$}\label{sec:wMC}
%%%%%%%%%%%%%%%%%%%%%%%%%%%%%%%%%%%%%%%%%%%%%%%

In this section we determine the matrices for $w_{\beta,\gamma}$ and hence $M_{\beta,\gamma} = \Id - w_{\beta,\gamma}$, with respect to the basis $\Delta^\vee$ for $R^\vee$ in type $C_n$ with $n \geq 2$.  
Most of this section is devoted to finding the matrix for $w_\gamma$.  The matrix for $w_\beta$ then has a relatively straightforward description, and it is not difficult to multiply these matrices together to obtain the matrix for $w_{\beta,\gamma} = w_\beta \cdot w_\gamma$.  

To construct the matrix for $w_\gamma$, we first record the following special cases.

\begin{lemma}\label{lem:gammaCoxeterCn}  Let $\sW$ be of type $C_n$, for $n \geq 2$.  
\begin{enumerate}
\item If $\gamma = (n)$, then the matrix for $w_\gamma$ with respect to $\Delta^\vee$ is $V_n$.
\item\label{gamma1s} If $\gamma = (1,\dots,1)$ with $|\gamma| = n$, then the matrix for $w_\gamma$ with respect to $\Delta^\vee$ is $-\Id_n$.
\end{enumerate}
\end{lemma}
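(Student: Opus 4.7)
The plan is to handle both parts by direct computation using \eqref{eq:siaction} together with the type~$C_n$ Cartan data in Bourbaki conventions. The key asymmetry to track is that $c_{n,n-1} = -2$ while $c_{n-1,n} = -1$, with all remaining Cartan integers $c_{i,i\pm 1}$ equal to $-1$; this is precisely what forces the unique $-2$ in the bottom-left entry of $V_n$.

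For part (1), equation \eqref{eq:wgamma1} applied to $\gamma = (n)$ gives $w_\gamma = s_n s_{n-1} \cdots s_1$, and I would compute $w_\gamma(\alpha_j^\vee)$ column by column. For $j = 1$, an easy induction on $k$ shows $s_k s_{k-1} \cdots s_1(\alpha_1^\vee) = -\alpha_1^\vee - \cdots - \alpha_k^\vee$ for $1 \leq k \leq n-1$; applying $s_n$ then contributes an extra $-2\alpha_n^\vee$ because $c_{n,n-1} = -2$, yielding the first column of $V_n$. For $2 \leq j \leq n-1$, every $s_i$ with $i < j-1$ fixes $\alpha_j^\vee$, and the two-step calculation $s_j s_{j-1}(\alpha_j^\vee) = s_j(\alpha_j^\vee + \alpha_{j-1}^\vee) = \alpha_{j-1}^\vee$ together with the fact that $s_i$ fixes $\alpha_{j-1}^\vee$ for every $i > j$ gives $w_\gamma(\alpha_j^\vee) = \alpha_{j-1}^\vee$, matching the $j$-th column of $V_n$. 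For $j = n$, only $s_n$ and $s_{n-1}$ act nontrivially, and $s_n s_{n-1}(\alpha_n^\vee) = s_n(\alpha_n^\vee + \alpha_{n-1}^\vee) = \alpha_{n-1}^\vee + \alpha_n^\vee$, giving the last column of $V_n$.

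For part (2), each $\gamma_k = 1$ forces the trailing product in \eqref{eq:wgammak-t} to be empty, so $w_k^\gamma = t_{j_k^\gamma} = t_{k-1}$ and hence $w_\gamma = t_0 t_1 \cdots t_{n-1}$. The plan is to identify this with the longest element $w_0$ of $\sW$. Using the recursion $t_i = s_{n-i} t_{i-1} s_{n-i}$, one checks inductively that the given expression for $t_i$ has length $2i + 1$ and is reduced, so the full product has length $\sum_{i=0}^{n-1}(2i+1) = n^2 = \ell(w_0)$ in type $C_n$. Thus $w_\gamma = w_0$, and since $w_0$ acts as $-\Id$ on the reflection representation in type $C_n$ (a standard fact), its matrix with respect to $\Delta^\vee$ is $-\Id_n$.

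Neither part presents a genuine obstacle; the only real bookkeeping is isolating the unique $-2$ coming from $c_{n,n-1}$ in part (1), and invoking the two standard facts $\ell(w_0) = n^2$ and $w_0 = -\Id$ for type $C_n$ in part (2). An alternative for part (2) would be a direct induction on $n$ showing that $t_0 t_1 \cdots t_{n-1}$ sends each $\alpha_j^\vee$ to $-\alpha_j^\vee$ via the recursive formula for $t_i$, but appealing to the identification with $w_0$ is substantially cleaner.
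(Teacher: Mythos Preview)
Your approach for both parts matches the paper's. For part~(1), the paper simply refers to the $n=4$ computation in Example~\ref{eg:beta0gamma4C4} and says the general case is a straightforward generalization; you carry out exactly that generalization, and your column-by-column computation is correct.

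For part~(2), your overall strategy---identify $w_\gamma = t_0 t_1 \cdots t_{n-1}$ with the longest element $w_0$ and then use that $w_0$ acts as $-\Id$ in type~$C_n$---is precisely what the paper does. However, your justification that $w_\gamma = w_0$ contains a gap: knowing that each $t_i$ individually has a reduced expression of length $2i+1$ does \emph{not} imply that the concatenation $t_0 t_1 \cdots t_{n-1}$ is reduced (reduced words do not in general concatenate to reduced words), so you cannot conclude $\ell(w_\gamma) = n^2$ from this alone. To complete the length argument you would need to verify separately that the concatenated word is reduced, for instance by invoking that the $w_\gamma$ are by construction minimal-length conjugacy class representatives, or by a direct inductive check. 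The paper sidesteps this entirely by citing Table~1 of Benkart--Kang--Oh--Park for the identification of $t_0 t_1 \cdots t_{n-1}$ with $w_0$. Your proposed alternative---a direct induction showing each $\alpha_j^\vee$ is sent to $-\alpha_j^\vee$---would also close the gap.
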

\begin{proof}  
The proof of (1) is a straightforward generalization of the corresponding computations in Example~\ref{eg:beta0gamma4C4}.  For (2), we have $w_\gamma = t_0 t_1 \dots t_{n-1}$ in the notation of \eqref{eq:ti}.  Then $w_\gamma$ is the longest element in $\sW$ (see, for instance, Table 1 of~\cite{BenkartKangOhPark}).  Since we are in type $C_n$, we then have $w_\gamma(\alpha_i^\vee) = - \alpha_i^\vee$ for all $i \in [n]$, as claimed.
\end{proof}

We next find matrices for $w^\gamma_1$ and $w^\gamma_q$, as defined in \eqref{eq:wgamma1} and \eqref{eq:wgammak}, respectively, for $|\gamma| = n$.

\begin{lemma}\label{lem:1q}  Let $\sW$ be of type $C_n$ for $n \geq 2$, and let $\gamma = (\gamma_1, \dots,\gamma_q)$ be a weakly increasing composition of $n$. 
\begin{enumerate}
\item\label{1q1} If $2 \leq \gamma_1 < n$, then the matrix for $w^\gamma_1$ with respect to $\Delta^\vee$ is block lower-triangular of the form
\[w^\gamma_1 = 
\begin{pmatrix} 
\Id_{n - \gamma_1} & \\
R(1,2)& V_{\gamma_1}
\end{pmatrix}.\]
\item\label{1q2} If $2 \leq \gamma_q < n$, then the matrix for $w^\gamma_q$ with respect to $\Delta^\vee$ is block lower-triangular of the form \[ w^\gamma_q = \begin{pmatrix}V_{\gamma_q} & \\ L(-2) & \Id_{n-\gamma_q} \end{pmatrix}.\]
\end{enumerate}
\end{lemma}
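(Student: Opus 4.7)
The proof proceeds by direct column-by-column computation of $w^\gamma_1$ and $w^\gamma_q$ on the basis $\Delta^\vee$, using the explicit expressions \eqref{eq:wgamma1} and \eqref{eq:wgammak-t}. For part~(1), observe that $w^\gamma_1 = s_n s_{n-1} \cdots s_{n-\gamma_1+1}$ is the Coxeter element of the standard parabolic subgroup of type $C_{\gamma_1}$ on the nodes $\{n-\gamma_1+1,\dots,n\}$. Applying Lemma~\ref{lem:gammaCoxeterCn}(1) to this subsystem (with the obvious shift of indices) identifies its $\gamma_1 \times \gamma_1$ bottom-right block with $V_{\gamma_1}$, and since $w^\gamma_1$ preserves the subspace spanned by $\alpha^\vee_{n-\gamma_1+1},\dots,\alpha^\vee_n$, the top-right block vanishes. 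For $i \leq n-\gamma_1-1$, no reflection in the support of $w^\gamma_1$ is adjacent to $\alpha^\vee_i$, so those coroots are fixed, yielding all but the last column of the identity block.

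The one remaining column is $n-\gamma_1$, which I would compute directly. The rightmost factor gives $s_{n-\gamma_1+1}(\alpha^\vee_{n-\gamma_1}) = \alpha^\vee_{n-\gamma_1} + \alpha^\vee_{n-\gamma_1+1}$; the subsequent reflections $s_{n-\gamma_1+2}, \dots, s_{n-1}, s_n$ fix $\alpha^\vee_{n-\gamma_1}$ while acting on $\alpha^\vee_{n-\gamma_1+1}$ in a telescoping cascade, with each intermediate $s_k$ contributing a further summand $\alpha^\vee_k$ and the final $s_n$ contributing $2\alpha^\vee_n$ via the relation $s_n(\alpha^\vee_{n-1}) = \alpha^\vee_{n-1} + 2\alpha^\vee_n$. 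The assembled formula $w^\gamma_1(\alpha^\vee_{n-\gamma_1}) = \alpha^\vee_{n-\gamma_1} + \alpha^\vee_{n-\gamma_1+1} + \cdots + \alpha^\vee_{n-1} + 2\alpha^\vee_n$ simultaneously supplies the missing diagonal $1$ of $\Id_{n-\gamma_1}$ and the column $[1,\dots,1,2]^T$ of $R(1,2)$.

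For part~(2), I would instead use the factorization $w^\gamma_q = t_{n-\gamma_q} \cdot u$ from \eqref{eq:wgammak-t}, with $u = s_{\gamma_q-1} \cdots s_1$ lying in the type $A_{\gamma_q-1}$ subsystem on $\{s_1,\dots,s_{\gamma_q-1}\}$, and $t_{n-\gamma_q}$ the single reflection $s_{\theta_{n-\gamma_q}}$ whose associated root and coroot are $\theta_{n-\gamma_q} = \alpha_n + 2(\alpha_{n-1} + \cdots + \alpha_{\gamma_q})$ and $\theta^\vee_{n-\gamma_q} = \alpha^\vee_{\gamma_q} + \alpha^\vee_{\gamma_q+1} + \cdots + \alpha^\vee_n$ (the latter verified by iterating $s_i(\alpha^\vee_{i+1}) = \alpha^\vee_{i+1} + \alpha^\vee_i$ starting from $\alpha^\vee_n$). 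By support, $u$ fixes $\alpha^\vee_i$ for $i \geq \gamma_q+1$; an elementary adjacency bookkeeping using the Cartan entries between consecutive nodes shows that $\langle \alpha^\vee_i, \theta_{n-\gamma_q}\rangle = 0$ for all such $i$, so $t_{n-\gamma_q}$ also fixes those coroots, producing the bottom-right $\Id_{n-\gamma_q}$ block.

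The main technical obstacle is verifying the top-left $V_{\gamma_q}$ and bottom-left $L(-2)$ blocks, which requires careful tracking of cancellations. A direct type $A$ calculation yields $u(\alpha^\vee_1) = -(\alpha^\vee_1 + \cdots + \alpha^\vee_{\gamma_q-1})$, $u(\alpha^\vee_i) = \alpha^\vee_{i-1}$ for $2 \leq i \leq \gamma_q-1$, and $u(\alpha^\vee_{\gamma_q}) = \alpha^\vee_{\gamma_q-1} + \alpha^\vee_{\gamma_q}$, to which I would then apply the reflection formula $t_{n-\gamma_q}(v) = v - \langle v, \theta_{n-\gamma_q}\rangle\theta^\vee_{n-\gamma_q}$. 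Computing the relevant pairings gives $0$ for $i \leq \gamma_q - 2$, equals $-2$ for $i = \gamma_q - 1$, and equals $+2$ for $i = \gamma_q$. Thus in column $\gamma_q$ the two $\pm 2\theta^\vee_{n-\gamma_q}$ contributions cancel to leave $\alpha^\vee_{\gamma_q-1} + \alpha^\vee_{\gamma_q}$ as demanded by $V_{\gamma_q}$, while only column $1$ retains an uncancelled $-2\theta^\vee_{n-\gamma_q}$, whose entries in rows $\gamma_q+1,\dots,n$ furnish precisely the $L(-2)$ block below.
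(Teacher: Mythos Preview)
Your argument is correct. For part~(1) you follow essentially the same route as the paper: both identify $w^\gamma_1$ as Coxeter in the $C_{\gamma_1}$ subsystem to obtain the $V_{\gamma_1}$ block, and then compute column $n-\gamma_1$ directly (the paper, via Example~\ref{eg:beta0gamma34C7}, inserts $s_{n-\gamma_1}s_{n-\gamma_1}$; you telescope the reflections one by one; these amount to the same calculation).

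For part~(2) you take a genuinely different route. The paper's method (as in the $w^\gamma_2$ computation of Example~\ref{eg:beta0gamma34C7}) expands $w^\gamma_q$ via~\eqref{eq:wgammak}, groups it as the ascending product $s_{n-j^\gamma_q}\cdots s_{n-1}$ times the full Coxeter element $s_n s_{n-1}\cdots s_1$ of $C_n$, and then multiplies the two corresponding matrices column by column. You instead use the factorization~\eqref{eq:wgammak-t} as written, treating $t_{n-\gamma_q}$ as a single reflection with explicitly identified root and coroot, so that the action on each $u(\alpha_i^\vee)$ reduces to one pairing computation. Your approach localizes all of the ``type $C$'' content to this one reflection, and the observation that only $\alpha_{\gamma_q-1}^\vee$ and $\alpha_{\gamma_q}^\vee$ pair nontrivially with $\theta_{n-\gamma_q}$ makes the block structure visible at once; the trade-off is that you must first compute the root and coroot of $t_{n-\gamma_q}$. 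One small point of phrasing: your list of pairing values refers to $\langle \alpha_i^\vee,\theta_{n-\gamma_q}\rangle$ rather than $\langle u(\alpha_i^\vee),\theta_{n-\gamma_q}\rangle$, so that for column~$1$ the pairing of $u(\alpha_1^\vee) = -(\alpha_1^\vee+\cdots+\alpha_{\gamma_q-1}^\vee)$ with $\theta_{n-\gamma_q}$ is $+2$ (coming from the $-\alpha_{\gamma_q-1}^\vee$ term), which is what produces the $-2\theta^\vee_{n-\gamma_q}$ correction you use. The conclusion is unaffected.
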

\begin{proof}   Parts \eqref{1q1} and \eqref{1q2} in this statement are obtained using the same methods as in the computation of $w^\gamma_1$ and $w^\gamma_2$ from Example~\ref{eg:beta0gamma34C7}, respectively.
\end{proof}

We then obtain the matrix for $w_\gamma = w^\gamma_1 \dots w^\gamma_q$ when $|\gamma| =n$ and $\gamma_1 \geq 2$ as follows.

\begin{corollary}\label{cor:wgammano1parts}
Let $\sW$ be of type $C_n$ for $n \geq 2$, and let $\gamma = (\gamma_1, \dots,\gamma_q)$ be a weakly increasing composition of $n$.  If $\gamma_1 \geq 2$, then the matrix for $w_\gamma$ with respect to $\Delta^\vee$ is block lower-triangular of the form
\[ w_\gamma = \begin{pmatrix}V_{\gamma_q} & & & &\\ 
N(-2;1,2) & V_{\gamma_{q-1}}&  & &\\
N(-2;2) & N(-2;1,2)  & V_{\gamma_{q-2}} &&  \\
\vdots & \ddots & \ddots & \ddots && \\
N(-2;2) & \cdots &N(-2;2)   & N(-2;1,2) & V_{\gamma_1}
 \end{pmatrix}.\]
 \end{corollary}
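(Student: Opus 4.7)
The plan is to prove Corollary \ref{cor:wgammano1parts} by induction on $q$, the number of parts of $\gamma$. In the base case $q = 1$, we have $\gamma = (n)$, and Lemma \ref{lem:gammaCoxeterCn}(1) directly yields the matrix of $w_\gamma$ as $V_n = V_{\gamma_1}$, matching the single-block form claimed.

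For the inductive step with $q \geq 2$, I would factor $w_\gamma = L \cdot w^\gamma_q$, where $L = w^\gamma_1 \cdots w^\gamma_{q-1}$. Since $q \geq 2$ forces $\gamma_q < n$, Lemma \ref{lem:1q}(2) supplies the block form
\[
w^\gamma_q = \begin{pmatrix} V_{\gamma_q} & 0 \\ L(-2) & \Id_{n-\gamma_q} \end{pmatrix}.
\]
For each $k \leq q - 1$, the element $w^\gamma_k$ has support $J^\gamma_k \subseteq \{s_{\gamma_q+1}, \ldots, s_n\}$, so $L$ lies in the parabolic subgroup generated by these last $n - \gamma_q$ nodes, which is naturally a type-$C_{n-\gamma_q}$ subsystem with special node still $s_n$. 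Under the identification of this parabolic with $C_{n-\gamma_q}$ via $s_i \mapsto s_{i + \gamma_q}$, a direct check using the definitions in \eqref{eq:ti} and \eqref{eq:wgammak-t} shows that $w^\gamma_k$ corresponds precisely to $w^{\widetilde\gamma}_k$ for the partition $\widetilde\gamma = (\gamma_1, \ldots, \gamma_{q-1})$ of $n - \gamma_q$, so $L$ is the embedded image of $w_{\widetilde\gamma}$ inside $C_n$.

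The matrix of $L$ with respect to $\Delta^\vee$ is then nearly block-diagonal: it acts as the identity on $\alpha_1^\vee, \ldots, \alpha_{\gamma_q - 1}^\vee$, and its action on $\alpha_{\gamma_q+1}^\vee, \ldots, \alpha_n^\vee$ is governed by the inductively known matrix $M$ of $w_{\widetilde\gamma}$ in $C_{n-\gamma_q}$, with diagonal blocks $V_{\gamma_{q-1}}, \ldots, V_{\gamma_1}$ and the corresponding $N(-2;1,2)$ and $N(-2;2)$ subdiagonal blocks. The only column of $L$ not controlled directly by the induction is that of the boundary coroot $\alpha_{\gamma_q}^\vee$, which is adjacent to the support of $L$ through $s_{\gamma_q + 1}$. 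Once this boundary column $v = L(\alpha_{\gamma_q}^\vee) - \alpha_{\gamma_q}^\vee$ is determined, a block matrix multiplication of $L$ with $w^\gamma_q$ produces the asserted form: the top-left $V_{\gamma_q}$ comes from $w^\gamma_q$, the diagonal blocks $V_{\gamma_{q-1}}, \ldots, V_{\gamma_1}$ descend from $M$, and the subdiagonal $N(-2;1,2)$ and further $N(-2;2)$ blocks arise from the products $v c^T + M l_1$, where $c^T$ denotes the leading row of $V_{\gamma_q}$ and $l_1$ the first column of $L(-2)$.

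The main obstacle will be pinning down the boundary column $v$, namely the coefficients of $\alpha_{\gamma_q+1}^\vee, \ldots, \alpha_n^\vee$ appearing in $L(\alpha_{\gamma_q}^\vee)$. These entries arise because $s_{\gamma_q + 1}$ lies in the support of $L$ and acts nontrivially on $\alpha_{\gamma_q}^\vee$, and based on Example \ref{eg:beta0gamma234C9} and Lemma \ref{lem:1q}(1), they appear to take the shape $(1, \ldots, 1, 2, \ldots, 2)^T$ with $\gamma_{q-1} - 1$ leading ones and a final entry of $2$ coming from the $c_{n,n-1} = -2$ Cartan entry. The precise values of $v$ are essential so that $v c^T + M l_1$ reduces correctly to the stack of $N(-2; 1, 2)$ atop the $N(-2; 2)$ blocks, and they must be computed separately by tracking the action of $s_{\gamma_q + 1}$ through each $w^\gamma_k$ in the inductive construction of $L$, in direct analogy with the $R(1,2)$ block appearing in Lemma \ref{lem:1q}(1).
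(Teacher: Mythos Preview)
Your proposal is correct and follows essentially the same approach as the paper: induction on the number of parts $q$, with the inductive step using the block form of $w^\gamma_q$ from Lemma~\ref{lem:1q}(2) and recognizing $w^\gamma_1\cdots w^\gamma_{q-1}$ as the element $w_{\widetilde\gamma}$ in the type $C_{n-\gamma_q}$ parabolic on the last $n-\gamma_q$ nodes. The paper's proof is a single sentence pointing to Example~\ref{eg:beta0gamma234C9} and Lemma~\ref{lem:1q}; your write-up simply spells out what that generalization entails, including the boundary-column computation (which in the paper's example is handled by passing to the auxiliary partition $\gamma'' = (2,4)$ and shifting indices).
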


\begin{proof}
This statement is obtained by generalizing the argument in Example~\ref{eg:beta0gamma234C9}, by induction on the number of parts of $\gamma$, and using the matrices given by \Cref{lem:1q}. 
\end{proof}

We now consider the case $|\gamma| = n$ and $\gamma_1 = 1$.

\begin{lemma}\label{lem:wgamma1parts} 
Let $\sW$ be of type $C_n$ for $n \geq 2$, and let $\gamma = (\gamma_1, \dots,\gamma_q)$ be a weakly increasing composition of $n$. If the first $m$ parts of $\gamma$ are equal to $1$, with $1 \leq m < n$, then the matrix for $w^\gamma_1 \cdots w^\gamma_m$ with respect to $\Delta^\vee$ is block lower-triangular of the form
\[w^\gamma_1 \cdots w^\gamma_m = 
\begin{pmatrix} 
\Id_{n - m} & \\
R(2)& -\Id_m
\end{pmatrix}.\]
\end{lemma}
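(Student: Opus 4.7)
The plan is to reduce the product $w^\gamma_1 \cdots w^\gamma_m$ to the product $t_0 t_1 \cdots t_{m-1}$ of pairwise commuting reflections, then compute in standard coordinates for type $C_n$ before translating back to the basis $\Delta^\vee$.

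First, I would observe that when $\gamma_k = 1$, the trailing factor $s_{n-(j^\gamma_k+1)} \cdots s_{n-j^\gamma_{k+1}+1}$ in \eqref{eq:wgammak-t} is an empty product (its index set runs from $n - j^\gamma_k - 1$ down to $n - j^\gamma_k$), so $w^\gamma_k = t_{j^\gamma_k}$. Since the first $m$ parts of $\gamma$ are equal to $1$, we have $j^\gamma_k = k-1$ for $1 \leq k \leq m$, hence
\[
w^\gamma_1 \cdots w^\gamma_m \;=\; t_0 \, t_1 \cdots t_{m-1}.
\]

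Next, I would use the standard realization of type $C_n$ on $V^* \cong \R^n$ with orthonormal basis $\{e_1,\dots,e_n\}$ in which $\alpha_i^\vee = e_i - e_{i+1}$ for $i < n$ and $\alpha_n^\vee = e_n$; then $s_i$ for $i < n$ swaps $e_i \leftrightarrow e_{i+1}$, while $s_n$ negates $e_n$. A straightforward induction on $i$, using $t_i = s_{n-i} t_{i-1} s_{n-i}$, shows that $t_i$ negates $e_{n-i}$ and fixes all other $e_j$ (equivalently, $t_i$ is the reflection in the hyperplane $\cH_{2e_{n-i}}$). Because the hyperplanes fixed by $t_0, t_1, \dots, t_{m-1}$ are pairwise orthogonal, these reflections pairwise commute, and their product sends
\[
e_j \longmapsto e_j \quad (1 \leq j \leq n-m), \qquad e_j \longmapsto -e_j \quad (n-m+1 \leq j \leq n).
\]

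Finally, I would translate this action back to $\Delta^\vee$ column by column, letting $w = w^\gamma_1 \cdots w^\gamma_m$. For $j < n-m$, $w(\alpha_j^\vee) = \alpha_j^\vee$, giving the $\Id_{n-m}$ block; for $n-m < j < n$, $w(\alpha_j^\vee) = -\alpha_j^\vee$, and $w(\alpha_n^\vee) = -\alpha_n^\vee$, giving the $-\Id_m$ block. The only nontrivial column is $j = n-m$, where $w(\alpha_{n-m}^\vee) = e_{n-m} + e_{n-m+1}$, and using $2e_{n-m+1} = 2\alpha_{n-m+1}^\vee + \cdots + 2\alpha_{n-1}^\vee + 2\alpha_n^\vee$, this expands as
\[
w(\alpha_{n-m}^\vee) \;=\; \alpha_{n-m}^\vee \;+\; 2\alpha_{n-m+1}^\vee + \cdots + 2\alpha_{n-1}^\vee + 2\alpha_n^\vee,
\]
which is precisely the column with a $1$ in row $n-m$ and $2$s in rows $n-m+1, \dots, n$ — i.e. the last column of $R_{m,n-m}(2)$ stacked under the $\Id_{n-m}$ block.

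The main obstacle is essentially bookkeeping in this last step: the factor of $2$ arises because $\alpha_n^\vee = e_n$ is the short coroot in type $C_n$, so expressing $2e_{n-m+1}$ in $\Delta^\vee$ doubles every coefficient. Apart from this, the argument is a direct computation once the reduction to $t_0 t_1 \cdots t_{m-1}$ is made.
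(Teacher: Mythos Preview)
Your proof is correct, and it takes a genuinely different route from the paper's. Both arguments begin with the observation that $w^\gamma_1 \cdots w^\gamma_m = t_0 t_1 \cdots t_{m-1}$, but from there they diverge. The paper identifies this product with the longest element of the type $C_m$ subsystem on the last $m$ nodes (citing external sources), which immediately gives the $-\Id_m$ block; it then computes the $(n-m)$th column by an inductive calculation entirely in the $\Delta^\vee$ basis, using \eqref{eq:siaction} and the formula for $w^\gamma_m(\alpha_{n-m}^\vee)$. You instead pass to the standard orthonormal realization $\alpha_i^\vee = e_i - e_{i+1}$, $\alpha_n^\vee = e_n$, where each $t_i$ is transparently the sign change $e_{n-i} \mapsto -e_{n-i}$, and then translate back. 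Your approach is more self-contained (no appeal to the longest element) and handles $m = 1$ and $m \geq 2$ uniformly; the paper's approach stays within the $\Delta^\vee$ framework used throughout Section~\ref{sec:proofsC} and avoids introducing a second coordinate system. Either way, the only nontrivial column is $j = n-m$, and both computations land on $\alpha_{n-m}^\vee + 2\sum_{i=n-m+1}^n \alpha_i^\vee$.
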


\begin{proof}
If $m = 1$ we have $w^\gamma_1 = t_0 = s_n$, and the result is immediate (see Example \ref{eg:beta0gamma13C4} for the case $n = 4$).  Now if $m \geq 2$, we have  $w^\gamma_1 \cdots w^\gamma_m = t_0 \cdots t_{m-1}$.  This is the longest element in the type $C_m$ subsystem indexed by the last $m$ nodes, which fixes~$\alpha_i^\vee$ for $1 \leq i \leq n-m-1$.  Hence by \Cref{lem:gammaCoxeterCn}\eqref{gamma1s}, it now suffices to prove that the matrix for $w^\gamma_1 \cdots w^\gamma_m$ has $(n-m,n-m)$-entry equal to $1$, and $(i,n-m)$-entry equal to $2$ for $n-m+1 \leq i \leq n$.  For this, we first obtain using similar arguments to those used to prove \Cref{cor:wgammano1parts} that
\[
w^\gamma_m(\alpha^\vee_{n-m}) = (s_{n-m+1} \dots s_{n-1})(s_n s_{n-1} \dots s_{n-m+1})(\alpha^\vee_{n-m}) =  \alpha_{n-m}^\vee + \sum_{i=n-m+1}^n 2\alpha_i^\vee.
\]
Using this result and induction on $m \geq 2$, we then get via \eqref{eq:siaction} that
\begin{eqnarray*}
w^\gamma_1 \cdots w^\gamma_m(\alpha_{n-m}^\vee) & = & w^\gamma_1 \dots w^\gamma_{m-1}\left( \alpha_{n-m}^\vee + \sum_{i=n-m+1}^n 2\alpha_i^\vee \right) \\
& = &  \alpha_{n-m}^\vee + 2\left(\alpha_{n-m+1}^\vee + \sum_{i=n-m+2}^n 2\alpha_i^\vee \right) + 2\left( \sum_{i=n-m+2}^n -\alpha_i^\vee \right) 
\end{eqnarray*}
which simplifies to give the required result.
\end{proof}

Combining \Cref{lem:wgamma1parts} and \Cref{cor:wgammano1parts}, we thus obtain the following uniform description of the matrix for $w_\gamma$ when $|\gamma| = n$.

\begin{corollary}\label{cor:wgamma1parts}
Let $\sW$ be of type $C_n$ for $n \geq 2$, and let $\gamma = (\gamma_1, \dots,\gamma_q)$ be a weakly increasing composition of $n$.  Define $0 \leq m \leq q$ such that $\gamma_k = 1$ for all $1 \leq k \leq m$ and $\gamma_k \geq 2$ for all $m+1 \leq k \leq q$.
Then the matrix for $w_\gamma$ with respect to $\Delta^\vee$ is block lower-triangular of the form
\[ w_\gamma = \begin{pmatrix}V_{\gamma_q} & & & & &\\ 
N(-2;1,2) & V_{\gamma_{q-1}}&  & &&\\
N(-2;2) & N(-2;1,2)  & V_{\gamma_{q-2}} &&&  \\
\vdots & \ddots & \ddots & \ddots && &\\
N(-2;2) & \cdots &N(-2;2)   & N(-2;1,2) & V_{\gamma_{m+1}}& \\
N(-2;2)&\cdots&N(-2;2)&N(-2;2)&N(-2;2)&-\Id_m
 \end{pmatrix}.\]
 \end{corollary}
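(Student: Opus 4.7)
The plan is to combine Lemma \ref{lem:wgamma1parts} with an argument modeled on Corollary \ref{cor:wgammano1parts}, via block-matrix multiplication. The boundary cases are immediate: when $m = 0$ the statement reduces to Corollary \ref{cor:wgammano1parts}, and when $m = q$ (forcing $m = n$), Lemma \ref{lem:gammaCoxeterCn}\eqref{gamma1s} gives $w_\gamma = -\Id_n$, matching the claim with only the $-\Id_m$ block.

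Assume now $1 \leq m < q$, and factor $w_\gamma = u \cdot v$ with $u = w^\gamma_1 \cdots w^\gamma_m$ and $v = w^\gamma_{m+1} \cdots w^\gamma_q$. Lemma \ref{lem:wgamma1parts} supplies the matrix of $u$ as $\begin{pmatrix} \Id_{n-m} & 0 \\ R_{m,n-m}(2) & -\Id_m \end{pmatrix}$. The main intermediate step is to show that the matrix of $v$ takes the block form $\begin{pmatrix} A & 0 \\ C & \Id_m \end{pmatrix}$, where $A$ is the $(n-m) \times (n-m)$ matrix produced by Corollary \ref{cor:wgammano1parts} applied to $(\gamma_{m+1}, \ldots, \gamma_q)$ in type $C_{n-m}$, and $C$ is the $m \times (n-m)$ matrix each of whose rows equals the last row of $A$ with its final entry replaced by $0$. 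The key input is a short lemma, provable by induction on $i$ via the recursion $t_i = s_{n-i} t_{i-1} s_{n-i}$, stating that $t_i(\alpha_j^\vee) = \alpha_j^\vee$ for all $j \geq n-i+1$. Combined with the fact that the remaining simple reflections in each $w^\gamma_k$ for $k > m$ have indices at most $n-m-1$, and so commute with $\alpha_{n-m+1}^\vee, \dots, \alpha_n^\vee$, this forces $v$ to fix each of the last $m$ simple coroots, yielding the zero top-right block and $\Id_m$ bottom-right block. The top-left block $A$ is then identified with the matrix of Corollary \ref{cor:wgammano1parts} by rerunning its inductive argument on the first $n-m$ coordinates, and $C$ is extracted by tracking how each $t_{j^\gamma_k}$ leaks weight into the last $m$ coordinates when acting on the first $n-m$ simple coroots.

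The computation finishes with the block product $u \cdot v = \begin{pmatrix} A & 0 \\ R_{m,n-m}(2) \cdot A - C & -\Id_m \end{pmatrix}$. Since every row of $R_{m,n-m}(2)$ equals $(0, \ldots, 0, 2)$, each row of $R_{m,n-m}(2) \cdot A$ is twice the last row of $A$; subtracting $C$ converts the final $V_{\gamma_{m+1}}$-block contribution $(-2, 0, \ldots, 0, 1)$ into $(-2, 0, \ldots, 0, 2)$ and simultaneously halves each preceding block's doubling back to $(-2, 0, \ldots, 0, 2)$, producing exactly the uniform $N(-2;2)$ pattern that Corollary \ref{cor:wgamma1parts} stipulates in the last block row. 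The main obstacle will be verifying the intermediate claim about $v$, particularly identifying its top-left block $A$ with the output of Corollary \ref{cor:wgammano1parts}, despite the fact that $v$ uses the $t_{j^\gamma_k}$ living in the ambient $C_n$-system rather than any $t'_{j^\gamma_k - m}$ of a $C_{n-m}$ subsystem.
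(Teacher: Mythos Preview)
Your proposal is correct and matches the paper's approach, which consists of nothing more than the sentence ``Combining \Cref{lem:wgamma1parts} and \Cref{cor:wgammano1parts}''. Your plan is a valid and natural way to make this combination precise: factor $w_\gamma = u\cdot v$ with $u = w^\gamma_1\cdots w^\gamma_m$ and $v = w^\gamma_{m+1}\cdots w^\gamma_q$, take the matrix of $u$ from \Cref{lem:wgamma1parts}, determine the block structure of $v$, and multiply. The auxiliary lemma you isolate (that $t_i$ fixes $\alpha^\vee_j$ for $j\ge n-i+1$) is correct and is exactly what is needed to see the $\Id_m$ and zero blocks in $v$.

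Two brief remarks on the execution. First, your identification of the top-left block $A$ with the matrix of \Cref{cor:wgammano1parts} for $(\gamma_{m+1},\dots,\gamma_q)$ in type $C_{n-m}$ is correct, but the cleanest justification goes via the standard coordinates: since $v$ fixes $e_{n-m+1},\dots,e_n$, it acts as a signed permutation on $e_1,\dots,e_{n-m}$, and this signed permutation is identical to the one realized by $w_{\gamma'}$ in $C_{n-m}$ (both negate the same coordinates $e_{n-j^\gamma_k}$ and apply the same cycles). A short change-of-basis check then shows the $\alpha^\vee$-coefficients agree on the first $n-m$ coordinates. Second, your description of $C$ as ``the last row of $A$ with final entry replaced by $0$'' is correct: for $j<n-m$ one gets $C_{ij}=A_{n-m,j}$ directly from the coordinate argument, and for $j=n-m$ one uses that the $(n-m,n-m)$-entry of $A$ is the bottom-right entry of $V_{\gamma_{m+1}}$, namely $1$, so $C_{i,n-m}=0$. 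With these two points pinned down, the block multiplication you describe goes through exactly as written.

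An equivalent but slightly less laborious organization, closer to the spirit of the paper's proof of \Cref{cor:wgammano1parts}, is to run the same induction on $q$ (peeling off $w^\gamma_q$ via \Cref{lem:1q}(2) at each step) but to take \Cref{lem:wgamma1parts} as the base case once only parts equal to $1$ remain. This avoids analyzing $v$ as a standalone object.
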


We can now put together the above results to describe the matrix for $w_\gamma$ when $1 \leq |\gamma| \leq n$ (when $|\gamma|=0$, the element $w_\gamma$ is trivial).  The proof of the next result generalizes the computation of the matrix for $w_\gamma$ from Example~\ref{eg:beta4gamma3C7}.

\begin{corollary}\label{cor:wgammaC}  
Let $\sW$ be of type $C_n$ for $n \geq 2$, and let $\gamma = (\gamma_1, \dots,\gamma_q)$ be a weakly increasing composition such that $1 \leq |\gamma| \leq n$.  Let $0 \leq m \leq q$ be such that $\gamma_k = 1$ for all $1 \leq k \leq m$ and $\gamma_k \geq 2$ for all $m+1 \leq k \leq q$. Then the matrix for $w_\gamma$ with respect to $\Delta^\vee$ is block lower-triangular of the form
\[ w_\gamma = \begin{pmatrix}
\Id_{n-|\gamma|} &&&&& 
\\R(1,2)&V_{\gamma_q} & & & & &\\ 
R(2)&N(-2;1,2) & V_{\gamma_{q-1}}&  & &&\\
\vdots&N(-2;2) & N(-2;1,2)  & V_{\gamma_{q-2}} &&&  \\
\vdots&\vdots & \ddots & \ddots & \ddots && &\\
R(2) &N(-2;2)  & \cdots &N(-2;2)   & N(-2;1,2) & V_{\gamma_{m+1}}& \\
R(2) & N(-2;2)&\cdots&N(-2;2)&N(-2;2)&N(-2;2)&-\Id_m
 \end{pmatrix}.\]
\end{corollary}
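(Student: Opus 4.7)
The plan is to reduce Corollary~\ref{cor:wgammaC} to Corollary~\ref{cor:wgamma1parts} by exploiting that $w_\gamma$ lies in the standard parabolic subgroup $\sW_\gamma$ of type $C_{|\gamma|}$ generated by $\{s_{m'+1},\dots,s_n\}$, where $m' = n - |\gamma|$. The proof splits into three ingredients, one per ``region'' of the target matrix.

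First, since every simple reflection appearing in $w_\gamma$ has index $\geq m'+1$, the action formula \eqref{eq:siaction} immediately implies that $w_\gamma$ fixes $\alpha_i^\vee$ for all $1 \leq i \leq m'-1$ and preserves the $\Z$-span of $\{\alpha_{m'+1}^\vee,\dots,\alpha_n^\vee\}$. This yields the first $m'-1$ columns as standard basis vectors and forces zero entries in rows $1,\dots,m'-1$ outside the first $m'$ columns, reducing the computation to the bottom-right $|\gamma|\times|\gamma|$ block and to column $m'$.

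Second, for the bottom-right block I would view $w_\gamma$ as an element of the $C_{|\gamma|}$ subsystem on $\{s_{m'+1},\dots,s_n\}$, which uses the same special node $s_n$ as the ambient system. In this subsystem $w_\gamma$ is precisely the element associated with the partition $\gamma$ of $|\gamma|$ by the construction in Proposition~\ref{prop:wbgC}, and Corollary~\ref{cor:wgamma1parts} supplies the required lower-triangular block form.

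Third, and most delicately, for column $m'$ I would use the identity $w_\gamma(\alpha_{m'}^\vee) = -w_\gamma s_{m'}(\alpha_{m'}^\vee)$, which follows from $s_{m'}(\alpha_{m'}^\vee) = -\alpha_{m'}^\vee$. Set $\tilde\gamma = (\gamma_1,\dots,\gamma_{q-1},\gamma_q+1)$, a weakly increasing partition of $|\gamma|+1$, and let $\tilde w_{\tilde\gamma}$ denote the associated element in the type $C_{|\gamma|+1}$ subsystem on $\{s_{m'},\dots,s_n\}$ produced by Proposition~\ref{prop:wbgC}. A direct expansion using \eqref{eq:ti}, \eqref{eq:wgamma1}, \eqref{eq:wgammak-t}, and \eqref{eq:wgammak} shows that appending $s_{m'}$ to the rightmost factor of $w_\gamma$ produces precisely the negative block $\tilde w^{\tilde\gamma}_q$ for this enlarged subsystem; consequently $w_\gamma s_{m'} = \tilde w_{\tilde\gamma}$. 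Applying Corollary~\ref{cor:wgamma1parts} to $\tilde w_{\tilde\gamma}$, the first column of its matrix on $\{\alpha_{m'}^\vee,\dots,\alpha_n^\vee\}$ is obtained by stacking the first column of $V_{\gamma_q+1}$ atop the first columns of all $N(-2;1,2)$ and $N(-2;2)$ blocks below it, which by \eqref{eq:Vr} gives $\gamma_q$ entries equal to $-1$ followed by $|\gamma|-\gamma_q+1$ entries equal to $-2$. Negating and prepending $m'-1$ zeros yields exactly column $m'$ of the matrix in the statement: a $1$ at row $m'$ from the $\Id_{n-|\gamma|}$ block, the last column of $R(1,2)$ contributing $(1,\dots,1,2)^\top$ in rows $m'+1,\dots,m'+\gamma_q$, and the last column of each subsequent $R(2)$ block contributing a $2$.

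The main obstacle is the case analysis needed to verify $w_\gamma s_{m'} = \tilde w_{\tilde\gamma}$: the formula for the rightmost factor $w^\gamma_q$ depends on whether $q = 1$ (so $w^\gamma_q$ is given by \eqref{eq:wgamma1}), whether $\gamma_q = 1$ (forcing all parts to equal $1$, so $w_\gamma = t_0 \cdots t_{q-1}$ per \eqref{eq:wgammak-t}), or whether $q \geq 2$ and $\gamma_q \geq 2$ (so \eqref{eq:wgammak} applies). In each case, the palindromic expansion $t_i = s_{n-i} s_{n-i+1}\cdots s_n \cdots s_{n-i+1} s_{n-i}$ from \eqref{eq:ti} confirms the identification, and the uniform description provided by Corollary~\ref{cor:wgamma1parts} then absorbs the remaining bookkeeping without additional subcases.
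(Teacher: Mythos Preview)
Your proposal is correct and follows essentially the same approach as the paper. The paper's proof is the one-line remark that the result ``generalizes the computation of the matrix for $w_\gamma$ from Example~\ref{eg:beta4gamma3C7}'', and that example (together with Examples~\ref{eg:beta0gamma34C7} and~\ref{eg:beta0gamma234C9} which it references) uses exactly your three ingredients: the identity block from $w_\gamma$ fixing $\alpha_i^\vee$ for $i \leq m'-1$, the bottom-right block from Corollary~\ref{cor:wgamma1parts} in the $C_{|\gamma|}$ subsystem, and column $m'$ via the trick $w_\gamma(\alpha_{m'}^\vee) = -w_\gamma s_{m'}(\alpha_{m'}^\vee)$, recognizing $w_\gamma s_{m'}$ as the representative for the partition obtained by incrementing $\gamma_q$.
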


We now turn to the matrix for $w_\beta$ in type $C$. The formula is given by the next result, which generalizes the computation of the matrix for $w_\beta$ in Example~\ref{eg:beta4gamma3C7}.

\begin{lemma}\label{lem:wbetaC}  Let $\sW$ be of type $C_n$ for $n \geq 2$, and let $\beta = (\beta_1, \dots,\beta_p)$ be a partition with $|\beta| \leq n$.  
\begin{enumerate}
\item\label{beta01} If $|\beta| \in \{ 0,1\}$ then $w_\beta = \Id_{n}$.
\item\label{beta2} If $|\beta| \geq 2$, let $\beta'$ be the partition of $|\beta|+1$ given by $\beta' = (\beta_1,\dots,\beta_p,1)$, and let $w_{\beta'}$ be the matrix for the corresponding conjugacy class representative in type~$A_{|\beta|}$.  Then the matrix for $w_\beta \in \sW$ with respect to $\Delta^\vee$ is the block-diagonal matrix $w_\beta = \begin{pmatrix} w_{\beta'} & \\  & \Id_{n-|\beta|} \end{pmatrix}$.
\end{enumerate}
\end{lemma}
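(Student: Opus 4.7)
The plan is to split the proof into the two cases stated. Part (1) is immediate from our conventions in Section~\ref{sec:repsC}: if $|\beta| \in \{0,1\}$ then $w_\beta$ was defined to be the trivial element, whose matrix with respect to $\Delta^\vee$ is $\Id_n$.

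For part (2), set $m = |\beta| \geq 2$. By the construction summarized in Proposition~\ref{prop:wbgC} (part~\eqref{C_wbeta}), the element $w_\beta$ is a product of simple reflections drawn exclusively from $\{s_1,\dots,s_{m-1}\}$, and in particular does not involve $s_n$. I would then establish the block-diagonal shape by proving two separate claims:
\begin{enumerate}
\item[(a)] Every simple reflection appearing in $w_\beta$ fixes $\alpha_j^\vee$ for all $m+1 \leq j \leq n$, so that the bottom-right $(n-m) \times (n-m)$ block of the matrix for $w_\beta$ is $\Id_{n-m}$ and the off-diagonal blocks vanish.
\item[(b)] The top-left $m \times m$ block, recording the action of $w_\beta$ on $\{\alpha_1^\vee,\dots,\alpha_m^\vee\}$, coincides with the matrix for $w_{\beta'}$ in type $A_m$.
\end{enumerate}

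Claim (a) will follow from formula~\eqref{eq:siaction} together with the fact that the type $C_n$ Dynkin diagram is a path: for any $i \leq m-1$ and $j \geq m+1$ one has $|i - j| \geq 2$, hence the Cartan entry $c_{ij} = 0$, so $s_i(\alpha_j^\vee) = \alpha_j^\vee$. Claim (b) is the main point, and requires comparing the type $C_n$ action with the type $A_m$ action of the same simple reflections $s_1,\dots,s_{m-1}$ on the same list of coroots. The key observation is that the Cartan matrix of $C_n$ agrees with that of type $A_n$ in every entry $c_{ij}$ except $c_{n,n-1}$; since we only need entries $c_{ij}$ with $i \in \{1,\dots,m-1\}$ (so $i \neq n$) and $j \in \{1,\dots,m\}$, the exceptional entry never enters, and the relevant submatrix matches that of type $A_m$. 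I would then identify $w_{\beta'}$ directly: running the algorithm of Proposition~\ref{prop:wbetaA} on $\beta' = (\beta_1,\dots,\beta_p,1)$ in type $A_m$, the final part $1$ contributes an empty subdiagram (the lone separating node $s_m$), so $w_{\beta'}$ is the \emph{same} product of reflections $s_1,\dots,s_{m-1}$ used to define $w_\beta$. Since the two elements act by identical formulas on $\{\alpha_1^\vee,\dots,\alpha_m^\vee\}$, their matrices on this subspace are equal.

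The main obstacle — if there is one — is the marginal case $m = n$, where one might worry that the discrepancy $c_{n,n-1} = -2$ (versus $-1$ in type $A$) affects the top-left block. The plan handles this cleanly by noting that only Cartan entries with row index $i \leq m-1 \leq n-1$ appear, so we only need $c_{n-1,n} = -1$ (which matches type $A$) and never $c_{n,n-1}$. With both (a) and (b) in hand, the block-diagonal formula in the statement follows immediately.
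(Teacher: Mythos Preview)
Your proposal is correct and follows essentially the same approach as the paper's proof: both argue that $w_\beta$ involves only $s_1,\dots,s_{m-1}$, hence fixes $\alpha_j^\vee$ for $j \geq m+1$, and that the relevant Cartan entries (those $c_{ij}$ with $i \leq m-1$) agree with type $A$, so the action on $\alpha_1^\vee,\dots,\alpha_m^\vee$ coincides with that of $w_{\beta'}$ in type $A_m$. Your explicit handling of the boundary case $m=n$ is a nice touch, and the only cosmetic point is that claim~(a) as stated only pins down columns $m+1,\dots,n$ (giving the right column block and the upper off-diagonal block), with the vanishing of the lower off-diagonal block coming from claim~(b) once you note that each $s_i$ with $i\leq m-1$ preserves the span of $\alpha_1^\vee,\dots,\alpha_m^\vee$.
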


\begin{proof}  If $|\beta| \in \{ 0,1\}$ then by definition $w_\beta$ is trivial, so the result in \eqref{beta01} is clear.  

Assume now that $|\beta| = m \geq 2$, and recall from Section \ref{sec:repsC} that $w_\beta$ is the product, in increasing order, of certain simple reflections in the type $A_{m-1}$ subsystem indexed by the first $m-1$ nodes.   Hence $w_\beta$ fixes $\alpha_i^\vee$ for all $m+1 \leq i \leq n$, which explains the block $\Id_{n-|\beta|}$.  

Now since the last part of $\beta'$ equals $1$, the expression for $w_\beta$ in terms of the simple reflections in type $A_{m-1}$ is equal to the expression for $w_{\beta'}$ in terms of the simple reflections in type $A_{m}$.   Moreover, for all $2 \leq m \leq n$, the formula $s_{m-1}(\alpha^\vee_m) = \alpha^\vee_{m-1} + \alpha^\vee_{m}$ holds in type $C_{n}$ as well as in type $A_n$ by \eqref{eq:siaction}, since the Cartan matrix entries $c_{m-1,m} = -1$ for all $2 \leq m \leq n$ for both types. Hence for $1 \leq i \leq m$, the expression for $w_\beta(\alpha^\vee_i)$ in terms of the simple coroots $\{ \alpha_1^\vee,\dots,\alpha_{m}^\vee \}$ in type $C_n$ is equal to the expression for $w_{\beta'}(\alpha^\vee_i)$ in terms of the simple coroots $\{ \alpha_1^\vee,\dots,\alpha_{m}^\vee \}$ in type $A_n$.  The result in \eqref{beta2} follows. 
\end{proof}

The identity matrix blocks of $w_\beta$ and $w_\gamma$ mean that we can now easily determine the matrix for $\wbg = w_\beta \cdot w_\gamma$ as follows.

\begin{corollary}\label{cor:wbetagammaC}  Let $\sW$ be of type $C_n$ for $n \geq 2$.  Let $(\beta, \gamma)$ be a pair of compositions so that $\beta = (\beta_1, \dots,\beta_p)$ is weakly decreasing, $\gamma = (\gamma_1,\dots,\gamma_q)$ is weakly increasing, and $|\beta|+ |\gamma| = n$.   Let $\beta'$ be the partition of $|\beta| + 1$ given by $\beta' = (\beta_1, \dots, \beta_p,1)$.
\begin{enumerate}
\item If $|\beta| = n$, then the matrix for $\wbg = w_\beta$ with respect to $\Delta^\vee$ in type $C_n$ the same as the matrix for $w_{\beta'}$ with respect to $\Delta^\vee$ in type $A_{n}$.
\item If $|\gamma| = n$, then the matrix for $\wbg = w_\gamma$ with respect to $\Delta^\vee$ in type $C_n$ is given by \Cref{cor:wgammaC} above.
\item Otherwise, the matrix for $\wbg = w_\beta \cdot w_\gamma$ with respect to $\Delta^\vee$ in type $C_n$ is the block lower-triangular matrix obtained by replacing the $\Id_{n-|\gamma|}$ block in the top left of the matrix in \Cref{cor:wgammaC} by the matrix $w_{\beta'}$.
\end{enumerate}
\end{corollary}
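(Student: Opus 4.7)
The plan is to derive this corollary directly from the block descriptions of $w_\beta$ and $w_\gamma$ established in Lemma \ref{lem:wbetaC} and Corollary \ref{cor:wgammaC}, respectively, by carrying out the block matrix multiplication $w_\beta \cdot w_\gamma$. All three cases will fall out cleanly from these existing results; the bulk of the work has already been done.

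First I would dispatch the boundary cases. For case (1), we have $|\gamma| = 0$, so $w_\gamma$ is trivial by definition and $\wbg = w_\beta$; applying Lemma \ref{lem:wbetaC}(2) with $|\beta| = n$ collapses the $\Id_{n-|\beta|}$ block to size zero, leaving exactly $w_{\beta'}$. For case (2), we have $|\beta| = 0$, so $w_\beta = \Id_n$ by Lemma \ref{lem:wbetaC}(1), and $\wbg = w_\gamma$ has the form given by Corollary \ref{cor:wgammaC}.

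For case (3), the main computation, write $|\beta| = m$ with $1 \leq m \leq n-1$. If $m \geq 2$, Lemma \ref{lem:wbetaC}(2) gives
\[
w_\beta = \begin{pmatrix} w_{\beta'} & 0 \\ 0 & \Id_{n-m} \end{pmatrix},
\]
while Corollary \ref{cor:wgammaC} expresses $w_\gamma$ as a block lower-triangular matrix whose top-left block is $\Id_{n-|\gamma|} = \Id_m$. Writing
\[
w_\gamma = \begin{pmatrix} \Id_m & 0 \\ X & Y \end{pmatrix},
\]
where $X$ collects the leftmost block-column below $\Id_m$ and $Y$ is the block lower-triangular tail involving the $V_{\gamma_k}$ and $-\Id_m$ diagonal blocks, the product is
\[
w_\beta \cdot w_\gamma = \begin{pmatrix} w_{\beta'} & 0 \\ X & Y \end{pmatrix},
\]
which is exactly the matrix obtained by replacing the $\Id_{n-|\gamma|}$ block in Corollary \ref{cor:wgammaC} by $w_{\beta'}$. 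For $m = 1$ the partition $\beta = (1)$ gives $w_\beta = \Id_n$, and the partition $\beta' = (1,1)$ yields $w_{\beta'} = \Id_1$ by Theorem \ref{thm:An} (since $J_{\beta'} = \emptyset$), so the stated description continues to hold and is consistent: the top-left block of $w_\gamma$ is already $\Id_1 = w_{\beta'}$.

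There is no genuine obstacle, since block multiplication with a block-diagonal factor on the left acts independently on each block-row. The only point requiring care is the uniform treatment of the boundary value $|\beta| = 1$, where $\beta' = (1,1)$ makes $w_{\beta'}$ trivial and the replacement operation is a no-op; verifying this edge case against Lemma \ref{lem:wbetaC}(1) is the only small consistency check needed.
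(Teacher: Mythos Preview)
Your proposal is correct and takes essentially the same approach as the paper, which simply remarks that ``the identity matrix blocks of $w_\beta$ and $w_\gamma$ mean that we can now easily determine the matrix for $\wbg = w_\beta \cdot w_\gamma$'' and leaves the details implicit. You have merely made explicit the block multiplication and the handling of the boundary cases $|\beta|\in\{0,1,n\}$, all of which are exactly what the paper intends.
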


Finally, we are able to determine the matrix for $\Mbg = \Id - \wbg$.  Recall $N_r$ from \eqref{eq:Vr}.

\begin{corollary}\label{cor:MbetagammaC}  Let $\sW$ be of type $C_n$ for $n \geq 2$.  Let $(\beta, \gamma)$ be a pair of compositions so that $\beta = (\beta_1, \dots,\beta_p)$ is weakly decreasing, $\gamma = (\gamma_1,\dots,\gamma_q)$ is weakly increasing, and $|\beta|+ |\gamma| = n$.  Let $\beta'$ be the partition of $|\beta| + 1$ given by $\beta' = (\beta_1, \dots, \beta_p,1)$, and let $0 \leq m \leq q$ be such that $\gamma_k = 1$ for all $1 \leq k \leq m$ and $\gamma_k \geq 2$ for all $m+1 \leq k \leq q$.
\begin{enumerate}
\item If $|\beta| = n$, then the matrix for $\Mbg = \Id - w_\beta$ with respect to $\Delta^\vee$ is equal to the matrix $M_{\beta'}$ from Corollary \ref{cor:matrixMbetaA}.
\item\label{Cgamman} If $|\gamma| = n$, then the matrix for $\Mbg = \Id - w_\gamma$ with respect to $\Delta^\vee$ is block lower-triangular of the form
\[ \Mbg = \begin{pmatrix}N_{\gamma_q} & & & & &\\ 
N(2;-1,-2) & N_{\gamma_{q-1}}&  & &&\\
N(2;-2) & N(2;-1,-2)  & N_{\gamma_{q-2}} &&&  \\
\vdots & \ddots & \ddots & \ddots && &\\
N(2;-2) & \cdots &N(2;-2)   & N(2;-1,-2) & N_{\gamma_{m+1}}& \\
N(2;-2)&\cdots&N(2;-2)&N(2;-2)&N(2;-2)&2\Id_m
 \end{pmatrix}.\]
\item\label{Cbetagamma} Otherwise, the matrix for $\Mbg = \Id - \wbg$ with respect to $\Delta^\vee$ is block lower-triangular of the form
\[ \Mbg = \begin{pmatrix}
M_{\beta'} &&&&& 
\\R(-1,-2)&N_{\gamma_q} & & & & &\\ 
R(-2)&N(2;-1,-2) & N_{\gamma_{q-1}}&  & &&\\
\vdots&N(2;-2) & N(2;-1,-2)  & N_{\gamma_{q-2}} &&&  \\
\vdots&\vdots & \ddots & \ddots & \ddots && &\\
R(-2) &N(2;-2)  & \cdots &N(2;-2)   & N(2;-1,-2) & N_{\gamma_{m+1}}& \\
R(-2) & N(2;-2)&\cdots&N(2;-2)&N(2;-2)&N(2;-2)&2\Id_m
 \end{pmatrix}.\]
\end{enumerate}
\end{corollary}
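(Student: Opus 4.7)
The plan is to derive Corollary~\ref{cor:MbetagammaC} directly from Corollary~\ref{cor:wbetagammaC} by subtracting $w_{\beta,\gamma}$ from the identity block-by-block. Since all three cases ($|\beta|=n$, $|\gamma|=n$, and the mixed case) follow the same template, I would handle them uniformly after recording the four block-level identities that do all the work.

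First, I would record the following elementary identities from the definitions in Section~\ref{sec:matricesC} and equation~\eqref{eq:Vr}:
\begin{itemize}
\item $\Id_r - V_r = N_r$, which is literally the definition of $N_r$;
\item $\Id_m - (-\Id_m) = 2\Id_m$;
\item $-R(1,2) = R(-1,-2)$ and $-R(2,1) = R(-2,-1)$ (by linearity, since these matrices are defined as sums $L + R$ by scalar entries);
\item $-N(-2;1,2) = N(2;-1,-2)$ and $-N(-2;2) = N(2;-2)$, which follow from the same linearity: $N(k;k') = L(k)+R(k')$ and $N(k;1,2)=L(k)+R(1,2)$, so negation simply flips the sign of each scalar parameter.
\end{itemize}
Note in particular that the off-diagonal blocks of $w_{\beta,\gamma}$ in Corollary~\ref{cor:wbetagammaC} are built from $R(1,2)$, $R(2)$, $N(-2;1,2)$, and $N(-2;2)$; these are exactly the blocks whose negatives appear in the statement of Corollary~\ref{cor:MbetagammaC}.

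Next, I would handle each of the three cases in turn. In case~(1), where $|\beta|=n$, Corollary~\ref{cor:wbetagammaC} identifies the matrix of $w_{\beta,\gamma}=w_\beta$ in type $C_n$ with that of $w_{\beta'}$ in type $A_{|\beta|}$, so subtracting from the identity of the same size produces $M_{\beta'}$ as described in Corollary~\ref{cor:matrixMbetaA}. In case~(2), where $|\gamma|=n$, I would apply $\Id-(\cdot)$ to the block lower-triangular matrix for $w_\gamma$ in Corollary~\ref{cor:wgammaC} (equivalently Corollary~\ref{cor:wbetagammaC}(2)): the diagonal blocks $V_{\gamma_k}$ become $N_{\gamma_k}$ by the first identity above, the bottom-right block $-\Id_m$ becomes $2\Id_m$ by the second, and each strictly-lower-triangular block is simply negated by the third and fourth identities. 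This produces exactly the displayed matrix. Case~(3) is the combination of the previous two: the top-left block $\Id_{n-|\gamma|}$ gets replaced by $w_{\beta'}$ in $w_{\beta,\gamma}$, so after subtracting from the identity it becomes $M_{\beta'}$; the remaining blocks behave exactly as in case~(2).

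This proof has no real obstacle, so I do not expect any difficulty beyond bookkeeping — the only place one must be careful is confirming that the identity block $\Id_{n-|\gamma|}$ in the top-left of the $w_{\beta,\gamma}$ matrix and the $\Id_{|\beta|}$ block that would appear inside $w_{\beta'}$ (from the last part of $\beta'$ being $1$) are correctly aligned, so that $\Id-w_{\beta,\gamma}$ genuinely has $M_{\beta'}$ as its top-left block rather than some shifted variant. This is guaranteed by the construction of $\beta'$ in Lemma~\ref{lem:wbetaC} together with the assembly of $w_{\beta,\gamma}$ in Corollary~\ref{cor:wbetagammaC}, so the verification is routine.
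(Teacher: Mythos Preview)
Your proposal is correct and takes essentially the same approach as the paper, which treats this corollary as immediate from Corollary~\ref{cor:wbetagammaC} by subtracting block-by-block from the identity. One small notational slip: you write ``$-R(2,1)=R(-2,-1)$'', but the relevant identity for case~(3) is simply $-R(2)=R(-2)$ with the single-parameter $R_{m,n}(k)$; this is trivially true and your argument goes through unchanged.
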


%%%%%%%%%%%%%%%%%%%%%%%%%%%%%%%%%%%%%%%%%%%%%
\subsubsection{Proof of \Cref{thm:Cn}}\label{sec:propsCn}
%%%%%%%%%%%%%%%%%%%%%%%%%%%%%%%%%%%%%%%%%%%%%

In this section we complete the proof of \Cref{thm:Cn}.  Let $\sW$ be of type $C_n$ with $n \geq 2$, and let $(\beta,\gamma)$ be a pair of compositions such that $\beta = (\beta_1,\dots,\beta_p)$ is weakly decreasing, $\gamma = (\gamma_1,\dots,\gamma_q)$ is weakly increasing, and $|\beta| + |\gamma| = n$.  Let $\Mbg = \Id - \wbg$ be the matrix given by \Cref{cor:MbetagammaC}.

We first assume that $|\beta| = n$.  Then $I_\gamma = J_\gamma = \emptyset$ and $q = 0$.  Now since $\Mbg = M_{\beta'}$ where $\beta' = (\beta_1, \dots, \beta_p,1)$ has last part equal to $1$, the result follows from \Cref{thm:An} in this case.

We next assume that $|\gamma| = n$, in which case $J_\beta = \emptyset$.  By Corollary \ref{cor:MbetagammaC}\eqref{Cgamman}, for all $i \in I_\gamma$, the nonzero entries in row $i$ of $\Mbg$ are~$\pm 2$, while for all $i \in [n] \setminus I_\gamma$, the last nonzero entry in row $i$ is the $-1$ in the $(i,i+1)$-position.  Hence, if $|\beta| = 0$, then
\[  \Mod(\wbg) = \left\{\sum_{i=1}^n c_i \alpha_i^\vee \ \middle| \ c_i \in \Z, \ c_i \equiv 0 \; \operatorname{mod}\left(2\right) \mbox{ for } i \in I_\gamma  \right\},\]
which clearly has a $\Z$-basis given by $\{ \alpha_i^\vee \mid i \in [n] \setminus I_\gamma \} \cup \{ 2\alpha_i^\vee \mid i \in I_\gamma \}$.

 For the Smith normal form when $|\gamma| = n$, we first use the pivot entries in the $2\Id_m$ block in the lower right to clear all nonzero entries in the last $m$ rows of $\Mbg$.  We then use column operations to convert $N_{\gamma_{m+1}}$ to the matrix $U_{\gamma_{m+1}}$ defined in \eqref{eq:Ur}.  Since $U_{\gamma_{m+1}}$ has pivot entries $-1$ in all but its last row, and a pivot entry $2$ in its last row, we can then use column operations to clear all nonzero entries to the left of $U_{\gamma_{m+1}}$.  We then repeat this process moving up one block at a time, to obtain the block-diagonal matrix
 \[
 U_{\beta,\gamma} = \begin{pmatrix} U_{\gamma_q} &&& \\ & \ddots && \\ && U_{\gamma_{m+1}}& \\ &&& 2\Id_m \end{pmatrix}.
 \]
Now $2\Id_m$ is in Smith normal form already, while for $m+1 \leq k \leq q$, the matrix $U_{\gamma_k}$ has Smith normal form $\diag(1^{\gamma_k - 1},2)$ by Lemma \ref{lem:UrSNF}.  We thus obtain that $\Mbg$ has Smith normal form $\Sbg = \diag(1^{n-q},2^q)$, and hence $R^\vee/\Mod(\wbg) \cong (\Z/2\Z)^q$.  This completes the proof of \Cref{thm:Cn} in the case $|\gamma| = n$.

The remaining case is when $1 \leq |\beta| \leq n-1$ and  $1 \leq |\gamma| \leq n-1$, which corresponds to case \eqref{Cbetagamma} of Corollary \ref{cor:MbetagammaC}. Applying the same argument as for $|\gamma| = n$, we initially see that $\Mbg \x = \bfc$ has solution over $\Z$ if and only if $c_i \equiv 0 \; \operatorname{mod}\left(2\right)$ for all $i \in I_\gamma$ and the $|\beta|$ equations represented by the matrix $M_{\beta'}$ in the top left of $\Mbg$ have solution over $\Z$.  Now as the partition $\beta'$ has last part equal to $1$, these $|\beta|$ equations have solution over $\Z$ if and only if $c_i = 0$ for all $i \in \{ 1,\dots,|\beta|\} \setminus J_\beta$ by part \eqref{ModA} of Theorem \ref{thm:An}.  This establishes the description of $\Mod(\wbg)$ from \Cref{thm:Cn}\eqref{ModC}, and the $\Z$-basis given in \eqref{BasisC} is immediate.

 For the Smith normal form in this final case, we first apply to the $|\gamma| \times |\gamma|$ block in the bottom right-hand corner of $\Mbg$ the same sequence of column operations as in the case $|\gamma| = n$.  This converts $\Mbg$ to the block lower-triangular matrix
 \[
U_{\beta,\gamma} = \begin{pmatrix}
M_{\beta'} &&&&&& \\
R(-1,-2)&U_{\gamma_q} & & & &&\\ 
R(-2)& & U_{\gamma_{q-1}}&  & &&\\
\vdots& &   & U_{\gamma_{q-2}} &&&  \\
\vdots& &  &  & \ddots && &\\
R(-2)& &  &  & & U_{\gamma_{m+1}}& \\
R(-2)& &  &  & & &2\Id_m
 \end{pmatrix}.
 \]
We can then use the pivot entries in the $U_{\gamma_k}$ and $2\Id_m$ to clear the nonzero entries in the corresponding matrices $R(-1,-2)$ and $R(-2)$.  We now have the block-diagonal matrix
  \[
 U'_{\beta,\gamma} = \begin{pmatrix} M_{\beta'}&&&& \\ &U_{\gamma_q} &&& \\ && \ddots && \\ &&& U_{\gamma_{m+1}}& \\ &&& &2\Id_m \end{pmatrix}.
 \] 
 Since the partition $\beta'$ has $p+1$ parts with last part equal to $1$, by \Cref{thm:An} the Smith normal form for $M_{\beta'}$ is $\diag(1^{|\beta| - (p+1)},1,0^{(p+1)-1}) = \diag(1^{|\beta| - p},0^p)$.  Now from the case $|\gamma| = n$, the $|\gamma| \times |\gamma|$ matrix in the bottom right has Smith normal form $\diag(1^{|\gamma| - q},2^q)$.  Putting this together with the equation $|\beta| + |\gamma| = n$, we obtain that $\Mbg$ has Smith normal form $\Sbg = (1^{n - p - q}, 2^q, 0^p)$.  Therefore $R^\vee/\Mod(\wbg) \cong (\Z/2\Z)^q \oplus  \Z^p$.  
 
 This completes the proof of \Cref{thm:Cn}. \qed

  %%%%%%%%%%%%%%%%%%%%%%%%%%%%%%%%%%%%%%%%%%%%%  
%%%%%%%%%%%%%%%%%%%%%%%%%%%%%%%%%%%%%%%%%%%%%
%%%%%%%%%%%%%%%%%%%%%%%%%%%%%%%%%%%%%%%%%%%%%  
\section{Type $B$ mod-sets}\label{sec:TypeB}
%%%%%%%%%%%%%%%%%%%%%%%%%%%%%%%%%%%%%%%%%%%%%
%%%%%%%%%%%%%%%%%%%%%%%%%%%%%%%%%%%%%%%%%%%%%
%%%%%%%%%%%%%%%%%%%%%%%%%%%%%%%%%%%%%%%%%%%%%

In this section, we give a description of all mod-sets in type $B$.   Since the minimal length representatives of conjugacy classes in $\sW$ are identical in types $B_n$ and $C_n$, we start by directly introducing the general notation required for the main results stated in Section \ref{sec:repsB}.  We then give several key examples in \Cref{sec:examplesB}, designed to fully illustrate the proofs of our results on mod-sets.

Let $\sW$ be the finite Weyl group of type $B_n$, with $n \geq 2$. Throughout this section, we order the nodes of the Dynkin diagram increasing from left to right, as in both Bourbaki \cite{Bourbaki4-6} and Sage \cite{sagemath}, so that the first $n-1$ nodes form a type $A_{n-1}$ subsystem, and the special node is indexed by $n$ on the right.  We note that our labeling is the reverse of the ordering of nodes used in~\cite{GeckPfeifferBook}, and also that in Sage \cite{sagemath} the Cartan matrices in types $B_n$ and $C_n$ are reversed with respect to the Dynkin diagrams; see~\Cref{app:dynkin} for a direct comparison of these conventions.

%%%%%%%%%%%%%%%%%%%%%%%%%%%%%%%%%%%%%%%%%%%%%%%%%%%
%%%%%%%%%%%%%%%%%%%%%%%%%%%%%%%%%%%%%%%%%%%%%%%%%%%
\subsection{Conjugacy class representatives and mod-sets in type $B$}\label{sec:repsB}
%%%%%%%%%%%%%%%%%%%%%%%%%%%%%%%%%%%%%%%%%%%%%%%%%%%
%%%%%%%%%%%%%%%%%%%%%%%%%%%%%%%%%%%%%%%%%%%%%%%%%%%

Following \cite[Proposition 3.4.7]{GeckPfeifferBook} as reviewed in Section \ref{sec:repsC}, the conjugacy classes of $\sW$ of type $B_n$ are parameterized by ordered pairs of compositions~$(\beta, \gamma)$ such that $\beta$ is weakly decreasing, $\gamma$ is weakly increasing, and $|\beta| + |\gamma| = n \geq 2$. For each such pair $(\beta,\gamma)$, recall from Section \ref{sec:repsC} the standard parabolic subgroups $\sW_{\beta}$ and $\sW_\gamma$ of $\sW$, and the  element $\wbg = w_\beta \cdot w_\gamma$ with $w_\beta$ cuspidal in $\sW_\beta$ and $w_\gamma$ cuspidal in $ \sW_\gamma$ such that $\wbg$ form a complete system of minimal length representatives for the conjugacy classes of $\sW$.

We now establish the notation needed to formally state our results on mod-sets in type $B$.  Throughout this section, we let 
\[\beta= (\beta_1,\dots,\beta_p) \quad \mbox{and} \quad \gamma = (\gamma_1,\dots,\gamma_{q}) \] 
be a pair of compositions such that $\beta$ is weakly decreasing, $\gamma$ is weakly increasing, and $|\beta| + |\gamma| = n$, with corresponding conjugacy class representative $\wbg \in \sW$, constructed in \Cref{sec:repsC}. Write $|\beta| = m$ so that $|\gamma| = n-m$. Recall the subsets $J_\beta$ and $J_\gamma$ which index the support of $w_\beta$ and $w_\gamma$, as well as the sequences $(j_k^\beta)$ and $(j_k^\gamma)$ of partial sums of the compositions $\beta$ and $\gamma$, which both satisfy $j_1^\beta = j_1^\gamma = 0$. 

For the weakly increasing composition $\gamma$, we define the following $q$-element subset of $[n]$ exactly as in type $C_n$:
\[
I_\gamma = \left\{ n - j^\gamma_k \mid 1 \leq k \leq q \right\} = \left\{ n, n - \gamma_1, \dots, n - \sum_{k=1}^{q-1} \gamma_k \right\} = \left\{ n - (|\gamma| - \gamma_q), \dots, n - \gamma_1, n \right\}.
\]
In type $B_n$, unlike type $C_n$, we also have the following subset depending on the partition $\beta$:
\[ I_\beta = \{ j_k^\beta \mid 2 \leq k \leq p \} \cup \{ m\}.\] Given any subset $I \subseteq [n]$, we denote by $I- 1 = \{ i-1 \mid i \in I\}$.

To simplify notation in the statements below, we further define $\gamma_{1,0} = 0$ and $\gamma_{k,k-1} = \left(\gamma_k + \gamma_{k-1}\right) \operatorname{mod} \left(2\right)$ for $2 \leq k \leq q$. That is, $\gamma_{k,k-1} = 0$ if the parts $\gamma_k$ and $\gamma_{k-1}$ have the same parity, and otherwise $\gamma_{k,k-1}=1$.  In particular, note that all parts of $\gamma$ have the same parity if and only if $\gamma_{k,k-1} = 0$ for all $2 \leq k \leq q$. We define $\gcd(\beta_k,2) = \gcd(\beta_1,\dots,\beta_k,2)$, so that $\gcd(\beta_k,2) = 1$ if and only if $\beta$ has at least one odd part. Finally, if $\beta$ has last part $\beta_p \geq 2$, we write $\gcd(\beta_k, \beta_p - 2) = \gcd(\beta_1, \dots, \beta_p, \beta_p - 2)$, noting that this integer equals $\gcd(\beta_k)$ if $\beta_p = 2$.

With this notation established, we now state our results describing mod-sets in type~$B$.   \Cref{thm:SNFB} provides the Smith normal form of $(\Id - \wbg)$ in the type $B_n$ basis $\Delta^\vee$, from which the isomorphism type of the quotient $R^\vee/\Mod(\wbg)$ is immediate.  Note that even though the Cartan matrices in types $B_n$ and $C_n$ differ only by exchanging the $(n-1,n)$- and $(n,n-1)$-entries, the results on mod-sets are rather different; compare Theorems \ref{thm:SNFB} and \ref{thm:BasisB} in type $B$ below to their (more straightforward) type $C$ counterpart Theorem \ref{thm:Cn}.

\begin{thm}\label{thm:SNFB}
Suppose $\sW$ is of type $B_n$ with $n \geq 2$.  Let $(\beta,\gamma)$ be a pair of compositions such that $\beta = (\beta_1,\dots,\beta_p)$ is weakly decreasing, $\gamma = (\gamma_1,\dots,\gamma_q)$ is weakly increasing, and $|\beta| + |\gamma| = n$, with corresponding conjugacy class representative $\wbg \in \sW$. 
 
 Then for any $w \in [\wbg]$, the Smith normal form of $(\Id-w)$ is as follows:
 \begin{enumerate}
  \item\label{SNFB_gcd2_parity} If $\gcd(\beta_k,2) = 2$ (including $|\beta| = 0$), $|\gamma| \geq 1$, and all parts of $\gamma$ have the same parity, then 
  \[
 \Sbg = \diag(1^{n - q-p}, 2^{q},0^p).
 \]
   \item\label{SNFB_gcd2_nonparity} If $\gcd(\beta_k,2) = 2$ (including $|\beta| = 0$), $|\gamma| \geq 1$, and $\gamma$ has a change in parity, then 
  \[
 \Sbg = \diag(1^{n - q-p+1}, 2^{q-2},4,0^p).
 \]
  \item\label{SNFB_gcd1} If $\gcd(\beta_k,2) = 1$ and $|\gamma| \geq 1$, then 
  \[
 \Sbg = \diag(1^{n-q-p+1}, 2^{q-1}, 0^p).
 \]
 \item\label{SNFB_gamma0_2} If $|\gamma| = 0$, $\beta_p \geq 2$, and $\gcd(\beta_k,\beta_p - 2) \geq 2$, then 
 \[
 \Sbg = \diag(1^{n-p-1}, \gcd(\beta_k,\beta_p -2), 0^p).
 \]
  \item\label{SNFB_gamma0_1} If $|\gamma| = 0$, and either $\beta_p = 1$, or $\beta_p \geq 2$ and $\gcd(\beta_k, \beta_p - 2) = 1$, then 
 \[
 \Sbg = \diag(1^{n-p}, 0^p).
 \]
 \end{enumerate}
\end{thm}

We also give a basis for $\Mod(\wbg)$ in Theorem \ref{thm:BasisB} in the cases where either $|\beta| = 0$ or $|\gamma| = 0$. Note that $|\beta| = 0$ if and only if $\wbg = w_\gamma$ is cuspidal in $\sW$.

\begin{thm}\label{thm:BasisB}
Suppose $\sW$ is of type $B_n$ with $n \geq 2$.  Let $(\beta,\gamma)$ be a pair of compositions such that $\beta = (\beta_1,\dots,\beta_p)$ is weakly decreasing, $\gamma = (\gamma_1,\dots,\gamma_q)$ is weakly increasing, and $|\beta| + |\gamma| = n$, with corresponding conjugacy class representative $\wbg \in \sW$.  

If either $|\beta| = 0$ or $|\gamma| =0$, then the module $\Mod(\wbg)$ has a $\Z$-basis as follows: 

\begin{enumerate}

\item\label{B_beta0_gamma3} If $|\beta|=0$ and $\gamma_1 \geq 3$, then $\Mod(\wbg)$ has $\Z$-basis given by  
\[ \left\{ 2 \alpha_{n-j^\gamma_k}^\vee + \gamma_{k,k-1}\alpha_{n}^\vee \ \middle| \  2 \leq k \leq q \right\} \cup 
\left\{ 2\alpha_{n-1}^\vee,\; (\gamma_1\operatorname{mod} 2) \alpha_{n-1}^\vee + \alpha_n^\vee \right\} \]
\[ \cup \ \left\{ -\alpha_i^\vee + \alpha_{i+1}^\vee \ \middle| \ i \not \in I_\gamma \cup (I_\gamma- 1)  \right\}  \]
\[ \cup \  \left \{ -\alpha_{i}^\vee + \alpha_{i+2}^\vee \ \middle| \ i \in (I_\gamma-1),\; i \neq n-1 \right\}.\]

\item\label{B_beta0_gamma2} If $|\beta|=0$ and $\gamma_1 = 2$, then $\Mod(\wbg)$ has $\Z$-basis given by 
\[ \left\{ 2 \alpha_{n-j^\gamma_k}^\vee + \gamma_{k,k-1}\alpha_{n-1}^\vee \ \middle| \  2 \leq k \leq q \right\} \cup 
\left\{ 2\alpha_{n-1}^\vee,\; (\gamma_1\operatorname{mod} 2) \alpha_{n-1}^\vee + \alpha_n^\vee \right\} \]
\[ \cup \ \left\{ -\alpha_i^\vee + \alpha_{i+1}^\vee \ \middle| \ i \not \in I_\gamma \cup (I_\gamma- 1)  \right\}  \]
\[ \cup \  \left \{ -\alpha_{i}^\vee + \alpha_{i+2}^\vee \ \middle| \ i \in (I_\gamma-1),\; i \neq n-1 \right\}.\]

\item\label{B_beta0_gamma12}  If $|\beta| = 0$, $\gamma_1 = 1$, and $\gamma_2 \geq 2$, then $\Mod(\wbg)$ has $\Z$-basis given by 
\[ \left\{ 2 \alpha_{n-j^\gamma_k}^\vee + \gamma_{k,k-1}\alpha_{n}^\vee \ \middle| \  1 \leq k \leq q\right\} \]
\[ \cup \  \left\{ -\alpha_i^\vee + \alpha_{i+1}^\vee \ \middle| \  i \not \in I_\gamma \cup (I_\gamma - 1)   \right\}  \]
\[ \cup \  \left\{ -\alpha_{i}^\vee + \alpha_{i+2}^\vee \ \middle|\  i \in (I_\gamma-1), \; i \neq n-1 \right\}.\]

\item\label{B_beta0_gamma11}  If $|\beta| = 0$ and $\gamma_k = 1$ for all $1 \leq k \leq \ell < n$ for some $\ell \geq 2$, while $\gamma_{\ell + 1} \geq 2$, then $\Mod(\wbg)$ has $\Z$-basis given by 
\[ \left\{ 2 \alpha_{n-j^\gamma_k}^\vee + \gamma_{k,k-1}\alpha_{n}^\vee \ \middle| \  1 \leq k \leq q\right\} \cup \ \{ -\alpha_{n-\ell-1}^\vee + \alpha_n^\vee \} \]
\[ \cup \  \left\{ -\alpha_i^\vee + \alpha_{i+1}^\vee \ \middle| \  i \not \in I_\gamma \cup (I_\gamma - 1) \right\}  \]
\[ \cup \  \left\{ -\alpha_{i}^\vee + \alpha_{i+2}^\vee \ \middle|\  i \in (I_\gamma-1), \; i < n-\ell - 1 \right\}.\]

\item\label{B_beta0_gamma1s} If $|\beta| = 0$ and $\gamma = (1,\dots,1)$, then $\Mod(\wbg)$ has $\Z$-basis given by \[ \{ 2\alpha_i^\vee \mid i \in [n] \}.\]

\item\label{B_gamma0_2} If $|\gamma|=0$, $\beta_p \geq 2$, and $\gcd(\beta_k, \beta_p - 2) \geq 2$, then $\Mod(\wbg)$ has $\Z$-basis given by
\[ \left\{ \alpha_i^\vee - \alpha_{i+1}^\vee \ \middle|\ i \in J_{\beta} \backslash (I_{\beta} - 1) \right\} \ \cup \] 
\[ \left\{ \alpha_i^\vee - \alpha_{i+2}^\vee \ \middle| \ i \in (I_{\beta} - 1) \backslash \{n-1\} \right\}\ \cup \ \left\{ \gcd(\beta_k,\beta_p - 2)\alpha_{n-1}^\vee \right\}.\]

\item\label{B_gamma0_1} If $|\gamma|=0$, and either $\beta_p = 1$, or $\beta_p \geq 2$ and $\gcd(\beta_k, \beta_p - 2) = 1$,  then $\Mod(\wbg)$ has $\Z$-basis given by
\[
\{ \alpha_j^\vee \mid j \in J_\beta \}.
\]

\end{enumerate}
\end{thm}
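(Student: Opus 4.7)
The plan is to follow the pattern established in \Cref{sec:TypeA} and \Cref{sec:TypeC}. First I will compute the matrix $\Mbg = \Id - \wbg$ with respect to the basis $\Delta^\vee$ of $R^\vee$ in type $B_n$, then extract a $\Z$-basis for $\Mod(\wbg) = \Mbg R^\vee$ via column operations on $\Mbg$. The essential new ingredient is that in type $B_n$ one has $c_{n-1,n} = -2$ and $c_{n,n-1} = -1$, so by \eqref{eq:siaction}, $s_{n-1}(\alpha_n^\vee) = \alpha_n^\vee + 2\alpha_{n-1}^\vee$ and $s_n(\alpha_{n-1}^\vee) = \alpha_{n-1}^\vee + \alpha_n^\vee$. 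Compared to type $C_n$, these formulas have the factor $2$ moved from the $\alpha_n^\vee$ slot to the $\alpha_{n-1}^\vee$ slot, and this migration propagates through all subsequent reflections to produce the distinctive type $B$ structure.

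For the cases \eqref{B_gamma0_2} and \eqref{B_gamma0_1} with $|\gamma| = 0$, the element $\wbg = w_\beta$ lies in the type $A_{n-1}$ subsystem generated by $\{s_1,\dots,s_{n-1}\}$. An induction on the block structure of $w_\beta$, parallel to \Cref{lem:wbetaC}, shows that the first $n-1$ columns of $\Mbg$ agree with the type $A_{n-1}$ matrix from \Cref{cor:matrixMbetaA} (viewing $\beta$ as a partition of $n$), while the $n$-th column equals $-2\sum_{i=n-\beta_p+1}^{n-1}\alpha_i^\vee$ when $\beta_p \geq 2$ and is zero when $\beta_p = 1$. Telescoping along the differences $\alpha_i^\vee - \alpha_{i+1}^\vee$ already in the type $A$ basis for $n-\beta_p+1 \leq i \leq n-2$ reduces this new column to $-2(\beta_p - 1)\alpha_{n-1}^\vee$ modulo the type $A$ column span. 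Since $\gcd(\beta_k)\alpha_{n-1}^\vee$ is among the type $A$ basis elements, and one verifies $\gcd(\gcd(\beta_k), 2(\beta_p - 1)) = \gcd(\beta_k, \beta_p - 2)$, the minimal positive integer multiple of $\alpha_{n-1}^\vee$ in $\Mod(\wbg)$ is $\gcd(\beta_k, \beta_p - 2)\alpha_{n-1}^\vee$. This yields the basis of case \eqref{B_gamma0_2} directly; when this $\gcd$ equals $1$ (or $\beta_p = 1$), the coroot $\alpha_{n-1}^\vee$ itself lies in $\Mod(\wbg)$, and assembling $\alpha_j^\vee = \alpha_{j+1}^\vee + (\alpha_j^\vee - \alpha_{j+1}^\vee)$ backwards through the type $A$ differences recovers the basis $\{\alpha_j^\vee : j \in J_\beta\}$ of case \eqref{B_gamma0_1}.

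For the cases \eqref{B_beta0_gamma3}--\eqref{B_beta0_gamma1s} with $|\beta| = 0$, the element $\wbg = w_\gamma$ is cuspidal, so $\Mov(w_\gamma) = V^*$ and $\Mod(w_\gamma)$ has full rank $n$ by \Cref{thm:modMoveW}. The plan is to establish a type $B$ analog of \Cref{cor:wgammaC}: $w_\gamma$ has a block lower-triangular matrix whose diagonal blocks are type $B$ analogs of $V_{\gamma_k}$ (with the factor $2$ now appearing in the last column of the block rather than in the last row), and whose off-diagonal blocks contain $\pm 1$ or $\pm 2$ entries depending on the parities of $\gamma_k$ and $\gamma_{k-1}$. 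Proving this requires induction on $q$ analogous to \Cref{lem:1q} and \Cref{lem:wgamma1parts}, tracking how applying $s_{n-1}$ twice returns $\alpha_n^\vee$ to itself but leaves behind an even multiple of $\alpha_{n-1}^\vee$, so that odd-length blocks swap the roles of $\alpha_n^\vee$ and $\alpha_{n-1}^\vee$ while even-length blocks largely preserve them up to a factor of $2$ on $\alpha_{n-1}^\vee$. Once the matrix is constructed, column operations convert the diagonal blocks into analogs of $U_r$ from \eqref{eq:Ur}, and Bezout reductions on the resulting columns yield basis elements of the form $2\alpha_{n-j_k^\gamma}^\vee + \gamma_{k,k-1}\alpha_n^\vee$ (or with $\alpha_{n-1}^\vee$ replacing $\alpha_n^\vee$ in case \eqref{B_beta0_gamma2}), while the remaining basis elements $-\alpha_i^\vee + \alpha_{i+1}^\vee$ and $-\alpha_i^\vee + \alpha_{i+2}^\vee$ arise from the off-block contributions exactly as in type $A$.

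The main obstacle will be the careful case distinction driven by the small initial parts of $\gamma$. When $\gamma_1 \geq 3$ (case \eqref{B_beta0_gamma3}), the block $w_1^\gamma$ already produces both $2\alpha_{n-1}^\vee$ and an independent $\alpha_n^\vee$-component; when $\gamma_1 = 2$ (case \eqref{B_beta0_gamma2}), the short block length forces the parity information encoded in $\gamma_{k,k-1}$ to migrate from the $\alpha_n^\vee$ slot to the $\alpha_{n-1}^\vee$ slot; and when $\gamma$ begins with one or more parts equal to $1$ (cases \eqref{B_beta0_gamma12} and \eqref{B_beta0_gamma11}), the initial $t_0, t_1, \dots$ act as the longest element of a small type $B$ subsystem via the type $B$ analog of \Cref{lem:wgamma1parts}, producing the additional basis element $-\alpha_{n-\ell-1}^\vee + \alpha_n^\vee$. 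The degenerate case \eqref{B_beta0_gamma1s} with $\gamma = (1,\dots,1)$ is the simplest, since $w_\gamma = w_0$ equals $-\Id$ in type $B_n$, giving $\Mbg = 2\Id$ and basis $\{2\alpha_i^\vee\}$. Verifying the uniform pattern across these five subcases for $|\beta|=0$ amounts to careful bookkeeping of the factor-$2$ migrations, but follows the style of the examples in \Cref{sec:examplesA} and \Cref{sec:examplesC}.
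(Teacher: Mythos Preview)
Your proposal is correct and takes essentially the same approach as the paper. The paper does not give a formal proof of this theorem; instead it states that the examples in \Cref{sec:examplesB} ``can then be generalized in a straightforward manner to prove these results,'' and explicitly leaves the general argument to the reader. Your outline---compute $\Mbg$ by tracking the type $B$ Cartan entries $c_{n-1,n}=-2$, $c_{n,n-1}=-1$, then extract a basis via column operations with case distinctions driven by the small initial parts of $\gamma$---is precisely the program those examples illustrate, and your treatment of the $|\gamma|=0$ case (the last column $-2\sum_{i=n-\beta_p+1}^{n-1}\alpha_i^\vee$, telescoping to $-2(\beta_p-1)\alpha_{n-1}^\vee$, then $\gcd$ with $\gcd(\beta_k)$) matches the computation in Examples~\ref{eg:beta4gamma0B4}--\ref{eg:beta41gamma0B5} up to the choice of column operations. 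One small wording issue: your heuristic that ``applying $s_{n-1}$ twice returns $\alpha_n^\vee$ to itself but leaves behind an even multiple of $\alpha_{n-1}^\vee$'' is not literally right (since $s_{n-1}^2=1$); the parity dependence actually enters through the alternating structure of the palindromic words $w_k^\gamma$ as in Examples~\ref{eg:beta0gamma34B7} and~\ref{eg:beta0gamma234B9}, but this does not affect the validity of your overall plan.
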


As seen above, the two special cases considered in Theorem~\ref{thm:BasisB} already involve several subcases and delicate statements. Thus for the remaining ``mixed" case, where $|\beta| \geq 1$ and $|\gamma| \geq 1$, we have opted not to give even lengthier general statements, but instead to provide illustrative examples of how to identify convenient bases. In all cases, a set of equations explicitly describing $\Mod(\wbg)$ can easily be deduced once the (relatively sparse) basis is known.

%%%%%%%%%%%%%%%%%%%%%%%%%%%%%%%%%%%%%%%%%%%%%%
%%%%%%%%%%%%%%%%%%%%%%%%%%%%%%%%%%%%%%%%%%%%%%
\subsection{Examples in type $B$}\label{sec:examplesB}
%%%%%%%%%%%%%%%%%%%%%%%%%%%%%%%%%%%%%%%%%%%%%%
%%%%%%%%%%%%%%%%%%%%%%%%%%%%%%%%%%%%%%%%%%%%%%

In this section, we illustrate both Theorems~\ref{thm:SNFB} and~\ref{thm:BasisB} with a sequence of examples, which can then be generalized in a straightforward manner to prove these results, as well as to find a basis in all cases. In Section~\ref{sec:examplesBC}, we consider the same pairs of compositions as in Section~\ref{sec:examplesC}:
\begin{itemize}
\item $|\beta| = 0$ and $\gamma = (4)$  in Example~\ref{eg:beta0gamma4B4};
\item $|\beta| = 0$ and $\gamma = (3,4)$  in Example~\ref{eg:beta0gamma34B7};
\item $|\beta| = 0$ and $\gamma = (2,3,4)$  in Example~\ref{eg:beta0gamma234B9};
\item $|\beta| = 0$ and $\gamma = (1,3)$  in Example~\ref{eg:beta0gamma13B4}; 
\item $\beta = (1)$ and $\gamma = (3)$  in Example~\ref{eg:beta1gamma3B4}; 
\item $\beta = (4)$ and $\gamma = (3)$  in Example~\ref{eg:beta4gamma3B7}; and
\item $\beta = (3,3)$ and $\gamma = (3)$  in Example~\ref{eg:beta33gamma3B9}.  
\end{itemize}
In Section~\ref{sec:examplesBextra}, we then also consider:
\begin{itemize}
\item $|\beta| = 0$ and $\gamma = (1,\dots,1)$ in Example~\ref{eg:beta0gamma1s}; 
\item $|\beta| = 0$ and $\gamma = (1,1,1,4)$ in Example~\ref{eg:beta0gamma1114B7};  
\item $\beta = (1)$ and $\gamma = (3,4)$ in Example~\ref{eg:beta1gamma34B8};
\item $\beta = (5)$ and $\gamma = (3)$ in Example~\ref{eg:beta5gamma3B8};
\item $\beta = (4)$ and $|\gamma| = 0$ in Example~\ref{eg:beta4gamma0B4}; 
\item $\beta = (3,3)$ and $|\gamma| = 0$ in Example~\ref{eg:beta33gamma0B6}; and 
\item $\beta = (4,1)$ and $|\gamma| = 0$ in Example~\ref{eg:beta41gamma0B5}.
\end{itemize}
For each of these examples, we construct a $\Z$-basis for $\Mod(\wbg)$ and find the Smith normal form for $(\Id - \wbg)$ in type $B$.  

\subsubsection{Examples from type $C$, reconsidered in type $B$}\label{sec:examplesBC}

In this section, we illustrate our results in type $B$ by reconsidering the same examples as in Section~\ref{sec:examplesC}. 

\begin{example}\label{eg:beta0gamma4B4}  This example illustrates part~\eqref{SNFB_gcd2_parity} of Theorem~\ref{thm:SNFB} and part~\eqref{B_beta0_gamma3} of Theorem~\ref{thm:BasisB}. Let $\sW$ be of type $B_4$ and suppose $|\beta| = 0$ and $\gamma = (4)$, so that $\wbg = w_\gamma = s_4 s_3 s_2 s_1$ is Coxeter in $\sW$.  The matrices for $w_\gamma$ and $N_\gamma = \Id - w_\gamma$ with respect to the type $B_4$ basis $\Delta^\vee$ are given by
\[
w_\gamma = \begin{pmatrix}
-1 & 1 &  0 & 0\\ 
-1 & 0 & 1 & 0\\ 
-1 & 0 & 0 & \circled{$2$} \\
\circled{$-1$} & 0 & 0 & 1
\end{pmatrix}
\quad \mbox{and} \quad
N_\gamma = \begin{pmatrix}
2 & -1 &  0 & 0\\ 
1 & 1 & -1 & 0\\ 
1 & 0 & 1 & -2 \\
1 & 0 & 0 & 0
\end{pmatrix}.
\]
Note that the matrix for $w_\gamma$ differs from that of Example \ref{eg:beta0gamma4C4} in only the circled entries, due to the slight difference in the Cartan matrices.

To determine $\Mod(w_\gamma)$, we use similar column operations to those we used to obtain $B_\beta$ from $M_\beta$ in Example \ref{eg:CoxeterA3}. In particular, replace $\cC_1(N_\gamma)$ by $\sum_{i=1}^3 i \cC_i(N_\gamma)$ to give us $B'_\gamma$:
\[
B'_\gamma = \begin{pmatrix}
0 & -1 &  0 & 0\\ 
0 & 1 & -1 & 0\\ 
\circled{4} & 0 & 1 & \boxed{-2} \\
1 & 0 & 0 & 0
\end{pmatrix} 
\quad \to \quad
B_\gamma = \begin{pmatrix}
0 & -1 &  0 & 0\\ 
0 & 1 & -1 & 0\\ 
0 & 0 & 1 & -2 \\
1 & 0 & 0 & 0
\end{pmatrix}. 
\]
Since $\gcd(\beta_k, 2) = 2$ in this case, we can use the boxed $-2$ in column 4 to clear out the circled entry in column 1, giving us $B_\gamma$. The corresponding $\Z$-basis for $\Mod(\wbg)$ is then given by $\{ -\alpha_1^\vee+\alpha_2^\vee, -\alpha_2^\vee + \alpha_3^\vee, 2\alpha_3^\vee, \alpha_4^\vee \}$, matching part~\eqref{B_beta0_gamma3} of Theorem~\ref{thm:BasisB}.

To determine the Smith normal form $S_\gamma$ for $N_\gamma$, it is clear that following several row operations on $B_\gamma$, we obtain 
\[
T_\gamma = \begin{pmatrix}
0 & -1 &  0 & 0\\ 
0 & 0 & -1 & 0\\ 
0 & 0 & 0 & -2 \\
1 & 0 & 0 & 0
\end{pmatrix}.
\]
Hence $S_\gamma = \diag(1,1,1,2) $, and $R^\vee /\Mod(w_\gamma) \cong \Z/2\Z$, confirming part~\eqref{SNFB_gcd2_parity} of Theorem~\ref{thm:SNFB}.
\end{example}

\begin{example}\label{eg:beta0gamma34B7}  
This example illustrates part \eqref{SNFB_gcd2_nonparity}  of Theorem~\ref{thm:SNFB} and part~\eqref{B_beta0_gamma3} of Theorem~\ref{thm:BasisB}, in the case that $\gamma$ has more than one part. 
Let $\sW$ be of type $B_7$ and suppose $|\beta| = 0$ and $\gamma = (3,4)$, so that $\wbg = w_\gamma = w^\gamma_1 w^\gamma_2 = (s_7 s_6 s_5)(s_4 s_5 s_6 s_7 s_6 s_5 s_4 s_3 s_2 s_1)$.  The matrices for $w_\gamma$ and $N_\gamma = \Id-w_\gamma$ with respect to the type $B_7$ basis $\Delta^\vee$ are given by
\[
w_\gamma = 
\begin{pmatrix}
-1 & 1 & 0 & 0 &  &  &  \\
-1 & 0 & 1 & 0 &  &  &  \\
-1 & 0 & 0 & 1 &  &  &  \\
-2 & 0 & 0 & 1 &  &  &  \\
-2 & 0 & 0 & 1 & -1 & 1 & 0 \\
-2 & 0 & 0 & 1 & -1 & 0 & \circled{2} \\
\circled{$-1$} & 0 & 0 & \circled{1} & \circled{$-1$} & 0 & 1 \\
\end{pmatrix},
\quad
N_\gamma = 
\begin{pmatrix}
2 & -1 & 0 & 0 &  &  &  \\
1 & 1 & -1 & 0 &  &  &  \\
1 & 0 & 1 & -1 &  &  &  \\
2 & 0 & 0 & 0 &  &  &  \\
2 & 0 & 0 & -1 & 2 & -1 & 0 \\
2 & 0 & 0 & -1 & 1 & 1 & -2 \\
1 & 0 & 0 & -1 & 1 & 0 & 0 \\
\end{pmatrix}.
\]
Note that the matrix for $w_\gamma$ differs from that of Example \ref{eg:beta0gamma34C7} in only the circled entries, due to the slight difference in the Cartan matrices. 

For $\Mod(w_\gamma)$, we carry out column operations on $N_\gamma$ similar to those in the previous example within blocks. First add column 5 to column 4.  We then replace $\cC_1(N_\gamma)$ by $\cC_1(N_\gamma) + 2\cC_2(N_\gamma) + 3\cC_3(N_\gamma)$ and then replace $\cC_5(N_\gamma)$ by $\cC_5(N_\gamma) + 2\cC_6(N_\gamma)$ to give us $B_\gamma'$:
\[
B'_\gamma = 
\begin{pmatrix}
0 & -1 & 0 & 0 &  &  &  \\
0 & 1 & -1 & 0 &  &  &  \\
\circled{4} & 0 & 1 & -1 &  &  &  \\
2 & 0 & 0 & 0 &  &  &  \\
\circled{2} & 0 & 0 & 1 & 0 & -1 & 0 \\
2 & 0 & 0 & 0 & 3 & 1 & -2 \\
1 & 0 & 0 & 0 & 1 & 0 & 0 \\
\end{pmatrix}
\ \to \ 
B''_\gamma = 
\begin{pmatrix}
0 & -1 & 0 & 0 &  &  &  \\
0 & 1 & -1 & 0 &  &  &  \\
0 & 0 & 1 & -1 &  &  &  \\
2 & 0 & 0 & 0 &  &  &  \\
0 & 0 & 0 & 1 & 0 & -1 & 0 \\
\circled{8} & 0 & 0 & 0 & \circled{3} & 1 & \boxed{-2} \\
1 & 0 & 0 & 0 & 1 & 0 & 0 \\
\end{pmatrix}.
\]
Using columns 4 and 6 to clear the circled entries in column 1, we obtain $B''_\gamma$ above. Using the boxed pivot $-2$ in the last column, we can then reduce all other entries in row $n-1$ modulo $2$ to obtain $B_\gamma$
\[ 
B_\gamma = 
\begin{pmatrix}
0 & -1 & 0 & 0 &  &  &  \\
0 & 1 & -1 & 0 &  &  &  \\
0 & 0 & 1 & -1 &  &  &  \\
2 & 0 & 0 & 0 &  &  &  \\
0 & 0 & 0 & 1 & 0 & -1 & 0 \\
0 & 0 & 0 & 0 & 1 & 1 & -2 \\
1 & 0 & 0 & 0 & 1 & 0 & 0 \\
\end{pmatrix}.
\]
It may be checked that the $(n-1,1)$-entry of $B_\gamma$ equals $\gamma_2 (\operatorname{mod} 2)$, and the $(n-1,5)$-entry of $B_\gamma$ equals $\gamma_1 (\operatorname{mod} 2)$, so that these entries differ if and only if $\gamma_{2,1} = 1$. From here, the $\Z$-basis $ \{ 2\alpha_4^\vee + \alpha_7^\vee\} \cup \{ 2\alpha_{6}^\vee, \alpha_{6}^\vee + \alpha_7^\vee\} \cup \{ -\alpha_1^\vee + \alpha_2^\vee, -\alpha_2^\vee +\alpha_3^\vee, -\alpha_5^\vee+ \alpha_6^\vee \} \cup  \{- \alpha_3^\vee + \alpha_5^\vee \}$
can be read off, matching part~\eqref{B_beta0_gamma3} of Theorem~\ref{thm:BasisB}.

For the Smith normal form, continuing from $B_\gamma$, we can add rows 1 to 2, 2 to 3, 3 to 5, and 5 to 6 successively to obtain $T'_\gamma$:
\[ 
T'_\gamma = 
\begin{pmatrix}
0 & -1 & 0 & 0 &  &  &  \\
0 & 0 & -1 & 0 &  &  &  \\
0 & 0 & 0 & -1 &  &  &  \\
2 & 0 & 0 & 0 &  &  &  \\
0 & 0 & 0 & 0 & 0 & -1 & 0 \\
0 & 0 & 0 & 0 & 1 & 0 & -2 \\
1 & 0 & 0 & 0 & 1 & 0 & 0 \\
\end{pmatrix}.
\]
Extracting the $3\times 3$ submatrix containing the nonzero entries of columns 1, 5, and 7, we then have
\[ 
\begin{pmatrix}
2 & 0 & 0 \\
0 & 1 & -2 \\
1 & 1 & 0
\end{pmatrix}
\to 
\begin{pmatrix}
0 & -2 & 0 \\
0 & 1 & -2 \\
1 & 1 & 0
\end{pmatrix}
\to 
\begin{pmatrix}
0 & 0 & -4 \\
0 & 1 & -2 \\
1 & 0 & 0
\end{pmatrix}
\to 
\begin{pmatrix}
0 & 0 & -4 \\
0 & 1 & 0 \\
1 & 0 & 0
\end{pmatrix}.
\]
Inserting these columns back into $T'_\gamma$ then gives us the matrix $T_\gamma$:
\[ 
T_\gamma = 
\begin{pmatrix}
0 & -1 & 0 & 0 &  &  &  \\
0 & 0 & -1 & 0 &  &  &  \\
0 & 0 & 0 & -1 &  &  &  \\
0 & 0 & 0 & 0 & 0 & 0 & -4 \\
0 & 0 & 0 & 0 & 0 & -1 & 0 \\
0 & 0 & 0 & 0 & 1 & 0 & 0 \\
1 & 0 & 0 & 0 & 0 & 0 & 0 \\
\end{pmatrix}.
\]
Thus $S_\gamma = \diag(1^5, 4)$, and so $R^\vee/\Mod(w_\gamma) \cong (\Z/4\Z)$, confirming part \eqref{SNFB_gcd2_nonparity}  of Theorem~\ref{thm:SNFB}.
\end{example}

\begin{example}\label{eg:beta0gamma234B9}  
This example illustrates part \eqref{SNFB_gcd2_nonparity} of Theorem~\ref{thm:SNFB} and part~\eqref{B_beta0_gamma2} of Theorem~\ref{thm:BasisB}. 
Let $\sW$ be of type $B_9$ and suppose $|\beta| = 0$ and $\gamma = (2,3,4)$, so that $\wbg = w_\gamma = w^\gamma_1 w^\gamma_2 w^\gamma_3 = (s_9 s_8)(s_7 s_8 s_9 s_8s_7s_6 s_5)(s_4 \cdots s_8 s_9 s_8 \cdots s_4 s_3s_2  s_1)$.  The matrices for $w_\gamma$ and $N_\gamma = \Id - w_\gamma$ with respect to the type $B_9$ basis $\Delta^\vee$ are given by
\[
w_\gamma = 
\begin{pmatrix}
-1 & 1 & 0 & 0 & &&&&     \\
-1 & 0 & 1 & 0 &  &&&&    \\
-1 & 0 & 0 & 1 &   &&&&   \\
-2 & 0 & 0 & 1 &    &&&&  \\
-2&0&0&1 & -1 & 1 & 0 &      \\
-2&0&0&1 & -1 & 0 & 1 &      \\
-2&0&0&2 & -2 & 0 & 1 &      \\
-2&0&0&2 & -2 & 0 & 1 &  -1 & \circled{2}    \\
\circled{$-1$}&0&0&\circled{1} & \circled{$-1$} & 0 & \circled{1} & \circled{$-1$} & 1
\end{pmatrix},
\]
\[
N_\gamma = 
\begin{pmatrix}
2 & -1 & 0 & 0 & &&&&     \\
1 & 1 & -1 & 0 &  &&&&    \\
1 & 0 & 1 & -1 &   &&&&   \\
2 & 0 & 0 & 0 &    &&&&  \\
2&0&0&-1 & 2 & -1 & 0 &      \\
2&0&0&-1 & 1 & 1 & -1 &      \\
2&0&0&-2 & 2 & 0 & 0 &      \\
2&0&0&-2 & 2 & 0 & -1 &  2 & -2    \\
1&0&0&-1 & 1 & 0 & -1 & 1 & 0
\end{pmatrix}.
\]
Note that the matrix for $w_\gamma$ differs from that of Example \ref{eg:beta0gamma234C9}  in only the circled entries, due to the slight difference in the Cartan matrices.

For $\Mod(w_\gamma)$, we carry out column operations on $N_\gamma$ similar to those in the previous two examples.  Start by adding columns 5 and 8 to columns 4 and 7, respectively. Clearing entries within the first columns of the blocks on the diagonal, we obtain $B'_\gamma$:
\[
B'_\gamma = 
\begin{pmatrix}
0 & -1 & 0 & 0 & &&&&     \\
0 & 1 & -1 & 0 &  &&&&    \\
\circled{4} & 0 & 1 & -1 &   &&&&   \\
2 & 0 & 0 & 0 &    &&&&  \\
\circled{2}&0&0&1 & 0 & -1 & 0 &      \\
\circled{2}&0&0&0 & \circled{3} & 1 & -1 &      \\
2&0&0&0 & 2 & 0 & 0 &      \\
2&0&0&0 & 2 & 0 & 1 &  0 & -2    \\
1&0&0&0 & 1 & 0 & 0 & \boxed{1} & 0
\end{pmatrix}.
\]
Notice that we now have a boxed pivot entry $1$ in the last row, since $\delta_1 = 2$. We use this to clear the last row, and also clear the circled entries using columns to their right with first entry $-1$, as in the previous examples, and we get
\[
B''_\gamma = 
\begin{pmatrix}
0 & -1 & 0 & 0 & &&&&     \\
0 & 1 & -1 & 0 &  &&&&    \\
0 & 0 & 1 & -1 &   &&&&   \\
2 & 0 & 0 & 0 &    &&&&  \\
0&0&0&1 & 0 & -1 & 0 &      \\
0&0&0&0 & 0 & 1 & -1 &      \\
\circled{2}&0&0& 0 & 2 & 0 & 0 & &    \\
10&0&0&0 & 5 & 0 & 1 &  0 & -2    \\
0&0&0&0 & 0 & 0 & 0 & 1 & 0
\end{pmatrix}.
\]
We then clear the circled entry using column $5$, and carry out several final column operations to obtain $B_\gamma$:
\[
B_\gamma = 
\begin{pmatrix}
0 & -1 & 0 & 0 & &&&&     \\
0 & 1 & -1 & 0 &  &&&&    \\
0 & 0 & 1 & -1 &   &&&&   \\
2 & 0 & 0 & 0 &    &&&&  \\
0&0&0&1 & 0 & -1 & 0 &      \\
0&0&0&0 & 0 & 1 & -1 &      \\
0&0&0& 0 & 2 & 0 & 0 & &    \\
1&0&0&0 & 1 & 0 & 1 &  0 & 2    \\
0&0&0&0 & 0 & 0 & 0 & 1 & 0
\end{pmatrix},
\]
from which the $\Z$-basis $ \{ 2\alpha_4^\vee + \alpha_8^\vee, 2\alpha_7^\vee + \alpha_8^\vee\} \cup \{ 2\alpha_{8}^\vee, \alpha_{9}^\vee \} \cup \{ -\alpha_1^\vee + \alpha_2^\vee, -\alpha_2^\vee +\alpha_3^\vee, -\alpha_5^\vee+ \alpha_6^\vee \} \cup  \{- \alpha_3^\vee + \alpha_5^\vee, - \alpha_6^\vee + \alpha_8^\vee \}$
can be read off, matching part~\eqref{B_beta0_gamma2} of Theorem~\ref{thm:BasisB}.

For the Smith normal form, continuing from $B_\gamma$, we can add rows 1 to 2, 2 to 3, 3 to 5, 5 to 6, and 6 to 8 successively to obtain $T'_\gamma$:
\[
T'_\gamma = 
\begin{pmatrix}
0 & -1 & 0 & 0 & &&&&     \\
0 & 0 & -1 & 0 &  &&&&    \\
0 & 0 & 0 & -1 &   &&&&   \\
2 & 0 & 0 & 0 &    &&&&  \\
0&0&0&0 & 0 & -1 & 0 &      \\
0&0&0&0 & 0 & 0 & -1 &      \\
0&0&0& 0 & 2 & 0 & 0 & &    \\
1&0&0&0 & 1 & 0 & 0 &  0 & -2    \\
0&0&0&0 & 0 & 0 & 0 & 1 & 0
\end{pmatrix},
\]
Extracting the $3 \times 3$ submatrix containing the nonzero entries of columns 1, 5, and 8, that is, the columns of $T'_\gamma$ which have $2$s, we have
\[ 
\begin{pmatrix}
2 & 0 & 0 \\
0 & 2 & 0 \\
1 & 1 & 2
\end{pmatrix}
\to 
\begin{pmatrix}
0 & -2 & -4 \\
0 & 2 & 0 \\
1 & 1 & 2
\end{pmatrix}
\to 
\begin{pmatrix}
0 & 0 & 4 \\
0 & 2 & 0 \\
1 & 0 & 0
\end{pmatrix}
\]
Inserting these columns back into $T'_\gamma$ then gives us the matrix $T_\gamma$:
\[
T_\gamma = 
\begin{pmatrix}
0 & -1 & 0 & 0 & &&&&     \\
0 & 0 & -1 & 0 &  &&&&    \\
0 & 0 & 0 & -1 &   &&&&   \\
0 & 0 & 0 & 0 &  0 &0&0&0&4  \\
0&0&0&0 & 0 & -1 & 0 &  0 & 0    \\
0&0&0&0 & 0 & 0 & -1 &    0 & 0  \\
0&0&0& 0 & 2 & 0 & 0 &0 &  0   \\
1&0&0&0 & 0 & 0 & 0 &  0 & 0    \\
0&0&0&0 & 0 & 0 & 0 & 1 & 0
\end{pmatrix}.
\]
Thus $S_\gamma = \diag(1^6, 2, 4)$, and so $R^\vee/\Mod(w_\gamma) \cong (\Z/2\Z) \oplus (\Z/4\Z)$, confirming part \eqref{SNFB_gcd2_nonparity} of Theorem~\ref{thm:SNFB}.
\end{example}

\begin{example}\label{eg:beta0gamma13B4}  
This example illustrates part \eqref{SNFB_gcd2_parity} of Theorem~\ref{thm:SNFB} and part~\eqref{B_beta0_gamma12} of Theorem~\ref{thm:BasisB}. 
Let $\sW$ be of type $B_4$ and suppose $|\beta| = 0$ and $\gamma = (1,3)$, so that $\wbg = w_\gamma = w_1^\gamma w_2^\gamma = (s_4)(s_3 s_4 s_3 s_2 s_1)$. The matrices for $w_\gamma$ and $N_\gamma = \Id - w_\gamma$ with respect to the type $B_4$ basis $\Delta^\vee$ are given by
\[
w_\gamma = 
\begin{pmatrix} 
-1 &1&0& 0\\
-1 & 0 &1&0 \\
-2&0&1&0 \\
\circled{$-1$}&0&\circled{1}&-1
\end{pmatrix},
\quad\quad
N_\gamma = 
\begin{pmatrix} 
2 &-1&0& 0\\
1 & 1 &-1&0 \\
2&0&0& 0\\
1&0&-1&2
\end{pmatrix}.
\]
Note that the matrix for $w_\gamma$ differs from that of Example \ref{eg:beta0gamma13C4} in only the circled entries, due to the slight difference in the Cartan matrices.

Proceeding using similar column operations to the previous examples, we obtain 
\[
B_\gamma = 
\begin{pmatrix} 
0 &-1&0& 0\\
0 & 1 &-1&0 \\
2&0&0& 0\\
0&0&1&2
\end{pmatrix},
\]
from which the $\Z$-basis $ \{ 2\alpha_3^\vee, 2\alpha_4^\vee\} \cup \{ -\alpha_1^\vee + \alpha_2^\vee \} \cup \{ -\alpha_2^\vee + \alpha_4^\vee\}$ can be read off. Here, we have $I_\gamma = \{3,4\}$ so $I_{\gamma} - 1 = \{ 2,3\}$, and this matches part~\eqref{B_beta0_gamma12} of Theorem~\ref{thm:BasisB}. 

To obtain the Smith normal form, continue by doing successive row operations on $B_\gamma$ as in the previous examples, to yield 
\[
T_\gamma = 
\begin{pmatrix} 
0 &-1&0& 0\\
0 & 0 &-1&0 \\
2&0&0& 0\\
0&0&0&2
\end{pmatrix},
\]
Thus $S_\gamma = \diag(1,1,2,2)$, and so $R^\vee / \Mod(w_\gamma) \cong (\Z/2\Z)^2$, confirming part \eqref{SNFB_gcd2_parity} of Theorem~\ref{thm:SNFB}.
\end{example}

Our final examples in this section illustrate the case when the compositions $\beta$ and $\gamma$ are both nonempty.

\begin{example}\label{eg:beta1gamma3B4}   This example illustrates part \eqref{SNFB_gcd1} of Theorem~\ref{thm:SNFB}, and shows how to obtain a basis for $\Mod(\wbg)$. 
Let $\sW$ be of type $B_4$ and suppose $\beta = (1)$ and $\gamma = (3)$, so that $\wbg = w_\gamma = s_4 s_3 s_2$. The matrices for $\wbg$ and $\Mbg = \Id - \wbg$ with respect to the type $B_4$ basis $\Delta^\vee$ are given by
\[
\wbg = 
\begin{pmatrix}
1&0&0&0\\
 1 & -1 & 1 & 0 \\
 1 & -1 & 0 & \circled{2} \\
  \circled{1} & \circled{$-1$} & 0 & 1 \\
\end{pmatrix} \quad \mbox{and} \quad
\Mbg = 
\begin{pmatrix}
0&0&0&0\\
 -1 & 2 & -1 & 0 \\
 -1 & 1 & 1 & -2 \\
  -1 & 1 & 0 & 0 \\
\end{pmatrix}. 
\]
Note that the matrix for $\wbg$ differs from that of Example \ref{eg:beta1gamma3C4} in only the circled entries, due to the slight difference in the Cartan matrices.

To find $\Mod(\wbg)$, we now  add column 2 to column 1 of $\Mbg$, to obtain $\Bbg'$:
\[
\Bbg' = 
\begin{pmatrix}
0&0&0&0\\
 \boxed{1} & 2 & -1 & 0 \\
 0 & 1 & 1 & -2 \\
 0 & 1 & 0 & 0 \\
\end{pmatrix}. 
\]
We then use the boxed pivot entry $1$ in the first column of $\Bbg'$ to clear the remainder of row $2$, and then use the resulting pivot entry $1$ in column $3$ to clear the remainder of row $3$, to obtain $\Bbg$:
\[
\Bbg = 
\begin{pmatrix}
0&0&0&0\\
 \boxed{1} & 0 & 0 & 0 \\
 0 & 0 & \boxed{1} & 0 \\
 0 & \boxed{1} & 0 & 0 \\
\end{pmatrix}. 
\]
The resulting $\Z$-basis is $\{ \alpha_2^\vee, \alpha_3^\vee, \alpha_4^\vee \}$.
We also have $\Sbg = \diag(1^3, 0)$, and so $R^\vee/\Mod(\wbg) \cong \Z$, confirming part \eqref{SNFB_gcd1} of Theorem~\ref{thm:SNFB}.
\end{example}

\begin{example}\label{eg:beta4gamma3B7}   This example illustrates part \eqref{SNFB_gcd2_parity} of Theorem~\ref{thm:SNFB} in the case $|\beta| \neq 0$, and shows how to obtain a basis for $\Mod(\wbg)$. 
Let $\sW$ be of type $B_7$ and suppose $\beta = (4)$ and $\gamma = (3)$ so that $\wbg = w_\beta w_\gamma = (s_1 s_2 s_3)(s_7 s_6 s_5)$. The matrices for $\wbg$ and $\Mbg = \Id - \wbg$ with respect to the type $B_7$ basis $\Delta^\vee$ are given by
\[
\wbg = 
\begin{pmatrix}
0&0&-1&1& \\
1&0&-1&1& \\
0&1&-1&1& \\
0&0&0&1&0&0&0\\
&&& 1 & -1 & 1 & 0 \\
&& & 1 & -1 & 0 & \circled{2} \\
&&& \circled{1} & \circled{$-1$} & 0 & 1 \\
\end{pmatrix} \quad
\Mbg = 
\begin{pmatrix}
1&0&1&-1& \\
-1&1&1&-1& \\
0&-1&2&-1& \\
0&0&0&0&0&0&0\\
&&& -1 & 2 & -1 & 0 \\
&& & -1 & 1 & 1 & -2 \\
&&& -1 & 1 & 0 & 0 \\
\end{pmatrix}. 
\]
Note that the matrix for $\wbg$ differs from that of Example \ref{eg:beta4gamma3C7} in only the circled entries, due to the slight difference in the Cartan matrices.

We start by adding columns 3 and 5 to column 4, and then proceed with column operations as in previous type $A$ examples, to obtain 
\[ 
B'_{\beta,\gamma} = 
\begin{pmatrix}
1&0&0&0& \\
-1&1&0&0& \\
0&-1&4&1& \\
0&0&0&0&0&0&0\\
&&& 1 & 2 & -1 & 0 \\
&& & 0 & 1 & 1 & -2 \\
&&& 0 & 1 & 0 & 0 \\
\end{pmatrix}
\quad \to \quad
B''_{\beta,\gamma} = 
\begin{pmatrix}
1&0&0&0& \\
-1&1&0&0& \\
0&-1&0&1& \\
0&0&0&0&0&0&0\\
&& 0 & 1 & 2 & -1 & 0 \\
&&-4 & 0 & 1 & 1 & -2 \\
&&& 0 & 1 & 0 & 0 \\
\end{pmatrix}.
\]
Using columns 4 and 6, we can move the $\beta_k = 4$ down into the row with the $-2$ in the last column, giving $B''_{\beta,\gamma}$. At this point, we replace the $-4$ by $\gcd(\beta_k,2) \in \{1,2\}$.  

Note that the lower right-hand $3 \times 3$ matrix is the same as for $\gamma = (3)$ in type $B_3$, so that the basis coming from the last 3 columns is the same as if we were working in type $B_3$.  Finally, we use $\gcd(\beta_k,2)$ to clear the final column, removing $2\alpha_6^\vee$ from the basis for the lower right $3\times 3$ minor.  The resulting $\Z$-basis from the matrix $B_{\beta,\gamma}$ below
\[
B_{\beta,\gamma} = 
\begin{pmatrix}
1&0&0&0& \\
-1&1&0&0& \\
0&-1&0&1& \\
0&0&0&0&0&0&0\\
&&0& 1 & 0 & -1 & 0 \\
&&2 & 0 & 1 & 1 & 0 \\
&&& 0 & 1 & 0 & 0 \\
\end{pmatrix}.
\]
is read off as $\{ \alpha_1^\vee - \alpha_2^\vee, \alpha_2^\vee - \alpha_3^\vee, \alpha_5^\vee - \alpha_6^\vee\} \cup \{ \alpha_3^\vee + \alpha_5^\vee\} \cup \{ \gcd(\beta_k,2)\alpha_6^\vee\} \cup  \{ \alpha_6^\vee + \alpha_7^\vee\}$.

To find the Smith normal form, continue by doing successive row operations on $B_{\beta,\gamma}$ as in previous examples, to yield
\[
T_{\beta,\gamma} = 
\begin{pmatrix}
1&0&0&0& \\
0&1&0&0& \\
0&0&0&1& \\
0&0&0&0&0&0&0\\
&&0& 0 & 0 & -1 & 0 \\
&&2 & 0 & 0 & 0 & 0 \\
&&& 0 & 1 & 0 & 0 \\
\end{pmatrix}.
\]
Therefore $\Sbg = \diag(1^5, 2,0)$, and so $R^\vee/\Mod(\wbg) \cong (\Z/2\Z) \oplus \Z$, confirming part \eqref{SNFB_gcd2_parity} of Theorem~\ref{thm:SNFB}.
\end{example}

\begin{example}\label{eg:beta33gamma3B9}  
This example illustrates part \eqref{SNFB_gcd1} of Theorem~\ref{thm:SNFB}, and shows how to obtain a basis for $\Mod(\wbg)$. 
Let $\sW$ be of type $B_9$ and suppose $\beta = (3,3)$ and $\gamma = (3)$.  The matrix $\Mbg = \Id - \wbg$ with respect to the type $B_9$ basis $\Delta^\vee$ is given by
\[
\Mbg = 
\begin{pmatrix}
1 & 1 & -1 & & & & && \\
-1 & 2 & -1 & & & & &&\\
0 & 0 & 0 & 0 & 0 & 0 && \\
& & -1&1&1&-1&&& \\
 &  & &-1&2&-1& &&\\
 & & &0&0&0& 0 &0&0\\
&&&&& -1 & 2 & -1 & 0 \\
&&&& & -1 & 1 & 1 & -2 \\
&&&&& -1 & 1 & 0 & 0 \\
\end{pmatrix}. 
\]
We add columns 5 and 7 to column 6, carry out column operations on the first 5 columns as done in type $A$, and then move $\gcd(\beta_k) = 3$ down to row $n-1$, giving us $\Bbg'$:
\[
\Bbg' = 
\begin{pmatrix}
1 & 0 & 0 & & & & && \\
-1 & 0 & 1 & & & & &&\\
0 & 0 & 0 & 0 & 0 & 0 && \\
& & -1&1&0&0&&& \\
 &  & &-1&0&1& &&\\
 & & &0&0&0& 0 &0&0\\
&&&&0& 1 & 2 & -1 & 0 \\
&&&& -3 & 0 & 1 & 1 & -2 \\
&&&&& 0 & 1 & 0 & 0 \\
\end{pmatrix}. 
\]
Now since $\gcd(\beta_k,2) = 1$ we get a pivot $1$ in entry $(n-1,m-1)$. We can use this pivot to successively clear entries in the last $\gamma_1 = 3$ rows, and then clear entries in successive rows, using only column operations, and moving up the matrix, resulting in $\Bbg$:
\[
\Bbg= 
\begin{pmatrix}
\boxed{1} & 0 & 0 & & & & && \\
0 & 0 & \boxed{1} & & & & &&\\
0 & 0 & 0 & 0 & 0 & 0 && \\
& & 0&\boxed{1}&0&0&&& \\
 &  & &0&0&\boxed{1}& &&\\
 & & &0&0&0& 0 &0&0\\
&&&&0& 0 & 0 & \boxed{1} & 0 \\
&&&& \boxed{1} & 0 & 0 & 0 & 0 \\
&&&&& 0 & \boxed{1} & 0 & 0 \\
\end{pmatrix}. 
\]
Therefore $\Sbg = \diag(1^7, 0^2)$, and so $R^\vee/\Mod(\wbg) \cong \Z^2$, confirming part \eqref{SNFB_gcd1} of Theorem~\ref{thm:SNFB}.
\end{example}

\subsubsection{Additional examples in type $B$}\label{sec:examplesBextra}

We now give some further examples in type $B$, to illustrate the differences in behavior compared to type $C$. See the introduction to Section~\ref{sec:examplesB} for the list of further examples we consider in type $B$.
  
\begin{example}\label{eg:beta0gamma1s} 
This example proves a special case of part \eqref{SNFB_gcd2_parity} of Theorem \ref{thm:SNFB}, and part \eqref{B_beta0_gamma1s} of Theorem~\ref{thm:BasisB}.  
 Let $\sW$ be of type $B_{n}$ with $n \geq 2$, and let $|\beta| = 0$ and $\gamma = (1,\dots,1)$. For this case, we have that
\[\wbg = w_\gamma = s_n (s_{n-1} s_n s_{n-1}) \cdots (s_1 s_2 \cdots s_{n-1} s_n s_{n-1} \cdots s_2 s_1),\] 
which equals the longest element $w_0$ of $\sW$; see Table 1 in \cite{BenkartKangOhPark}, for example. Thus $w_0$ is central in $\sW$ and acts as $-\Id$ on $R^\vee$.
Hence, in this case
 \[\Mod(\wbg) = (\Id - w_\gamma)R^\vee = (2\Id) R^\vee = 2R^\vee,\] 
which has $\Z$-basis $\{ 2\alpha_i^\vee \mid i \in [n] \}$. The Smith normal form for $(\Id - \wbg)$ is also clearly $\Sbg = (2^{n})$.
\end{example}    
  
\begin{example}\label{eg:beta0gamma1114B7} 
This example illustrates part \eqref{SNFB_gcd2_nonparity} of Theorem~\ref{thm:SNFB} and part~\eqref{B_beta0_gamma11} of Theorem~\ref{thm:BasisB}.  Let $\sW$ be of type $B_{7}$, and let $|\beta| = 0$ and $\gamma = (1,1,1,4)$. The matrix for $N_\gamma = \Id - w_\gamma$ with respect to the type $B_7$ basis $\Delta^\vee$ is given by
\[
N_\gamma = 
\begin{pmatrix}
2 & -1&0&0&&& \\
1 & 1 & -1 & 0 &  &      \\
1 &0 & 1 & -1 &  &      \\
2 &0 & 0 & 0 & 0 & 0 & 0   \\
2 &  & &  -2 & 2 & 0 & 0   \\
2  &  & &  -2 & 0 & 2 & 0  \\
1 &  &  &-1 & 0 & 0 & 2  
\end{pmatrix}.
\]
We carry out column operations on the first $4$ columns as in type $C$, and reduce entries in the last $\ell = 3$ rows modulo $2$, to obtain $B_\gamma$:
\[
B_\gamma = 
\begin{pmatrix}
0 & -1&0&0&&& \\
0 & 1 & -1 & 0 &  &      \\
0 &0 & 1 & -1 &  &      \\
2 &0 & 0 & 0 & 0 & 0 & 0   \\
0 &  & &  0 & 2 & 0 & 0   \\
0  &  & &  0 & 0 & 2 & 0  \\
1 &  &  & 1 & 0 & 0 & 2  
\end{pmatrix}.
\]
Thus a $\Z$-basis for $\Mod(\wbg)$ is given by the union of $\{ 2\alpha_4^\vee + \alpha_7^\vee, 2\alpha_5^\vee, 2\alpha_6^\vee, 2\alpha_7^\vee\}$, $ \{ -\alpha_3^\vee + \alpha_7^\vee \}$, and $\{ -\alpha_1^\vee + \alpha_2^\vee, -\alpha_2^\vee + \alpha_3^\vee\}$. Here, we have $I_\gamma = \{ 7,6,5,4\}$, so $I_\gamma - 1 = \{ 6,5,4,3\}$, and we have $n - \ell - 1 = 3$. Hence this matches part~\eqref{B_beta0_gamma11} of Theorem~\ref{thm:BasisB}. Note that the set of vectors $\left\{ -\alpha_{i}^\vee + \alpha_{i+2}^\vee \ \middle|\  i \in (I_\gamma-1), \; i < n-\ell - 1 \right\}$ is empty in this example, since $\gamma$ has only one part which is strictly greater than $1$.

To find the Smith normal form, we continue from $B_\gamma$, now using some row operations. We successively add row 1 to row 2, row 2 to row 3, and row 3 to row 7, to obtain
\[
T_\gamma = 
\begin{pmatrix}
0 & -1&0&0&&&& \\
0 & 0 & -1 & 0 &  &  &    \\
0 &0 & 0 & -1 &  &  &    \\
2 &0 & 0 & 0 & 0 & 0 & 0   \\
0 &  & &  0 & 2 & 0 & 0   \\
0  &  & &  0 & 0 & 2 & 0  \\
1 &  &  & 0 & 0 & 0 & 2  
\end{pmatrix}.
\]
The entry 1 in the bottom left of $T_\gamma$ is due to the parity change in $\gamma$. We can now extract a suitable minor containing this entry, and using row and column column operations verify that $S_\gamma = \diag(1^4,2^2,4)$, confirming part \eqref{SNFB_gcd2_nonparity} of Theorem~\ref{thm:SNFB}.
\end{example}  
  
\begin{example}\label{eg:beta1gamma34B8}  
This example illustrates part \eqref{SNFB_gcd1} of Theorem~\ref{thm:SNFB} in the case $\beta = (1)$, and shows how to obtain a basis for $\Mod(\wbg)$, for $\gamma$ with more than one part. 
Let $\sW$ be of type $B_8$ and suppose $\beta = (1)$ and $\gamma = (3,4)$, so that $\wbg = w_\gamma = w^\gamma_1 w^\gamma_2 = (s_8 s_7 s_6)(s_5 s_6 s_7 s_8 s_7 s_6 s_5 s_4 s_3 s_2)$.  The matrix for $N_\gamma = \Id-w_\gamma$ with respect to the type $B_8$ basis $\Delta^\vee$ is given by
\[
N_\gamma = 
\begin{pmatrix}
0 & &&&&&& \\
-1 & 2 & -1 & 0 & 0 &  &  &  \\
-1 &1 & 1 & -1 & 0 &  &  &  \\
-1 &1 & 0 & 1 & -1 &  &  &  \\
-2 & 2 & 0 & 0 & 0 &  &  &  \\
 &  &  &  & -1 & 2 & -1 & 0 \\
 &  &  & & -1 & 1 & 1 & -2 \\
 &  &  & & -1 & 1 & 0 & 0 \\
\end{pmatrix}.
\]

For $\Mod(w_\gamma)$, we first add column 2 to column 1, which creates a pivot entry 1 in the first column. We then use this pivot to clear entries in the $4 \times 4$ block corresponding to $\gamma_2 = 4$, which gives $B_\gamma'$:
\[
B_\gamma' = 
\begin{pmatrix}
0 & &&&&&& \\
\boxed{1} & 0 & 0 & 0 & 0 &  &  &  \\
0 & 0& \boxed{1} & 0 & 0 &  &  &  \\
0 &0 & 0 & \boxed{1} & 0 &  &  &  \\
0 & \boxed{2} & 0 & 0 & 0 &  &  &  \\
 &  &  &  & -1 & 2 & -1 & 0 \\
 &  &  & & -1 & 1 & 1 & -2 \\
 &  &  & & -1 & 1 & 0 & 0 \\
\end{pmatrix}.
\]
We then add column 6 to column 5, and similarly clear entries in the $3 \times 3$ block corresponding to $\gamma_1 = 3$, to give $B_\gamma$:
\[
B_\gamma = 
\begin{pmatrix}
0 & &&&&&& \\
\boxed{1} & 0 & 0 & 0 & 0 &  &  &  \\
0 & 0& \boxed{1} & 0 & 0 &  &  &  \\
0 &0 & 0 & \boxed{1} & 0 &  &  &  \\
0 & \boxed{2} & 0 & 0 & 0 &  &  &  \\
 &  &  &  & \boxed{1} & 0 & 0 & 0 \\
 &  &  & & 0 & 0 & \boxed{1} & 0 \\
 &  &  & & 0 & \boxed{1} & 0 & 0 \\
\end{pmatrix}.
\]
The resulting $\Z$-basis is $\{ \alpha_2^\vee, \alpha_3^\vee, \alpha_4^\vee, 2\alpha_5^\vee \} \cup \{ \alpha_6^\vee, \alpha_7^\vee, \alpha_8^\vee\}$. We have $S_\gamma = \diag(1^6, 2, 0)$, and so $R^\vee/\Mod(w_\gamma) \cong (\Z/2\Z)$, confirming part \eqref{SNFB_gcd1} of Theorem~\ref{thm:SNFB}.
\end{example}

\begin{example}\label{eg:beta5gamma3B8}   This example illustrates part \eqref{SNFB_gcd1} of Theorem~\ref{thm:SNFB} in the case $|\beta| \geq 2$. Let $\sW$ be of type $B_8$ and suppose $\beta = (5)$ and $\gamma = (3)$  so that $\wbg = w_\beta w_\gamma = (s_1 s_2 s_3s_4)(s_8 s_7 s_6)$. The matrix for $\Mbg = \Id - \wbg$ with respect to the type $B_8$ basis $\Delta^\vee$ is given by
\[
\Mbg = 
\begin{pmatrix}
1&0&0&1&-1& \\
-1&1&0&1&-1& \\
0&-1&1&1&-1& \\
0&0&-1&2&-1&&&\\
0&0&0&0&0&0&0&0\\
&&&& -1 & 2 & -1 & 0 \\
&& && -1 & 1 & 1 & -2 \\
&&&& -1 & 1 & 0 & 0 \\
\end{pmatrix}. 
\]

Then similarly to Example~\ref{eg:beta4gamma3B7}, we carry out column operations on $\Mbg$ to obtain $\Bbg''$:
\[
B''_{\beta,\gamma} = 
\begin{pmatrix}
1&0&0&0& &&&\\
-1&1&0&0& &&&\\
0&-1&1&0& &&&\\
0 & 0 & -1 & 0 & 1 && \\
0&0&0&0&0&0&0& 0\\
&&& 0 & 1 & 2 & -1 & 0 \\
&&&-5 & 0 & 1 & 1 & -2 \\
&&&& 0 & 1 & 0 & 0 \\
\end{pmatrix}.
\]
At this point, we replace the $-5$ by $\gcd(\beta_k,2) \in \{1,2\}$. But now, since $\gcd(\beta_k,2) = 1$, we create a pivot entry $1$ in row $n-1$. We can now successively clear entries in the last $\gamma_1 = 3$ rows, and so obtain a pivot entry $1$ in row 4 of the matrix $\Bbg'$:
\[
B'_{\beta,\gamma} = 
\begin{pmatrix}
1&0&0&0& &&&\\
-1&1&0&0& &&&\\
0&-1&1&0& &&&\\
0 & 0 & -1 & 0 & \boxed{1} && \\
0&0&0&0&0&0&0& 0\\
&&& 0 & 0 & 0 & \boxed{1} & 0 \\
&&&\boxed{1} & 0 & 0 & 0 & 0 \\
&&&& 0 & \boxed{1} & 0 & 0 \\
\end{pmatrix}.
\]
We can then clear entries in the first $ 4$ rows, to get $\Bbg$:
\[
B_{\beta,\gamma} = 
\begin{pmatrix}
\boxed{1}&0&0&0& &&&\\
0&\boxed{1}&0&0& &&&\\
0&0&\boxed{1}&0& &&&\\
0 & 0 & 0 & 0 & \boxed{1} && \\
0&0&0&0&0&0&0& 0\\
&&& 0 & 0 & 0 & \boxed{1} & 0 \\
&&&\boxed{1} & 0 & 0 & 0 & 0 \\
&&&& 0 & \boxed{1} & 0 & 0 \\
\end{pmatrix}.
\]
Therefore $\Sbg = \diag(1^7, 0)$, and so $R^\vee/\Mod(\wbg) \cong \Z$, confirming part \eqref{SNFB_gcd1} of Theorem~\ref{thm:SNFB}.
\end{example}

\begin{example}\label{eg:beta4gamma0B4} This example illustrates part \eqref{SNFB_gamma0_2} of Theorem~\ref{thm:SNFB}, and part~\eqref{B_gamma0_2} of Theorem~\ref{thm:BasisB}. 

Let $\sW$ be of type $B_4$ and suppose $\beta = (4)$ and $|\gamma| = 0$, so that $\wbg = w_\beta = s_1 s_2 s_3$. The matrix for $\Mbg = \Id - \wbg$ with respect to the type $B_4$ basis $\Delta^\vee$ is given by
\[
\Mbg = 
\begin{pmatrix}
1&0&1&-2 \\
-1&1&1&-2 \\
0&-1&2&-2 \\
0&0&0&0
\end{pmatrix}. 
\]
Note that the last row of this matrix is all $0$s, while the $(n-1) \times (n-1)$ minor in the top left is the same as the matrix $M_\beta$ in type $A_{n-1}$.

We start by adding column 3 to column 4, then multiply column 4 by $-1$, to obtain $B'_{\beta,\gamma}$:
\[ 
B'_{\beta,\gamma} = 
\begin{pmatrix}
1&0&1&1 \\
-1&1&1&1 \\
0&-1&2&0 \\
0&0&0&0
\end{pmatrix}
\]
Note that the number of $1$s in column $n$ of $B'_{\beta,\gamma}$ is equal to $(\beta_p - 2) \geq 0$. (This is also true when $\beta_p = 2$.)
We then add to both columns $n-1$ and $n$ suitable linear combinations of columns 1 and 2, to obtain $B''_{\beta,\gamma}$: 
\[
B''_{\beta,\gamma} = 
\begin{pmatrix}
1&0&0&0 \\
-1&1&0&0 \\
0&-1&4&2 \\
0&0&0&0
\end{pmatrix}.
\]
We see that in $B''_{\beta,\gamma}$, the $(n-1,n-1)$-entry is $\gcd(\beta_k)$, and the $(n-1,n)$-entry is $(\beta_p - 2)$. 
We can thus replace the $(n-1,n-1)$-entry of $B''_{\beta,\gamma}$ by $\gcd(\beta_k,\beta_p-2)$ and clear its $(n-1,n)$-entry. In this example, since $\gcd(\beta_k,\beta_p - 2) = 2$, this gives us:
\[ 
B_{\beta,\gamma} = 
\begin{pmatrix}
1&0&0&0 \\
-1&1&0&0 \\
0&-1&2&0 \\
0&0&0&0
\end{pmatrix}.
\]
From here, the basis $\{\alpha_1^\vee - \alpha_2^\vee, \alpha_2^\vee - \alpha_3^\vee, 2 \alpha_{n-1}^\vee \}$ can be read off, which matches part~\eqref{B_gamma0_2} of Theorem~\ref{thm:BasisB}. Note that in this example, the set of basis vectors $\left\{ \alpha_i^\vee - \alpha_{i+2}^\vee \ \middle| \ i \in (I_{\beta} - 1) \backslash \{n-1\} \right\}$ is empty, since $\beta$ has only one part.

To find the Smith normal form, we successively add row 1 to row 2, then row 2 to row 3, to directly obtain $S_{\beta,\gamma}$:
\[ 
S_{\beta,\gamma} = 
\begin{pmatrix}
1&0&0&0 \\
0&1&0&0 \\
0&0&2&0 \\
0&0&0&0
\end{pmatrix}.
\]
In other words, $\Sbg = \diag(1^2,2,0)$, and so $R^\vee/\Mod(\wbg) \cong (\Z /2\Z) \oplus \Z$, confirming part \eqref{SNFB_gamma0_2} of Theorem~\ref{thm:SNFB}.
\end{example}

\begin{example}\label{eg:beta33gamma0B6} This example illustrates part \eqref{SNFB_gamma0_1} of Theorem~\ref{thm:SNFB} and part \eqref{B_gamma0_1} of Theorem~\ref{thm:BasisB} in the case where $\beta_p \geq 2$. (The case $\beta_p = 1$ is discussed in Example~\ref{eg:beta41gamma0B5} below.)

Let $\sW$ be of type $B_6$ and suppose $\beta = (3,3)$ and $|\gamma| = 0$ so that $\wbg = w_\beta = (s_1 s_2)(s_4s_5)$.  The matrix $\Mbg = \Id - \wbg$ with respect to the type $B_6$ basis $\Delta^\vee$ is given by
\[
\Mbg = 
\begin{pmatrix}
1 & 1 & -1 & & &   \\
-1 & 2 & -1 & & &  \\
0 & 0 & 0 & 0 & 0 & 0  \\
& & -1&1&1&-2 \\
 &  &0& -1&2&-2\\
 & & 0&0&0&0\\
\end{pmatrix}. 
\]
We add column 2 to column 3 and then proceed as in type $A$ and the previous example to obtain:
\[
\Bbg' = 
\begin{pmatrix}
1 & 0 & 0 & &  &   \\
-1 & 0 & 1 & & &  \\
0 & 0 & 0 & 0 & 0 & 0  \\
& & -1&1&0&0 \\
 &  &0& -1&3&1\\
 & & 0&0&0&0\\
\end{pmatrix}. 
\]
We see that in $\Bbg'$, the $(n-1,n-1)$-entry is $\gcd(\beta_k)$, and the $(n-1,n)$-entry is $(\beta_p - 2) \geq 0$. We can thus replace the $(n-1,n-1)$-entry by $\gcd(\beta_k,\beta_p - 2)$ and clear the $(n-1,n)$-entry. In this example, since $\gcd(\beta_k,\beta_p-2) = 1$, we get a pivot $1$ in the  $(n-1,n-1)$-entry.  We can use this resulting pivot to successively clear entries moving up the matrix, resulting in $\Bbg$:
\[
\Bbg = 
\begin{pmatrix}
1 & 0 & 0 & & &   \\
0 & 0 & 1 & & & \\
0 & 0 & 0 & 0 & 0 & 0  \\
& & 0&1&0&0 \\
 &  &0& 0&1&0\\
 & & 0&0&0&0\\
\end{pmatrix}. 
\]
From here, we read off the basis $\{ \alpha_1^\vee, \alpha_2^\vee \} \cup \{ \alpha_4^\vee, \alpha_5^\vee \}$, confirming part~\eqref{B_gamma0_1} of Theorem~\ref{thm:BasisB} in this case. 
The Smith normal form is also now clearly $\Sbg = \diag(1^4, 0^2)$ and so $R^\vee/\Mod(\wbg) \cong \Z^2$, confirming part \eqref{SNFB_gamma0_1} of Theorem~\ref{thm:SNFB} in this case.
\end{example}

\begin{example}\label{eg:beta41gamma0B5}
This example illustrates part \eqref{SNFB_gamma0_1} of Theorem~\ref{thm:SNFB} and part~\eqref{B_gamma0_1} of Theorem~\ref{thm:BasisB}, in the case that $\beta$ has last part $\beta_p=1$. 

Let $\sW$ be of type $B_5$ and suppose $\beta = (4,1)$ and $|\gamma| = 0$, so that $\wbg = w_\beta = s_1 s_2 s_3$. Note that since $\beta_p = 1$, the element $w_\beta$ has support in the type $A_{n-2}$ subsystem indexed by the first $n-2$ nodes, and so in particular $w_\beta$ fixes $\alpha_n^\vee$. Moreover, letting $\beta' = (4,1,1)$ be the partition obtained by adding last part $1$ to $\beta$, the element $w_\beta$ acts on $\alpha_{n-1}^\vee$ exactly as does the element $w_{\beta'}$ in type $A_{n-1}$. Hence the matrix for $\Mbg = \Id - \wbg$ with respect to the type $B_5$ basis $\Delta^\vee$ is given by
\[
\Mbg = 
\begin{pmatrix}
1&0&1&-1 & 0\\
-1&1&1&-1 & 0\\
0&-1&2&-1&0 \\
0&0&0&0&0 \\
0&0&0&0&0
\end{pmatrix}. 
\]
Note that if we denote by $\ell$ the number of parts of $\beta$ which are equal to 1, then the last $\ell + 1$ rows of this matrix are all $0$s. 

We then proceed as in type $A_{n-1}$ in the $(n-1) \times (n-1)$ submatrix in the top left (compare Example~\ref{eg:41A4}, which considers $\beta = (4,1)$ in type $A_4$, as well as Example~\ref{eg:431A7}, which considers $\beta = (4,3,1)$ in type $A_7$). Since $\beta_p=1$, there is a pivot 1 in column $n-1$, and we thus obtain
\[
\Bbg = 
\begin{pmatrix}
1&0&0&0 & 0\\
0&1&0&0 & 0\\
0&0&0&1&0 \\
0&0&0&0&0 \\
0&0&0&0&0
\end{pmatrix}.
\]
(If instead $\gcd(\beta_k, \beta_p-2)=1$, the pivot 1 is created in the same row of column $n-2$.)
From here, we read off the basis $\{ \alpha_1^\vee, \alpha_2^\vee, \alpha_3^\vee \}$, confirming part~\eqref{B_gamma0_1} of Theorem~\ref{thm:BasisB} in the case $\beta_p = 1$. In addition, clearly $\Sbg = \diag(1^3, 0^2)$, confirming part \eqref{SNFB_gamma0_1} of Theorem~\ref{thm:SNFB} in the case $\beta_p = 1$.
\end{example}

%%%%%%%%%%%%%%%%%%%%%%%%%%%%%%%%%%%%%%%%%%%%%  
%%%%%%%%%%%%%%%%%%%%%%%%%%%%%%%%%%%%%%%%%%%%%
%%%%%%%%%%%%%%%%%%%%%%%%%%%%%%%%%%%%%%%%%%%%%  
\section{Type $D$ mod-sets}\label{sec:TypeD}
%%%%%%%%%%%%%%%%%%%%%%%%%%%%%%%%%%%%%%%%%%%%%
%%%%%%%%%%%%%%%%%%%%%%%%%%%%%%%%%%%%%%%%%%%%%
%%%%%%%%%%%%%%%%%%%%%%%%%%%%%%%%%%%%%%%%%%%%%

In this section we give a description of all mod-sets in type $D$. Let $\sW$ be the finite Weyl group of type $D_n$, for $n \geq 4$.  In \Cref{sec:repsD}, we review the complete system of minimal length representatives for the conjugacy classes of $\sW$ provided in \cite[Chapter~3]{GeckPfeifferBook}. Our results are stated in Section \ref{sec:resultsD}, and we then give examples in Section \ref{sec:examplesD}, designed to fully illustrate the proofs of our results on mod-sets.

Throughout this section, we label the nodes of the Dynkin diagram by $[n]$ as in both Bourbaki \cite{Bourbaki4-6} and Sage \cite{sagemath}, so that the first $n-1$ nodes form a type $A_{n-1}$ subsystem, and the unique node of valence~$3$ is indexed by $(n-2)$. We remark that this is different from the indexing used in~\cite{GeckPfeifferBook}, where the nodes of the Dynkin diagram in type $D_n$ are labeled by $\{0,\dots,n-1\}$, and the unique node of valence $3$ is indexed by $2$. See Table \ref{table:dynkin} in~\Cref{app:dynkin} for a direct comparison of these choices of labeling.

%%%%%%%%%%%%%%%%%%%%%%%%%%%%%%%%%%%%%%%%%%%%%
%%%%%%%%%%%%%%%%%%%%%%%%%%%%%%%%%%%%%%%%%%%%%
\subsection{Conjugacy class representatives in type $D$}\label{sec:repsD}
%%%%%%%%%%%%%%%%%%%%%%%%%%%%%%%%%%%%%%%%%%%%%
%%%%%%%%%%%%%%%%%%%%%%%%%%%%%%%%%%%%%%%%%%%%%

Following \cite{GeckPfeifferBook}, we first explain how the conjugacy classes of $\sW$ of type $D_n$ are parameterized by ordered pairs of compositions~$(\beta, \delta)$ such that $\beta$ is weakly decreasing, $\delta$ is weakly increasing and has an even number of parts, and $|\beta| + |\gamma| = n \geq 4$.  In the special case $(\beta,0)$ where all parts of $\beta$ are even, there are two distinct conjugacy classes, denoted by $\beta^+$ and $\beta^-$, or  $\beta^\pm$ to include both cases. For each  pair $(\beta,\delta)$, we will define standard parabolic subgroups $\sW_{\beta}$ and $\sW_\delta$ of $\sW$, and an  element $w_{\beta, \delta} = w_{\beta} \cdot w_\delta$ with $w_{\beta}$ cuspidal in $\sW_{\beta}$ and $w_\delta$ cuspidal in $ \sW_\delta$. The set of all $w_{\beta^\pm, \delta}$ forms a complete system of minimal length representatives for the conjugacy classes of $\sW$.

If $|\beta| = 0$ or $|\beta| = 1$, we define $w_\beta$ and $\sW_\beta$ to both be trivial. If $\beta$ is a partition of $m$ with $2 \leq m < n$, then we define both $w_{\beta}$ and $\sW_{\beta}$ as in \Cref{sec:repsA}, so that the~$w_{\beta}$ are the minimal length conjugacy class representatives in the type $A_{m-1}$ subsystem of $\sW$ indexed by the first $m-1$ nodes of the Dynkin diagram.  If $|\beta| = n$ and $\beta$ has at least one odd part, then $w_\beta$ is again defined as in Section \ref{sec:repsA}. If $|\beta| = n$ and $\beta$ has all parts even, then $w_{\beta^+}$ is defined to be the minimal length conjugacy class representative $w_\beta$ in the type $A_{n-1}$ subsystem of $\sW$ indexed by the first $n-1$ nodes of the Dynkin diagram, as in Section~\ref{sec:repsA}.  If $|\beta| = n$ and $\beta$ has all parts even, the element $w_{\beta^-}$ is defined by replacing the element $s_{n-1}$ in $w_{\beta^+}$ by $s_n$.

Now let $\delta = (\delta_1, \dots, \delta_{2r})$ be any weakly increasing composition of $n-m$, with an even number of parts.  If $|\delta| = 0$, we define $w_\delta$ and $\sW_\delta$ to both be trivial.  Note that since $\delta$ must have an even number of parts, the case $|\delta| = 1$ does not occur. It thus remains to define $w_\delta$ and $\sW_\delta$  for  $2 \leq m < n$. 

Following \cite[Section 3.4]{GeckPfeifferBook} but using different labeling conventions and notation, we first define elements \[ u_0 = s_n \quad \mbox{and} \quad u_1 = s_n s_{n-1},\] and then for $2 \leq j \leq n-1$, inductively define \[u_j = s_{n-j} u_{j-1} s_{n-j}.\]  
We also define a strictly increasing subsequence of $\{0,1,\dots,n\}$ in the same way as we did for the compositions $\beta$  and $\gamma$ in Sections \ref{sec:repsA} and \ref{sec:repsC}, respectively.  
That is, let $j^\delta_1 = 0$ and for all $2 \leq k \leq 2r+1$, define
\begin{equation}\label{eq:jdeltak}
j^\delta_k = \sum_{i=1}^{k-1} \delta_i = \delta_1 + \dots + \delta_{k-1}.
\end{equation}
In particular, note that $j^\delta_2 = \delta_1$ and $j^\delta_{2r+1} = n-m$. 
Notice also that if $\delta_1 = \dots = \delta_k = 1$ for some $k \geq 1$, then $j^\delta_{k+1} = k$.
Now for all $2 \leq k \leq 2r$ such that $\delta_k \geq 2$, define the element 
\begin{equation}\label{eq:vdeltak}
v_k^\delta = u_{j^\delta_k} s_{n-(j^\delta_k+1)} s_{n-(j^\delta_k+2)}\cdots s_{n - (j^\delta_k + \delta_k - 1)},
\end{equation}
analogous to \eqref{eq:wgammak-t}. In particular, $v_k^\delta$ ends with a product of $\delta_k - 1 $ consecutive simple reflections ordered by decreasing index. Considering the initial $u_{j^\delta_k}$ as a single term, the expression for $v_k^\delta$ has $\delta_k \geq 2$ terms. Note also that if the last part $\delta_{2r} \geq 2$, then the element $v_{2r}^\delta$ has final simple reflection equal to $s_{n - |\delta| + 1}$.

We can now define the element $w_\delta$ for $2 \leq m < n$, which has several cases.  We first consider the cases where at least one part of $\delta$ equals 1. In the special case that all $2r$ parts of $\delta$ are equal to $1$ (and hence $|\delta| = n-m = 2r$), we define
\begin{equation}\label{eq:wdelta1s}
w_\delta = u_1 u_2 \dots u_{2r - 1}.
\end{equation}
Next, if $\delta_1 = \dots = \delta_{\ell} = 1$ while $\delta_{\ell+1} \geq 2$, we define
\begin{equation}\label{eq:wdelta1s2s}
w_\delta = (u_1 \cdots u_{\ell-1}) v^\delta_{\ell+1} v^\delta_{\ell+2} \dots v^\delta_{2r},
\end{equation}
where the product $(u_1\cdots u_{\ell-1})$ is trivial if $\ell = 1$. Note that $v^\delta_{\ell+1}$ begins with $u_\ell$, and hence in all cases where $\delta_1 = \dots = \delta_\ell = 1$ for some $\ell \geq 1$, the element $w_\delta$ begins with the product $u_1 \dots u_\ell$. 
We next consider the cases where $\delta$ does not contain any parts equal to 1.  Define
\begin{equation}\label{eq:wdelta3s}
w_\delta = \left(u_0 s_{n-2} s_{n-3} \dots s_{n-(\delta_1 - 1)} \right) v_2^\delta v_3^\delta \dots v_{2r}^\delta,
\end{equation}
where the product $s_{n-2} s_{n-3} \dots s_{n-(\delta_1 - 1)}$ is trivial if $\delta_1 = 2$,
so that the parenthetical expression in the formula for $w_\delta$ contains $\delta_1 - 1$ terms, including $u_0 = s_n$. 

By \cite[Prop.~3.4.11]{GeckPfeifferBook}, the element $w_\delta$ is cuspidal in the parabolic subgroup $W_\delta$ of type $D_{n-m}$, generated by the simple reflections indexed by the last $|\delta| = n-m$ nodes of the Dynkin diagram.   By \cite[Prop.~3.4.12]{GeckPfeifferBook}, the set of $w_{\beta^\pm, \delta} = w_{\beta^\pm} \cdot w_\delta$ for all distinct pairs of compositions $(\beta, \delta)$ such that $\beta$ is weakly decreasing, $\delta$ is weakly increasing with an even number of parts, and $|\beta| + |\delta| = n$ forms a complete system of minimal length representatives of the conjugacy classes of $\sW$ in type $D_n$.

%%%%%%%%%%%%%%%%%%%%%%%%%%%%%%%%%%%%%%%%%%%%%
%%%%%%%%%%%%%%%%%%%%%%%%%%%%%%%%%%%%%%%%%%%%%
\subsection{Results on mod-sets in type $D$}\label{sec:resultsD}
%%%%%%%%%%%%%%%%%%%%%%%%%%%%%%%%%%%%%%%%%%%%%
%%%%%%%%%%%%%%%%%%%%%%%%%%%%%%%%%%%%%%%%%%%%%

In this section, we state our results in type~$D$.  It turns out that there are quite a few cases in type $D$, and so we state three separate theorems.  Throughout this section, we assume $\sW$ is of type $D_n$ with $n \geq 4$, and we let \[\beta= (\beta_1,\dots,\beta_p) \quad \mbox{and} \quad \delta = (\delta_1,\dots,\delta_{2r}) \] be a pair of compositions such that $\beta $ is weakly decreasing, $\delta$ is weakly increasing with an even number of parts, and $|\beta| + |\delta| = n$, with corresponding conjugacy class representative $\wbd \in \sW$, constructed as in \Cref{sec:repsD}. In the special case $|\beta| = n$ and all parts of $\beta$ are even, recall that we have two cases $\beta^\pm$.

\Cref{thm:DnSNF} provides the Smith normal form of $(\Id - \wbd)$, from which the isomorphism type of the quotient $R^\vee/\Mod(\wbd)$ is immediate. In~\Cref{thm:DnCuspidal}, we give a basis for $\Mod(\wbd)$ in the case that $|\beta| = 0$, equivalently whenever $\wbd = w_\delta$ is cuspidal in $W$. In~\Cref{thm:Dnnoncuspidal}, we then give a basis for $\Mod(\wbd)$ when $|\delta| = 0$. As in type $B_n$, for the remaining noncuspidal cases, where $|\beta| \geq 1$ and $|\delta| \geq 2$, we provide illustrative examples of how to identify convenient bases. In all cases, a set of equations explicitly describing $\Mod(\wbd)$ can easily be deduced once the (relatively sparse) basis is known.

Throughout this section, we write $\gcd(\beta_k,2) = \gcd(\beta_1, \dots, \beta_p, 2)$.  In particular, $\gcd(\beta_k,2) = 2$ when all parts of $\beta$ are even (including the case that $|\beta| = 0$), and otherwise $\gcd(\beta_k,2) = 1$.  We now state our results on the Smith normal form for $(\Id-\wbd)$ in type $D$.

\begin{thm}\label{thm:DnSNF}
Suppose $\sW$ is of type $D_n$ with $n \geq 4$.  Let $(\beta,\delta)$ be a pair of compositions such that $\beta = (\beta_1,\dots,\beta_p)$ is weakly decreasing, $\delta = (\delta_1,\dots,\delta_{2r})$ is weakly increasing with an even number of parts, and $|\beta| + |\delta| = n$.  Let $w_{\beta^\pm, \delta} \in \sW$ be the corresponding conjugacy class representative in $\sW$.  

Then for any $w \in [w_{\beta^\pm, \delta}]$, the Smith normal form of $(\Id-w)$ is as follows:
\begin{enumerate}
 \item\label{SNFD_gcd2_parity} If $\gcd(\beta_k,2) = 2$ (including the case that $|\beta| = 0$), $|\delta| \geq 2$, and all parts of $\delta$ have the same parity, then 
  \[
 \Sbd = \diag(1^{n - 2r-p}, 2^{2r},0^p).
 \]
   \item\label{SNFD_gcd2_nonparity} If $\gcd(\beta_k,2) = 2$ (including the case that $|\beta| = 0$), $|\delta| \geq 2$, and $\delta$ has a change in parity, then 
  \[
 \Sbd = \diag(1^{n - 2r-p+1}, 2^{2r-2},4,0^p).
 \]
\item\label{D_gcd1} If $\gcd(\beta_k,2) = 1$ and $|\delta| \geq 2$, then
\[ \Sbd = \diag(1^{n-2r-p+1},2^{2r-1},0^p).\] 
\item\label{D_beta_delta0} If $|\delta| = 0$ and $\beta = (1,\dots, 1)$, so that $\wbd$ is trivial, then $\Sbd = \diag(0^n)$. If $|\delta| = 0$ and $\beta \neq (1,\dots, 1)$, then \[ S_{\beta^\pm, \delta} = \diag(1^{n-p-1}, \gcd(\beta_k,2), 0^{p}).\] 
\end{enumerate}
\end{thm}

\noindent Since Smith normal form is canonical, the delicate nature of the results in type $D$ is evident already from the many cases necessarily appearing in Theorem \ref{thm:DnSNF} above.

The next theorem gives a $\Z$-basis for $\Mod(\wbd)$ when $|\beta| = 0$, equivalently whenever $\wbd = w_\delta$ is cuspidal in $\sW$. Before stating this result, we require some additional notation.  

Given $\beta = (\beta_1,\dots,\beta_p)$ with $|\beta| = m$, recall from \eqref{eq:Jbetak} the pairwise disjoint subintervals $J^\beta_k$ of $[m-1]$, for $1 \leq k \leq p$. These subintervals are constructed such that  $J_\beta = \sqcup_{k=1}^p J^\beta_k$ indexes the simple reflections whose product equals $w_\beta$, when written in increasing order. In type $D_n$, as in type $B_n$, we also have the following subsets
\[ I_\beta = \{ j_k^\beta \mid 2 \leq k \leq p \} \cup \{ m\} \quad \text{and} \quad I_\beta - 1 = \{ i-1 \mid i \in I_\beta\}.\]

Given a weakly increasing composition $\delta = (\delta_1, \dots, \delta_{2r})$ with $|\delta| = n-m$, recall from \eqref{eq:jdeltak} that $j^\delta_1 = 0$ and $j^\delta_k = \sum_{i=1}^{k-1} \delta_i$ for $2 \leq k \leq 2r$. 
Define the following $2r$-element subset of $[n]$ associated to $\delta$:
\[ I_\delta = \left\{ n - j^\delta_k \ \middle| 1 \leq k \leq 2r \right\} =\left\{ m+ \delta_{2r}, m+\delta_{2r} + \delta_{2r-1}, \dots, n - \delta_1, n \right\}.
\] 
In addition, we denote by $I_\delta - 1 =\{ i - 1 \mid i \in I_\delta \}$. 

To simplify notation in the statements below, we further define $\delta_{1,0} = 0$ and $\delta_{k,k-1} = \left(\delta_k + \delta_{k-1}\right) \operatorname{mod} \left(2\right)$ for $2 \leq k \leq 2r$. That is, $\delta_{k,k-1} = 0$ if the parts $\delta_k$ and $\delta_{k-1}$ have the same parity, and otherwise $\delta_{k,k-1}=1$.  In particular, note that all parts of $\delta$ have the same parity if and only if $\delta_{k,k-1} = 0$ for all $2 \leq k \leq 2r$. 

With this additional notation established, we now state our results providing bases for the mod-sets in type $D$.

\begin{thm}\label{thm:DnCuspidal}
Suppose $\sW$ is of type $D_n$ with $n \geq 4$.  Let $(\beta, \delta)$ be a pair of compositions such that $|\beta| = 0$, $\delta = (\delta_1,\dots,\delta_{2r})$ is weakly increasing with an even number of parts, and $|\delta| = n$.  Let $\wbd = w_\delta$ be the corresponding conjugacy class representative in $\sW$, which is cuspidal in $\sW$. Then:

\begin{enumerate}
\item\label{D_delta1s} If $\delta = (1, \dots, 1)$, then $\Mod(\wbd) = 2R^\vee$ and has $\Z$-basis given by \[\{ 2\alpha_i^\vee \mid i \in [n]\}.\]

\item\label{D_delta1s_2} If $\delta_k = 1$ for all $1 \leq k \leq \ell < n$ for some $\ell \geq 2$, while $\delta_{\ell+1} \geq 2$, then $\Mod(\wbd)$  has $\Z$-basis given by 
\[ \left\{ 2 \alpha_{n-j^\delta_k}^\vee + \delta_{k,k-1}(\alpha_{n-1}^\vee + \alpha_n^\vee)\  \middle| \  1 \leq k \leq 2r\right\}  \]
\[ \cup \  \left\{ -\alpha_i^\vee + \alpha_{i+1}^\vee\  \middle| \  i \in [n],\; i \not \in I_\delta \cup (I_\delta - 1) \cup \{ n-\ell-1\} \right\}  \]
\[ \cup \  \left\{ -\alpha_{i}^\vee + \alpha_{i+2}^\vee\ \middle| \ i \in (I_\delta-1),\; i \leq n-(\delta_{\ell + 1} + \ell)\right\} \]
\[\cup\  \left\{ -\alpha_{n-\ell-1}^\vee + \alpha_{n-1}^\vee + \alpha_n^\vee \right\}.\]

\item\label{D_delta12} If $\delta_1 = 1$ and $\delta_2 \geq 2$, then  $\Mod(\wbd)$ has $\Z$-basis given by
\[ \left\{ 2 \alpha_{n-j^\delta_k}^\vee + \delta_{k,k-1}\alpha_{n-2}^\vee \ \middle| \  1 \leq k \leq 2r \right\} \]
\[ \cup \  \left\{ -\alpha_i^\vee + \alpha_{i+1}^\vee \ \middle|\  i \in [n],\; i \not \in I_\delta \cup (I_\delta - 1) \cup \{ n-1,n\} \right\}  \]
\[ \cup \  \left\{ -\alpha_{i}^\vee + \alpha_{i+2}^\vee \ \middle|\  i \in (I_\delta-1),\; i \leq n-(\delta_2 + 1)\right\}\]
\[ \cup\  \left\{- \alpha_{n-2}^\vee - \alpha_{n-1}^\vee + \alpha_n^\vee \right\}.\]

\item\label{D_delta2} If $\delta_1 = 2$, then $\Mod(\wbd)$ has $\Z$-basis given by 
\[ \left\{ 2 \alpha_{n-j^\delta_k}^\vee + \delta_{k,k-1}\alpha_{n}^\vee \ \middle| \  1 \leq k \leq 2r\right\} \]
\[ \cup \  \left\{ -\alpha_i^\vee + \alpha_{i+1}^\vee \ \middle| \  i \not \in I_\delta \cup (I_\delta - 1)  \ \operatorname{or} \ i=n-1 \right\}  \]
\[ \cup \  \left\{ -\alpha_{i}^\vee + \alpha_{i+2}^\vee \ \middle|\  i \in (I_\delta-1), \; i \neq n-1 \right\}.\]

\item\label{D_delta3} If $\delta_1 \geq 3$, then $\Mod(\wbd)$ has $\Z$-basis given by 
\[ \left\{ 2 \alpha_{n-j^\delta_k}^\vee + \delta_{k,k-1}\alpha_{n-1}^\vee \ \middle| \  2 \leq k \leq 2r \right\} \cup 
\left\{ 2\alpha_{n-1}^\vee,\; ((\delta_1 - 1)\operatorname{mod} 2) \alpha_{n-1}^\vee + \alpha_n^\vee \right\} \]
\[ \cup \ \left\{ -\alpha_i^\vee + \alpha_{i+1}^\vee \ \middle| \ i \not \in I_\delta \cup (I_\delta - 1)  \right\}  \]
\[ \cup \  \left \{ -\alpha_{i}^\vee + \alpha_{i+2}^\vee \ \middle| \ i \in (I_\delta-1),\; i \neq n-1 \right\}.\]
\end{enumerate}
\end{thm}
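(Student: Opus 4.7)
The proof strategy follows the matrix-theoretic blueprint already established in the proofs for types $A$, $C$, and $B$ (Sections \ref{sec:proofsA} and \ref{sec:proofsC}, with the type $B$ algorithm fully illustrated in Section \ref{sec:examplesB}). Namely, I would first explicitly compute the matrix $M_\delta = \Id - w_\delta$ with respect to the basis $\Delta^\vee$, then perform a controlled sequence of $\Z$-column operations to reduce $M_\delta$ to a matrix $B_\delta$ whose nonzero columns yield the claimed basis. Since $|\beta|=0$ here, the matrix for $w_\delta$ depends only on the cuspidal construction from \Cref{sec:repsD}, and no ``type $A$ block'' sits in the top-left corner.

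The first step is to establish a block lower-triangular description of the matrix for $w_\delta = v_1^\delta v_2^\delta \cdots v_{2r}^\delta$ (with the leading factors replaced by $(u_0 s_{n-2}\cdots s_{n-(\delta_1-1)})$ or by products of $u_k$'s when there are parts equal to $1$, as in \eqref{eq:wdelta1s}--\eqref{eq:wdelta3s}). Exactly as in the proof of \Cref{cor:wgammaC} in type $C$, the crucial observation is that for $k \geq 2$ with $\delta_k \geq 2$, the factor $v_k^\delta$ fixes $\alpha_i^\vee$ for $i < n-j_{k+1}^\delta$ and acts on $\alpha_i^\vee$ for $i \in J_k^\delta$ via a small matrix analogous to $V_{\gamma_k}$ from \eqref{eq:Vr}. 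The essential difference from type $C$ is the effect at the two ``tail'' coroots $\alpha_{n-1}^\vee$ and $\alpha_n^\vee$, which enter symmetrically because of the two branches at node $n-2$ in the Dynkin diagram. This is precisely where the five cases (1)--(5) of the theorem diverge: the block corresponding to $\delta_{2r}$ (the ``first-applied'' part, reading $w_\delta$ from the right) depends on whether $\delta_1 \geq 3$, $\delta_1 = 2$, $\delta_1 = 1$ with $\delta_2 \geq 2$, the first several parts are $1$, or all parts are $1$.

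Once the matrix $M_\delta$ is in block form, I would carry out the column-reduction algorithm exactly as in the type $B$ examples of Section \ref{sec:examplesB}: within each block corresponding to a part $\delta_k \geq 2$, apply the same column operations as in the proof of \Cref{thm:Cn} to convert the block into a $U_{\delta_k}$-type matrix whose nonzero columns are $\pm \alpha_i^\vee + \alpha_{i+1}^\vee$ entries together with a single ``pivot'' column containing $2\alpha_{n - j_k^\delta}^\vee$ (possibly modified by $\alpha_{n-1}^\vee$, $\alpha_n^\vee$, or their sum, depending on the case). The parity correction $\delta_{k,k-1}$ appears by exactly the same mechanism as in Examples \ref{eg:beta0gamma34B7} and \ref{eg:beta0gamma234B9}: when adjacent parts $\delta_{k-1}, \delta_k$ have opposite parities, reducing modulo $2$ leaves a residual $(\alpha_{n-1}^\vee + \alpha_n^\vee)$ or $\alpha_{n-2}^\vee$ or $\alpha_n^\vee$ contribution that cannot be cleared, which is precisely what the terms $\delta_{k,k-1}\alpha_{n-1}^\vee$, $\delta_{k,k-1}(\alpha_{n-1}^\vee + \alpha_n^\vee)$, and $\delta_{k,k-1}\alpha_{n-2}^\vee$ in cases (3), (2), (5) record. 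Case (1) with $\delta = (1,\dots,1)$ is handled separately using the observation that $n$ must be even and $w_\delta$ is then the longest element of $\sW$, which acts as $-\Id$ (as in \Cref{eg:beta0gamma1s} for type $B$).

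The main obstacle is bookkeeping at the branch point of the Dynkin diagram: unlike types $B$ and $C$, where only $\alpha_n^\vee$ interacts with the doubled edge, in type $D$ both $\alpha_{n-1}^\vee$ and $\alpha_n^\vee$ are attached to $\alpha_{n-2}^\vee$ and interact symmetrically under the $u_k$-style conjugations defining $w_\delta$. Consequently, the ``residue'' term attached to the first pivot column depends on whether the innermost block ends at $\alpha_{n-1}^\vee$ (case (5), analogous to type $B$), at $\alpha_n^\vee$ only (case (4)), or involves the combination $\alpha_{n-2}^\vee + \alpha_{n-1}^\vee - \alpha_n^\vee$ triggered by $u_1 = s_n s_{n-1}$ acting on $\alpha_{n-2}^\vee$ (cases (2) and (3)). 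The proof accordingly reduces to a careful, case-by-case computation of the three rightmost columns of $M_\delta$, followed by the same column-reduction machinery used throughout Sections \ref{sec:TypeA}--\ref{sec:TypeB}; verifying that the final Smith form produced matches \Cref{thm:DnSNF}\eqref{SNFD_gcd2_parity}--\eqref{SNFD_gcd2_nonparity} serves as a useful internal consistency check.
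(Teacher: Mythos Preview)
Your proposal is correct and follows essentially the same approach as the paper: compute the block lower-triangular matrix $P_\delta = \Id - w_\delta$ with respect to $\Delta^\vee$, perform a controlled sequence of $\Z$-column operations (modeled on those in types $A$, $C$, and $B$) to reach a matrix $Q_\delta$ whose nonzero columns give the claimed basis, with the five cases arising from the differing behavior of the innermost block at the branch of the Dynkin diagram. The paper carries this out via five fully worked examples (Examples~\ref{eg:delta1111}--\ref{eg:delta3333}), one for each case, and explicitly states that these generalize to prove the theorem; your outline is exactly the shape of that generalization.
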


Finally, we give a $\Z$-basis for $\Mod(w_{\beta^\pm, \delta})$ in the case that $|\delta| =0$.

\begin{thm}\label{thm:Dnnoncuspidal}
Suppose $\sW$ is of type $D_n$ with $n \geq 4$.  Let $(\beta,\delta)$ be a pair of compositions such that $\beta = (\beta_1,\dots,\beta_p)$ is weakly decreasing, $|\beta| = n$, and $|\delta| = 0$. Let $w_{\beta^\pm, \delta} = w_{\beta^\pm}$ be the corresponding conjugacy class representative in $\sW$, which is not cuspidal in $\sW$. 
Then:

\begin{enumerate}

\item\label{D_beta1} If $\beta_p = 1$, then $\Mod(\wbd)$ has $\Z$-basis given by
\[ \{ \alpha_j^\vee \mid j \in J_\beta \}. \]

\item\label{D_beta2} If $\beta_p \geq 2$, then:
\begin{enumerate}
\item Both $\Mod(w_{\beta, \delta})$ and $\Mod(w_{\beta^+, \delta})$ have $\Z$-bases given by
\[ \left\{ \alpha_i^\vee - \alpha_{i+1}^\vee \ \middle|\ i \in J_{\beta} \backslash (I_{\beta} - 1) \right\} \ \cup \] 
\[ \left\{ \alpha_i^\vee - \alpha_{i+2}^\vee \ \middle| \ i \in (I_{\beta} - 1) \backslash \{n-1\} \right\}\ \cup \ \left\{ \gcd(\beta_k,2)\alpha_{n-1}^\vee \right\}.\]
\item Replace every $\alpha_{n-1}^\vee$ by $\alpha_n^\vee$ in the basis for $\Mod(w_{\beta^+, \delta})$ to obtain a basis for $\Mod(w_{\beta^-, \delta})$.
\end{enumerate}
\end{enumerate}
\end{thm}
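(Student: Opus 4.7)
The plan is to compute the matrix $\Id - w_{\beta^\pm}$ with respect to the basis $\Delta^\vee$ in type $D_n$ and then use column operations to extract the claimed $\Z$-basis for $\Mod(w_{\beta^\pm})$, building upon the type $A$ results from Section~\ref{sec:TypeA}. Since $|\delta| = 0$, the element $w_{\beta^+} = w_\beta$ is constructed entirely within the type $A_{n-1}$ subsystem generated by $\{s_1,\dots,s_{n-1}\}$, and $w_{\beta^-}$ is obtained by replacing every occurrence of $s_{n-1}$ in $w_{\beta^+}$ by $s_n$. The key structural observation is that the Cartan entries $c_{ij}$ for $i,j \leq n-1$ coincide in types $A_{n-1}$ and $D_n$, so the action of $w_{\beta^+}$ on $\alpha_i^\vee$ for $i \leq n-1$ matches the type $A_{n-1}$ computation verbatim. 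Consequently the upper-left $(n-1)\times(n-1)$ block of $\Id - w_{\beta^+}$ coincides with the matrix $M_\beta$ from Corollary~\ref{cor:matrixMbetaA}. Moreover, since $s_n$ never appears in $w_{\beta^+}$, the $n$-th row of the matrix is entirely zero, so $\Mod(w_{\beta^+})$ is generated by the first $n-1$ columns together with the single additional vector $(\Id - w_{\beta^+})(\alpha_n^\vee)$.

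For part~\eqref{D_beta1}, the assumption $\beta_p = 1$ forces $J_\beta \subseteq [n-2]$, so $w_{\beta^+}$ has support in $\{s_1,\dots,s_{n-2}\}$ and is fixed by the diagram automorphism $\phi$ exchanging $s_{n-1}$ with $s_n$. Because $\phi$ acts on $R^\vee$ by swapping $\alpha_{n-1}^\vee$ with $\alpha_n^\vee$ while fixing $\alpha_i^\vee$ for $i \leq n-2$, and both $(\Id - w_{\beta^+})(\alpha_{n-1}^\vee)$ and $(\Id - w_{\beta^+})(\alpha_n^\vee)$ lie in $\bigoplus_{i=1}^{n-2}\Z\alpha_i^\vee$ (the rows indexed $n-1$ and $n$ of the matrix are both zero), this symmetry forces $(\Id - w_{\beta^+})(\alpha_{n-1}^\vee) = (\Id - w_{\beta^+})(\alpha_n^\vee)$, making one of these columns redundant. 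Hence the column span of $\Id - w_{\beta^+}$ in $R^\vee$ equals the natural image of the column span of $M_\beta$ from type $A_{n-1}$, and Theorem~\ref{thm:An}\eqref{BasisA} applied to the partition $\beta$ of $n$ with $\beta_p = 1$ immediately yields the basis $\{\alpha_j^\vee : j \in J_\beta\}$ claimed in part~\eqref{D_beta1}.

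For part~\eqref{D_beta2}(a) with $\beta_p \geq 2$, a direct calculation using $s_{n-1}(\alpha_n^\vee) = \alpha_n^\vee$ and $s_{n-2}(\alpha_n^\vee) = \alpha_n^\vee + \alpha_{n-2}^\vee$ establishes that
\[
w_{\beta^+}(\alpha_n^\vee) = \alpha_n^\vee + \sum_{i=n-\beta_p+1}^{n-2}\alpha_i^\vee,
\]
where the sum is empty in the boundary case $\beta_p = 2$. Combined with the type $A_{n-1}$ identity $(\Id - w_{\beta^+})(\alpha_{n-1}^\vee) = \sum_{i=n-\beta_p+1}^{n-2}\alpha_i^\vee + 2\alpha_{n-1}^\vee$, this yields the key relation
\[
(\Id - w_{\beta^+})(\alpha_{n-1}^\vee) + (\Id - w_{\beta^+})(\alpha_n^\vee) = 2\alpha_{n-1}^\vee,
\]
placing $2\alpha_{n-1}^\vee$ in $\Mod(w_{\beta^+})$. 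Theorem~\ref{thm:An}\eqref{BasisA} supplies the type $A_{n-1}$ basis for the column span of $M_\beta$ containing $\gcd(\beta_k)\alpha_{n-1}^\vee$ together with the two families of differences, and applying Bezout's Theorem to $\gcd(\beta_k)\alpha_{n-1}^\vee$ and $2\alpha_{n-1}^\vee$ replaces the former by $\gcd(\beta_k,2)\alpha_{n-1}^\vee$. A short verification that $(\Id - w_{\beta^+})(\alpha_n^\vee)$ already lies in the so-obtained $\Z$-span confirms that no further generators are required. Part~\eqref{D_beta2}(b) for $w_{\beta^-}$ then follows from the functoriality $\Mod(w_{\beta^-}) = \phi(\Mod(w_{\beta^+}))$ under the diagram automorphism $\phi$, which is available precisely when all parts of $\beta$ are even. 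The principal obstacle I anticipate is the case bookkeeping around the boundary $\beta_p = 2$ (where the extra sum is empty but $\gcd(\beta_k) \mid 2$, so $\gcd(\beta_k) = \gcd(\beta_k,2)$ automatically) versus $\beta_p \geq 3$ (where the extra column genuinely reduces $\gcd(\beta_k)$ to $\gcd(\beta_k,2)$), alongside verifying consistency with the combinatorial descriptions of $J_\beta \setminus (I_\beta - 1)$ and $(I_\beta - 1) \setminus \{n-1\}$ across all partition structures.
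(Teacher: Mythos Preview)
Your proposal is correct and takes essentially the same approach as the paper: both reduce the computation to the type $A_{n-1}$ matrix $M_\beta$ sitting in the upper-left $(n-1)\times(n-1)$ block, handle the single extra column $(\Id - w_{\beta^+})(\alpha_n^\vee)$, and then invoke Theorem~\ref{thm:An}\eqref{BasisA}. Your ``key relation'' $(\Id - w_{\beta^+})(\alpha_{n-1}^\vee) + (\Id - w_{\beta^+})(\alpha_n^\vee) = 2\alpha_{n-1}^\vee$ is exactly the column operation the paper performs in Example~\ref{eg:beta4delta0}, and your use of the diagram automorphism $\phi$ for both the $\beta_p = 1$ redundancy and the passage to $w_{\beta^-}$ is a clean conceptual repackaging of the paper's row/column swaps in Examples~\ref{eg:beta31delta0} and~\ref{eg:beta22delta0}.
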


As seen in Theorems \ref{thm:DnCuspidal} and \ref{thm:Dnnoncuspidal}, the two special cases considered here already involve many subcases and delicate statements.  Thus for the remaining ``mixed'' case, where $|\beta| \geq 1$ and $|\delta| \geq 2$, we have opted not to give even lengthier general statements, but instead to provide illustrative examples of how to identify convenient bases. 
We illustrate each of these type $D_n$ results with several examples in Section \ref{sec:examplesD}, which can then be generalized in a straightforward manner to prove each of these three theorems.

%%%%%%%%%%%%%%%%%%%%%%%%%%%%%%%%%%%%%%%%%%%%%
%%%%%%%%%%%%%%%%%%%%%%%%%%%%%%%%%%%%%%%%%%%%%
\subsection{Examples in type $D$}\label{sec:examplesD}
%%%%%%%%%%%%%%%%%%%%%%%%%%%%%%%%%%%%%%%%%%%%%
%%%%%%%%%%%%%%%%%%%%%%%%%%%%%%%%%%%%%%%%%%%%%

In this section, we present a sequence of examples which can be generalized to obtain the proofs of the theorems stated in \Cref{sec:resultsD}. The collection of examples is chosen to fully illustrate every case that arises in Theorems \ref{thm:DnSNF}, \ref{thm:DnCuspidal}, and \ref{thm:Dnnoncuspidal}.

In Section \ref{sec:cuspidal}, we restrict to the case $|\beta| = 0$, equivalently $\wbd = w_\delta$ is cuspidal. These examples illustrate the proofs of parts \eqref{SNFD_gcd2_parity} and \eqref{SNFD_gcd2_nonparity} of \Cref{thm:DnSNF} in the case $|\beta| = 0$, and the proof of \Cref{thm:DnCuspidal}. The cuspidal examples we consider are: 
\begin{itemize}
\item $|\beta| = 0$ and $\delta = (1,\dots,1)$ with $n \geq 4$ even in Example~\ref{eg:delta1111};
\item $|\beta| = 0$ and $\delta = (1,1,1,2,2,3)$ in Example~\ref{eg:delta111223};
\item $|\beta| = 0$ and $\delta = (1,3,3,4)$ in Example~\ref{eg:delta1334};
\item $|\beta| = 0$ and $\delta = (2,3,3,3)$ in Example~\ref{eg:delta2333}; and 
\item $|\beta| = 0$ and $\delta = (3,3,3,3)$ in Example~\ref{eg:delta3333}.
\end{itemize}

In \Cref{sec:noncuspidal2}, we consider examples with $\beta_p = 1$ and $|\delta| \geq 2$, to illustrate the proof of part \eqref{D_gcd1} of \Cref{thm:DnSNF} in this case. The examples with $\beta_p = 1$ we consider are: 
\begin{itemize}
\item $\beta = (1)$ and $\delta = (1,\dots,1)$ with $n \geq 5$ odd in Example~\ref{eg:beta1delta1111};
\item $\beta = (1)$ and $\delta = (1,1,1,2)$ in Example~\ref{eg:beta1delta1112};
\item $\beta = (1)$ and $\delta = (1,3)$ in Example~\ref{eg:beta1delta13};
\item $\beta = (1)$ and $\delta = (2,2)$ in Example~\ref{eg:beta1delta22}; 
\item $\beta = (1)$ and $\delta = (3,3)$ in Example~\ref{eg:beta1delta33}; and
\item $\beta = (3,1)$ and $\delta = (1,2)$ in Example~\ref{eg:beta31delta12}.
\end{itemize}

In \Cref{sec:noncuspidal3}, we consider examples with $\beta_p \geq 2$ and $|\delta| \geq 2$. These examples illustrate parts \eqref{SNFD_gcd2_parity}, \eqref{SNFD_gcd2_nonparity}, and \eqref{D_gcd1} of \Cref{thm:DnSNF} in this case. The examples we consider in this section are: 
\begin{itemize}
\item $\beta = (4)$ and $\delta = (1,1,1,2)$ in Example~\ref{eg:beta4delta1112};
\item $\beta = (4)$ and $\delta = (1,2)$ in Example~\ref{eg:beta4delta12};
\item $\beta = (4)$ and $\delta = (2,2)$ in Example~\ref{eg:beta4delta22}; 
\item $\beta = (4)$ and $\delta = (3,3)$ in Example~\ref{eg:beta4delta33}; and 
\item $\beta = (3)$ and $\delta = (3,3)$ in Example~\ref{eg:beta3delta33}.
\end{itemize}

Finally, in \Cref{sec:noncuspidal1},  we give examples in the case where $|\delta| = 0$, illustrating the proof of part \eqref{D_beta_delta0} of \Cref{thm:DnSNF}, and the proof of \Cref{thm:Dnnoncuspidal}. The examples with $|\delta|=0$ we consider are: 
\begin{itemize}
\item $\beta = (3,1)$ and $|\delta| = 0$ in Example~\ref{eg:beta31delta0};
\item $\beta^\pm = (2,2)$ and $|\delta| = 0$ in Example~\ref{eg:beta22delta0}; and
\item $\beta^\pm = (4)$ and $|\delta| = 0$ in Example~\ref{eg:beta4delta0}.
\end{itemize}

For each of the examples listed above, we construct a $\Z$-basis for $\Mod(\wbd)$ and find the Smith normal form for $ (\Id - \wbd)$. From this information, it is then possible to give an explicit description of the $\Z$-module $\Mod(\wbd) = (\Id - \wbd)R^\vee$, and to easily obtain the isomorphism class of the quotient $R^\vee / \Mod(\wbd)$.

%%%%%%%%%%%%%%%%%%%%%%%%%%%%%
\subsubsection{Cuspidal examples}\label{sec:cuspidal}
%%%%%%%%%%%%%%%%%%%%%%%%%%%%%

We first consider the situation where $|\beta| = 0$, equivalently $\wbd = w_\delta$ is cuspidal in $\sW$. These examples illustrate the proofs of parts \eqref{SNFD_gcd2_parity} and \eqref{SNFD_gcd2_nonparity} of \Cref{thm:DnSNF} in the case $|\beta| = 0$, and the proof of \Cref{thm:DnCuspidal}; see the introduction to Section \ref{sec:examplesD} for the list of examples.

%%%%%%%%%%%% 1111

\begin{example}\label{eg:delta1111}\label{eg:delta1111}
This example proves part \eqref{SNFD_gcd2_parity} of Theorem \ref{thm:DnSNF} in the special case $|\beta| = 0$ and $\delta = (1,\dots,1)$, and part \eqref{D_delta1s} of Theorem~\ref{thm:DnCuspidal}.
 Let $\sW$ be of type $D_{n}$ with $n=2r \geq 4$ even. Let $|\beta| = 0$ and $\delta = (1,\dots,1)$. For this case, recall from \eqref{eq:wdelta1s} that
\[\wbd = w_\delta = u_1 u_2 \dots u_{n-1},\] 
which equals the longest element $w_0$ of $\sW$; see Table 1 in \cite{BenkartKangOhPark}, for example. In type $D_n$ with $n$ even, note that $w_0$ is central in $\sW$ and acts as $-\Id$ on $R^\vee$.
Hence, in this case
 \[\Mod(\wbd) = (\Id - w_\delta)R^\vee = (2\Id) R^\vee = 2R^\vee,\] 
which has a $\Z$-basis $\{ 2\alpha_i^\vee \mid i \in [n] \}$. The Smith normal form for $(\Id - \wbd)$ is also clearly $\Sbd = (2^{n})$, confirming part \eqref{SNFD_gcd2_parity} of Theorem \ref{thm:DnSNF} and part \eqref{D_delta1s} of Theorem \ref{thm:DnCuspidal} in this case.
\end{example}

%%%%%%%%%%% 111223

\begin{example}\label{eg:delta111223} 
This example illustrates  part \eqref{SNFD_gcd2_nonparity} of \Cref{thm:DnSNF} in the case $|\beta| = 0$, and part \eqref{D_delta1s_2} of \Cref{thm:DnCuspidal}. Let $\sW$ be of type $D_{10}$, and let $|\beta| = 0$ and $\delta = (1,1,1,2,2,3)$. By \eqref{eq:wdelta1s2s} with $\ell = 3$, we have 
\[ \wbd = w_\delta = u_1 u_2 v^\delta_4 v^\delta_5 v^\delta_6 = u_1 u_2 (u_3 s_6)(u_5 s_4)(u_7 s_2 s_1).\]  
Applying \eqref{eq:siaction}, the matrix for $\Id - w_\delta$, with respect to the basis $\Delta^\vee$, is thus given by
\[
P_\delta = 
\begin{pmatrix}
2 & -1 & 0 &&&&&&& \\
1 & 1 & -1  &&&&&&& \\
2 &0 &0 &&&&&&& \\
2 &  & -1 & 2 & -1 &&&&&\\
2 &  & -2 & 2 & 0 &&&&&\\
2 &  &-2 & 2 & -1 & 2 & -1 &&&\\
2 &  &-2 & 2 & -2 & 2 & 0 &&&\\
2 &  &-2 & 2 & -2 & 2 & -2 & 2 & &\\
1 &  &-1 & 1 & -1 & 1 & -1 &  & 2 & \\
1 &  &-1 & 1 & -1 & 1 & -1 &  &  & 2
\end{pmatrix},
\]
where we have omitted most $0$ entries for readability. Notice that $P_\delta$ is block lower-triangular with diagonal blocks of sizes $3,2,2,1,1,1$ from left to right, obtained from the parts of $\delta$ in reverse order.

More specifically, the diagonal blocks of size~$1$ have entry equal to $2$, with all $0$s underneath. The diagonal blocks of size $\geq 2$ have first column $2,1, \dots, 1, 2$, diagonal entries $2, 1, \dots, 1, 0$, above-diagonal entries $-1$, and all other entries $0$. Underneath the diagonal blocks of size $\geq 2$, the last two rows have~$1$s in the leftmost column and $-1$s in the rightmost column, and then 2s fill the remaining entries of the leftmost column. The rightmost column below the $\delta_{\ell+1}$ block is filled with $-2$s.  For the blocks of size $\delta_k$ with $k \geq \ell+2$, place $-1$s in the first $(\delta_{\ell+1}-1)$ entries of the rightmost column, then put $-2$s in the remaining entries of the rightmost column. (In this example, $\delta_{\ell+1} -1 = \delta_4-1 = 2 - 1 = 1$, so we have one $-1$ at the top of the rightmost column below the leftmost blocks of sizes 3 and 2, but not in the $\delta_4$ block itself.) All other entries equal $0$.  One can check via \eqref{eq:siaction} that this description in fact characterizes $P_\delta$ for all $\delta$ with $|\delta| = n$ and $\delta_1 = \cdots = \delta_\ell = 1$ for some $2 \leq \ell < n$ such that $\delta_{\ell+1} \geq 2$; that is, the case considered in part \eqref{D_delta1s_2} of \Cref{thm:DnCuspidal}.

We now carry out a series of column operations on $P_\delta$ to obtain a matrix from which a basis for $\Mod(w_\delta)$ can be read off; see the matrix $Q_\delta$ in \eqref{eq:Qdelta111223} below.
We start by carrying out column operations on columns~1 and 3; that is, the first and last columns of the leftmost block of $P_\delta$. We replace $\cC_1(P_\delta)$ by $\cC_1(P_\delta) + 2\cC_2(P_\delta)  + 3\cC_3(P_\delta)$, and then replace $\cC_3(P_\delta)$ by $\cC_3(P_\delta) + \cC_4(P_\delta)$, to obtain
\[
P_\delta' = 
\begin{pmatrix}
0 & -1 & 0&&&&&&& \\
0 & 1 & -1 &&&&&&& \\
2 & 0 & 0 &&&&&&& \\
\circled{$-1$} &  &1 & \circled{2} & -1 &&&&&\\
-4 &  & & 2 & 0 &&&&&\\
-4& &  & 2 & -1 & 2 & -1 &&&\\
-4 &  & & 2 & \circled{$-2$} & 2 & 0 &&&\\
-4 &  & & 2 & \circled{$-2$} & 2 & -2 & 2 & &\\
-2 &  &  &1 & \circled{$-1$} & 1 & -1 &  & 2 & \\
-2 &  &  &1 & \circled{$-1$} & 1 & -1 &  &  & 2
\end{pmatrix}.
\]

We now clear the circled entry in the first column of $P_\delta'$ using its second block, by replacing $\cC_1(P_\delta')$ by $\cC_1(P_\delta') - \cC_5(P_\delta')$. We then clear the circled entries in columns 4 and~5 of $P'_\delta$ by carrying out column operations on these columns (i.e. the first and last columns of the second block) similar to those we did on columns~1 and~3 in the previous step. Namely, we replace $\cC_4(P_\delta')$ by $\cC_4(P_\delta') + 2\cC_5(P_\delta')$, and then replace $\cC_5(P_\delta')$ by $\cC_5(P_\delta') + \cC_6(P_\delta')$.  This yields
\[
P_\delta'' = 
\begin{pmatrix}
0 & -1 & 0&&&&&&& \\
0 & 1 & -1 &&&&&&& \\
2 & 0 & 0 &&&&&&& \\
0 &  &1 & 0 & -1 &&&&&\\
-4 &  & & 2 & 0 &&&&&\\
\circled{$-3$}& &  &0  & 1 & \circled{2} & -1 &&&\\
-2 & & & -2 &  & 2 & 0 &&&\\
-2 &  & & -2 &  & 2 & -2 & 2 & &\\
-1 &  &  &-1 &  & 1 & -1 & 0 & 2 & \\
-1 &  &  &-1 &  & 1 & -1 & 0 & 0 & 2
\end{pmatrix}.
\]
We now clear the circled entry in the first column of $P_\delta''$ using its third block, by replacing $\cC_1(P_\delta'')$ by $\cC_1(P_\delta'') - 3\cC_7(P_\delta'')$. In the third block, which is the last block of size $\geq 2$, we clear the circled entry of $P''_\delta$ by replacing $\cC_6(P_\delta')$ by $\cC_6(P_\delta') + 2\cC_7(P_\delta')$, to obtain
\[
P_\delta''' = 
\begin{pmatrix}
0 & -1 &0 &&&&&&& \\
0 & 1 & -1 &&&&&&& \\
2 & 0 & 0 &&&&&&& \\
0 &  &1 & 0 & -1 &&&&&\\
\circled{$-4$} &  & & \mathbf{2} & 0 &&&&&\\
0& &  & 0 & 1 & 0 & -1 &&&\\
\circled{$-2$} &  & & \circled{$-2$} &  & \mathbf{2} & 0 &&&\\
4 &  & & -2 &  & -2 & -2 & \boxed{2} & &\\
2 &  &  &-1 &  & -1 & -1 &  & \boxed{2} & \\
2 &  &  &-1 &  & -1 & -1 &  &  & \boxed{2}
\end{pmatrix}.
\]

From here on, we will freely work mod 2 in the $(i,j)$ entries for $8 \leq i \leq 10$, since there are boxed pivot entries $2$ in the $(j,j)$-entry of $P'''_\delta$ for all $8 \leq j \leq 10$. In particular, this allows us to clear all entries in row 8 other than the $(8,8)$-entry, since these are all even. We can also clear any even entries which occur in rows 9 and 10 throughout the remainder of this argument.

Our next goal is to clear the circled entries of $P_\delta'''$. For this, we use the $2$s in the bottom left corners of the second and third blocks, shown in bold above, using the following column operations. First add column 6 to column~4 and column~1, and then add twice the resulting column 4 to column 1, to obtain:
\begin{equation}\label{eq:Qdelta111223}
Q_\delta = 
\begin{pmatrix}
0 & -1 &0 &&&&&&& \\
0 & 1 & -1 &&&&&&& \\
2 & 0 & 0 &&&&&&& \\
 &  &1 & 0 & -1 &&&&&\\
 &  & & 2 & 0 &&&&&\\
& &  &  & 1 & 0 & -1 &&&\\
 &  & &  &  & 2 & 0 &&&\\
 &  & &  &  &  &  & 2 & &\\
1 &  &  & &  & 1 & 1 &  & 2 & \\
1 &  &  & &  & 1 & 1 &  &  & 2
\end{pmatrix}.
\end{equation}

Since we have only used column operations so far, a $\Z$-basis for $\Mod(w_\delta)$ can be read off from the columns of $Q_\delta$, as follows. The columns with $2$s in them yield
\[
\{ 2\alpha_3^\vee+(\alpha_9^\vee + \alpha_{10}^\vee), \quad 2\alpha_5^\vee, \quad 2\alpha_7^\vee + (\alpha_9^\vee + \alpha_{10}^\vee), \quad 2\alpha_8^\vee, \quad 2\alpha_9^\vee,\quad 2\alpha_{10}^\vee\}  \]
and then the remaining columns give the basis elements \[  \{ -\alpha_1^\vee + \alpha_2^\vee \}\  \cup\  \{ -\alpha_2^\vee+ \alpha^\vee_{4}, \ -\alpha^\vee_4 + \alpha_6^\vee \} \ \cup\  \{-\alpha_6^\vee + \alpha_9^\vee + \alpha_{10}^\vee \}.
\]
Now recall that $\delta = (1,1,1,2,2,3)$, so the sequence $(j^\delta_k)=(0,1,2,3,5,7)$ and the sequence $(\delta_{k,k-1})=(0,0,0,1,0,1)$. Hence, the first set in this basis equals
\[ 
\{ 2\alpha_{n-j^\delta_k}^\vee + \delta_{k,k-1}(\alpha_{n-1}^\vee + \alpha_{n}^\vee) \mid 1 \leq k \leq 2r\} \]
with $n = 10$ and $2r = 6$. We also have $I_\delta = \{ 3, 5, 7, 8, 9, 10 \}$ hence $I_\delta - 1 = \{ 2, 4, 6, 7, 8, 9 \}$, and the first $\ell = 3$ parts of $\delta$ equal $1$ while $\delta_{\ell + 1} \geq 2$, so 
\[
\{ -\alpha_1^\vee + \alpha_2^\vee \} = \{ -\alpha_i^\vee + \alpha_{i+1}^\vee \mid i \in [n],\; i \not \in I_\delta \cup (I_\delta - 1) \cup \{ n - \ell - 1\} \} \]
and 
\[
\{ -\alpha_2^\vee+ \alpha^\vee_{4}, -\alpha^\vee_4 + \alpha_6^\vee \} = \{ -\alpha_i^\vee + \alpha_{i+2}^\vee \mid i \in I_\delta - 1, i \leq n-\ell -2\},
\]
while 
$\{-\alpha_6^\vee + \alpha_9^\vee + \alpha_{10}^\vee \} = \{-\alpha_{n - \ell - 1}^\vee + \alpha_{n-1}^\vee + \alpha_{n}^\vee \}$, confirming part \eqref{D_delta1s_2} of \Cref{thm:DnCuspidal}.

To find the Smith normal form for $P_\delta$, we continue from $Q_\delta$, now using some row operations. Working from left to right, we use the $-1$s above the diagonal to successively clear all of the $1$s underneath them. This gives 
\[
Q_\delta' = 
\begin{pmatrix}
0 & -1 &0 &&&&&&& \\
0 & 0 & -1 &&&&&&& \\
2 & 0 & 0 &&&&&&& \\
&  & & 0 & -1 &&&&&\\
 &  & & 2 & 0 &&&&&\\
& &  &  &  & 0 & -1 &&&\\
 &  & &  &  & 2 & 0 &&&\\
 &  & &  &  &  &  & 2 & &\\
1 &  &  & &  & 1 &  &  & 2 & \\
1 &  &  & &  & 1 &  &  &  & 2
\end{pmatrix}.
\]

From here we extract the $6 \times 6$ minor of $Q'_\delta$ consisting of all rows and columns containing 2s. We then carry out a series of row and then column operations, as follows. First subtract row 6 from row 5, then subtract twice row 6 from row 1, and then add row 3 to row 1 to obtain the second matrix. Then use the boxed pivots to clear out the remaining entries in the same row, via column operations:
\[
\begin{pmatrix}
2 & 0 & 0 & 0 & 0 & 0\\
0 &2 &  0 & 0 & 0 & 0 \\
0 &0 &  2 & 0 & 0 & 0 \\
0 &0 &  0 & 2 & 0 & 0 \\
1 & 0 & 1 & 0 & 2 & 0\\
1 & 0 & 1 & 0 & 0 & 2\\
 \end{pmatrix} \to 
\begin{pmatrix}
0 & 0 & 0 & 0 & 0 & -4\\
0 &2 &  0 & 0 & 0 & 0 \\
0 &0 &  2 & 0 & 0 & 0 \\
0 &0 &  0 & 2 & 0 & 0 \\
0 & 0 & 0 & 0 & \boxed{2} & -2\\
\boxed{1} & 0 & 1 & 0 & 0 & 2\\
 \end{pmatrix} \to 
\begin{pmatrix}
0 & 0 & 0 & 0 & 0 & -4\\
0 &2 &  0 & 0 & 0 & 0 \\
0 &0 &  2 & 0 & 0 & 0 \\
0 &0 &  0 & 2 & 0 & 0 \\
0 & 0 & 0 & 0 & 2 & 0\\
1 & 0 & 0 & 0 & 0 & 0\\
 \end{pmatrix}.
\]
After scaling and rearrangement, this implies that $P_\delta$ has Smith normal form $\diag(1^5,2^4,4)$, confirming part \eqref{SNFD_gcd2_nonparity} of Theorem \ref{thm:DnSNF} in this case.
\end{example}

%%%%%%%%%%%%%%%% 1334

\begin{example}\label{eg:delta1334}
This example illustrates part \eqref{SNFD_gcd2_nonparity} of \Cref{thm:DnSNF} in the case $|\beta| = 0$, and part \eqref{D_delta12} of \Cref{thm:DnCuspidal}. Let $\sW$ be of type $D_{11}$ and let $|\beta| = 0$ and $\delta = (1,3,3,4)$. By \eqref{eq:wdelta1s2s} with $\ell=1$,  we have
 \[ \wbd = w_\delta = v^\delta_2 v^\delta_3 v^\delta_4 = (u_1 s_9 s_8)(u_4 s_6 s_5)(u_7 s_3 s_2 s_1).\]  
Applying \eqref{eq:siaction}, the matrix for $\Id - w_\delta$ with respect to the basis $\Delta^\vee$ is given by
\[
P_\delta = 
\begin{pmatrix}
2 & -1 &0 & 0 &&&&&&& \\
1 & 1 & -1& 0 &&&&&&& \\
1 & 0  & 1 & -1&&&&&&& \\
2 &  0 &0 & 0 &&&&&&& \\
2 &&& -1 & 2 & -1& 0 &&&&\\
2 &&& -1 & 1 & 1 & -1 &&&&\\
2 && & -2 & 2 & 0 & 0 &&&&\\
2 & &  & -2 & 2 &  & -1 & 2 & -1 &0&0\\
2 & &  & -2 & 2 &  & -1 & 1 &  1&-1&-1\\
1 &&  &-1 & 1 & & -1 & 1 & 0 & 1 &-1 \\
1 & & & -1 & 1 &  & -1 & 1 &0& -1 & 1  
\end{pmatrix}.
\]

Notice that $P_\delta$ is block lower-triangular with diagonal blocks of sizes $4,3,$ and $3+1$ going from left to right.
 That is, the sizes of the diagonal blocks of $P_\delta$ are the parts of $\delta$ in reverse order, except for the first two parts of $\delta$, which correspond to a single block of size $\delta_2 + \delta_1 = \delta_2 + 1$ on the right.
The diagonal blocks and entries underneath them corresponding to the parts $\delta_k$ for $k \geq 3$ are the same as described in Example~\ref{eg:delta111223} above. The rightmost diagonal block of size $\delta_2+1$ has first column $2,1,\dots,1$, diagonal entries $2,1,\dots, 1$, above-diagonal entries all $-1$, an additional $-1$ in  entries $(n,n-1)$ and $(n-2,n)$ of $P_\delta$, and all other entries $0$.  One can check via \eqref{eq:siaction} that this description in fact characterizes $P_\delta$ for all $|\delta| = n$ with $\delta_1 = 1$ and $\delta_2 \geq 2$; that is, the case considered in part \eqref{D_delta12} of \Cref{thm:DnCuspidal}.

We now carry out a series of column operations on $P_\delta$ to obtain a matrix from which a basis for $\Mod(w_\delta)$ can be read off; see the matrix $Q_\delta$ in \eqref{eq:Qdelta1334} below. We start with column operations on columns 1 and 4 similar to those done on columns 1 and 3 in the previous example. 
More specifically, we replace $\cC_1(P_\delta)$ by $\cC_1(P_\delta) + 2\cC_2(P_\delta) + 3\cC_3(P_\delta) + 4\cC_4(P_\delta)$, and then replace $\cC_4(P_\delta)$ by $\cC_4(P_\delta) + \cC_5(P_\delta)$, to obtain
\[
P_\delta' = 
\begin{pmatrix}
0 & -1 &0 & 0 &&&&&&& \\
0 & 1 & -1& 0 &&&&&&& \\
0 & 0  & 1 & -1&&&&&&& \\
2 & 0  &0 & 0 &&&&&&& \\
\circled{$-2$} &&& 1 & \circled{2} & -1& 0 &&&&\\
\circled{$-2$} && &  & \circled{1} & 1 & -1 &&&&\\
-6 && &  & 2 & 0 & 0 &&&&\\
-6 & &  &  & 2 &  & -1 & 2 & -1 &0&0\\
-6 & &  &  & 2 &  & \circled{$-1$} & 1 &  1&-1&-1\\
-3 &&  & & 1 & & \circled{$-1$}  & 1 & 0 & 1 &-1 \\
-3 & & &  & 1 &  & \circled{$-1$}  & 1 &0& -1 & 1  
\end{pmatrix}.
\]
We now clear the circled entries in column 1 using the second block, specifically by replacing $\cC_1(P_\delta')$ by $\cC_1(P_\delta') - 2(\cC_6(P_\delta')+2\cC_7(P_\delta'))$. Similar to how we began with the leftmost block, we now also clear the circled entries in columns 5 and 7 by  replacing $\cC_5(P_\delta')$ by $\cC_5(P_\delta') + 2\cC_6(P_\delta')+3\cC_7(P_\delta')$, and then replacing $\cC_7(P_\delta')$ by $\cC_7(P_\delta') + \cC_8(P_\delta')$.

In the last $4$ columns, corresponding to the parts $\delta_2 + \delta_1 = 3 + 1$ of $\delta$, the procedure is a little different. We replace $\cC_8(P_\delta')$ by $\cC_8(P_\delta') + 2\cC_9(P_\delta') + 3[\cC_{10}(P_\delta') + \cC_{11}(P_\delta') ]$, and then add column $10$ to column $11$. Altogether, this yields
\[
P_\delta'' = 
\begin{pmatrix}
0 & -1 &0 & 0 &&&&&&& \\
0 & 1 & -1& 0 &&&&&&& \\
0 & 0  & 1 & -1&&&&&&& \\
2 & 0  &0 & 0 &&&&&&& \\
0 &&& 1 & 0 & -1& 0 &&&&\\
0 && &  & 0 & 1 & -1 &&&&\\
-6 && &  & 2 & 0 & 0 &&&&\\
\circled{$-2$} & &  &  & \circled{$-1$} &  & 1 & 0 & -1 &0&0\\
-2 & &  &  & -1 &  &  & -3 &  1&-1&\boxed{-2}\\
\circled{$1$} &&  & & \circled{$-2$} & &  & 1 & 0 & 1 &0 \\
\circled{$1$} & & &  & \circled{$-2$} &  &  & 1 &0& -1 & 0  
\end{pmatrix}.
\]

From here on, we will freely work mod 2 in row 9, using the boxed pivot entry to clear all entries as much as possible. We now clear the circled entries in columns 1 and 5 of $P_\delta''$ by adding multiples of columns 8 and 9. We then add column 10 to column 8, to obtain
\[
P_\delta''' = 
\begin{pmatrix}
0 & -1 &0 & 0 &&&&&&& \\
0 & 1 & -1& 0 &&&&&&& \\
0 & 0  & 1 & -1&&&&&&& \\
2 & 0  &0 & 0 &&&&&&& \\
 &&& 1 & 0 & -1& 0 &&&&\\
 && &  & 0 & 1 & -1 &&&&\\
\circled{$-6$} && &  & \boxed{2} & 0 & 0 &&&&\\
 & &  &  &  &  & 1 & 0 & -1 &0&0\\
1 & &  &  &  &  & & 0 &  1&1&\circled{$-2$}\\
 0&&  & &  & &  & 2 & 0 & 1 &0 \\
 0& & &  &  &  &  &  && -1 & 0  
\end{pmatrix}.
\]

We then clear the circled entry in column 1 of $P_\delta'''$ using the boxed entry in column 5. In the rightmost block, we first use column 10 to clear the circled entry in column 11, then use column 8 to clear the resulting $(10,11)$-entry mod 2, and lastly multiply columns 10 and 11 by $-1$, to obtain
\begin{equation}\label{eq:Qdelta1334}
Q_\delta = 
\begin{pmatrix}
0 & -1 &0 & 0 &&&&&&& \\
0 & 1 & -1& 0 &&&&&&& \\
0 & 0  & 1 & -1&&&&&&& \\
2 & 0  &0 & 0 &&&&&&& \\
 &&& 1 & 0 & -1& 0 &&&&\\ 
 && &  & 0 & 1 & -1 &&&&\\
 && &  & 2 & 0 & 0 &&&&\\
 & &  &  &  &  & 1 & 0 & -1 &0&\\
1 & &  &  &  &  &  & 0 &  1&-1& \\
0 &&  & &  &&  & 2 & 0 & -1 & \\
0 & & &  &  &  &  &  && 1 & 2  
\end{pmatrix}.
\end{equation}

Since we have only used column operations, the following basis for $\Mod(w_\delta)$ can be read off from the columns of $Q_\delta$:
\[
 \{ 2\alpha_4^\vee + \alpha_9^\vee, \quad 2\alpha_7^\vee, \quad 2\alpha_{10}^\vee, \quad 2\alpha_{11}^\vee\} \] 
together with \[ \{ -\alpha_i^\vee + \alpha_{i+1}^\vee \mid i = 1,2,5,8\} \cup \{ -\alpha_i^\vee+ \alpha_{i+2} \mid i = 3,6\} \cup \{-\alpha_9^\vee - \alpha_{10}^\vee + \alpha_{11}^\vee \},
\]
confirming part \eqref{D_delta12} of Theorem \ref{thm:DnCuspidal}.

To find the Smith normal form for $P_\delta$, we continue from $Q_\delta$, now using some row operations. Working from left to right, we use the $-1$s in columns 2, 3, 4, 6, 7, and 9 to successively clear the $1$s underneath them, similar to how we obtained $Q'_\delta$ in Example \ref{eg:delta111223}.
From there we extract the $5 \times 5$ minor of $Q_\delta$ consisting of all columns containing 2s, in addition to column $n-1 = 10$, together with the rows containing the nonzero entries of these columns, resulting in the lefthand matrix below. We then carry out a series of row and then column operations, as follows:
\[
\begin{pmatrix}
2 & 0 & 0 & 0 & 0 \\
0 &2 &  0 & 0 & 0  \\
1 &0 &  0 & -1 & 0  \\
0 &0 &  2 & -1 & 0  \\
0 & 0 & 0 & 1 & 2 \\
 \end{pmatrix} \to 
\begin{pmatrix}
2 & 0 & 0 & 0 & 0 \\
0 &2 &  0 & 0 & 0  \\
\boxed{1} &0 &  0 & 0 & 2  \\
0 &0 &  2 & 0 & 0  \\
0 & 0 & 0 & 1 & 0 \\
 \end{pmatrix} \to 
\begin{pmatrix}
0 & 0 & 0 & 0 & -4 \\
0 &2 &  0 & 0 & 0  \\
1 &0 &  0 & 0 & 0  \\
0 &0 &  2 & 0 & 0  \\
0 & 0 & 0 & 1 & 0 \\
 \end{pmatrix}.
\]
On this minor, we first add row 5 to rows 3 and 4, and then use columns 3 and 4 to clear the entries in column 5, to obtain the second matrix. Then use the boxed entry in column 1 to clear the 2 in column 5. Finally, use the same boxed entry in row 3 to clear the 2 in row 1, to obtain the final matrix.
After scaling and rearrangement, this implies that $P_\delta$ has Smith normal form $\diag(1^8,2^2,4)$, confirming part \eqref{SNFD_gcd2_nonparity} of Theorem \ref{thm:DnSNF} in this case.
\end{example}

%%%%%%%%%%%%%%%%% 2333

\begin{example}\label{eg:delta2333}  
This example illustrates part \eqref{SNFD_gcd2_nonparity} of \Cref{thm:DnSNF} in the case $|\beta| =0$, and part \eqref{D_delta2} of \Cref{thm:DnCuspidal}. Let $\sW$ be of type $D_{11}$ and let $|\beta| = 0$ and $\delta = (2,3,3,3)$. By \eqref{eq:wdelta3s}, we have
 \[ \wbd = w_\delta = u_0 v^\delta_2 v^\delta_3 v^\delta_4 = u_0(u_2 s_8 s_7)(u_5 s_5 s_4)(u_{8} s_2 s_1).\]  
 Applying \eqref{eq:siaction}, the matrix for $\Id - w_\delta$ with respect to the basis $\Delta^\vee$ is given by
\[
P_\delta = 
\begin{pmatrix}
2 & -1  & 0 & &&&&&&& \\
1 & 1& -1 & &&&&&&& \\
2 & 0  & 0 & &&&&&&& \\
2 &   & -1 &2 &-1&0&&&&& \\
2 & & -1 & 1 & 1& -1 &&&&&\\
2 &&  -2 & 2 & 0 & 0 &&&&&\\
2 & & -2 & 2 &  & -1 &2&-1&0&&\\
2 & &  -2 & 2 &  & -1 & 1 & 1 &-1&&\\
2 & &  -2 & 2 &  & -2 & 2 & 0 &0&&\\
1 & &-1 & 1 & & -1 & 1 &  & 0 &1 & -1\\
1 & & -1 & 1 &  & -1 & 1 && -1 & 1  &1
\end{pmatrix}.
\]

Notice that $P_\delta$ is block lower-triangular with diagonal blocks of sizes $3,3,3,2$ going from left to right, obtained from the parts of $\delta$ in reverse order.  For $k \geq 3$, both the diagonal blocks and the entries underneath them corresponding to the parts $\delta_k$ are the same as described in Example~\ref{eg:delta111223} above. The diagonal block corresponding to~$\delta_2$ is the same as in Example~\ref{eg:delta111223} above, and the diagonal block for $\delta_1 = 2$ is the $2 \times 2$ matrix in the lower right depicted in $P_\delta$. Finally, in the $2$ rows underneath the block for $\delta_2$, the leftmost column has entries $1,1$, the rightmost column has entries $0,-1$, and all other entries equal~$0$. 
One can check via \eqref{eq:siaction} that this description in fact characterizes $P_\delta$ for all $|\delta| = n$ with $\delta_1 =2$; that is, the case considered in part \eqref{D_delta2} of \Cref{thm:DnCuspidal}.

We now carry out a series of column operations on $P_\delta$ to obtain a matrix from which a basis for $\Mod(w_\delta)$ can be read off; see the matrix $Q_\delta$ in \eqref{eq:Qdelta2333} below. We start with column operations on  columns 1, 3, 4, 6, and 7 modeled upon those in the two previous examples, clearing each of these columns as much as possible, to obtain
\[
P_\delta' = 
\begin{pmatrix}
0 & -1  & 0 & &&&&&&& \\
0 & 1& -1 & &&&&&&& \\
2 & 0  & 0 & &&&&&&& \\
 &   & 1 &0 &-1&0&&&&& \\
 & &  & 0 & 1& -1 &&&&&\\
 &&   & 2 & 0 & 0 &&&&&\\
 & &  &  &  & 1 &0&-1&0&&\\
 & &   &  &  &  & 0 & 1 &-1&&\\
 & &   &  &  &  & 2 & 0 &0&&\\
 & & &  & &  &  &  & 0 &1 & -1\\
\circled{$-20$} &  &  & \circled{$-4$} &  &  & \circled{$-3$} && -1 & 1  &1
\end{pmatrix}.
\]

The process of clearing the bottom row of $P'_\delta$ is somewhat unique to the case of $\delta_2 = 2$. We first add column 10 to column 9, and then add column 11 to column 10, resulting in the boxed entry below, which is used to clear the circled entries in the bottom row of $P'_\delta$ mod 2, giving us
\begin{equation}\label{eq:Qdelta2333}
Q_\delta = 
\begin{pmatrix}
0 & -1  & 0 & &&&&&&& \\
0 & 1& -1 & &&&&&&& \\
2 & 0  & 0 & &&&&&&& \\
 &   & 1 &0 &-1&0&&&&& \\
 & &  & 0 & 1& -1 &&&&&\\
 &&   & 2 & 0 & 0 &&&&&\\
 & &  &  &  & 1 &0&-1&0&&\\
 & &   &  &  &  & 0 & 1 &-1&&\\
 & &   &  &  &  & 2 & 0 &0&&\\
 & & &  & &  &  &  & 1 & 0& -1\\
 &  &  &  &  &  & 1 &  &  & \boxed{2}  &1
\end{pmatrix}.
\end{equation}

Since we have only used column operations, the following basis for $\Mod(w_\delta)$ can be read off from the columns of $Q_\delta$. The columns with 2s yield
\[
 \{ 2\alpha_3^\vee, \quad 2\alpha_6^\vee, \quad 2\alpha_{9}^\vee + \alpha_{11}^\vee, \quad 2\alpha_{11}^\vee\}, \]
 and the remaining columns give us
  \[ \{ -\alpha_i^\vee + \alpha_{i+1}^\vee \mid i = 1,4,7,10\} \cup \{ -\alpha_i^\vee+ \alpha_{i+2} \mid i = 2,5,8\}.
\]
In particular, since $\delta = (2,3,3,3)$, then we obtain the sequences $(j_k^\delta) = (0,2,5,8)$ and $(\delta_{k,k-1}) = (0,1,0,0)$. Hence, the first set in this basis equals 
\[ \left\{ 2 \alpha_{n-j^\delta_k}^\vee + \delta_{k,k-1}\alpha_{n}^\vee \ \middle| \  1 \leq k \leq 2r\right\} \]
with $n=11$ and $2r = 4$. We also have $I_\delta = \{3,6,9,11\}$ and hence $I_\delta - 1 = \{2,5,8,10\}$, and so the second pair of sets in the basis obtained from $Q_\delta$ coincides with
\[ \left\{ -\alpha_i^\vee + \alpha_{i+1}^\vee \ \middle| \  i \not \in I_\delta \cup (I_\delta - 1)  \ \text{or} \ i=n-1 \right\} \ \cup \  \left\{ -\alpha_{i}^\vee + \alpha_{i+2}^\vee \ \middle|\  i \in (I_\delta-1), \; i \neq n-1 \right\},\]
confirming part \eqref{D_delta2} of Theorem \ref{thm:DnCuspidal}.

To find the Smith normal form for $P_\delta$, we continue from $Q_\delta$. Working from left to right, we use the $-1$s above the diagonal to successively clear the $1$s underneath them. This yields 
\[
Q_\delta' = 
\begin{pmatrix}
0 & -1  & 0 & &&&&&&& \\
0 & 0& -1 & &&&&&&& \\
2 & 0  & 0 & &&&&&&& \\
 &   &  &0 &-1&0&&&&& \\
 & &  & 0 & 0& -1 &&&&&\\
 &&   & 2 & 0 & 0 &&&&&\\
 & &  &  &  &  &0&-1&0&&\\
 & &   &  &  &  & 0 & 0 &-1&&\\
 & &   &  &  &  & 2 & 0 &0&&\\
 & & &  & &  &  &  &  & 0& -1\\
 &  &  &  &  &  & 1 &  &  & 2  &0
\end{pmatrix}.
\]
From here, noting that $\begin{pmatrix} 2 & 0 \\ 1 & 2 \end{pmatrix} \to \begin{pmatrix} 0 & -4 \\ 1 & 0 \end{pmatrix}$ via one row and one column operation, it is easy to see that $P_\delta$ has Smith normal form $\diag(1^8,2^2,4)$, confirming part \eqref{SNFD_gcd2_nonparity} of Theorem \ref{thm:DnSNF} in this case.
\end{example}

%%%%%%%%%%%%%%%%%% 3333

\begin{example}\label{eg:delta3333} 
This example illustrates part \eqref{SNFD_gcd2_parity} of \Cref{thm:DnSNF} in the case $|\beta| = 0$, and part \eqref{D_delta3} of \Cref{thm:DnCuspidal}.  Let $\sW$ be of type $D_{12}$ and let $|\beta| = 0$ and $\delta = (3,3,3,3)$. By \eqref{eq:wdelta3s}, we have
 \[ \wbd = w_\delta = (u_0 s_{10})v^\delta_2 v^\delta_3 v^\delta_4 = (u_0 s_{10})(u_3 s_8 s_7)(u_6 s_5 s_4)(u_9 s_2 s_1).\]
Applying \eqref{eq:siaction}, the matrix for $\Id - w_\delta$ with respect to the basis $\Delta^\vee$ is given by
\[
P_\delta = 
\begin{pmatrix}
2 & -1  & 0 & &&&&&&&& \\
1 & 1& -1 & &&&&&&&& \\
2 & 0  & 0 & &&&&&&&& \\
2 &   & -1 &2 &-1&0&&&&&& \\
2 & & -1 & 1 & 1& -1 &&&&&&\\
2 &&  -2 & 2 & 0 & 0 &&&&&&\\
2 & & -2 & 2 &  & -1 &2&-1&0&&&\\
2 & &  -2 & 2 &  & -1 & 1 & 1 &-1&&&\\
2 & &  -2 & 2 &  & -2 & 2 & 0 &0&&&\\
2 & &  -2 & 2 &  & -2 & 2 &  &-1&2&-1&-1\\
1 & &-1 & 1 & & -1 & 1 &  & 0 & 0 &1 & -1\\
1 & & -1 & 1 &  & -1 & 1 && -1 & 1& 0  &0
\end{pmatrix}.
\]

As in most previous examples, the sizes of the diagonal blocks of $P_\delta$ are the parts of $\delta$ in reverse order. The diagonal blocks and entries underneath them corresponding to the parts $\delta_k$ for $k \geq 3$ are the same as described in Example~\ref{eg:delta111223} above. The diagonal block corresponding to~$\delta_2$ is also the same as in Example~\ref{eg:delta111223}. Underneath the $\delta_2$ block, the leftmost column has entries $2,\dots,2,1,1$ like other blocks to its left; however, the rightmost column has entries $-1,\dots,-1,0,-1$, and all other entries equal~$0$. The diagonal block for $\delta_1$ is the $\delta_1 \times \delta_1$ matrix which has first column $(2,1^{\delta_1 - 3},0,1)$, diagonal entries $2,1, \dots, 1, 0$, above-diagonal entries $-1$, and an additional $-1$ entry in the $(n-2,n)$ position.  One can check via \eqref{eq:siaction} that this description characterizes $P_\delta$ for all $|\delta| = n$ with $\delta_1 \geq 3$; that is, the case considered in part \eqref{D_delta3} of \Cref{thm:DnCuspidal}.

We now carry out a series of column operations on $P_\delta$ to obtain a matrix $Q_\delta$ from which a basis for $\Mod(w_\delta)$ can be read off; see \eqref{eq:Qdelta3333} below.  We start with column operations on columns 1, 3, 4, 6, 7, and 9 modeled upon those in previous examples, clearing each of these columns as much as possible, to obtain
\[
P_\delta' = 
\begin{pmatrix}
0 & -1  & 0 & &&&&&&&& \\
0 & 1& -1 & &&&&&&&& \\
2 & 0  & 0 & &&&&&&&& \\
 &   & 1 &0 &-1&0&&&&&& \\
&&& 0 & 1& -1 &&&&&&\\
&&& 2 & 0 & 0 &&&&&&\\
&&&  &  & 1 &0&-1&0&&&\\
&&& &&& 0 & 1 &-1&&&\\
&&& &&& 2 & 0 &0&&&\\
&&& &&&  &  &1&2&-1&-1\\
-32&&& -8&&& &&& 0 &1 & -1\\
-6&&&-6&&&-2 &&& 1& 0  &0
\end{pmatrix}.
\]
After the initial steps of replacing $\cC_1(P_\delta)$ by $\cC_1(P_\delta) + 2\cC_2(P_\delta) + 3\cC_3(P_\delta)$ and $\cC_3(P_\delta)$ by $\cC_3(P_\delta) + \cC_4(P_\delta)$ and so on, we point out that all subsequent column operations clearing columns 1, 4, and 7 below the diagonal result in even numbers in these columns, due to the absence of any change of parity in $\delta$. The result is that we are able to completely clear columns 1, 4, and 7 in this example; compare $P'_\delta$ in Example \ref{eg:delta2333}, where the odd number in column 7 results from the change in parity between the first two parts of $\delta = (2,3,3,3)$.

The process of clearing the bottom $\delta_1-1$ rows in columns 1, 4, and 7 now involves several column operations unique to the case of $\delta_1 \geq 3$. In the last block, we change its first column to $0,\dots, 0,\delta_1-1,1$ by replacing its first column, which has index $n-\delta_1 + 1$, by the sum $\cC_{n-\delta_1 + 1}(P_\delta) + 2 \cC_{n-\delta_1 + 2}(P_\delta) + 3 \cC_{n-\delta_1 + 3}(P_\delta) + \cdots + (\delta_1 - 1) \cC_{n-1}(P_\delta)$. We then subtract column $n-1$ from column $n$. The resulting boxed entries in the $\delta_1$ block can be used to clear the remaining entries mod 2 in row $n-1$, and completely in row $n$, to obtain
\begin{equation}\label{eq:Qdelta3333}
Q_\delta = 
\begin{pmatrix}
0 & -1  & 0 & &&&&&&&& \\
0 & 1& -1 & &&&&&&&& \\
2 & 0  & 0 & &&&&&&&& \\
 &   & 1 &0 &-1&0&&&&&& \\
&&& 0 & 1& -1 &&&&&&\\
&&& 2 & 0 & 0 &&&&&&\\
&&&  &  & 1 &0&-1&0&&&\\
&&& &&& 0 & 1 &-1&&&\\
&&& &&& 2 & 0 &0&&&\\
&&& &&&  &  &1&0&-1&0\\
&&& &&& &&& 0 &1 & \boxed{-2}\\
&&&&&& &&& \boxed{1}& 0  &0
\end{pmatrix}.
\end{equation}

Since we have only used column operations, the following basis for $\Mod(w_\delta)$ can be read off from the columns of $Q_\delta$.  The columns with 2s, together with column $n-\delta_1+1$, yields
\[
 \{ 2\alpha_3^\vee, \quad 2\alpha_6^\vee, \quad 2\alpha_{9}^\vee\} \ \cup \ \{ 2\alpha_{11}^\vee, \quad \alpha_{12}^\vee\}, \] 
 and the remaining columns give us 
 \[ \{ -\alpha_i^\vee + \alpha_{i+1}^\vee \mid i = 1,4,7,10\} \cup \{ -\alpha_i^\vee+ \alpha_{i+2} \mid i = 2,5,8\}.
\]
Since $\delta= (3,3,3,3)$ has no parity changes, then $(\delta_{k,k-1}) = (0,0,0,0)$, and $(j_k^\delta) = (0,3,6,9)$. Therefore, the first sets agree with 
\[ \left\{ 2 \alpha_{n-j^\delta_k}^\vee + \delta_{k,k-1}\alpha_{n-1}^\vee \ \middle| \  2 \leq k \leq 2r \right\} \cup 
\left\{ 2\alpha_{n-1}^\vee,\; ((\delta_1 - 1)\operatorname{mod} 2) \alpha_{n-1}^\vee + \alpha_n^\vee \right\}\]
for $n=12$ and $2r=4$. In this example, $I_\delta = \{3,6,9,12\}$ and so $I_\delta - 1 = \{2,5,8,11\}$. The second pair of sets in the basis obtained from $Q_\delta$ thus coincides with
\[ \left\{ -\alpha_i^\vee + \alpha_{i+1}^\vee \ \middle| \ i \not \in I_\delta \cup (I_\delta - 1)  \right\} \cup   \left \{ -\alpha_{i}^\vee + \alpha_{i+2}^\vee \ \middle| \ i \in (I_\delta-1),\; i \neq n-1 \right\},\]
confirming part \eqref{D_delta3} of Theorem \ref{thm:DnCuspidal}.

To find the Smith normal form for $P_\delta$ from $Q_\delta$, we work left to right as in previous examples, using row operations with the $-1$s above the diagonal to successively clear the 1s below, giving us
\[
Q_\delta' = 
\begin{pmatrix}
0 & -1  & 0 & &&&&&&&& \\
0 & 0& -1 & &&&&&&&& \\
2 & 0  & 0 & &&&&&&&& \\
 &   &  &0 &-1&0&&&&&& \\
&&& 0 & 0& -1 &&&&&&\\
&&& 2 & 0 & 0 &&&&&&\\
&&&  &  &  &0&-1&0&&&\\
&&& &&& 0 & 0 &-1&&&\\
&&& &&& 2 & 0 &0&&&\\
&&& &&&  &  &&0&-1&0\\
&&& &&& &&& 0 &0 & -2\\
&&&&&& &&& 1& 0  &0
\end{pmatrix}.
\]
Therefore, $P_\delta$ has Smith normal form $\diag(1^6,2^4)$, confirming part \eqref{SNFD_gcd2_parity} of Theorem \ref{thm:DnSNF} in this case.
\end{example}

%%%%%%%%%%%%%%%%%%%%%%%%%%%%%%%%%%%%%%%%%%%%%%%%%%%%%%%%%%%%

\subsubsection{Noncuspidal examples with $\beta_p = 1$}\label{sec:noncuspidal2}
We now consider the situation where $\beta_p = 1$.  This includes the special case $\beta = (1)$. Recall from \Cref{sec:repsD} that when $|\beta| = 1$, we have $w_\beta$ trivial and $J_\beta = \emptyset$ (as in the case $|\beta| = 0$). 
The following examples illustrate the proof of part~\eqref{D_gcd1} of \Cref{thm:DnSNF}, and how to obtain a basis in this case; see the introduction to Section \ref{sec:examplesD} for the list of examples. 

%%%%%%%%%%%%%%% beta 1 delta 1111

\begin{example}\label{eg:beta1delta1111}
Let $\sW$ be of type $D_n$ with $n \geq 5$ odd, and let $\beta = (1)$ and $\delta = (1,\dots,1)$ with $n-1$ parts. We prove this special case of part \eqref{D_gcd1} of \Cref{thm:DnSNF}, and find a basis. Recall from \eqref{eq:wdelta1s} that 
\[
\wbd = w_\delta = u_1 u_2 \dots u_{n-2},
\]
which equals the longest element in the type $D_{n-1}$ subsystem indexed by the last $n-1$ nodes; see Table 1 in \cite{BenkartKangOhPark}, for example. Therefore, $w_\delta$ acts as $(-\Id)$ on the submodule of $R^\vee$ spanned by $\{\alpha_i^\vee \mid 2 \leq i \leq n\}$. 

From \eqref{eq:siaction}, the matrix for $\Id - w_\delta$ with respect to $\Delta^\vee$ is therefore
\[
\Pd  = 
\begin{pmatrix}
0 &&&&& \\
-2 & 2 &  & & &\\
\vdots & & \ddots &&& \\
-2 &  & & 2 & & \\
-1 &  & & &2 &  \\
-1 &  &  & & & 2
\end{pmatrix}.
\]
That is, first row of this matrix is all $0$s, the $(n-1) \times (n-1)$ minor in the bottom right is~$2\Id$, and the first column is $0,-2,\dots,-2,-1,-1$. 

We now add columns $2$ through $n$ to column $1$ to obtain $P_\delta'$, and then use two further column operations to clear the circled entry and obtain $\Qd$ below:
\[
\Pd'  = 
\begin{pmatrix}
0 &&&&& \\
0 & 2 &  & & &\\
\vdots & & \ddots &&& \\
0 &  & & 2 & & \\
1 &  & & &2 &  \\
1 &  &  & & & \circled{2}
\end{pmatrix}
\quad \to \quad
\Qd = 
\begin{pmatrix}
0 &&&&& \\
0 & 2 &  & & &\\
\vdots & & \ddots &&& \\
0 &  & & 2 & & \\
1 &  & & &2 &  \\
1 &  &  & & & 0
\end{pmatrix}.
\]
A basis for $\Mod(\wbd)$ can then be read off from $\Qd$; namely $\{ 2\alpha_2^\vee, \dots, 2 \alpha_{n-1}^\vee \} \cup \{ \alpha_{n-1}^\vee + \alpha_n\}$.

Now on the matrix $\Qd$, we subtract row $n$ from row $n-1$ to obtain
\[
\Qd' = 
\begin{pmatrix}
0 &&&&& \\
0 & 2 &  & & &\\
\vdots & & \ddots &&& \\
0 &  & & 2 & & \\
0 &  & & &2 &  \\
1 &  &  & & & 0
\end{pmatrix}.
\]
The Smith normal form $(1,2^{n-2},0)$ can be seen immediately from $\Qd'$, 
which establishes this case of part \eqref{D_gcd1} of \Cref{thm:DnSNF}.
\end{example}

%%%%%%%%%%%%%%% beta 1 delta 1112

\begin{example}\label{eg:beta1delta1112} 
Let $\sW$ be of type $D_6$ and let $\beta = (1)$ and $\delta = (1,1,1,2)$.  This example illustrates part \eqref{D_gcd1} of \Cref{thm:DnSNF} and how to find a basis, in the case that $\beta = (1)$, $\delta_1 = \delta_2 = 1$, and $\delta \neq (1,\dots,1)$. By \eqref{eq:wdelta1s2s} with $\ell = 3$, 
\[
\wbd = w_\delta = u_1 u_2 v_4^\delta = u_1 u_2 (u_3 s_2).
\]
Applying \eqref{eq:siaction}, the matrix for $\Id - w_\delta$ with respect to $\Delta^\vee$ is 
\[
\Pd  = 
\begin{pmatrix}
0 &0&0&&& \\
-1 & 2 & -1 & & & \\
-2 & 2 & 0 &  & & \\
-2 & 2 & -2 & 2 && \\
 -1& 1 & -1 &  & 2&\\
-1 & 1 & -1 &  & & 2
\end{pmatrix}.
\]
Notice that the first row of this matrix is all $0$s, while the $(n-1) \times (n-1)$ minor in the bottom right is the same as the matrix for the cuspidal element $w_\delta$ in the subsystem of type $D_{n-1}$ indexed by the last $n-1$ nodes. 
The first column of $\Pd$ is the negative of its second column, except in the $(2,1)$-entry, which equals $-1$. The same holds whenever $\beta = (1)$. 

We now add column $2$ to column $1$, so that the $(2,1)$-entry becomes the boxed pivot entry~$1$. We then carry out the same column operations on the $(n-1)\times(n-1)$ minor in the bottom right as we did in the cuspidal case (see Example~\ref{eg:delta111223}), to obtain
\[
\Pd'  = 
\begin{pmatrix}
0 &&&&& \\
\boxed{1} & 0 & \circled{$-1$} & & & \\
 & 2 & 0 &  & & \\
 & 0 &0  & 2 && \\
& \circled{$1$} & \boxed{1} &  & 2&\\
 & \circled{$1$} & \boxed{1} &  & & 2
\end{pmatrix}.
\]
Now, working from left to right, we can apply column operations which use the boxed entries to clear the circled entries of $P'_\delta$ to  obtain the matrix $P_\delta''$ 
\[
P_\delta'' = 
\begin{pmatrix}
0 &&&&& \\
1 & 0 & 0 & & & \\
 & 2 & 0 &  & & \\
 & 0 & 0 & 2 && \\
&  & 1 &  & 2&\\
 &  & 1 &  & & 2
\end{pmatrix}
\quad \to \quad
\Qd = 
\begin{pmatrix}
0 &&&&& \\
1 & 0 & 0 & & & \\
 & 2 & 0 &  & & \\
 & 0 & 0 & 2 && \\
&  & 1 &  & 2&\\
 &  & 1 &  & & 0
\end{pmatrix}
\]
Finally, use columns 3 and 5 to clear column $n=6$, which yields the matrix $\Qd$, from which we can read off a basis for $\Mod(\wbd)$, namely  $\{ \alpha_2^\vee , 2\alpha_3^\vee \} \cup \{  2\alpha_4^\vee, 2\alpha_5^\vee \} \cup \{ \alpha_{5}^\vee + \alpha_6\}$. 

From here, observe that $\begin{pmatrix} 1 & 2 \\ 1 & 0 \end{pmatrix} \to \begin{pmatrix} 0 & 2 \\ 1 & 0 \end{pmatrix}$ via one row operation. Therefore, $P_\delta$ clearly has Smith normal form $\diag(1^2,2^3,0)$, confirming part \eqref{D_gcd1} of Theorem \ref{thm:DnSNF}.
\end{example}

%%%%%%%%%%%%%%% beta 1 delta 13

\begin{example}\label{eg:beta1delta13} 
Let $\sW$ be of type $D_5$ and let $\beta = (1)$ and $\delta = (1,3)$.  This example illustrates part \eqref{D_gcd1} of \Cref{thm:DnSNF}, and how to find a basis, in the case $\beta = (1)$, $\delta_1 = 1$, and $\delta_2 \geq 2$. By \eqref{eq:wdelta1s2s} with $\ell = 1$, we have 
\[
\wbd = w_\delta = v_2^\delta = u_1 s_3 s_2.
\]
Applying \eqref{eq:siaction}, the matrix for  $\Id - w_\delta$ with respect to $\Delta^\vee$ is
\[
\Pd  = 
\begin{pmatrix}
0 &&&& \\
-1 & 2 & -1 & 0 & 0\\
-1 & 1 & 1 & -1 & -1 \\
-1 & 1 & 0 & 1 & -1\\
-1 & 1 & 0 & -1 & 1
\end{pmatrix}.
\] 
We now add column $2$ to column $1$, so that the $(2,1)$-entry becomes a pivot entry $1$. We then carry out the same column operations on the $(n-1)\times(n-1)$ minor in the bottom right as we did in the cuspidal case (see Example~\ref{eg:delta1334}), to obtain
\[
\Pd'  = 
\begin{pmatrix}
0 &&&& \\
\boxed{1} & 0 & \circled{$-1$} & 0 & \\
 & 0 &\boxed{1} & \circled{$-1$} &  \\
 & 2 & 0 & -1 & \\
 &  &  & 1 & 2
\end{pmatrix}.
\]
Compare the bottom right $4\times 4$ minor of $\Qd$ in \eqref{eq:Qdelta1334}. Working from left to right, we can apply column operations which use the boxed entries to clear the circled entries of $P_\delta'$, yielding
\[
\Pd''  = 
\begin{pmatrix}
0 &&&& \\
1 & 0 & 0 & 0 & \\
 & 0 & 1 & 0 &  \\
 & 2 & 0 & -1 & \\
 &  &  & 1 & \boxed{2}
\end{pmatrix}.
\]
We now add twice column 4 to column 2, and then used the boxed pivot entry $2$ in column $n$ of $\Pd''$ to clear the resulting $(n,2)$-entry. This gives us
\[
\Qd  = 
\begin{pmatrix}
0 &&&& \\
\boxed{1} & 0 & 0 & 0 & \\
 & 0 & \boxed{1} & 0 &  \\
 & 0 & 0 & -1 & \\
 &  &  & \circled{1} & \boxed{2}
\end{pmatrix}.
\]
A basis for $\Mod(\wbd)$ given by $\{\alpha_2^\vee, \alpha_3^\vee \} \cup \{ -\alpha_4^\vee + \alpha_5^\vee\} \cup \{2\alpha_5^\vee \}$  can now be read off $\Qd$. 

To obtain the Smith normal form of $\Pd$, we add row 4 to row 5, which clears the circled entry of $\Qd$ and gives us
\[
\Qd'  = 
\begin{pmatrix}
0 &&&& \\
\boxed{1} & 0 & 0 & 0 & \\
 & 0 &\boxed{1} & 0 &  \\
 & 0 & 0 & \boxed{-1} & \\
 &  &  &  & \boxed{2}
\end{pmatrix}.
\]
Therefore, $P_\delta$ has Smith normal form $\diag(1^3,2^1,0)$, confirming part \eqref{D_gcd1} of Theorem \ref{thm:DnSNF}.
\end{example}

%%%%%%%%%%%%%%% beta 1 delta 22

\begin{example}\label{eg:beta1delta22}
Let $\sW$ be of type $D_5$ and let $\beta = (1)$ and $\delta = (2,2)$. This example illustrates part \eqref{D_gcd1} of \Cref{thm:DnSNF} and shows how to obtain a basis, in the case that $\beta = (1)$ and $\delta$ has first part $\delta_1 = 2$. By \eqref{eq:wdelta3s}, we have 
\[
\wbd = w_\delta = u_0 v_2^\delta = u_0 u_2 s_2,
\]
and so by \eqref{eq:siaction} the matrix for $\Id - w_\delta$ with respect to $\Delta^\vee$ is
\[
\Pd  = 
\begin{pmatrix}
0 &&&& \\
-1 & 2 & -1 &  & \\
-2 & 2 & 0 &  &  \\
-1 & 1 & 0 & 1 & -1\\
-1 & 1 & -1 & 1 & 1
\end{pmatrix}.
\]
We now add column $2$ to column $1$, so that the $(2,1)$-entry becomes a pivot entry $1$. We then carry out the same column operations on the $(n-1)\times(n-1)$ minor in the bottom right as we did in the cuspidal case (see Example~\ref{eg:delta2333}), to obtain
\[
\Pd'  = 
\begin{pmatrix}
0 &&&& \\
\boxed{1} & 0 & \circled{$-1$} &  & \\
 & 2 & 0 &  &  \\
 &  & \boxed{1} & 0 & \circled{$-1$}\\
 &  & & \circled{2} & \boxed{1}
\end{pmatrix}
\quad \to \quad
\Qd  = 
\begin{pmatrix}
0 &&&& \\
1 & 0 & 0 &  & \\
 &2 & 0 &  &  \\
 &  & 1 & 0 & 0\\
 &  & & 0 & 1
\end{pmatrix}.
\]
Now, working from left to right, we can apply further column operations which use the boxed entries of $P_\delta'$ to clear the circled entries, yielding $\Qd$. A basis for $\Mod(\wbd)$ given by $\{\alpha_2^\vee, 2\alpha_3^\vee \} \cup \{ \alpha_4^\vee, \alpha_5^\vee\}$  can now be read off from $\Qd$. 

Moreover, we can easily see from $\Qd$ that $P_\delta$ has Smith normal form $\diag(1^3, 2^1, 0)$, confirming part \eqref{D_gcd1} of Theorem \ref{thm:DnSNF}.
\end{example}

%%%%%%%%%%%%%%% beta 1 delta 33

\begin{example}\label{eg:beta1delta33}
Let $\sW$ be of type $D_7$ and let $\beta = (1)$ and $\delta = (3,3)$. This example illustrates part \eqref{D_gcd1} of \Cref{thm:DnSNF} and how to find a basis, in the case that $\beta = (1)$ and $\delta$ has first part $\delta_1\geq 3$. By \eqref{eq:wdelta3s}, we have 
\[
\wbd = w_\delta = (u_0 s_5)v^\delta_2 = (u_0 s_5)( u_3 s_3 s_2).
\]
By \eqref{eq:siaction}, the matrix for $\Id - w_\delta$ with respect to $\Delta^\vee$ is
\[
\Pd  = 
\begin{pmatrix}
0 &&&&&& \\
-1 & 2 & -1 & 0 & &&\\
-1& 1 & 1 & -1 &&& \\
-2 & 2 & 0 & 0 &&& \\
-2 & 2 & & -1 &2 &-1& -1\\
-1 & 1 && 0 &0& 1 & -1\\
-1 & 1 && -1 & 1 & 0 & 0  
\end{pmatrix}.
\]
We now add column $2$ to column $1$, so that the $(2,1)$-entry becomes a pivot entry $1$. We then carry out the same column operations on the $(n-1)\times(n-1)$ minor in the bottom right as we did in the cuspidal case (see Example~\ref{eg:delta3333}), to obtain $P'_\delta$, which agrees with the lower right $6\times 6$ minor in \eqref{eq:Qdelta3333}.
\[
\Pd'  = 
\begin{pmatrix}
0 &&&&&& \\
\boxed{1} & 0 & \circled{$-1$} & 0 & &&\\
& 0 & \boxed{1} & \circled{$-1$} &&& \\
 & 2 & 0 & 0 &&& \\
 &  & & \boxed{1} &0 &\circled{$-1$} & 0\\
 &  &&  &0& \boxed{1} & \circled{$-2$}\\
 &  &&  & 1 & 0 & 0  
\end{pmatrix}
\quad \to \quad
\Qd  = 
\begin{pmatrix}
0 &&&&&& \\
1 & 0 & 0 & 0 & &&\\
& 0 & 1 & 0 &&& \\
 & 2 & 0 & 0 &&& \\
 &  & & 1 &0 &0 & 0\\
 &  &&  &0& 1 & 0\\
 &  &&  & 1 & 0 & 0  
\end{pmatrix}
\]
Now, working from left to right, we can apply column operations which use the boxed entries to clear the circled entries of $P'_\delta$, yielding $\Qd$ above. A basis for $\Mod(\wbd)$ given by $\{\alpha_2^\vee, \alpha_3^\vee, 2\alpha_4^\vee \} \cup \{ \alpha_5^\vee, \alpha_6^\vee, \alpha_7^\vee\}$  can now be read off from $\Qd$. 

Moreover, we can easily see from $\Qd$ that $P_\delta$ has Smith normal form $\diag(1^5, 2^1, 0)$, confirming part \eqref{D_gcd1} of Theorem \ref{thm:DnSNF}.
\end{example}

%%%%%%%%%%%%%%% beta 31 delta 12

\begin{example}\label{eg:beta31delta12} 
This example illustrates part \eqref{D_gcd1} of \Cref{thm:DnSNF} in the case that $\beta_p = 1$ and $\beta$ has more than one part, and shows how to obtain a basis.

Let $\sW$ be of type $D_{7}$, and let $\beta = (3,1)$ and $\delta = (1,2)$. By Proposition \ref{prop:wbetaA} and Equation \eqref{eq:wdelta1s2s}, we have
\[
\wbd  = (s_1 s_2)(u_1 s_5).
\]
Let $|\beta| = m \leq n-2$ (so that $m = 4$ in this example). Then since $\beta_p = 1$, the element $w_\beta$ has support in the type $A_{m-2}$ subsystem indexed by the first $m-2$ nodes, so in particular, $w_\beta$ fixes $\alpha_i^\vee$ for $m \leq i \leq n$. Moreover, letting $\beta' = (3,1,1)$ be the partition obtained by adding last part $1$ to $\beta$, the element $w_\beta$ acts on $\alpha_{m-1}^\vee$ exactly as does the element $w_{\beta'}$ in type $A_{m-1}$. Thus applying \eqref{eq:siaction}, the matrix for $\Id - \wbd$ with respect to $\Delta^\vee$ is given by $\Pbd$ below:
\[
\Pbd  = 
\begin{pmatrix}
1 & 1 & -1 & 0 & &&\\
-1 & 2 & -1 & 0 &&& \\
0 & 0 & 0 & 0 &&& \\
0 & 0 & 0 & 0 & 0 & 0 & 0\\
 &  &  & -1 & 2 & -1 & -1 \\
  &  &  & -1 & 1 & 1 & -1 \\
   &  &  & -1 & 1 & -1 & 1 
\end{pmatrix}.
\]
Notice that the $(m-1) \times (m-1)$ minor in the top left (here $m-1 = 3$) is the same as the matrix $M_{\beta}$ from case \eqref{Abeta1} of \Cref{cor:matrixMbetaA} in type $A_{m-1}$, since the last part of $\beta$ equals $1$. Also, the $(n-m+1) \times (n-m+1)$ minor in the bottom right (here $n-m+1 = 4$) is analogous to the matrix $P_\delta$ for the case $\beta = (1)$ considered in Example~\ref{eg:beta1delta13}. We can thus carry out column operations on columns $1$ through $m-1$ as in type $A$, and on columns $m$ through $n$ as in the case $\beta = (1)$, to obtain 
\[
\Qbd  = 
\begin{pmatrix}
1 & 0 & 0 & 0 & &&\\
0 & 0 & 1 & 0 &&& \\
0 & 0 & 0 & 0 &&& \\
0 & 0 & 0 & 0 & 0 & 0 & 0\\
 &  &  & 1 & 0 & 0 & 0 \\
  &  &  & 0 & 0 & 2 & 0 \\
   &  &  & 0 & 1 & 0 & 0 
\end{pmatrix}.
\]

Note that, since the last part of $\beta$ equals $1$, we have $\gcd(\beta_k,2)  = 1$ in this case. Hence in columns $1$ through $m-1$, we obtain diagonal entries $(1^{m-p},0^{p-1})$. In columns $m$ through $n$, by the case $\beta = (1)$ we obtain diagonal entries $(1^{n-m+1-2r}, 2^{2r-1},0)$. Putting this together gives us $\Sbd = \diag(1^{n-2r-p+1},2^{2r-1},0^p)$, which confirms part~\eqref{D_gcd1} of Theorem \ref{thm:DnSNF}. The basis read off from $\Qbd$ yields $\{ \alpha_j^\vee \mid j \in J_\beta \}$ together with the basis obtained from the last $n-m+1$ columns as in the case $\beta = (1)$.
\end{example}

%%%%%%%%%%%%%%%%%%%%%%%%%%%%%%%%

\subsubsection{Noncuspidal examples with $\beta_p \geq 2$}\label{sec:noncuspidal3}

We continue by considering some examples in which $\beta_p \geq 2$, to illustrate parts \eqref{SNFD_gcd2_parity}, \eqref{SNFD_gcd2_nonparity}, and~\eqref{D_gcd1} of \Cref{thm:DnSNF} in this case. See the start of Section~\ref{sec:examplesD} for the full list of examples.

%%%%%%%%%%%%%% beta 4 delta 1112

\begin{example}\label{eg:beta4delta1112} 

This example illustrates part \eqref{SNFD_gcd2_nonparity} of \Cref{thm:DnSNF} in the case $\beta_p \geq 2$ and $\delta_1 = \delta_2 = 1$. Let $\sW$ be of type $D_9$ and let $\beta = (4)$ and $\delta = (1,1,1,2)$. By \eqref{eq:wdelta1s2s} with $\ell = 3$, 
\[
\wbd = w_\beta w_\delta = (s_1 s_2 s_3)u_1 u_2 v_4^\delta = (s_1 s_2 s_3)u_1 u_2 (u_3 s_5),
\]
and the matrix for $\Id - \wbd$ with respect to $\Delta^\vee$ is given by
\[
\Pbd = 
\begin{pmatrix}
1&0&1&-1&&&&& \\
-1&1&1&-1&&&& \\
0&-1&2&-1&&&& \\
0&0&0&0&0&0&0&0&0 \\
&&& -1 & 2 & -1&&&  \\
&&& -2 & 2 & 0&&&  \\
&&& -2 & 2 & -2&2&&  \\
&&& -1 & 1 & -1&&2&  \\
&&& -1 & 1 & -1&&&2
\end{pmatrix}. 
\]
Let $m = |\beta|$ and define $\beta' = (4,1)$; that is, $\beta'$ is the partition of $m+1$ obtained by adjoining a last part $1$ to $\beta$. Then the $m \times m$ minor in the top left (here $m = 4$) is the matrix $M_{\beta'}$ from case \eqref{Abeta1} of \Cref{cor:matrixMbetaA}, since the last part of $\beta'$ equals $1$. Also, the $(n-m+1) \times (n-m+1)$ minor in the bottom right (here $n-m+1 = 6$) is the matrix $P_\delta$ for the case $\beta = (1)$ and $\delta = (1,1,1,2)$ considered in Example~\ref{eg:beta1delta1112}.

To find a basis for $\Mod(\wbd)$, we first add column $m-1=3$ and column $m+1=5$ to column $m=4$, then carry out column operations in the first $m-1$ columns as in type $A$ to obtain $(m-1,m-1)$-entry equal to $\gcd(\beta_k) = 4$. We also carry out column operations on the last $(n-m)$ columns (here $n-m = 5$) as done in the case $|\beta| = 0$ to obtain the matrix $Q_\delta$ (compare Example~\ref{eg:delta111223}). This gives us $\Pbd'$:
\[
\Pbd' = 
\begin{pmatrix}
1&0&0&0&&&&& \\
-1&1&0&0&&&& \\
0&-1&4&1&&&& \\
0&0&0&0&0&0&0&0&0 \\
&&& 1 & 0 & -1&&&  \\
&&& 0 & 2 & 0&&&  \\
&&& 0 & 0 & 0&2&&  \\
&&& 0 & 1 & 1&&2&  \\
&&& 0 & 1 & 1&&&2
\end{pmatrix}. 
\]

We now subtract $\gcd(\beta_k) = 4$ times column $m$ from column $m-1$. Then, using a suitable linear combination of the columns to the right of column $m$  which do not have $2$s, we move $(-\gcd(\beta_k)) = -4$ down to the last two entries in column $m-1$, as seen in $\Pbd''$:
\[
\Pbd'' = 
\begin{pmatrix}
1&0&0&0&&&&& \\
-1&1&0&0&&&& \\
0&-1&0&1&&&& \\
0&0&0&0&0&0&0&0&0 \\
&&& 1 & 0 & -1&&&  \\
&&& 0 & 2 & 0&&&  \\
&&& 0 & 0 & 0&2&&  \\
&&-4& 0 & 1 & 1&&\boxed{2}&  \\
&&-4& 0 & 1 & 1&&&\boxed{2}
\end{pmatrix}. 
\]
Then, using the boxed entries $2$ in the last two columns of $\Pbd''$, we can clear column $m-1$ and replace both of these boxed $2$s by $\gcd(\beta_k,2) \in \{1,2\}$. In this example, since $\gcd(\beta_k,2) = 2$, we obtain
\[
\Qbd = 
\begin{pmatrix}
1&0&0&0&&&&& \\
-1&1&0&0&&&& \\
0&-1&0&1&&&& \\
0&0&0&0&0&0&0&0&0 \\
&&& 1 & 0 & -1&&&  \\
&&& 0 & 2 & 0&&&  \\
&&& 0 & 0 & 0&2&&  \\
&&& 0 & 1 & 1&&2&  \\
&&& 0 & 1 & 1&&&2
\end{pmatrix}. 
\]
Notice that the $(n-m) \times (n-m)$ minor in the bottom right (here $n-m = 5$) does not change at this step in the case $\gcd(\beta_k,2) = 2$. Since we have only used column operations so far, and the nonzero columns of $\Qbd$ are linearly independent over $\Z$, a basis for $\Mod(\wbd)$ can be read off from the columns of $\Qbd$.

To find the Smith normal form, we will carry out some row operations on $\Qbd$. First, in the $(m-1) \times (m-1)$ minor in the top left (here $m -1 = 3$), we use successive row operations to clear the $-1$s under $1$s. We then subtract row $m-1 = 3$ from row $m+1=5$,  to obtain
\[
\Qbd' = 
\begin{pmatrix}
\boxed{1}&0&0&0&&&&& \\
0&\boxed{1}&0&0&&&& \\
0&0&0&\boxed{1}&&&& \\
0&0&0&0&0&0&0&0&0 \\
&&& 0 & 0 & -1&&&  \\
&&& 0 & 2 & 0&&&  \\
&&& 0 & 0 & 0&2&&  \\
&&& 0 & 1 & 1&&2&  \\
&&& 0 & 1 & 1&&&2
\end{pmatrix}. 
\]
After some rearrangement in columns $1$ through $m = 4$, we obtain diagonal entries $(1^{m-p},0^{p}) = (1^3,0)$, since $\beta' = (4,1)$ has last part equal to $1$. We can then continue to find the Smith normal form for the $(n-m) \times (n-m)$ minor in the bottom right (here $n-m = 5$) as in the case $|\beta| = 0$ (compare  Example~\ref{eg:delta111223}). This will have diagonal entries $(1^{(n-m)-2r+1},2^{2r-2},4) = (1^2,2^2,4)$, since $\delta=(1,1,1,2)$ has a change of parity in this example. Putting this together, we obtain $\Sbd = (1^{n-2r-p+1}, 2^{2r-2},4,0^p) = (1^5,2^2,4,0)$, confirming part~\eqref{SNFD_gcd2_nonparity} of Theorem~\ref{thm:DnSNF}.
\end{example}

%%%%%%%%%%%%%% beta 4 delta 12

\begin{example}\label{eg:beta4delta12} 

This example illustrates part \eqref{SNFD_gcd2_nonparity} of \Cref{thm:DnSNF} in the case $\beta_p \geq 2$, $\delta_1 = 1$, and $\delta_2 \geq 2$. Let $\sW$ be of type $D_7$ and let $\beta = (4)$ and $\delta = (1,2)$. By \eqref{eq:wdelta1s2s}, we have 
\[
\wbd = w_\beta w_\delta = (s_1 s_2 s_3)(u_1s_5),
\]
and the matrix for $\Id - \wbd$ with respect to $\Delta^\vee$ is given by
\[
\Pbd = 
\begin{pmatrix}
1&0&1&-1 &&\\
-1&1&1&-1 &&\\
 0 & -1& 2 &-1&& \\
0&0&0&0&0 &0&0\\
&& &-1 & 2 & -1 &-1 \\
&& &-1 & 1 & 1 &-1\\
&& &-1 & 1 & -1&1
\end{pmatrix}. 
\]

Let $|\beta| = m$. As in the previous example, we add columns $m-1=3$ and $m+1=5$ to column $m=4$, carry out column operations on the first $m-1$ columns as in type $A$, and subtract $\gcd(\beta_k) = 4$ times column $m$ from column $m-1$. This has the effect of moving $(-\gcd(\beta_k))=-4$ down into row $m+1$.

 The next step is particular to the case $\delta_1 = 1$ and $\delta_2 \geq 2$. We add to column $n-\delta_2 = 5$ twice the sum of columns $n-1$ and $n$, and then add column $n-1$ to column $n$ (this step is analogous to that used to obtain the matrix $P_\delta''$ in Example~\ref{eg:delta1334}). This creates a pivot $-2$ in the $(n-2,n)$-entry, as well as clearing much of column $n-\delta_2=5$, as shown in $\Pbd'$:
\[
\Pbd' = 
\begin{pmatrix}
1&0&0& &&\\
-1&1&0& &&\\
 0 & -1& 0 &1&& \\
0&0&0&0&0 &0&0\\
&&-4 &1 & 0 & -1 &\boxed{-2} \\
&& & & 1 & 1 &0\\
&& & & 1 & -1&0
\end{pmatrix}. 
\] 
In this particular example, we have $m+1 = n-2 = 5$, and so we can immediately use this boxed $-2$ to clear column $m-1=3$ and replace the $(n-2,n)$-entry by $\gcd(\beta_k,2) \in \{1,2\}$. (In general for $\delta_1 = 1$ and $\delta_2 \geq 2$, we can use a suitable linear combination of the columns to the right of column $m = 4$  which have nonzero entries $1$ and $-1$ to move $(-\gcd(\beta_k)) = -4$ down to row $n-2$. We can then use the boxed $-2$ to clear column $m-1$ and replace the $(n-2,n)$-entry by $\gcd(\beta_k,2) \in \{1,2\}$.) In this example, since $\gcd(\beta_k,2) = 2$, we obtain
\[
\Pbd'' = 
\begin{pmatrix}
1&0&0& &&\\
-1&1&0& &&\\
 0 & -1& 0 &1&& \\
0&0&0&0&0 &0&0\\
&& &1 & 0 & -1 &2 \\
&& & & 1 & 1 &0\\
&& & & 1 & -1&0
\end{pmatrix}. 
\] 
Notice that the $(n-m) \times (n-m)$ minor in the bottom right (here $n-m = 3$) does not change at this step in the case $\gcd(\beta_k,2) = 2$.

In order to write down a basis in the case $\delta_1 = 1$ and $\delta_2 \geq 2$, we will take the vectors corresponding to the nonzero columns of index $1$ through $m = 4$ of $\Pbd''$  without further changes, but carry out additional column operations on the $(n-m) \times (n-m)$ minor in the bottom right (here $n-m = 3$). If $|\delta| \geq 4$, we carry out column operations on this minor to obtain a basis as in the case $|\beta| = 0$ (compare Example~\ref{eg:delta1334}). In the special case $\delta = (1,2)$ considered in this example, a basis can be read off from the matrix $\Qbd$ below, which is obtained from $\Pbd''$ by column operations on the last $n-m = 3$ columns:
\[
\Qbd = 
\begin{pmatrix}
1&0&0& &&\\
-1&1&0& &&\\
 0 & -1& 0 &1&& \\
0&0&0&0&0 &0&0\\
&& &1 & 1 & 1 &2 \\
&& & & 2 & 1 &0\\
&& & & 0 & -1&0
\end{pmatrix}. 
\] 

To find the Smith normal form, as in the previous example, we carry out row operations on the first $m - 1 = 3$ rows of $\Qbd$, and then subtract row $m-1 = 3$ from row $m + 1 = 5$, to obtain $\Qbd'$ below:
\[
\Qbd' = 
\begin{pmatrix}
1&0&0& &&\\
0&1&0& &&\\
 0 & 0& 0 &1&& \\
0&0&0&0&0 &0&0\\
&& &0 & 1 & 1 &2 \\
&& & & 2 & 1 &0\\
&& & & 0 & -1&0
\end{pmatrix} 
\quad \to \quad
\Qbd'' = 
\begin{pmatrix}
1&0&0& &&\\
0&1&0& &&\\
 0 & 0& 0 &1&& \\
0&0&0&0&0 &0&0\\
&& &0 & 1 & 0 &2 \\
&& & & 2 & 0 &0\\
&& & & 0 & 1&0
\end{pmatrix}.
\] 
Then Smith normal form for the entire matrix $\Pbd$ can be found by combining the Smith normal form for the $m \times m$ minor in the top left (here $m = 4$), namely $(1^{m-p},0^p) = (1^3,0)$, with the Smith normal form for the $(n-m) \times (n-m)$ minor in the bottom right (here $n-m = 3$). (If $|\delta| \geq 4$, then the Smith normal form from the case $|\beta| = 0$ can be used for this $(n-m) \times (n-m)$ minor; compare Example~\ref{eg:delta1334}.) 

In the special case $\delta = (1,2)$ considered in this example, the diagonal entries in the Smith normal form are $(1,1,4)$, by using row operations to obtain $\Qbd''$, and then noting similarly to Example \ref{eg:delta2333} that $\begin{pmatrix} 1 & 2 \\ 2 & 0 \end{pmatrix} \to \begin{pmatrix} 1 & 0 \\ 0 & -4 \end{pmatrix}$ via one row and one column operation. The resulting Smith normal form for the entire matrix is thus $(1^5,4,0)$, confirming part~\eqref{SNFD_gcd2_nonparity} of \Cref{thm:DnSNF}.
\end{example}

%%%%%%%%%%%%%%% beta 4 delta 22

\begin{example}\label{eg:beta4delta22}
This example illustrates part \eqref{SNFD_gcd2_parity} of \Cref{thm:DnSNF} in the case $\beta_p \geq 2$ and $\delta_1 = 2$.
Let $\sW$ be of type $D_8$ and let $\beta = (4)$ and $\delta = (2,2)$. By \eqref{eq:wdelta3s}, we have 
\[
\wbd = w_\beta w_\delta = (s_1 s_2 s_3)(u_0u_2s_5).
\] 
Here, the matrix for $\Id - w_{\beta,\delta}$ with respect to $\Delta^\vee$ is given by
\[
\Pbd  = 
\begin{pmatrix}
1&0&1&-1 &&&\\
-1&1&1&-1 &&&\\
 0 & -1& 2 &-1&&& \\
0&0&0&0&0 &0&&\\
&&&-1 & 2 & -1 &  & \\
&&&-2 & 2 & 0 &  &  \\
&&&-1 & 1 & 0 & 1 & -1\\
&&&-1 & 1 & -1 & 1 & 1
\end{pmatrix}.
\]

Let $|\beta| = m$. As in the previous two examples, we add columns $m-1 = 3$ and $m+1 = 5$ to column $m = 4$, and then carry out column operations on columns $1$ through $m-1=3$ as in type $A$. Now, on columns $m+1 = 5$ through $n$, we carry out the same column operations as in the case $|\beta| = 0$ (compare Example~\ref{eg:delta2333}), to obtain:
\[
\Pbd'  = 
\begin{pmatrix}
1&0&0&0 &&&\\
-1&1&0&0 &&&\\
 0 & -1& 0 &1&&& \\
0&0&0&0&0 &0&&\\
&&-4&1 & 0 & -1 &  & \\
&&&0 & 2 & 0 &  &  \\
&&&0 & 0 & 1 & 0 & -1\\
&&&0 & 0 & 0 & \boxed{2} & 1
\end{pmatrix}.
\]

In this case, since $\delta_1 = 2$, we obtain the boxed pivot $2$ in the $(n,n-1)$-entry. We can now use further column operations, as in the previous examples, to move  $(-\gcd(\beta_k)) = -4$ down to row $n$, and then clear the $(n,m-1)$-entry and replace the boxed pivot by $\gcd(\beta_k,2) \in \{1,2\}$. In this example, since $\gcd(\beta_k,2) = 2$, we obtain:
\[
\Qbd  = 
\begin{pmatrix}
1&0&0&0 &&&\\
-1&1&0&0 &&&\\
 0 & -1& 0 &1&&& \\
0&0&0&0&0 &0&&\\
&&&1 & 0 & -1 &  & \\
&&&0 & 2 & 0 &  &  \\
&&&0 & 0 & 1 & 0 & -1\\
&&&0 & 0 & 0 & 2 & 1
\end{pmatrix}.
\]
Notice that the $(n-m) \times (n-m)$ minor in the bottom right (here $n-m = 4$) does not change at this step in the case $\gcd(\beta_k,2) = 2$.  Since we have only used column operations so far, and the nonzero columns of $\Qbd$ are linearly independent over $\Z$, a basis for $\Mod(\wbd)$ can be read off from the columns of $\Qbd$.

From $\Qbd$, the Smith normal form is obtained similarly to the previous two examples. In this example, since all parts of $\delta = (2,2)$ have the same parity, the $(n-m) \times (n-m)$ minor in the bottom right (here $n-m = 4$) has Smith normal form $\diag(1^2,2^2)$. Hence the Smith normal form of the entire matrix $\Pbd$ is given by $\Sbd = \diag(1^5,2^2,0)$, which confirms part~\eqref{SNFD_gcd2_parity} of Theorem~\ref{thm:DnSNF}.
\end{example}

%%%%%%%%%%%%%%% beta 4 delta 33

\begin{example}\label{eg:beta4delta33}
This example illustrates part \eqref{SNFD_gcd2_parity} of \Cref{thm:DnSNF} in the case $\beta_p \geq 2$ and $\delta_1 \geq 3$. 
Let $\sW$ be of type $D_{10}$ and let $\beta = (4)$ and $\delta = (3,3)$. By \eqref{eq:wdelta3s}, we have 
\[
\wbd = w_\beta w_\delta = (s_1 s_2 s_3)(u_0s_8\cdot u_3s_6s_5).
\] 
 Here, the matrix for $\Id - w_\delta$ with respect to $\Delta^\vee$ is given by
\[
\Pbd  = 
\begin{pmatrix}
1&0&1&-1 &&&&&\\
-1&1&1&-1 &&&&&\\
 0 & -1& 2 &-1&&&&& \\
0&0&0&0&0 &0&0&&&\\
&&&-1 & 2 & -1 & 0 &&& \\
&&&-1 & 1 & 1 & -1 &&& \\
&&&-2 & 2 & 0 & 0 &&&  \\
&&&-2 & 2 &  & -1 & 2 &-1& -1 \\
&&&-1 & 1 &  & 0 & 0 & 1 & -1\\
&&&-1 & 1 & &  -1 & 1 & 0 & 0
\end{pmatrix}.
\]

Let $|\beta| = m$. As in the previous three examples, we add columns $m-1 = 3$ and $m+1 = 5$ to column $m = 4$, then carry out column operations on columns $1$ through $m-1=3$ as in type $A$. On columns $m+1=5$ through $n$, we then carry out the same column operations as in the case $|\beta| = 0$ (compare Example~\ref{eg:delta3333}), to obtain:
\[
\Pbd'  = 
\begin{pmatrix}
1&0&0&0 &&&&&\\
-1&1&0&0 &&&&&\\
 0 & -1& 0 &1&&&&& \\
0&0&0&0&0 &0&0&&&\\
&&-4&1 & 0 & -1 & 0 &&& \\
&&& & 0 & 1 & -1 &&& \\
&&& & 2 & 0 & 0 &&&  \\
&&& &  &  & 1 & 0 &-1& 0 \\
&&& &  &  &  & 0 & 1 & \boxed{-2}\\
&&& &  & &   & 1 & 0 & 0
\end{pmatrix}.
\]
In this case, since $\delta_1 \geq 3$, we obtain the boxed pivot $-2$ in the $(n-1,n)$-entry. 
We can then use further column operations to move $(-\gcd(\beta_k)) = -4$ down to row $n-1$, and then clear the $(n-1,m-1)$-entry and replace the boxed pivot by $\gcd(\beta_k,2) \in \{1,2\}$. In this example, since $\gcd(\beta_k,2) = 2$, we obtain:
\[
\Qbd = 
\begin{pmatrix}
1&0&0&0 &&&&&\\
-1&1&0&0 &&&&&\\
 0 & -1& 0 &1&&&&& \\
0&0&0&0&0 &0&0&&&\\
&&&1 & 0 & -1 & 0 &&& \\
&&& & 0 & 1 & -1 &&& \\
&&& & 2 & 0 & 0 &&&  \\
&&& &  &  & 1 & 0 &-1& 0 \\
&&& &  &  &  & 0 & 1 & 2\\
&&& &  & &   & 1 & 0 & 0
\end{pmatrix}.
\] 
Notice that the $(n-m) \times (n-m)$ minor in the bottom right (here $n-m = 6$) does not change at this step in the case $\gcd(\beta_k,2) = 2$. Since we have only used column operations so far, and the nonzero columns of $\Qbd$ are linearly independent over $\Z$, a basis for $\Mod(\wbd)$ can be read off from the columns of $\Qbd$.

From $\Qbd$, the Smith normal form is obtained similarly to the previous three examples. We note that in this example, since all parts of $\delta = (3,3)$ have the same parity, the $(n-m) \times (n-m)$ minor in the bottom right has Smith normal form $\diag(1^4,2^2)$. Hence the Smith normal form of the entire matrix $\Pbd$ is given by $\Sbd = \diag(1^7,2^2,0)$, which confirms part~\eqref{SNFD_gcd2_parity}  of Theorem~\ref{thm:DnSNF}.
\end{example}

%%%%%%%%%%%%%%% beta 3 delta 33

\begin{example}\label{eg:beta3delta33}
This example illustrates part \eqref{D_gcd1} of \Cref{thm:DnSNF}, which considers the case $\gcd(\beta_k,2) = 1$ and $|\delta| \geq 2$, when $\beta_p \geq 2$ and $\delta_1 \geq 3$. The argument is similar for $\delta_1 \in \{1,2\}$. 

Let $\sW$ be of type $D_{9}$ and let $\beta = (3)$ and $\delta = (3,3)$. By \eqref{eq:wdelta3s}, we have 
\[
\wbd = w_\beta w_\delta = (s_1 s_2)(u_0s_7\cdot u_3s_5s_4).
\] 
Here, the matrix for $\Id - w_\delta$ with respect to $\Delta^\vee$ is given by
\[
\Pbd  = 
\begin{pmatrix}
1&1&-1 &&&&&\\
  -1& 2 &-1&&&&& \\
0&0&0&0 &0&0&&&\\
&&-1 & 2 & -1 & 0 &&& \\
&&-1 & 1 & 1 & -1 &&& \\
&&-2 & 2 & 0 & 0 &&&  \\
&&-2 & 2 &  & -1 & 2 &-1& -1 \\
&&-1 & 1 &  & 0 & 0 & 1 & -1\\
&&-1 & 1 & &  -1 & 1 & 0 & 0
\end{pmatrix}.
\]

We proceed similarly to the previous example, with $|\beta| = m = 3$ here. We move $(-\gcd(\beta_k)) = -3$ down to row $n-1$, and then clear the $(n-1,m-1)$-entry and replace the $(n-1,n)$-entry by $\gcd(\beta_k,2) \in \{1,2\}$. Now, since $\gcd(\beta_k,2) = 1$ in this example, we obtain a pivot entry $1$ in column $n$:
\[
\Pbd'  = 
\begin{pmatrix}
1&0&0 &&&&&\\
  -1& 0 &1&&&&& \\
0&0&0&0 &0&0&&&\\
&&1 & 0 & -1 & 0 &&& \\
&& & 0 & 1 & -1 &&& \\
&& & 2 & 0 & 0 &&&  \\
&& &  &  & 1 & 0 &-1& 0 \\
&& &  &  &  & 0 & 1 & \boxed{1}\\
&& &  & &   & 1 & 0 & 0
\end{pmatrix}.
\] 
We can now use column operations and this boxed pivot entry $1$ to clear many entries, moving up the matrix, to obtain:
\[
\Qbd  = 
\begin{pmatrix}
1&0&0 &&&&&\\
 0& 0 &1&&&&& \\
0&0&0&0 &0&0&&&\\
&& & 0 & -1 & 0 &&& \\
&& & 0 & 0 & -1 &&& \\
&& & 2 & 0 & 0 &&&  \\
&& &  &  &  & 0 &-1& 0 \\
&& &  &  &  & 0 & 0 & 1\\
&& &  & &   & 1 & 0 & 0
\end{pmatrix},
\]
from which a basis for $\Mod(\wbd)$ can be easily read off.
In addition, the Smith normal form is clearly given by $\Sbd = \diag(1^{n-2r-p+1},2^{2r-1},0^p) = \diag(1^7,2,0)$, confirming  part~\eqref{D_gcd1}  of Theorem~\ref{thm:DnSNF}. 
\end{example}

%%%%%%%%%%%%%%%%%%%%%%%%%%%%%%%%%%%%%%%%%%%%%%%%%%%%%%%%%%%

\subsubsection{Noncuspidal examples with $|\delta| = 0$}\label{sec:noncuspidal1}

We conclude by considering the situation where $|\delta| = 0$ and $|\beta| = n$. If $\beta$ has at least one odd part, then $\wbd = w_\beta$ is cuspidal  in the type $A_{n-1}$ subsystem indexed by the first $n-1$ nodes. If all parts of $\beta$ are even, then  $\wbd = w_{\beta^+}$ is also cuspidal  in the type $A_{n-1}$ subsystem indexed by the first $n-1$ nodes, while $\wbd = w_{\beta^-}$ is obtained from $w_{\beta^+}$ by replacing $s_{n-1}$ by $s_n$, hence $w_{\beta^-}$ is cuspidal in the type $A_{n-1}$ subsystem indexed by $\{s_1,\dots,s_{n-2},s_n\}$.

The following examples illustrate the proofs of part \eqref{D_beta_delta0} of \Cref{thm:DnSNF} and parts \eqref{D_beta1} and \eqref{D_beta2} of \Cref{thm:Dnnoncuspidal}; see the introduction to Section \ref{sec:examplesD} for the list of examples we consider.

%%%%%%%%%%%%%%% beta 31 delta 0

\begin{example}\label{eg:beta31delta0} 
This example illustrates part \eqref{D_beta_delta0} of \Cref{thm:DnSNF}, and part \eqref{D_beta1} of \Cref{thm:Dnnoncuspidal}. Let $\sW$ be of type $D_{4}$, and let $\beta = (3,1)$ and $|\delta| = 0$. By Proposition \ref{prop:wbetaA}, we have
\[
\wbd = w_\beta = s_1 s_2.
\]
Applying \eqref{eq:siaction}, the matrix for $\Id - w_\beta$ with respect to $\Delta^\vee$ is given by $\Pb$ below:
\[
\Pb  = 
\begin{pmatrix}
1 & 1 & -1 & -1 \\
-1 & 2 & -1 & -1 \\
0 & 0 & 0 & 0 \\
0 & 0 & 0 & 0
\end{pmatrix}
\quad \to \quad  
Q_\beta  = 
\begin{pmatrix}
1 & 0 & 0 & 0 \\
0 & 0 & 1 & 0 \\
0 & 0 & 0 & 0 \\
0 & 0 & 0 & 0
\end{pmatrix}.
\]
Notice that the last row of $\Pb$ is all $0$s, while the $(n-1) \times (n-1)$ minor in the top left (for $n = 4$) is the same as the matrix $M_{\beta}$ from case \eqref{Abeta1} of \Cref{cor:matrixMbetaA} in type $A_{n-1}$, since the last part of $\beta$ equals $1$. Also, the last two columns of $\Pb$ are identical. 

Since the last two columns of $\Pb$ are identical, we subtract column $(n-1)$ from column~$n$, and then work as in type $A_{n-1}$ in the top left minor,  to obtain $Q_\beta$ using only column operations. The resulting basis for $\Mod(\wbd) = \Mod(w_\beta)$ is given then by part \eqref{D_beta1} of \Cref{thm:Dnnoncuspidal}, using the first case of part \eqref{BasisA} of Theorem \ref{thm:An}.

The Smith normal form for $(\Id - \wbd) = (\Id - w_\beta)$ will also be the same as that corresponding to $\beta$ in type $A_{n-1}$, with one extra $0$. By part \eqref{SNFA} of Theorem \ref{thm:An}, and using the fact that $\gcd(\beta_k,2)=1$, we thus have $\Sbd = (1,1,0,0)$, confirming part \eqref{D_beta_delta0} of Theorem \ref{thm:DnSNF}.
\end{example}

%%%%%%%%%%%%%%% beta 22 delta 0

\begin{example}\label{eg:beta22delta0} 
This example illustrates part \eqref{D_beta_delta0} of \Cref{thm:DnSNF}, and part \eqref{D_beta2} of \Cref{thm:Dnnoncuspidal}. Let $\sW$ be of type $D_{4}$, and let $\beta = (2,2)$ and $|\delta| = 0$. Note that all parts of $\beta$ are even, so there are two cases $\beta^\pm$ to consider. By Proposition \ref{prop:wbetaA}, we have
\[
w_{\beta^+, \delta} = w_{\beta^+} = s_1 s_3.
\]
Applying \eqref{eq:siaction}, the matrix for $\Id - w_{\beta^+}$ with respect to $\Delta^\vee$ is given by $P_{\beta^+}$ below:
\[
P_{\beta^+}  = 
\begin{pmatrix}
2 & -1 & 0 & 0 \\
0 & 0 & 0 & 0 \\
0 & -1 & 2 & 0 \\
0 & 0 & 0 & 0
\end{pmatrix}
\quad \to \quad  
Q_{\beta^+}  = 
\begin{pmatrix}
0 & 1 & 0 & 0 \\
0 & 0 & 0 & 0 \\
0 & -1 & 2 & 0 \\
0 & 0 & 0 & 0
\end{pmatrix}.
\]
Notice that the last column and last row of $P_{\beta^+}$ are all $0$s, while the $(n-1) \times (n-1)$ minor in the top left (for $n = 4$) is same as the matrix $M_{\beta}$ from case \eqref{Abeta2} of \Cref{cor:matrixMbetaA} in type $A_{n-1}$.   

Since the last row and last column of $P_{\beta^+}$ are all $0$s, we can work in the $(n-1) \times (n-1)$ minor in the top left exactly as in type $A_{n-1}$, to obtain $Q_{\beta^+}$ using only column operations. More precisely, following the same column operations as used to obtain the matrix $B_\beta'$ in Example \ref{eg:64A9}, we first add column 1 to column 2. Since here each block $M_{\beta_k}$ is already in the form $B_{\beta_k}$ from Example \ref{eg:CoxeterA3}, we proceed to move the part $\beta_1 = 2$ to the bottom nonzero row, by subtracting twice column 2 from column 1. The pivot $\beta_2 = 2$ in the third column can then clear the column 1, giving $Q_{\beta^+}$ above.

The resulting basis for $\Mod(w_{\beta^+, \delta}) = \Mod(w_{\beta^+})$ is therefore 
\[ \{ \alpha_1^\vee - \alpha_3^\vee, 2\alpha_3^\vee \}. \]
Since $J_\beta = \{1,3\}$ and $I_\beta = \{2,4\}$, then $J_\beta \backslash (I_\beta - 1) = \emptyset$, and so this basis matches the second case of part \eqref{BasisA} of Theorem \ref{thm:An} with $n=3$, equivalently part \eqref{D_beta2} of \Cref{thm:Dnnoncuspidal} with $n=4$.

The Smith normal form for $(\Id -w_{\beta^+, \delta}) = (\Id - w_{\beta^+})$ will then be the same as that corresponding to $\beta$ in type $A_{n-1}$, with one extra $0$. By part \eqref{SNFA} of Theorem \ref{thm:An}, we thus have $S_{\beta^+, \delta} = (1,2,0,0)$, confirming part \eqref{D_beta_delta0} of Theorem \ref{thm:DnSNF}. 

We now consider the case of $\beta^-$.  We switch the $s_3$ for $s_4$ in $w_{\beta^+}$ and so obtain
\[
w_{\beta^-, \delta} = w_{\beta^-} = s_1 s_4.
\]
The matrix for $\Id - w_{\beta^-}$ with respect to $\Delta^\vee$ can then be obtained from $P_{\beta^+}$ by exchanging both rows and columns $n-1$ and $n$:
\[
P_{\beta^-}  = 
\begin{pmatrix}
2 & -1 & 0 & 0 \\
0 & 0 & 0 & 0 \\
0 & 0 & 0 & 0 \\
0 & -1 & 0 & 2
\end{pmatrix}
\quad \to \quad  
Q_{\beta^-}  = 
\begin{pmatrix}
0 & 1 & 0 & 0 \\
0 & 0 & 0 & 0 \\
0 & 0 & 0 & 0 \\
0 & -1 & 0 & 2
\end{pmatrix}.
\]
Since $P_{\beta^-}$ is equivalent to $P_{\beta^+}$ up to row and column operations, then clearly $S_{\beta^-,\delta} = S_{\beta^+,\delta} = (1,2,0,0)$, confirming part \eqref{D_beta_delta0} of Theorem \ref{thm:DnSNF}.  

Similarly, using the same column operations as performed on $P_{\beta^+}$, we obtain $Q_{\beta^-}$, which differs only from $Q_{\beta^+}$ by exchanging rows $n-1$ and $n$, confirming the description for $\Mod(w_{\beta^-, \delta}) = \Mod(w_{\beta^-})$  provided by part \eqref{D_beta2} of Theorem \ref{thm:Dnnoncuspidal}.
\end{example}

%%%%%%%%%%%%%% beta 4 delta 0

\begin{example}\label{eg:beta4delta0} 
 This example illustrates part \eqref{D_beta_delta0} of \Cref{thm:DnSNF}, and part \eqref{D_beta2} of \Cref{thm:Dnnoncuspidal}. Let $\sW$ be of type $D_{4}$ and let $\beta = (4)$ and $|\delta| = 0$. Note that all parts of $\beta$ are even, so there are two cases $\beta^\pm$ to consider. By Proposition \ref{prop:wbetaA}, we have
 \[
w_{\beta^+,\delta} = w_{\beta^+} = s_1 s_2 s_3,
\]
and applying \eqref{eq:siaction} says that the matrix for $\Id - w_{\beta^+}$ with respect to $\Delta^\vee$ is
\[
P_{\beta^+} = 
\begin{pmatrix}
1 & 0 & 1 & -1 \\
-1 & 1 & 1 & -1 \\
0 & -1 & 2 & 0 \\
0 & 0 & 0 & 0
\end{pmatrix}.
\]
Notice that the last row of this matrix is all $0$s, while the $(n-1) \times (n-1)$ minor in the top left (for $n = 4$) is the same as the matrix $M_{\beta}$ from case \eqref{Abeta2} of \Cref{cor:matrixMbetaA} in type $A_{n-1}$. Also, the last column of $P_{\beta^+}$ is the negative of its second-last column, except for the $(n-1,n)$-entry, which equals $0$. 

Upon adding column $n-1$ to column $n$ we obtain $P_{\beta^+}'$ below:
\[
P_{\beta^+}'  = 
\begin{pmatrix}
1 & 0 & 1 & 0 \\
-1 & 1 & 1 & 0 \\
0 & -1 & 2 & 2 \\
0 & 0 & 0 & 0
\end{pmatrix}
\  \to \
P_{\beta^+}''  = 
\begin{pmatrix}
1 & 0 & 0 & 0 \\
-1 & 1 & 0 & 0 \\
0 & -1 & 4 & \boxed{2} \\
0 & 0 & 0 & 0
\end{pmatrix}
\ \to \ 
Q_{\beta^+}  = 
\begin{pmatrix}
1 & 0 & 0 & 0 \\
-1 & 1 & 0 & 0 \\
0 & -1 & 0 & 2 \\
0 & 0 & 0 & 0
\end{pmatrix}.
\]
We can now work on the first $(n-1)$ rows and columns of $P_{\beta^+}'$ as in type $A_{n-1}$, without changing column $n$, to obtain $P_{\beta^+}''$. The boxed 2 in row 3 can be used to clear column 3 entirely in this case, since here $\gcd(\beta_k,2) = 2$.  The resulting basis for $\Mod(w_{\beta^+, \delta}) = \Mod(w_{\beta^+})$ is given by part \eqref{D_beta2} of \Cref{thm:Dnnoncuspidal}, noting that $J_\beta = \{1,2,3\}$ and $I_\beta = \{4\}$.

Finally, the Smith normal form for $(\Id - w_{\beta^+,\delta})$ is the same as in type $A_{n-1}$, except that $\gcd(\beta_k,2)$ replaces $\gcd(\beta_k)$ due to the pivot 2 in column 4, and we have one extra 0. By part \eqref{SNFA} of Theorem \ref{thm:An}, we thus have $S_{\beta^+,\delta} = (1,1,2,0)$, confirming part \eqref{D_beta_delta0} of Theorem \ref{thm:DnSNF}.

The argument for $w_{\beta^-} = s_1s_2s_4$ is the same as in the previous example.
\end{example}

 %%%%%%%%%%%%%%%%%%%%%%%%%%%%%%%%%%%%%%%%

%% EXCEPTIONAL TYPES

%%%%%%%%%%%%%%%%%%%%%%%%%%%%%%%%%%%%%%%%

\section{Exceptional types}\label{sec:exceptional}

In this section, we record an implicit description of all mod-sets in the exceptional types, for the sake of completeness.  We provide a complete system of minimal length representatives for all conjugacy classes of $\sW$, where $\sW$ is the finite Weyl group of type $E_6, E_7, E_8, F_4,$ or $G_2$.  For each representative $w \in \sW$, we present the Smith normal form $S_w$ for $(\Id - w)$ with respect to $\Delta^\vee$, which is both canonical and fully characterizes the isomorphism type of the quotient $R^\vee/\Mod(w)$.  In the exceptional types, the Smith normal form $S_w$ was calculated using the \verb!smith_form()! 
command in Sage \cite{sagemath}. 

For all exceptional types, we index the nodes of the Dynkin diagram using the common labeling from Bourbaki \cite{Bourbaki4-6}, Geck and Pfeiffer \cite{GeckPfeifferBook}, and Sage \cite{sagemath}; see Table \ref{table:dynkin} in~\Cref{app:dynkin}. For brevity, we denote the product $s_is_j$ by $s_{ij}$ in our tables.

\subsection{Type $E_6$}

To obtain a system of minimal length representatives for the 25 conjugacy classes in $\sW$ of type $E_6$, we use Table B.4 in \cite{GeckPfeifferBook} to identify the 5 cuspidal classes.  We then iterate over all proper parabolic subgroups of $\sW$ and choose representatives for the cuspidal conjugacy classes in each parabolic, using the classification in \cite{GeckPfeifferBook} for types $A$ and $D$. See Table \ref{table:SNFE6} for a summary of our results in type $E_6$.

\begin{table}[htp]
\begin{center}
\begin{tabular}{|l|c|c|} 
\hline
Representative $w\in \wconj$ & Type of $\Pc(w)$ & $S_w = \diag(\cdots)$ \\ 
\hline \hline
1&1&(0,0,0,0,0,0) \\
$s_1$& $A_1$ &(1,0,0,0,0,0) \\
$s_{12}$& $A_1\times A_1$ &(1,1,0,0,0,0) \\
$s_{13}$& $A_2$ &(1,1,0,0,0,0) \\
$s_{125}$& $A_1 \times A_1 \times A_1$ &(1,1,1,0,0,0) \\
$s_{132}$& $A_2 \times A_1$ &(1,1,1,0,0,0) \\
$s_{134}$& $A_3$ &(1,1,1,0,0,0) \\
$s_{3456}$& $A_4$ &(1,1,1,1,0,0) \\
$s_{3256}$& $A_1 \times A_1 \times A_2$ &(1,1,1,1,0,0) \\
$s_{1346}$& $A_3 \times A_1$ &(1,1,1,1,0,0) \\
$s_{1356}$& $A_2 \times A_2$ &(1,1,1,3,0,0) \\
$s_{13456}$& $A_5$ &(1,1,1,1,3,0) \\
$s_{12456}$& $A_4 \times A_1$ &(1,1,1,1,1,0) \\
$s_{13256}$& $A_2 \times A_2 \times A_1$ &(1,1,1,1,3,0) \\
$s_{2345}$& $D_4$ &(1,1,1,1,0,0) \\
$s_{242345}$& $D_4$ &(1,1,1,1,0,0) \\
$s_{234234542345}$& $D_4$ &(1,1,2,2,0,0) \\
$s_{23456}$& $D_5$ &(1,1,1,1,1,0) \\
$s_{2423456}$& $D_5$ &(1,1,1,1,1,0) \\
$s_{2342345423456}$& $D_5$ &(1,1,1,2,2,0) \\
$s_{123456}$& $E_6$ &(1,1,1,1,1,3) \\
$s_{12342546}$& $E_6$ &(1,1,1,1,1,3) \\
$s_{123142345465}$& $E_6$ &(1,1,1,1,1,3) \\
$s_{12342345423456}$& $E_6$ &(1,1,1,1,2,6) \\
$s_{123142314542314565423456}$& $E_6$ &(1,1,1,3,3,3) \\
\hline
\end{tabular}
\end{center}
\caption{Smith normal form $S_w$ for $(\Id-w)$ with $w \in \sW$ of type $E_6$.}
\label{table:SNFE6}
\end{table}%

\subsection{Type $E_7$}

To obtain a system of minimal length representatives for the 60 conjugacy classes in $\sW$ of type $E_7$, we use Table B.5 in \cite{GeckPfeifferBook} to identify the 12 cuspidal classes.  We then iterate over all proper parabolic subgroups of $\sW$ and choose representatives for the cuspidal conjugacy classes in each parabolic, using the classification in \cite{GeckPfeifferBook} for types $A, D$, and $E_6$. See Tables \ref{table:SNFE7-noncusp} and \ref{table:SNFE7-cusp} for a summary of our results in type $E_7$.

\begin{table}[htp]
\begin{center}
\begin{tabular}{|l|c|c|} 
\hline
Representative $w\in \wconj$ & Type of $\Pc(w)$ & $S_w = \diag(\cdots)$ \\ 
\hline \hline
1&1&(0,0,0,0,0,0,0) \\
$s_1$& $A_1$ &(1,0,0,0,0,0,0) \\
$s_{12}$& $A_1 \times A_1$ &(1,1,0,0,0,0,0) \\
$s_{13}$& $A_2$ &(1,1,0,0,0,0,0) \\
$s_{132}$& $A_2 \times A_1$ &(1,1,1,0,0,0,0) \\
$s_{134}$& $A_3$ &(1,1,1,0,0,0,0) \\
$s_{125}$& $A_1 \times A_1 \times A_1$ &(1,1,1,0,0,0,0) \\
$s_{257}$& $A_1 \times A_1 \times A_1$ &(1,1,2,0,0,0,0) \\
$s_{4567}$& $A_4$ &(1,1,1,1,0,0,0) \\
$s_{3567}$& $A_3 \times A_1$ &(1,1,1,1,0,0,0) \\
$s_{2567}$& $A_3 \times A_1$ &(1,1,1,2,0,0,0) \\
$s_{3467}$& $A_2 \times A_2$ &(1,1,1,1,0,0,0) \\
$s_{2367}$& $A_2 \times A_1 \times A_1$ &(1,1,1,1,0,0,0) \\
$s_{2357}$& $A_1 \times A_1 \times A_1 \times A_1$ &(1,1,1,2,0,0,0) \\
$s_{34567}$& $A_5$ &(1,1,1,1,1,0,0) \\
$s_{24567}$& $A_5$ &(1,1,1,1,2,0,0) \\
$s_{14567}$& $A_4 \times A_1$ &(1,1,1,1,1,0,0) \\
$s_{24367}$& $A_3 \times A_2$ &(1,1,1,1,1,0,0) \\
$s_{12467}$& $A_2 \times A_2 \times A_1$ &(1,1,1,1,1,0,0) \\
$s_{23567}$& $A_3 \times A_1 \times A_1$ &(1,1,1,1,2,0,0) \\
$s_{13257}$& $A_2 \times A_1 \times A_1 \times A_1$ &(1,1,1,1,2,0,0) \\
$s_{134567}$& $A_6$ &(1,1,1,1,1,1,0) \\
$s_{124567}$& $A_5 \times A_1$ &(1,1,1,1,1,2,0) \\
$s_{134267}$& $A_4 \times A_2$ &(1,1,1,1,1,1,0) \\
$s_{132567}$& $A_3 \times A_2 \times A_1$ &(1,1,1,1,1,2,0) \\
$s_{2345}$& $D_4$ &(1,1,1,1,0,0,0) \\
$s_{242345}$& $D_4$ &(1,1,1,1,0,0,0) \\
$s_{234234542345}$& $D_4$ &(1,1,2,2,0,0,0) \\
$s_{23457}$& $D_4 \times A_1$ &(1,1,1,1,2,0,0) \\
$s_{2423457}$& $D_4 \times A_1$ &(1,1,1,1,2,0,0) \\
$s_{2342345423457}$& $D_4 \times A_1$ &(1,1,2,2,2,0,0) \\
$s_{23456}$& $D_5$ &(1,1,1,1,1,0,0) \\
$s_{2423456}$& $D_5$ &(1,1,1,1,1,0,0) \\
$s_{2342345423456}$& $D_5$ &(1,1,1,2,2,0,0) \\
$s_{254317}$& $D_5 \times A_1$ &(1,1,1,1,1,2,0) \\
$s_{24254317}$& $D_5 \times A_1$ &(1,1,1,1,1,2,0) \\
$s_{25425434254317}$& $D_5 \times A_1$ &(1,1,1,2,2,2,0) \\
$s_{234567}$& $D_6$ &(1,1,1,1,1,2,0) \\
$s_{24234567}$& $D_6$ &(1,1,1,1,1,2,0) \\
$s_{2454234567}$& $D_6$ &(1,1,1,1,1,2,0) \\
$s_{23423454234567}$& $D_6$ &(1,1,1,2,2,2,0) \\
$s_{2342345654234567}$& $D_6$ &(1,1,1,1,2,4,0) \\
$s_{234234542345654234567654234567}$& $D_6$ &(1,2,2,2,2,2,0) \\
$s_{123456}$& $E_6$ &(1,1,1,1,1,1,0) \\
$s_{12342546}$& $E_6$ &(1,1,1,1,1,1,0) \\
$s_{123142345465}$& $E_6$ &(1,1,1,1,1,1,0) \\
$s_{12342345423456}$& $E_6$ &(1,1,1,1,2,2,0) \\
$s_{123142314542314565423456}$& $E_6$ &(1,1,1,1,3,3,0) \\
\hline
\end{tabular}
\end{center}
\caption{Smith normal form $S_w$ for $(\Id-w)$ with $w \in \sW$ non-cuspidal in type $E_7$.}
\label{table:SNFE7-noncusp}
\end{table}%

\begin{table}[htp]
\begin{center}
\begin{tabular}{|l|c|c|} 
\hline
Representative $w\in \wconj$ & Type of $\Pc(w)$ & $S_w = \diag(\cdots)$ \\ 
\hline \hline
$s_{1234567}$ & $E_7$ &(1,1,1,1,1,1,2) \\
$s_{123425467}$ & $E_7$ &(1,1,1,1,1,1,2) \\
$s_{12342546576}$ & $E_7$ &(1,1,1,1,1,1,2) \\
$s_{1234254234567}$ & $E_7$ &(1,1,1,1,1,1,2) \\
$s_{123423454234567}$ & $E_7$ &(1,1,1,1,2,2,2) \\
$s_{12342345423456576}$ & $E_7$ &(1,1,1,1,1,2,4) \\
$s_{123142314354234654765}$ & $E_7$ &(1,1,1,1,1,1,2) \\
$s_{12314231435423143546576}$ & $E_7$ &(1,1,1,1,2,2,2) \\
$s_{1231423145423145654234567}$ & $E_7$ &(1,1,1,1,1,3,6) \\
$s_{1234234542345654234567654234567}$ & $E_7$ &(1,1,2,2,2,2,2) \\
$s_{123142345423145654234567654234567}$ & $E_7$ &(1,1,1,1,2,4,4) \\
$w_0 = s_{765432456713456245341324567134562453413245624534132453413241321}$ & $E_7$ &(2,2,2,2,2,2,2) \\
\hline
\end{tabular}
\end{center}
\caption{Smith normal form $S_w$ for $(\Id-w)$ with $w \in \sW$ cuspidal in type $E_7$.}
\label{table:SNFE7-cusp}
\end{table}%

\subsection{Type $E_8$}

To obtain a system of minimal length representatives for the 112 conjugacy classes in $\sW$ of type $E_8$, we use Table B.6 in \cite{GeckPfeifferBook} to identify the 30 cuspidal classes.  We then iterate over all proper parabolic subgroups of $\sW$ and choose representatives for the cuspidal conjugacy classes in each parabolic,  using the classification in \cite{GeckPfeifferBook} for types $A, D, E_6$, and $E_7$. See Tables \ref{table:SNFE8-noncusp1}, \ref{table:SNFE8-noncusp2}, and \ref{table:SNFE8-cusp} for a summary of our results in type $E_8$.

\begin{table}[htp]
\begin{center}
\begin{tabular}{|l|c|c|} 
\hline
Representative $w\in \wconj$ & Type of $\Pc(w)$ & $S_w = \diag(\cdots)$ \\ 
\hline \hline
1&1&(0,0,0,0,0,0,0,0) \\
$s_1$& $A_1$ &(1,0,0,0,0,0,0,0) \\
$s_{13}$& $A_2$ &(1,1,0,0,0,0,0,0) \\
$s_{14}$& $A_1 \times A_1$ &(1,1,0,0,0,0,0,0) \\
$s_{134}$& $A_3$ &(1,1,1,0,0,0,0,0) \\
$s_{135}$& $A_2 \times A_1$ &(1,1,1,0,0,0,0,0) \\
$s_{146}$& $A_1 \times A_1 \times A_1$ &(1,1,1,0,0,0,0,0) \\
$s_{1345}$& $A_4$ &(1,1,1,1,0,0,0,0) \\
$s_{1346}$& $A_3 \times A_1$ &(1,1,1,1,0,0,0,0) \\
$s_{1356}$& $A_2 \times A_2$ &(1,1,1,1,0,0,0,0) \\
$s_{1357}$& $A_2 \times A_1 \times A_1$ &(1,1,1,1,0,0,0,0) \\
$s_{1468}$& $A_1 \times A_1 \times A_1 \times A_1$ &(1,1,1,1,0,0,0,0) \\
$s_{13456}$& $A_5$ &(1,1,1,1,1,0,0,0) \\
$s_{13457}$& $A_4 \times A_1$ &(1,1,1,1,1,0,0,0) \\
$s_{13467}$& $A_3 \times A_2$ &(1,1,1,1,1,0,0,0) \\
$s_{13468}$& $A_3 \times A_1 \times A_1$ &(1,1,1,1,1,0,0,0) \\
$s_{13568}$& $A_2 \times A_2 \times A_1$ &(1,1,1,1,1,0,0,0) \\
$s_{13257}$& $A_2 \times A_1 \times A_1 \times A_1$ &(1,1,1,1,1,0,0,0) \\
$s_{134567}$& $A_6$ &(1,1,1,1,1,1,0,0) \\
$s_{134568}$& $A_5 \times A_1$ &(1,1,1,1,1,1,0,0) \\
$s_{134578}$& $A_4 \times A_2$ &(1,1,1,1,1,1,0,0) \\
$s_{567812}$& $A_4 \times A_1 \times A_1$ &(1,1,1,1,1,1,0,0) \\
$s_{134678}$& $A_3 \times A_3$ &(1,1,1,1,1,1,0,0) \\
$s_{678124}$& $A_3 \times A_2 \times A_1$ &(1,1,1,1,1,1,0,0) \\
$s_{135628}$& $A_2 \times A_2 \times A_1 \times A_1$ &(1,1,1,1,1,1,0,0) \\
$s_{1345678}$& $A_7$ &(1,1,1,1,1,1,1,0) \\
$s_{1245678}$& $A_6 \times A_1$ &(1,1,1,1,1,1,1,0) \\
$s_{1342678}$& $A_4 \times A_3$ &(1,1,1,1,1,1,1,0) \\
$s_{1325678}$& $A_4 \times A_2 \times A_1$ &(1,1,1,1,1,1,1,0) \\
$s_{2345}$& $D_4$ &(1,1,1,1,0,0,0,0) \\
$s_{242345}$& $D_4$ &(1,1,1,1,0,0,0,0) \\
$s_{234234542345}$& $D_4$ &(1,1,2,2,0,0,0,0) \\
$s_{23457}$& $D_4 \times A_1$ &(1,1,1,1,1,0,0,0) \\
$s_{2423457}$& $D_4 \times A_1$ &(1,1,1,1,1,0,0,0) \\
$s_{2342345423457}$& $D_4 \times A_1$ &(1,1,1,2,2,0,0,0) \\
$s_{234578}$& $D_4 \times A_2$ &(1,1,1,1,1,1,0,0) \\
$s_{24234578}$& $D_4 \times A_2$ &(1,1,1,1,1,1,0,0) \\
$s_{23423454234578}$& $D_4 \times A_2$ &(1,1,1,1,2,2,0,0) \\
$s_{23456}$& $D_5$ &(1,1,1,1,1,0,0,0) \\
$s_{2423456}$& $D_5$ &(1,1,1,1,1,0,0,0) \\
$s_{2342345423456}$& $D_5$ &(1,1,1,2,2,0,0,0) \\
\hline
\end{tabular}
\end{center}
\caption{Smith normal form $S_w$ for $(\Id-w)$ with $w \in \sW$ non-cuspidal in type $E_8$ (continued in Table \ref{table:SNFE8-noncusp2}).}
\label{table:SNFE8-noncusp1}
\end{table}%

\begin{table}[htp]
\begin{center}
\begin{tabular}{|l|c|c|} 
\hline
Representative $w\in \wconj$ & Type of $\Pc(w)$ & $S_w = \diag(\cdots)$ \\ 
\hline \hline
$s_{234568}$& $D_5 \times A_1$ &(1,1,1,1,1,1,0,0) \\
$s_{24234568}$& $D_5 \times A_1$ &(1,1,1,1,1,1,0,0) \\
$s_{23423454234568}$& $D_5 \times A_1$ &(1,1,1,1,2,2,0,0) \\
$s_{2543178}$& $D_5 \times A_2$ &(1,1,1,1,1,1,1,0) \\
$s_{242543178}$& $D_5 \times A_2$ &(1,1,1,1,1,1,1,0) \\
$s_{254254342543178}$& $D_5 \times A_2$ &(1,1,1,1,1,2,2,0) \\
$s_{234567}$& $D_6$ &(1,1,1,1,1,1,0,0) \\
$s_{24234567}$& $D_6$ &(1,1,1,1,1,1,0,0) \\
$s_{2454234567}$& $D_6$ &(1,1,1,1,1,1,0,0) \\
$s_{23423454234567}$& $D_6$ &(1,1,1,1,2,2,0,0) \\
$s_{2342345654234567}$& $D_6$ &(1,1,1,1,2,2,0,0) \\
$s_{234234542345654234567654234567}$& $D_6$ &(1,1,2,2,2,2,0,0) \\
$s_{2345678}$& $D_7$ &(1,1,1,1,1,1,1,0) \\
$s_{242345678}$& $D_7$ &(1,1,1,1,1,1,1,0) \\
$s_{24542345678}$& $D_7$ &(1,1,1,1,1,1,1,0) \\
$s_{234234542345678}$& $D_7$ &(1,1,1,1,1,2,2,0) \\
$s_{23423456542345678}$& $D_7$ &(1,1,1,1,1,2,2,0) \\
$s_{234542345676542345678}$& $D_7$ &(1,1,1,1,1,2,2,0) \\
$s_{2342345423456542345676542345678}$& $D_7$ &(1,1,1,2,2,2,2,0) \\
$s_{123456}$& $E_6$ &(1,1,1,1,1,1,0,0) \\
$s_{12342546}$& $E_6$ &(1,1,1,1,1,1,0,0) \\
$s_{123142345465}$& $E_6$ &(1,1,1,1,1,1,0,0) \\
$s_{12342345423456}$& $E_6$ &(1,1,1,1,2,2,0,0) \\
$s_{123142314542314565423456}$& $E_6$ &(1,1,1,1,3,3,0,0) \\
$s_{1234568}$& $E_6 \times A_1$ &(1,1,1,1,1,1,1,0) \\
$s_{123425468}$& $E_6 \times A_1$ &(1,1,1,1,1,1,1,0) \\
$s_{1231423454658}$& $E_6 \times A_1$ &(1,1,1,1,1,1,1,0) \\
$s_{123423454234568}$& $E_6 \times A_1$ &(1,1,1,1,1,2,2,0) \\
$s_{1231423145423145654234568}$& $E_6 \times A_1$ &(1,1,1,1,1,3,3,0) \\
$s_{1234567}$ & $E_7$ &(1,1,1,1,1,1,1,0) \\
$s_{123425467}$ & $E_7$ &(1,1,1,1,1,1,1,0) \\
$s_{12342546576}$ & $E_7$ &(1,1,1,1,1,1,1,0) \\
$s_{1234254234567}$ & $E_7$ &(1,1,1,1,1,1,1,0) \\
$s_{123423454234567}$ & $E_7$ &(1,1,1,1,1,2,2,0) \\
$s_{12342345423456576}$ & $E_7$ &(1,1,1,1,1,2,2,0) \\
$s_{123142314354234654765}$ & $E_7$ &(1,1,1,1,1,1,1,0) \\
$s_{12314231435423143546576}$ & $E_7$ &(1,1,1,1,1,2,2,0) \\
$s_{1231423145423145654234567}$ & $E_7$ &(1,1,1,1,1,3,3,0) \\
$s_{1234234542345654234567654234567}$ & $E_7$ &(1,1,1,2,2,2,2,0) \\
$s_{123142345423145654234567654234567}$ & $E_7$ &(1,1,1,1,1,4,4,0) \\
$s_{765432456713456245341324567134562453413245624534132453413241321}$ & $E_7$ &(1,2,2,2,2,2,2,0) \\
\hline
\end{tabular}
\end{center}
\caption{Smith normal form $S_w$ for $(\Id-w)$ with $w \in \sW$ non-cuspidal in type $E_8$ (continued from Table \ref{table:SNFE8-noncusp1}).}
\label{table:SNFE8-noncusp2}
\end{table}%

\begin{table}[htp]
\begin{center}
\begin{tabular}{|l|c|c|} 
\hline
Representative $w\in \wconj$ & Type of $\Pc(w)$ & $S_w = \diag(\cdots)$ \\ 
\hline \hline
$s_{12345678}$& $E_8$ &(1,1,1,1,1,1,1,1) \\
$s_{1234254678}$& $E_8$ &(1,1,1,1,1,1,1,1) \\
$s_{123425465478}$& $E_8$ &(1,1,1,1,1,1,1,1) \\
$s_{12342542345678}$& $E_8$ &(1,1,1,1,1,1,1,1) \\
$s_{1231423454657658}$& $E_8$ &(1,1,1,1,1,1,1,1) \\
$s_{1234234542345678}$& $E_8$ &(1,1,1,1,1,1,2,2) \\
$s_{123423454234565768}$& $E_8$ &(1,1,1,1,1,1,2,2) \\
$s_{12314234542365476548}$& $E_8$ &(1,1,1,1,1,1,1,1) \\
$s_{1234234542345654765876}$& $E_8$ &(1,1,1,1,1,1,2,2) \\
$s_{1231423454316542345678}$& $E_8$ &(1,1,1,1,1,1,1,1) \\
$s_{123142314542345654765876}$& $E_8$ &(1,1,1,1,1,1,1,1) \\
$s_{123142314354316542345678}$& $E_8$ &(1,1,1,1,1,1,2,2) \\
$s_{12314231454231456542345678}$& $E_8$ &(1,1,1,1,1,1,3,3) \\
$s_{12342354234654276542345678}$& $E_8$ &(1,1,1,1,1,1,2,2) \\
$s_{1231423145423145654234567687}$& $E_8$ &(1,1,1,1,1,1,3,3) \\
$s_{123142314354231465423476548765}$& $E_8$ &(1,1,1,1,1,1,2,2) \\
$s_{12342345423456542345676542345678}$& $E_8$ &(1,1,1,1,2,2,2,2) \\
$s_{1231423454231456542345676542345678}$& $E_8$ &(1,1,1,1,1,1,4,4) \\
$s_{1231423143542314565423145676542345678765}$& $E_8$ &(1,1,1,1,1,1,1,1) \\
$s_{123142314354231435426542314567654234567876}$& $E_8$ &(1,1,1,1,1,1,2,2) \\
$s_{12314231435423456542314567654231435465768765}$& $E_8$ &(1,1,1,1,2,2,2,2) \\
$s_{12314231454231436542314354265431765423456787}$& $E_8$ &(1,1,1,1,1,1,3,3) \\
$s_{1231423145423145654234567654234567876542345678}$& $E_8$ &(1,1,1,1,1,1,6,6) \\
$s_{1231423154231654317654231435426543176542345678}$& $E_8$ &(1,1,1,1,1,1,4,4) \\
$s_{123142314542314565423145676542314567876542345678}$& $E_8$ &(1,1,1,1,1,1,5,5) \\
$s_{123142314354231435426542345765423143548765423143542654765876}$& $E_8$ &(1,1,1,1,2,2,2,2) \\
$s_{1231423143542314354265423143542654317654231435426543176542345678}$& $E_8$ &(1,1,2,2,2,2,2,2) \\
$s_{123142314354231435426542314354265431765423143542654317876542345678}$& $E_8$ &(1,1,1,1,2,2,4,4) \\
$s_{1231423143542314356542314354267654231435426543178765423}$& $E_8$ &(1,1,1,1,3,3,3,3) \\
$\quad \quad \quad \cdot s_{435426543176542345678765}$&  & \\
$w_0 = s_{876543245671345624534132456787654324567134562453413245678765}$& $E_8$ &(2,2,2,2,2,2,2,2) \\
$\quad \quad \quad \cdot s_{432456713456245341324567134562453413245624534132453413241321}$&  & \\
\hline
\end{tabular}
\end{center}
\caption{Smith normal form $S_w$ for $(\Id-w)$ with $w \in \sW$ cuspidal in type $E_8$.}
\label{table:SNFE8-cusp}
\end{table}%

\subsection{Type $F_4$}

To obtain a system of minimal length representatives for the 25 conjugacy classes in $\sW$ of type $F_4$, we use Table B.3 in \cite{GeckPfeifferBook} to identify the 9 cuspidal classes.  We then iterate over all proper parabolic subgroups of $\sW$ and choose representatives for the cuspidal conjugacy classes in each parabolic, using the classification in \cite{GeckPfeifferBook} for types $A, B$, and $C$. See Table \ref{table:SNFF4} for a summary of our results in type $F_4$.

\begin{table}[htp]
\begin{center}
\begin{tabular}{|l|c|c|} 
\hline
Representative $w\in \wconj$ & Type of $\Pc(w)$ & $S_w = \diag(\cdots)$ \\ 
\hline \hline
1&1&(0,0,0,0) \\
$s_1$& $A_1$ &(1,0,0,0) \\
$s_3$& $A_1$ &(1,0,0,0) \\
$s_{12}$& $A_2$ &(1,1,0,0) \\
$s_{34}$& $A_2$ &(1,1,0,0) \\
$s_{13}$& $A_1 \times A_1$ &(1,1,0,0) \\
$s_{134}$& $A_1 \times A_2$ &(1,1,1,0) \\
$s_{124}$& $A_2 \times A_1$ &(1,1,1,0) \\
$s_{23}$& $B_2$ &(1,1,0,0) \\
$s_{3232}$& $B_2$ &(1,2,0,0) \\
$s_{123}$& $B_3$ &(1,1,1,0) \\
$s_{32321}$& $B_3$ &(1,1,2,0) \\
$s_{323212321}$& $B_3$ &(1,2,2,0) \\
$s_{234}$& $C_3$ &(1,1,1,0) \\
$s_{23234}$& $C_3$ &(1,1,2,0) \\
$s_{232343234}$& $C_3$ &(1,2,2,0) \\
$s_{1234}$& $F_4$ &(1,1,1,1) \\
$s_{123234}$& $F_4$ &(1,1,1,2) \\
$s_{12132343}$& $F_4$ &(1,1,1,1) \\
$s_{1232343234}$& $F_4$ &(1,1,2,2) \\
$s_{1213213234}$& $F_4$ &(1,1,2,2) \\
$s_{121321343234}$& $F_4$ &(1,1,2,2) \\
$s_{12132132343234}$& $F_4$ &(1,1,2,4) \\
$s_{1213213432132343}$& $F_4$ &(1,1,3,3) \\
$w_0 = s_{323212321432132343213234}$& $F_4$ &(2,2,2,2) \\
\hline
\end{tabular}
\end{center}
\caption{Smith normal form $S_w$ for $(\Id-w)$ with $w \in \sW$ of type $F_4$.}
\label{table:SNFF4}
\end{table}%

\subsection{Type $G_2$}

To obtain a system of minimal length representatives for the 6 conjugacy classes in $\sW$ of type $G_2$, we calculate the conjugacy classes directly by hand, due to the small size of the group. See Table \ref{table:SNFG2} for a summary of our results in type $G_2$.

\begin{table}[h]
\begin{center}
\begin{tabular}{|l|c|c|} 
\hline
Representative $w\in \wconj$ & Type of $\Pc(w)$ & $S_w = \diag(\cdots)$ \\ 
\hline \hline
1&1&(0,0) \\
$s_1$& $A_1$ &(1,0) \\
$s_2$& $A_1$ &(1,0) \\
$s_{12}$& $G_2$ &(1,1) \\
$s_{1212}$& $G_2$ &(1,3) \\
$w_0 = s_{121212}$& $G_2$ &(2,2) \\
\hline
\end{tabular}
\end{center}
\caption{Smith normal form $S_w$ for $(\Id-w)$ with $w \in \sW$ of type $G_2$.}
\label{table:SNFG2}
\end{table}%

%%%%%%%%%%%%%%%%%%%%%%%%%%%%%%%%%%%%%%%%%%%%%%%%%%%%
%%%%%%%%%%%%%%%%%%%%%%%%%%%%%%%%%%%%%%%%%%%%%%%%%%%%

\appendix

\section{Crystallographic groups and affine Coxeter groups}\label{app:cryst}

In this appendix we briefly discuss the relationship between split $n$-dimensional crystallographic groups $\aH = T_\aH \rtimes \sH$ and affine Coxeter groups. For any such $\aH$, the \emph{holohedry group} of the associated lattice $L_\aH$ is its setwise stabilizer in $\On$. Thus as $L_\aH$ is $\sH$-invariant, $\sH$ is a subgroup of the holohedry group of $L_\aH$.

In dimension $2$, it can be verified from the classification of wallpaper groups that every (split) wallpaper group is contained in an affine Coxeter group. More precisely, up to affine equivalence every $2$-dimensional lattice $L_\aH$ is either a square lattice or a hexagonal lattice. These lattices have holohedry groups the finite Weyl groups of types $C_2$ and $G_2$, respectively. Hence every $2$-dimensional split crystallographic group $\aH$ is contained in an affine Coxeter group of type $\tilde{C}_2$ or $\tilde{G}_2$.

Now let $\aH = T_\aH \rtimes \sH$ be a split $3$-dimensional crystallographic group. Then the lattice $L_\aH$ is, up to affine equivalence, a special case of one of the four ``maximal lattices" given by~\cite[Figure~2]{Grimmer}. From the information in~\cite{Grimmer}, it can be checked that the holohedry group of each of these four ``maximal lattices" is a finite Weyl group of type either $C_3$ or $A_1 \times G_2$, and that each such lattice is, up to affine equivalence, the coroot lattice of the same type as the corresponding Weyl group. It follows that in dimension $3$, any split crystallographic $\aH$ is contained in an affine Coxeter group of type either $\tilde{C}_3$ or $\tilde{A}_1 \times \tilde{G}_2$. 

In dimension $4$, however, there exist split crystallographic groups $\aH = T_\aH \rtimes \sH$ such that the reflections in the holohedry group of $L_\aH$ generate a proper subgroup of $\sH$ (see, for example,~\cite{Horvath}). Since any finite Weyl group preserving $L_\aH$ must be generated by reflections which preserve $L_\aH$, it follows that $\aH$ is not contained in any affine Coxeter group. Then for all dimensions $n \ge 5$, one can take direct products to obtain examples which are also not contained in affine Coxeter groups.

%\clearpage 

\section{Dynkin diagram labeling conventions}\label{app:dynkin}

We gather in Table~\ref{table:dynkin} several useful conventions for labeling the nodes of the Dynkin diagrams for finite Weyl groups. In the second column of this table, we give the labelings used in Plates I--IX of Bourbaki~\cite{Bourbaki4-6}, which are the same as the labelings in Sage \cite{sagemath}. These are the conventions we follow throughout this work. Note, however, that in Sage \cite{sagemath}, the Cartan matrices in types $B_n$ and $C_n$ are swapped relative to the Dynkin diagrams, so that to carry out calculations in type $B_n$ as labeled below, one should call type $C_n$, and vice versa.

In the third column, for the classical types we give the labelings used by Geck and Pfeiffer in Chapter 3 of~\cite{GeckPfeifferBook}, since their characterization of the conjugacy classes in these types uses their conventions in Chapter 3. Note that the labeling used in Chapter 3 of~\cite{GeckPfeifferBook} is different to that given in Table 1.2 of~\cite{GeckPfeifferBook} in types $B_n, C_n$, and $D_n$, however, due to the embedding of the Weyl group of type $D_n$ as a normal subgroup of index 2 in that of type~$B_n$. For the exceptional types, we give the common labelings used in both Table 1.2 and Appendix B of~\cite{GeckPfeifferBook}, which agree with those in Bourbaki \cite{Bourbaki4-6} and Sage \cite{sagemath}.

\vskip 10pt

\begin{table}[ht]
\begin{center}
\begin{tabular}{|l|c|c|c|} 
\hline
Type & Bourbaki~\cite{Bourbaki4-6} \& Sage \cite{sagemath} & Geck and Pfeiffer~\cite{GeckPfeifferBook}  \\ 
\hline \hline
$A_n$, $n \geq 1$ && \\
&
% Bourbaki and Sage A_n
\begin{tikzpicture}[scale=0.7,double distance=3pt,thick]
\begin{scope}[scale=1.2] 
\fill (0,0) circle (3pt); 
\fill (1,0) circle (3pt); 
\fill (3,0) circle (3pt); 
\fill (4,0) circle (3pt); 
\draw (0,0)--(1,0)--(1.5,0);
\draw [dashed] (1.5,0)--(2.5,0);
\draw (2.5,0)--(3,0)--(4,0);
% labels
\draw (0,-0.3) node {$s_1$};	
\draw (1,-0.3) node {$s_2$};	
\draw (3,-0.3) node {$s_{n-1}$};	
\draw (4,-0.3) node {$s_n$};		
\end{scope}
\end{tikzpicture}
 & 
 % GP A_n
\begin{tikzpicture}[scale=0.7,double distance=3pt,thick]
\begin{scope}[scale=1.2] 
\fill (0,0) circle (3pt); 
\fill (1,0) circle (3pt); 
\fill (3,0) circle (3pt); 
\fill (4,0) circle (3pt); 
\draw (0,0)--(1,0)--(1.5,0);
\draw [dashed] (1.5,0)--(2.5,0);
\draw (2.5,0)--(3,0)--(4,0);
% labels
\draw (0,-0.3) node {$s_1$};	
\draw (1,-0.3) node {$s_{2}$};	
\draw (3,-0.3) node {$s_{n-1}$};	
\draw (4,-0.3) node {$s_n$};		
\end{scope}
\end{tikzpicture}
 \\
\hline
$B_n$, $n \geq 2$ & &  \\
&
% Bourbaki and Sage B_n
\begin{tikzpicture}[scale=0.7,double distance=3pt,thick]
\begin{scope}[scale=1.2] 
\fill (0,0) circle (3pt); 
\fill (1,0) circle (3pt); 
\fill (3,0) circle (3pt); 
\fill (4,0) circle (3pt); 
\fill (5,0) circle (3pt); 
\draw (0,0)--(1,0)--(1.5,0);
\draw [dashed] (1.5,0)--(2.5,0);
\draw (2.5,0)--(3,0)--(4,0);	
\draw (4,0.05)--(5,0.05);
\draw (4,-0.05)--(5,-0.05);
% arrow to right
\draw (4.4,0.15)--(4.55,0);
\draw (4.55,0)--(4.4,-0.15);
% labels
\draw (0,-0.3) node {$s_1$};	
\draw (1,-0.3) node {$s_2$};	
\draw (3,-0.3) node {$s_{n-2}$};	
\draw (4,-0.3) node {$s_{n-1}$};	
\draw (5,-0.3) node {$s_n$};
\end{scope}
\end{tikzpicture}
&
% GP B_n
\begin{tikzpicture}[scale=0.7,double distance=3pt,thick]
\begin{scope}[scale=1.2] 
\fill (0,0) circle (3pt); 
\fill (1,0) circle (3pt); 
\fill (3,0) circle (3pt); 
\fill (4,0) circle (3pt); 
\fill (5,0) circle (3pt); 
\draw (0,0)--(1,0)--(1.5,0);
\draw [dashed] (1.5,0)--(2.5,0);
\draw (2.5,0)--(3,0)--(4,0);	
\draw (4,0.05)--(5,0.05);
\draw (4,-0.05)--(5,-0.05);
% arrow to right
\draw (4.4,0.15)--(4.55,0);
\draw (4.55,0)--(4.4,-0.15);
% labels
\draw (0,-0.3) node {$s_{n-1}$};	
\draw (1,-0.3) node {$s_{n-2}$};	
\draw (3,-0.3) node {$s_2$};	
\draw (4,-0.3) node {$s_1$};	
\draw (5,-0.3) node {$s_0$};
\end{scope}
\end{tikzpicture}
\\
\hline
$C_n$, $n \geq 2$ & &  \\
&
% Bourbaki and Sage C_n
\begin{tikzpicture}[scale=0.7,double distance=3pt,thick]
\begin{scope}[scale=1.2] 
\fill (0,0) circle (3pt); 
\fill (1,0) circle (3pt); 
\fill (3,0) circle (3pt); 
\fill (4,0) circle (3pt); 
\fill (5,0) circle (3pt); 
\draw (0,0)--(1,0)--(1.5,0);
\draw [dashed] (1.5,0)--(2.5,0);
\draw (2.5,0)--(3,0)--(4,0);	
\draw (4,0.05)--(5,0.05);
\draw (4,-0.05)--(5,-0.05);
% arrow to left
\draw (4.6,0.15)--(4.45,0);
\draw (4.45,0)--(4.6,-0.15);
% labels
\draw (0,-0.3) node {$s_1$};	
\draw (1,-0.3) node {$s_2$};	
\draw (3,-0.3) node {$s_{n-2}$};	
\draw (4,-0.3) node {$s_{n-1}$};	
\draw (5,-0.3) node {$s_n$};
\end{scope}
\end{tikzpicture}
&
% GP C_n
\begin{tikzpicture}[scale=0.7,double distance=3pt,thick]
\begin{scope}[scale=1.2] 
\fill (0,0) circle (3pt); 
\fill (1,0) circle (3pt); 
\fill (3,0) circle (3pt); 
\fill (4,0) circle (3pt); 
\fill (5,0) circle (3pt); 
\draw (0,0)--(1,0)--(1.5,0);
\draw [dashed] (1.5,0)--(2.5,0);
\draw (2.5,0)--(3,0)--(4,0);	
\draw (4,0.05)--(5,0.05);
\draw (4,-0.05)--(5,-0.05);
% arrow to left
\draw (4.6,0.15)--(4.45,0);
\draw (4.45,0)--(4.6,-0.15);
% labels
\draw (0,-0.3) node {$s_{n-1}$};	
\draw (1,-0.3) node {$s_{n-2}$};	
\draw (3,-0.3) node {$s_2$};	
\draw (4,-0.3) node {$s_1$};	
\draw (5,-0.3) node {$s_0$};
\end{scope}
\end{tikzpicture}
\\
\hline
$D_n$, $n \geq 4$ & &  \\
&
% Bourbaki and Sage D_n
\begin{tikzpicture}[scale=0.7,double distance=3pt,thick]
\begin{scope}[scale=1.2] 
\fill (0,0) circle (3pt); 
\fill (1,0) circle (3pt); 
\fill (3,0) circle (3pt); 
\fill (4,0) circle (3pt); 
\fill (5,0.5) circle (3pt); 
\fill (5,-0.5) circle (3pt); 
\draw (0,0)--(1,0)--(1.5,0);
\draw [dashed] (1.5,0)--(2.5,0);
\draw (2.5,0)--(3,0)--(4,0); 
\draw (4,0)--(5,0.5);
\draw (4,0)--(5,-0.5);
% labels
\draw (0,-0.3) node {$s_1$};	
\draw (1,-0.3) node {$s_2$};	
\draw (3,-0.3) node {$s_{n-3}$};	
\draw (4,-0.3) node {$s_{n-2}$};	
\draw (5.2,0.8) node {$s_{n-1}$};
\draw (5,-0.8) node {$s_{n}$};
\end{scope}
\end{tikzpicture}
&
% GP D_n
\begin{tikzpicture}[scale=0.7,double distance=3pt,thick]
\begin{scope}[scale=1.2] 
\fill (0,0) circle (3pt); 
\fill (1,0) circle (3pt); 
\fill (3,0) circle (3pt); 
\fill (4,0) circle (3pt); 
\fill (5,0.5) circle (3pt); 
\fill (5,-0.5) circle (3pt); 
\draw (0,0)--(1,0)--(1.5,0);
\draw [dashed] (1.5,0)--(2.5,0);
\draw (2.5,0)--(3,0)--(4,0); 
\draw (4,0)--(5,0.5);
\draw (4,0)--(5,-0.5);
% labels
\draw (0,-0.3) node {$s_{n-1}$};	
\draw (1,-0.3) node {$s_{n-2}$};	
\draw (3,-0.3) node {$s_3$};	
\draw (4,-0.3) node {$s_2$};	
\draw (5,0.8) node {$s_0$};
\draw (5,-0.8) node {$s_1$};
\end{scope}
\end{tikzpicture}
 \\
\hline
$E_6$ & &  \\
&
% Bourbaki and Sage E_6
\begin{tikzpicture}[scale=0.7,double distance=3pt,thick]
\begin{scope}[scale=1.2] 
\fill (0,0) circle (3pt); 
\fill (1,0) circle (3pt); 
\fill (2,0) circle (3pt); 
\fill (3,0) circle (3pt); 
\fill (4,0) circle (3pt); 
\fill (2,-1) circle (3pt); 
\draw (0,0)--(1,0)--(2,0)--(3,0)--(4,0);
\draw (2,0)--(2,-1);
% labels
\draw (0,-0.3) node {$s_1$};	
\draw (1,-0.3) node {$s_3$};
\draw (2.3,-0.3) node {$s_4$};
\draw (3,-0.3) node {$s_5$};
\draw (4,-0.3) node {$s_6$};
\draw (2.4,-1) node {$s_2$};	
\end{scope}
\end{tikzpicture}
&
% GP E_6
\begin{tikzpicture}[scale=0.7,double distance=3pt,thick]
\begin{scope}[scale=1.2] 
\fill (0,0) circle (3pt); 
\fill (1,0) circle (3pt); 
\fill (2,0) circle (3pt); 
\fill (3,0) circle (3pt); 
\fill (4,0) circle (3pt); 
\fill (2,-1) circle (3pt); 
\draw (0,0)--(1,0)--(2,0)--(3,0)--(4,0);
\draw (2,0)--(2,-1);
% labels
\draw (0,-0.3) node {$s_1$};	
\draw (1,-0.3) node {$s_3$};
\draw (2.3,-0.3) node {$s_4$};
\draw (3,-0.3) node {$s_5$};
\draw (4,-0.3) node {$s_6$};
\draw (2.4,-1) node {$s_2$};	
\end{scope}
\end{tikzpicture}
\\
\hline
$E_7$ & &  \\
&
% Bourbaki and Sage E_7
\begin{tikzpicture}[scale=0.7,double distance=3pt,thick]
\begin{scope}[scale=1.2] 
\fill (0,0) circle (3pt); 
\fill (1,0) circle (3pt); 
\fill (2,0) circle (3pt); 
\fill (3,0) circle (3pt); 
\fill (4,0) circle (3pt); 
\fill (5,0) circle (3pt); 
\fill (2,-1) circle (3pt); 
\draw (0,0)--(1,0)--(2,0)--(3,0)--(4,0)--(5,0);
\draw (2,0)--(2,-1);
% labels
\draw (0,-0.3) node {$s_1$};	
\draw (1,-0.3) node {$s_3$};
\draw (2.3,-0.3) node {$s_4$};
\draw (3,-0.3) node {$s_5$};
\draw (4,-0.3) node {$s_6$};
\draw (5,-0.3) node {$s_7$};
\draw (2.4,-1) node {$s_2$};	
\end{scope}
\end{tikzpicture}
&
% GP E_7
\begin{tikzpicture}[scale=0.7,double distance=3pt,thick]
\begin{scope}[scale=1.2] 
\fill (0,0) circle (3pt); 
\fill (1,0) circle (3pt); 
\fill (2,0) circle (3pt); 
\fill (3,0) circle (3pt); 
\fill (4,0) circle (3pt); 
\fill (5,0) circle (3pt); 
\fill (2,-1) circle (3pt); 
\draw (0,0)--(1,0)--(2,0)--(3,0)--(4,0)--(5,0);
\draw (2,0)--(2,-1);
% labels
\draw (0,-0.3) node {$s_1$};	
\draw (1,-0.3) node {$s_3$};
\draw (2.3,-0.3) node {$s_4$};
\draw (3,-0.3) node {$s_5$};
\draw (4,-0.3) node {$s_6$};
\draw (5,-0.3) node {$s_7$};
\draw (2.4,-1) node {$s_2$};	
\end{scope}
\end{tikzpicture}
 \\
\hline
$E_8$ & &  \\
&
% Bourbaki and Sage E_8
\begin{tikzpicture}[scale=0.7,double distance=3pt,thick]
\begin{scope}[scale=1] 
\fill (0,0) circle (3pt); 
\fill (1,0) circle (3pt); 
\fill (2,0) circle (3pt); 
\fill (3,0) circle (3pt); 
\fill (4,0) circle (3pt); 
\fill (5,0) circle (3pt); 
\fill (6,0) circle (3pt); 
\fill (2,-1) circle (3pt); 
\draw (0,0)--(1,0)--(2,0)--(3,0)--(4,0)--(5,0)--(6,0);
\draw (2,0)--(2,-1);
% labels
\draw (0,-0.3) node {$s_1$};	
\draw (1,-0.3) node {$s_3$};
\draw (2.3,-0.3) node {$s_4$};
\draw (3,-0.3) node {$s_5$};
\draw (4,-0.3) node {$s_6$};
\draw (5,-0.3) node {$s_7$};
\draw (6,-0.3) node {$s_8$};
\draw (2.4,-1) node {$s_2$};	
\end{scope}
\end{tikzpicture}
&
% GP E_8
\begin{tikzpicture}[scale=0.7,double distance=3pt,thick]
\begin{scope}[scale=1] 
\fill (0,0) circle (3pt); 
\fill (1,0) circle (3pt); 
\fill (2,0) circle (3pt); 
\fill (3,0) circle (3pt); 
\fill (4,0) circle (3pt); 
\fill (5,0) circle (3pt); 
\fill (6,0) circle (3pt); 
\fill (2,-1) circle (3pt); 
\draw (0,0)--(1,0)--(2,0)--(3,0)--(4,0)--(5,0)--(6,0);
\draw (2,0)--(2,-1);
% labels
\draw (0,-0.3) node {$s_1$};	
\draw (1,-0.3) node {$s_3$};
\draw (2.3,-0.3) node {$s_4$};
\draw (3,-0.3) node {$s_5$};
\draw (4,-0.3) node {$s_6$};
\draw (5,-0.3) node {$s_7$};
\draw (6,-0.3) node {$s_8$};
\draw (2.4,-1) node {$s_2$};	
\end{scope}
\end{tikzpicture}
 \\
\hline
$F_4$ & &  \\
&
% Bourbaki and Sage F_4
\begin{tikzpicture}[scale=0.7,double distance=3pt,thick]
\begin{scope}[scale=1.2] 
\fill (0,0) circle (3pt); 
\fill (1,0) circle (3pt); 
\fill (2,0) circle (3pt); 
\fill (3,0) circle (3pt); 
\draw (0,0)--(1,0);
\draw (2,0)--(3,0);
\draw (1,0.05)--(2,0.05);
\draw (1,-0.05)--(2,-0.05);
% arrow to right
\draw (1.4,0.15)--(1.55,0);
\draw (1.55,0)--(1.4,-0.15);
% labels
\draw (0,-0.3) node {$s_1$};	
\draw (1,-0.3) node {$s_2$};
\draw (2,-0.3) node {$s_3$};
\draw (3,-0.3) node {$s_4$};
\end{scope}
\end{tikzpicture}
&
% GP F_4
\begin{tikzpicture}[scale=0.7,double distance=3pt,thick]
\begin{scope}[scale=1.2] 
\fill (0,0) circle (3pt); 
\fill (1,0) circle (3pt); 
\fill (2,0) circle (3pt); 
\fill (3,0) circle (3pt); 
\draw (0,0)--(1,0);
\draw (2,0)--(3,0);
\draw (1,0.05)--(2,0.05);
\draw (1,-0.05)--(2,-0.05);
% arrow to right
\draw (1.4,0.15)--(1.55,0);
\draw (1.55,0)--(1.4,-0.15);
% labels
\draw (0,-0.3) node {$s_1$};	
\draw (1,-0.3) node {$s_2$};
\draw (2,-0.3) node {$s_3$};
\draw (3,-0.3) node {$s_4$};
\end{scope}
\end{tikzpicture}
 \\
\hline
$G_2$ & &  \\
&
% Bourbaki and Sage G_2
\begin{tikzpicture}[scale=0.7,double distance=3pt,thick]
\begin{scope}[scale=1.2] 
\fill (0,0) circle (3pt); 
\fill (1,0) circle (3pt); 
\draw (0,0.07)--(1,0.07);
\draw (0,0)--(1,0);
\draw (0,-0.07)--(1,-0.07);
% arrow to left
\draw (0.6,0.15)--(0.45,0);
\draw (0.45,0)--(0.6,-0.15);
% labels
\draw (0,-0.3) node {$s_1$};	
\draw (1,-0.3) node {$s_2$};
\end{scope}
\end{tikzpicture}
&
% GP G_2
\begin{tikzpicture}[scale=0.7,double distance=3pt,thick]
\begin{scope}[scale=1.2] 
\fill (0,0) circle (3pt); 
\fill (1,0) circle (3pt); 
\draw (0,0.07)--(1,0.07);
\draw (0,0)--(1,0);
\draw (0,-0.07)--(1,-0.07);
% arrow to right
\draw (0.4,0.15)--(0.55,0);
\draw (0.55,0)--(0.4,-0.15);
% labels
\draw (0,-0.3) node {$s_1$};	
\draw (1,-0.3) node {$s_2$};
\end{scope}
\end{tikzpicture}
 \\
\hline
\end{tabular}
\end{center}
\caption{Dynkin diagram labeling conventions}
\label{table:dynkin}
\end{table}%

\clearpage % to avoid that the bibliography gets mixed with the tables in appendix B

%%%%%%%%%%%%%%%%%%%%%%%%%%%%%%%%%%%%%%%%%%%%%%%%%%%%
%%%%%%%%%%%%%%%%%%%%%%%%%%%%%%%%%%%%%%%%%%%%%%%%%%%%

\renewcommand{\refname}{Bibliography}
\bibliography{bibliographyConj}
\bibliographystyle{alphaurl}

\typeout{get arXiv to do 4 passes: Label(s) may have changed. Rerun} %comes from hints for authors on ArXiv, makes sure all labels, references, ... are parced correctly

\end{document}